\newcommand{\cmark}{\ding{51}}
\newcommand{\xmark}{\ding{55}}
\newcommand{\indep}{\mathrel{\perp\mspace{-10mu}\perp}}
\newtheorem{theorem}{Theorem}
\newtheorem{definition}{Definition}
\newtheorem{remark}{Remark}
\newtheorem{assumption}{Assumption}
\newtheorem*{assumption*}{Assumption}
\newtheorem{proposition}{Proposition}
\newtheorem{lemma}{Lemma}
\providecommand{\customgenericname}{}
\newcommand{\newcustomtheorem}[2]{%
  \newenvironment{#1}[1]
  {%
   \renewcommand\customgenericname{#2}%
   \renewcommand\theinnercustomgeneric{##1}%
   \innercustomgeneric
  }
  {\endinnercustomgeneric}
}
\renewcommand{\algocf@captiontext}[2]{#1\algocf@typo. \AlCapFnt{}#2} 
\def\@algocf@capt@plain{top}
\renewcommand{\algocf@makecaption}[2]{%
  \addtolength{\hsize}{\algomargin}%
  \sbox\@tempboxa{\algocf@captiontext{#1}{#2}}%
  \ifdim\wd\@tempboxa >\hsize
  \hskip .5\algomargin%
  \parbox[t]{\hsize}{\algocf@captiontext{#1}{#2}}
  \else%
  \global\@minipagefalse%
  \hbox to\hsize{\box\@tempboxa}
  \fi%
  \addtolength{\hsize}{-\algomargin}%
}
\begin{document}

\def\spacingset#1{\renewcommand{\baselinestretch}%
{#1}\small\normalsize} \spacingset{1}

\sectionfont{\bfseries\large\sffamily}%

\subsectionfont{\bfseries\sffamily\normalsize}%

\title{GMM with Many Weak Moment Conditions and Nuisance Parameters: General Theory and Applications to Causal Inference}

\author[1]{Rui Wang}
\author[1]{Kwun Chuen Gary Chan$^*$}
\author[1]{Ting Ye\thanks{Corresponding to: {\tt tingye1@uw.edu and kcgchan@uw.edu}}}

\affil[1]{Department of Biostatistics, University of Washington}
\date{}
\maketitle

\begin{abstract}

{ Weak identification arises in many statistical problems when key variables exhibit weak correlations—for example, when instrumental variables correlate weakly with treatment, or when proxy variables correlate weakly with unmeasured confounders. Under weak identification, standard estimation methods such as the generalized method of moments (GMM) can produce substantial bias, both in finite samples and asymptotically. This challenge is compounded in modern applications that require estimating many nuisance parameters. This paper develops a framework for estimation and inference of a finite-dimensional target parameter in general moment models with the number of weak moment conditions and nuisance parameters growing with sample size. We analyze a general two-step debiasing estimator that accommodates flexible, possibly nonparametric first-step estimation of nuisance parameters, in which Neyman orthogonality plays a more critical role in obtaining debiased inference than in conventional settings with strong identification. Under a many-weak-moment asymptotic regime, we establish the estimator's consistency and asymptotic normality. We provide high-level conditions for the general setting and demonstrate their application to two important special cases: inference with weak instruments and inference with weak proxies.}

\end{abstract}
\vspace{0.3 cm}
\noindent
\textsf{{\bf Keywords}: Causal inference; Instrumental variables; Neyman-orthogonality; Proximal causal inference; Structural mean models; Weak identification}

\newpage
\spacingset{1.6}

\setlength\abovedisplayskip{2pt}
\setlength\belowdisplayskip{1pt}%

\section{Introduction}
\subsection{Weak identification problem}
Weak identification is a common issue for many statistical applications. A well-known example is the problem of weak instrumental variables (IVs). IV methods are widely used to identify and estimate causal effects when there is unmeasured confounding \citep{angrist1996JASA,hernan2006instruments}. These methods typically rely on three standard assumptions:  (i) IV independence: the IVs are independent of unmeasured confounders; (ii) Exclusion restriction: the IVs do not have direct effect on the outcome variable; and (iii) IV relevance: the IVs are correlated with the treatment \citep{Baiocchi2014SIM}. While the IV relevance assumption can be empirically assessed and justified in many examples, the correlation between the instruments and the treatment is often weak. For example, in epidemiology, Mendelian randomization is a popular strategy to study the causal effects of some exposures in the presence of unmeasured confounding, using single-nucleotide polymorphisms (SNPs) as instruments. However, SNPs are usually weakly correlated with the exposure of interest, which leads to substantial bias using traditional methods \citep{sanderson2022mendelian}, such as two-stage least square or generalized method of moments (GMM) \citep{hansen1982GMM}. As another example, labor economists  used quarter of birth and
its interactions with year of birth as instruments for estimating the return to education  \citep{Angrist1991QJE}. Even if the total sample size is large, weak instruments problem might produce biased point and variance estimates, as discussed in \citet{Bound1995JASA}. 

Another important but less recognized example of weak identification arises in proximal causal inference \citep{ett2024proximal} with weak proxies. Like IV methods, proximal causal inference enables identification and estimation of causal effects in the presence of unmeasured confounding. This approach typically requires two types of proxies for the unmeasured confounders: treatment proxies and  outcome proxies \citep{Miao2018BKA,ett2024proximal}, where the treatment proxies are assumed to have no direct causal effect on the outcome, and the exposure is assumed to have no causal effect on the outcome proxies. When the proxies are only weakly correlated with the unmeasured confounders, they may fail to provide sufficient information about these confounders. As a result, standard methods (e.g., those proposed by \citet{ett2024proximal}) may yield inconsistent estimates and invalid inference, even in large samples, as demonstrated in our simulation study.

The weak identification problem is usually modeled through a drifting data-generating process where identification fails in the limit \citep{stock2000gmm,newey2009generalized}. Under weak identification, standard estimators—such as estimating equation estimators \citep{van2000asymptotic} and generalized method of moments (GMM) estimators with a fixed number of moment conditions \citep{hansen1982GMM}—can be biased, and their asymptotic distributions can become complex and nonstandard \citep{stock2000gmm}. Nevertheless, information about the target parameter may accumulate as the number of moment conditions increases. This has motivated the development of \emph{many weak moment asymptotics} \citep{han2006gmm,newey2009generalized}. Under this regime, traditional GMM estimators remain biased in general \citep{han2006gmm}, but consistent and asymptotically normal estimates can be obtained in the absence of nuisance parameters, using the continuous updating estimator (CUE) \citep{newey2009generalized}, which belongs to the broader family of estimators for moment conditions. However, modern semiparametric inference often involves flexibly estimating function-valued nuisance parameters. For example, the structural mean model approach for IV analysis involves modeling the conditional expectations of the instruments, treatment, and outcome given baseline covariates \citep{hernan2006instruments}. To mitigate model misspecification, flexible nonparametric estimation methods are frequently used to estimate function-valued nuisance parameters. The inclusion of such nuisance models complicates the estimation and inference of the target parameter, especially under weak identification.



\subsection{Previous works}
In this paper, we address two major estimation challenges that appear simultaneously, where each gathered a body of literature.  

The first challenge is to handle nuisance parameters. Function-valued nuisance parameters are very common in modern semiparametric models, especially in those efficient influence function based methods \citep{bickel1993efficient}.  Usually, a two-step estimator is considered, where the first-step is to estimate the nuisance parameters and the second step is to construct an efficient influence function-based estimator by plugging in the nuisance estimator. In the earlier literature, parametric estimation for nuisance parameters is widely adopted for constructing doubly robust \citep{Robins1994JASA} or multiple robust estimators \citep{ett2012aos}. More recently, based on the orthogonality property of efficient influence function \citep{chernozhukov2018double}, flexible nonparametric nuisance estimation coupled with cross-fitting was considered in one-step estimation \citep{Pfanzagl1990,andrea2021BKA}, estimating equation \citep{chernozhukov2018double}, targeted maximum likelihood estimation \citep{van2011targeted}, GMM \citep{Chernozhukov2022locallyrobust} and empirical likelihood \citep{Bravo2020AoS}. 

Another challenge is to deal with many weak moment conditions. The related weak IVs problem has been extensively studied in econometrics literature \citep{Staiger1996ECA,stock2000gmm,han2006gmm,newey2009generalized,andrews2019weak}. See  \citet{andrews2019weak} for a comprehensive review of weak IV literature. \citet{newey2009generalized} considered the case of high-dimensional weak moment conditions in the absence of nuisance parameters. A few recent papers consider weak moments with nuisance parameters. \citet{ye2024genius} considered many weak moment conditions with nuisance parameters under specific linear moment conditions and a one-dimensional paramter of interest in the setting of weak instrumental variables and Mendelian randomization. \citet{andrews2018valid} constructed valid confidence intervals under weak identification with first-step nuisance estimates but focused on a fixed number of moment conditions and did not consider point estimation alongside inference. Around the time we completed this paper, we became aware of a preprint by \citet{zhang2025debiased} on arXiv that proposed a similar idea for constructing an estimator but under a partially linear IV model. 

In contrast to these existing approaches, our work develops a more general theoretical framework that applies to both linear and nonlinear weak-identification problems and is not restricted to IV problems. See Table \ref{tab:contributions} for a comprehensive comparison with the previous weak IV literature and semiparametric estimation literature.

\begin{table}[ht]
    \centering
        \small
     \caption{Comparison of our contribution with existing literature, where \textbf{Growing $m$} means if they allow for growing number of moment conditions; \textbf{ML nuisance} means if they allow for general machine-learning based nuisance estimators; \textbf{General $g$} means if they allow for general linear and nonlinear moment conditions or if they focus on a specific form of $g$; \textbf{Weak $g$} means if they allow for weak moment conditions.}
    \label{tab:contributions}

    \begin{tabular}{lcccc}
    \toprule
    \textbf{Scholarship} & \textbf{Growing $m$} & \textbf{ML nuisance} & \textbf{General $g$} & \textbf{Weak $g$}\\
    \midrule
    \citet{chernozhukov2018double} & \xmark & \cmark & \cmark & \xmark \\
    \citet{newey2009generalized}& \cmark & \xmark & \cmark  & \cmark\\
    \citet{ye2024genius} & \cmark & \xmark & \xmark & \cmark \\
     \citet{zhang2025debiased} & \cmark & \cmark & \xmark &\cmark \\
     Our proposal & \cmark & \cmark & \cmark & \cmark \\
    \bottomrule
    \end{tabular}
\end{table}

\subsection{Main contributions}
{In this paper, we develop a framework
for estimation and inference of a finite-dimensional target parameter in general moment models with many weak moment conditions and many nuisance parameters. Our framework accommodates flexible first-step nuisance parameter estimators. In contrast, the two most closely related works \citep{ye2024genius,zhang2025debiased} focus exclusively on specific linear moment condition models for instrumental variable applications. By establishing a general theoretical framework, we provide new insights into the problem while offering a unified approach that can be applied to different settings without requiring new theory for each application.}

{ 
Specifically, our framework reveals that the interplay between function-valued nuisance parameters and many weak moments produces several unique features that set our theory apart from the existing semiparametric inference literature:}

{
\begin{enumerate}
    \item \textbf{Enhanced role of Neyman orthogonality.} Many weak moments necessitate a \emph{global Neyman orthogonality condition} (Definition \ref{Definition: Neyman Orthogonality}). Unlike standard semiparametric inference theory, where orthogonality primarily facilitates asymptotic normality, this global condition is essential for \emph{both} consistency and asymptotic normality. Figure \ref{fig: comparison of the estimators} demonstrates this importance: the continuously updated estimator (CUE) with non-orthogonal moment conditions remains consistent under strong identification (panel (d)) but becomes substantially biased under weak identification (panel (c)). With orthogonal moment conditions, however, the CUE maintains unbiasedness (panel (a)). Further details appear in Sections \ref{subsection: Global Neyman orthogonality} and \ref{subsection: Two key features}.
    \item \textbf{Stricter nuisance parameter requirements.} Many weak moments demand faster convergence rates than in classical settings. While standard theory requires convergence rates faster than $N^{-1/4}$, where $N$ is the sample size \citep{chernozhukov2018double}, our setting imposes more stringent conditions. We elaborate on these refined rate requirements in Sections \ref{subsection: consistency} and \ref{subsection: ASN}.
    \item \textbf{Non-standard asymptotics and variance estimation.} Our proposed estimator converges at a rate slower than $\sqrt{N}$, rendering it irregular in the classical sense \citep{van2000asymptotic}. The asymptotic expansion contains both the standard first-order term corresponding to the influence function of classical GMM and an additional non-negligible U-statistic term induced by the many weak moments. This necessitates careful control over how estimated nuisance parameters contribute to these U-statistic terms. Moreover, the classical influence function-based variance estimator becomes invalid in this setting. Details are provided in Section \ref{subsection: ASN}.
\end{enumerate}
}

\begin{figure}
    \centering
    \includegraphics[width=0.9\linewidth]{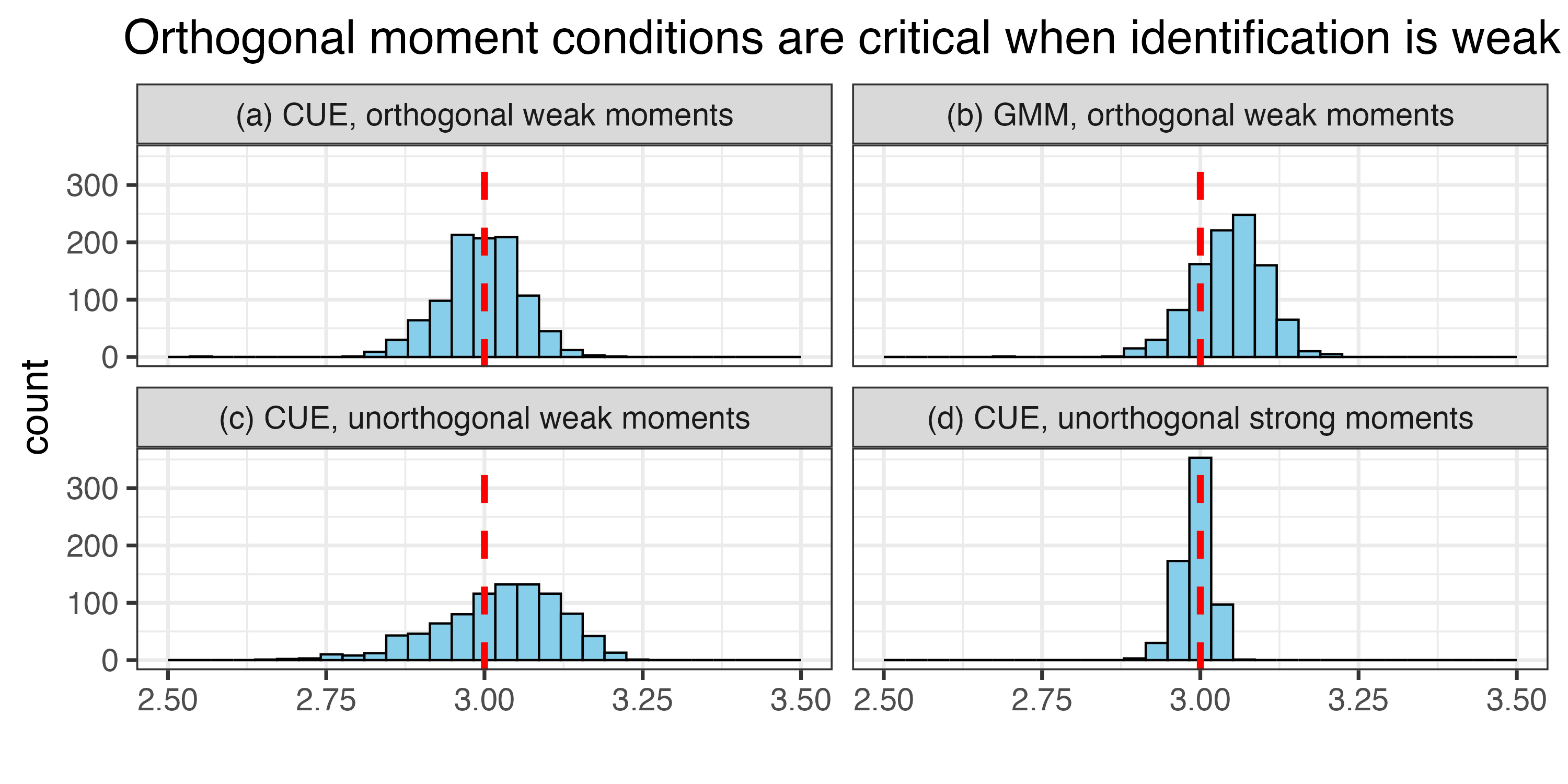} \vspace{-5mm}
    \caption{Neyman orthogonality plays a critical role under many weak moments. Data are simulated from an additive structural mean model, with the red dashed line indicating the true parameter value. Panels show sampling distributions across 1,000 simulations for: (a) CUE with orthogonal weak moments, (b) GMM with orthogonal weak moments, (c) CUE with non-orthogonal weak moments, and (d) CUE with non-orthogonal strong moments. Under weak identification, the orthogonalized CUE (panel a) remains unbiased, while the non-orthogonal CUE (panel c) exhibits substantial bias—despite being consistent under strong identification (panel d). Even with orthogonal moments, GMM (panel b) remains biased. Simulation details appear in Supplemental Material S13.2.}
    \label{fig: comparison of the estimators}
\end{figure}

{
We establish high-level conditions under which our proposed two-step CUE is consistent and asymptotically normal. This general approach enables future applications: researchers can apply our results to new problems by verifying the corresponding conditions. We demonstrate this versatility through three examples that verify these high-level regularity conditions. The first two examples involve instrumental variable settings: the additive structural mean model (ASMM) and the multiplicative structural mean model (MSMM). The third example addresses the setting of many weak proxies. To the best of our knowledge, our paper is the first to study weak identification of the target parameter in proximal causal inference. While \citet{bennett2023inferencestronglyidentifiedfunctionals} considered weakly identified nuisance parameters in proximal causal inference, they assumed the target parameter remained strongly identified.}

\section{Setup and two-step CUE estimator}
\vspace{-2mm}
\subsection{Framework: many weak moment conditions model}
We are interested in a finite-dimensional target parameter with true value $\beta_0$. The parameter space $\mathcal{B}$ is a compact subset of $\mathbb{R}^p$, and $\beta_0$ belongs to the interior of $\mathcal{B}$, satisfying the moment condition:
\begin{equation}
     \mathbbm{E}_{P_{0,N}}[g (O;\beta_0,\eta_{0,N})] = 0, \label{Equation: moment condition}
\end{equation}
where $P_{0,N}$ denotes the law of $O$, which may depend on sample size $N$, and $\mathbbm{E}_{P_{0,N}}[\cdot]$ denotes expectation with respect to $P_{0,N}$. The function $g(o;\beta,\eta) = (g^{(1)}(o;\beta,\eta),...,g^{(m)}(o;\beta,\eta))^T $ is an $m$-dimensional vector of known moment functions. The quantity $\eta_{0,N}\in \tilde{\mathcal{T}}_N$ is the true nuisance parameter value, which is a functional of the true data-generating distribution, where $\tilde{\mathcal{T}}_N$ denotes the nuisance parameter space. Finally, $(O_i)_{i=1}^N$ denotes observed data, assumed to be independent and identically distributed (i.i.d.) copies of $O$.

We consider the setting where the parameter $\beta_0$ is weakly identified. Weak identification is characterized by Assumption \ref{Assumption: weak moment condition}, which follows the formulation in \cite{newey2009generalized} in the setting without nuisance parameters. Define $G(o;\beta,\eta) = \frac{\partial}{\partial \beta}g(o;\beta,\eta)$, and let $\widebar{G}(\beta,\eta) = \mathbb{E}_{P_{0,N}} [{G}(O;\beta,\eta)]$, $\widebar{G}= \widebar{G}(\beta_0,\eta_{0,N})$, $\widebar{\Omega}(\beta,\eta) = \mathbb{E}_{P_{0,N}}[g(O;\beta,\eta)g^T(O;\beta,\eta)]$, $\widebar{\Omega} =\widebar{\Omega}(\beta_0,\eta_{0,N})$.

\begin{assumption}[Many weak moment conditions] \quad
    \begin{enumerate}[label=(\alph*)]
        \item There is a $p\times p$ matrix $S_N = \Tilde{S}_N \text{diag}(\mu_{1N},...,\mu_{pN})$ such that $\Tilde{S}_N$ is bounded, the smallest eigenvalue of $\Tilde{S}_N\Tilde{S}^T_N$ is bounded away from zero, for each $j$ either $\mu_{jN} = \sqrt{N}$ or $\mu_{jN}/\sqrt{N}\rightarrow 0$, $\mu_N = \min_{1\leq j\leq p}\mu_{jN}\rightarrow \infty$, $m/\mu_N^2 = O(1)$.
        \item $NS_N^{-1}\widebar{G}^T\widebar{\Omega}^{-1}\widebar{G} (S_N^{-1})^T\rightarrow H$, $H$ is nonsingular.
        \end{enumerate} \label{Assumption: weak moment condition}
\end{assumption}
Under the standard asymptotic regime, where $\mu_{jN}=\mu_{N}=\sqrt{N}$, $m$ is finite and $P_{0,N}$ does not depend on $N$, we may take $\Tilde{S}_N$ to be $I_p$ (the $p$-dimensional identity matrix). In this setting, Assumption \ref{Assumption: weak moment condition}(b) simplifies to requiring that $\widebar{G}^T\widebar{\Omega}^{-1}\widebar{G}$ is a fixed positive semidefinite matrix. This corresponds to the classical GMM asymptotics \citep{hansen1982GMM}, where  $\widebar{G}^T\widebar{\Omega}^{-1}\widebar{G}$ represents the asymptotic variance of the efficient GMM. When $\mu_N/\sqrt{N}\rightarrow 0$, the parameter $\beta_0$ becomes weakly identified. Later, in Section \ref{subsection: CUE}, we define the objective function of CUE (when the true value of nuisance is known) to be $\widehat{Q}(\beta,\eta_{0,N}) = \widehat{g}^T(\beta,\eta_{0,N})\widehat{\Omega}(\beta,\eta_{0,N})^{-1}\widehat{g}(\beta,\eta_{0,N})/2$, where $\widehat{g}(\beta,\eta) = \frac{1}{N}\sum_{i=1}^Ng(O_i;\beta,\eta)$ and $\widehat{\Omega}(\beta,\eta)=\frac{1}{N}\sum_{i=1}^N{g}(O_i;\beta,\eta){g}(O_i;\beta,\eta)^T$. Following \citet{newey2009generalized}, the Hessian of the objective function $\frac{\partial \widehat{Q}(\beta,\eta_{0,N})}{\partial \beta \partial \beta^T}$  converges to $\widebar{G}^T\widebar{\Omega}^{-1}\widebar{G}$ at $\beta=\beta_0$. Therefore, Assumption \ref{Assumption: weak moment condition}(b) implies that the objective function becomes nearly flat in certain directions around $\beta_0$, reflecting weak identification along those directions. 

Importantly,  Assumption \ref{Assumption: weak moment condition} allows for varying identification strength across different linear combinations of $\beta_0$, as characterized by $\Tilde{S}_N$. To see this, Assumption \ref{Assumption: weak moment condition}(b) can be rewritten as
\small
\begin{align*}
N \text{diag}(\mu_{1N}^{-1},...,\mu_{pN}^{-1})\mathbb{E}_{P_{0,N}}\left[\frac{\partial g(O;\beta_0,\eta_{0,N})}{\partial \tilde{S}^T_N\beta}\right]^T\widebar{\Omega}^{-1}\mathbb{E}_{P_{0,N}}\left[\frac{\partial g(O;\beta_0,\eta_{0,N})}{\partial \tilde{S}^T_N\beta}\right]\text{diag}(\mu^{-1}_{1N},...,\mu^{-1}_{pN})\rightarrow H.
\end{align*}
\normalsize
This formulation implies that the identification strength for each coordinate of $\tilde{S}_N^T\beta$ is measured by $\mu_{1N},...,\mu_{pN}$, and the overall identification strength is captured by $\mu_N = \min_{1\leq j\leq p}\mu_{jN}$. {In the special case where $\beta_0$ is a scalar, we can set $\Tilde{S}_N=1$, so the condition reduces to $N\mu_N^{-2}\widebar{G}^T\widebar{\Omega}^{-1}\widebar{G}\rightarrow H$, with $H$ a positive number; this is the setting considered by \cite{ye2024genius} and \cite{zhang2025debiased}, where there is no need to account for varying identification strength across different directions.}
The requirement $m/\mu_N^2 = O(1)$ imposes a lower bound on the strength of identification, ensuring that the number of moment conditions does not grow too quickly relative to the identification strength.


The following notations will be used throughout the paper.  For any vector $x$, we denote its $l_2$ norm by $\Vert x\Vert$. For a matrix $A$, $\Vert A\Vert$ denotes the spectral norm, and $\Vert A\Vert_F$ denotes the Frobenius norm. The $L^q(P)$ norm is written as $\Vert f \Vert_{P,q} = (\int |f|^q dP)^{1/q}$. For a vector $v$, $v^{(j)}$ denotes its $j$th element. For a matrix $A$, $A^{(j,k)}$ denotes the $(j,k)$th element, and $A^{(j)}$ denotes the $j$th column. For two matrices $A$ and $B$, we write $A \preceq B$ if $B-A$ is positive semidefinite. For a sequence of random variables $X_N$ and a sequence of positive numbers $a_N$, we write $X_N = o_p(a_N)$ if for all $\epsilon>0$, 
$\lim_{N\rightarrow \infty} P_{0,N}(\vert X_{N}\vert>\epsilon a_N) = 0$. We write $X_{N} = O_p(a_N)$ if for all $\epsilon>0$, there exists a constant $C>0$ such that $P_{0,N}(\vert X_N\vert>Ca_N)\leq 1-\epsilon$ for all $N$.

\subsection{Removing weak identification bias}
\label{subsection: CUE}
It is well-known that GMM estimators are typically biased under many weak asymptotics. \citet{han2006gmm} provided an intuitive explanation for this bias: the GMM objective function may not be asymptotically minimized at the true parameter value $\beta_0$. To illustrate this point, we assume for simplicity that the nuisance parameter $\eta_{0,N}$ is known. In this case, the GMM estimator is defined as $\widehat{\beta}_{\rm GMM} = \arg\min_{\beta \in \mathcal{B}}\widehat{g}(\beta,\eta_{0,N})^TW\widehat{g}(\beta,\eta_{0,N})/2$, where $W$ is a positive semidefinite weighting matrix, and $\widehat{g}(\beta,\eta) = \frac{1}{N}\sum_{i=1}^Ng(O_i;\beta,\eta)$ \citep{hansen1982GMM}. Let $\widebar{g}(\beta,\eta) = \mathbb{E}_{P_{0,N}}[g(O;\beta,\eta)]$. The expectation of the GMM objective function is
    \begin{align*}
        &\mathbb{E}_{P_{0,N}}\widehat{g}^T(\beta,\eta_{0,N})W\widehat{g}(\beta,\eta_{0,N})/2= \underbrace{(1-N^{-1})\widebar{g}(\beta,\eta_{0,N})^TW(\widebar{g}(\beta,\eta_{0,N}))/2}_{\text{Signal}}+ \underbrace{\text{tr}(W\widebar{\Omega}(\beta,\eta_{0,N}))/(2N)}_{\text{Noise}}.
    \end{align*}
     The first term reflects the signal and is minimized at $\beta_0$. However, the second term (noise) generally is not, and under many weak asymptotics, the signal may not dominate the noise, which leads to bias in the GMM estimator. 
     
     One way to remove the bias introduced by the noise term is to set $W=\widebar{\Omega}(\beta,\eta_{0,N})^{-1}$, so that the noise term does not depend on $\beta$. As a result, the expectation of the objective function is minimized at $\beta_0$. In practice, $\widebar{\Omega}(\beta,\eta_{0,N})$ can be replaced by its empirical version $\widehat{\Omega}(\beta,\eta)=\frac{1}{N}\sum_{i=1}^N{g}(O_i;\beta,\eta){g}(O_i;\beta,\eta)^T$, as long as $\sup_{\beta \in \mathcal{B}}\Vert \widehat \Omega(\beta,\eta_{0,N}) - \widebar{\Omega}(\beta,\eta_{0,N})\Vert$ converges to $0$ sufficiently fast. This leads to the objective function of the CUE: $\widehat{g}^T(\beta,\eta_{0,N})\widehat{\Omega}(\beta,\eta_{0,N})^{-1}\widehat{g}(\beta,\eta_{0,N})/2$.  In the absence of nuisance parameters, this is the estimator in \cite{newey2009generalized}, which removes the bias due to many weak moments.  When nuisance parameters are estimated in a first step, additional bias may be introduced, which we address in the next section.



\subsection{Removing bias from estimated nuisance parameters}
\label{subsection: Global Neyman orthogonality}
In general, nuisance parameter estimation can affect the asymptotic behavior of the target parameter estimator. To mitigate this impact, we focus on moment functions satisfying the \emph{global Neyman orthogonality} condition \citep{chernozhukov2018double,foster2023aos}. 

Let $f$ be a generic function (not necessarily a moment function). We define the order-$k$ Gateaux derivative map $D^{(k)}_{t,\beta,f}:\tilde{\mathcal{T}}_N \rightarrow \mathbb{R}^{m}$ in the direction of $\eta$, evaluated at $\beta \in \mathcal{B}$ and $t\in[0,1]$, as $
    D^{(k)}_{t,\beta,f}(\eta) = \frac{\partial^{k}}{\partial t^k}\mathbb{E}_{P_{0,N}}\left[f\left(O;\beta,(1-t)\eta_{0,N}+t\eta\right)\right]$.

\begin{definition}[Neyman Orthogonality]\label{Definition: Neyman Orthogonality}\quad
    (a) A function $f(o;\beta,\eta)$ is said to be {Neyman orthogonal} on a set $\mathcal{B}_1\subset \mathcal{B}$ if, for any $\beta \in \mathcal{B}_1$, $N$, and $\eta \in \tilde{\mathcal{T}}_N$,  the first-order Gâteaux derivative satisfies $D^{(1)}_{0,\beta,f}(\eta)=0$. If this condition holds for all $\beta\in \mathcal{B}$, we say that $f(o;\beta,\eta)$ is {global Neyman orthogonal}.\\
    (b) If $f(o;\beta,\eta)$ is a moment function and is Neyman orthogonal on $\mathcal{B}_1$, we say that the corresponding moment condition $\mathbb{E}_{P_{0,N}}[f(O;\beta_0,\eta_{0,N})]=0$ is a Neyman orthogonal moment condition on $\mathcal{B}_1$. When $\mathcal{B}_1=\mathcal{B}$, we refer to this as a global Neyman orthogonal moment condition.
\end{definition}
Neyman orthogonality implies that nuisance parameter estimation does not introduce first-order bias into $\mathbb{E}_{P_{0,N}}[f(O;\beta,\eta_{0,N})]$, regardless of the value of $\beta$ within $\mathcal{B}_1$. Later, in Section 3, we assume that the moment function $g(o;\beta,\eta)$ is globally Neyman orthogonal, which is a stronger condition than the version defined in \citet{chernozhukov2018double}, where orthogonality is imposed only at the single point $\mathcal{B}_1=\{\beta_0\}$. 

We require this stronger, global version of Neyman orthogonality for two main reasons. First, as discussed in Sections 2.4 and 3.1, global orthogonality is necessary to ensure estimator consistency under many weak moment asymptotics. In contrast, in conventional settings with a fixed number of strong moments, Neyman orthogonality is typically not required for consistency. The key intuition is that many weak moments can amplify first-order bias from nuisance parameter estimation when the moment condition is not orthogonal.

Second, the asymptotic expansion of the estimator under weak identification includes a second-order term. To achieve asymptotic normality, we must control the influence of nuisance parameter estimation on this second-order term. A sufficient condition for such control is Neyman orthogonality of the derivative function $G(o;\beta,\eta)=\frac{\partial}{\partial \beta}g(o;\beta,\eta)$ in a neighborhood of $\beta_0$. This condition is implied by the global Neyman orthogonality of $g(o;\beta,\eta)$, due to the following permanence property:
\begin{lemma}[Permanence properties of Neyman orthogonality]     \label{Lemma: permanence of global neyman orthogonality}
(a) Suppose  $g(o;\beta,\eta)$ satisfies global Neyman orthogonality,   and that the partial derivative   $\frac{\partial g(o;\beta,\eta)}{\partial \beta}$ exists for all $\beta \in \mathcal{B}_1\subset \mathcal{B}$. If there exists an integrable function $h(o)$ such that 
 $\vert \frac{\partial g(o;\beta,\eta)}{\partial \beta}\vert\leq h(o)$ almost surely,  then $\frac{\partial g(o;\beta,\eta)}{\partial \beta}$ is Neyman orthogonal on $\mathcal{B}_1$. (b) Suppose  $g_1(o;\beta,\eta)$ and $g_2(o;\beta,\eta)$ are both Neyman orthogonal on $\mathcal{B}_1$, and let $h_1(\beta)$ and $h_2(\beta)$ be functions of $\beta$.  Then the linear combination $g_1(o;\beta,\eta)h_1(\beta)+g_2(o;\beta,\eta)h_2(\beta)$ also satisfies Neyman orthogonality on $\mathcal{B}_1$.
\end{lemma}



In practice, we often start with an initial moment function $\Tilde{g}(o;\beta,\tilde{\eta})$, where $\tilde{\eta}$ is a nuisance parameter associated with this initial formulation and has a true value $\tilde{\eta}_{0,N}$. Typically, $\tilde{\eta}_{0,N}$ is a subset of the full nuisance parameter $\eta_{0,N}$, and $\eta_{0,N}$ can be expressed as a statistical functional of the data-generating distribution. That is, we can write $\eta_{0,N} $ as $ \eta(P_{0,N})$. 
A common approach to orthogonize a given moment function $\Tilde{g}(o;\beta,\tilde{\eta})$ is to add its first-order influence function. To define this, consider a parametric submodel  $\mathcal{P}_{t,N}=\{(1-t)P_{0,N}+tP_{1,N}, t\in [0,1]\}$, where $P_{1,N}$ is chosen so that $\eta(P)$ is well-defined for all $P\in \mathcal{P}_{t,N}$. We denote the nuisance parameter under distribution $P$ as $\eta(P)$. The first-order influence function, denoted by $\phi$, can be defined as follows \citep{ichimura2022influence}: $\frac{\partial }{\partial t}\mathbb{E}_{P_{0,N}}[\tilde{g}(O;\beta,\tilde{\eta}(P_{t,N}))]\Big\vert_{t=0} = \int\phi(o;\beta,\eta_{0,N}){dP_{0,N}(o)}$. 
This is essentially the Gateaux derivative characterization of the influence function. The orthogonalized moment function can then be constructed as $g(o;\beta,\eta) = \Tilde{g}(o;\beta,\tilde{\eta})+\phi(o;\beta,\eta)$. According to Theorem 1 in \citet{Chernozhukov2022locallyrobust} (also provided in the Supplemental Materials), this construction yields 
 a global Neyman orthogonal moment condition.
Alternatively, one can construct moment conditions that are Neyman orthogonal by inspection and then directly verify that they satisfy the definition of global Neyman orthogonality. Several examples are provided in Section \ref{sec:examples}.

To allow the use of flexible classes of nuisance parameter estimations, one can relax the requirement for the complexity of function class using cross-fitting  \citep{van2011targeted, chernozhukov2018double}. The idea is to use sample splitting to ensure that the data used to estimate the nuisance parameters is independent of the data used for target parameter estimation. We adopt a version of cross-fitting combined with the CUE approach, resulting in the following two-step estimator:
\begin{description}
    \item[Step I]~We partition the observation indices $\{1,...,N\}$ into $L$ mutually exclusive groups $\{I_l, l = 1,...,L\}$, each of equal size. For each fold $l$, estimate the nuisance parameter $\widehat{\eta}_l$ using only the data $\{O_i, i \in I_l^c\}$, that is,   all observations not in $I_l$. Define $l(i)$ as the index of the group containing observation $i$, i.e., $l(i) = l$ such that $i\in I_l$. 
\item[Step II]~Let
    \begin{align*}
        &\widehat{g}(\beta,\widehat{\eta}) =\frac{1}{N} \sum_{i =1}^N g(O_i;\beta,\widehat{\eta}_{l(i)}),\widehat{\Omega}(\beta,\widehat{\eta})  = \frac{1}{N}\sum_{i =1}^N g(O_i;\beta,\widehat{\eta}_{l(i)})g(O_i;\beta,\widehat{\eta}_{l(i)})^T, \\
        &\widehat{Q}(\beta,\widehat{\eta}) = \widehat{g}^T(\beta,\widehat{\eta})^T\widehat{\Omega}(\beta,\widehat{\eta})^{-1}\widehat{g}(\beta,\widehat{\eta})/2.
    \end{align*}
    The target parameter is then estimated by solving the optimization problem: $\widehat{\beta} = \arg\min_{\beta \in \mathcal{B}}\widehat{Q}(\beta,\widehat{\eta})$. 
    We call $\widehat{\beta}$ the two-step CUE.
\end{description}


\label{sec: setup}
\section{General theory for many weak moments with nuisance parameters}

\subsection{ Key theoretical insights}
\label{subsection: Two key features}

Before presenting our formal theory, we first highlight the key theoretical insights that distinguish our framework.

\begin{enumerate}
    \item In Section 3.2, we establish consistency of the proposed estimator. Notably, unlike in the standard double machine learning literature, our consistency result requires the Neyman orthogonality condition. To understand the reason, observe that a crucial step in proving consistency is controlling the rate of $\frac{\sqrt{N}}{\mu_N}\sup_{\beta \in \mathcal{B}} \Vert \widehat{g}(\beta,\widehat{\eta})-\widehat{g}(\beta,\eta_{0,N})\Vert$ and showing it is $o_p(1)$. This term is related to the bias from estimating $\widehat{\eta}$. 

Without Neyman orthogonality, the bias in $\sup_{\beta \in \mathcal{B}} \Vert \widehat{g}(\beta,\widehat{\eta})-\widehat{g}(\beta,\eta_{0,N})\Vert$ is approximately {of order} $\sqrt{m}\delta_N$, where $\delta_N$ is the convergence rate of the nuisance parameter estimator. {The factor  $\sqrt{m}$ reflects the accumulation of bias across $m$ moment conditions.} As a result, 
    $\frac{\sqrt{N}}{\mu_N}\sup_{\beta \in \mathcal{B}} \Vert \widehat{g}(\beta,\widehat{\eta})-\widehat{g}(\beta,\eta_{0,N})\Vert$  becomes $\frac{\sqrt{Nm}\delta_N}{\mu_N}$. Under standard asymptotics where $m$ is fixed and $\mu_N = \sqrt{N}$, it suffices to have $\delta_N = o(1)$; that is, consistency of the nuisance estimator guarantees consistency of the target estimator.  However, under weak identification—particularly when $\sqrt{m}/\mu_N=C$ for some constant $C>0$ — this requirement becomes  $\delta_N = o(1/\sqrt{N})$, which is unachievable even in parametric models.
    
In contrast, when the moment function $g(o;\beta,\eta)$ is global Neyman orthogonal, the first-order bias from estimating $\widehat{\eta}$ vanishes, and we show that $\frac{\sqrt{N}}{\mu_N}\sup_{\beta \in \mathcal{B}} \Vert \widehat{g}(\beta,\widehat{\eta})-\widehat{g}(\beta,\eta_{0,N})\Vert=o_p(1)$  under reasonable conditions on the nuisance parameter convergence rate. This ensures consistency with many weak moments.
    
    \item 
    The asymptotic expansion under many weak moments involves a higher-order term that requires additional theoretical development in the presence of nuisance parameters. Specifically,
    the asymptotic expansion of our two-step CUE estimator yields the standard GMM influence function term (up to a scaling matrix) plus a second-order U-statistic term. While the U-statistic term is negligible under standard asymptotics, it becomes non-negligible under many weak moment asymptotics. Crucially, we must show that the contribution of nuisance parameter estimation to this U-statistic term is asymptotically negligible, in addition to its contribution to the first-order term. This requirement introduces new technical challenges absent from the standard double machine learning literature and specific to the weak identification setting.
\end{enumerate}


\vspace{-4mm}
\subsection{Consistency} \vspace{-2mm}
\label{subsection: consistency}
In this section, we state the regularity conditions for consistency of the two-step CUE estimator and establish the consistency result.

\begin{assumption}[Identifiability]
 We assume the following conditions for the moment condition to ensure identifiability: (a)  For all $\beta \in \mathcal{B}$, let $\delta(\beta) = S^T_N (\beta-\beta_{0})/\mu_N$. There is a constant $C>0$ such that $\Vert \delta(\beta) \Vert \leq C\sqrt{N} \Vert \widebar{g}(\beta,\eta_{0,N}) \Vert/\mu_N$. (b) There is a constant $C>0$ and a random variable $\widehat{M} = O_p(1)$ such that for all $\beta \in \mathcal{B}$, $\Vert \delta(\beta) \Vert \leq C\sqrt{N}\Vert \widehat{g}(\beta,\eta_{0,N})\Vert/\mu_N +\widehat{M}$.
\label{Assumption: global identifiability}
\end{assumption}

\vspace{-6mm}

Note that $\sqrt{N} \| \widebar{g}(\beta, \eta_{0,N}) \| / \mu_N = \sqrt{N} \| \widebar{g}(\beta, \eta_{0,N}) - \widebar{g}(\beta_0, \eta_{0,N}) \| / \mu_N$, since $\widebar{g}(\beta_0, \eta_{0,N}) = 0$. Therefore, Assumption \ref{Assumption: global identifiability}(a) states that if $\widebar{g}(\beta, \eta_{0,N})$ is close to $\widebar{g}(\beta_0, \eta_{0,N})$, then $\beta$ must be close to $\beta_0$. This provides a population-level identifiability condition. Similarly, we have
$
\sqrt{N} \| \widehat{g}(\beta, \eta_{0,N}) \| / \mu_N = \sqrt{N} \| \widehat{g}(\beta, \eta_{0,N}) - \widehat{g}(\beta_0, \eta_{0,N}) \| / \mu_N + O_p(1),
$
since $\| \widehat{g}(\beta_0, \eta_{0,N}) \| = O_p(1)$, as shown in the Supplemental Materials. Thus, Assumption \ref{Assumption: global identifiability}(b) implies that if $\widehat{g}(\beta, \eta_{0,N})$ is close to $\widehat{g}(\beta_0, \eta_{0,N})$, then $\delta(\beta)$ is bounded in probability.
Together, these conditions ensure global identification of $\beta_0$ \citep{newey2009generalized}.

\begin{assumption}[Moment Function Regularity and Convergence]
The moment function  $g(o;\beta,\eta)$ is continuous in $\beta$ and  and the following conditions hold:
(a) $\sup_{\beta\in \mathcal{B}}\mathbb{E}_{P_{0,N}}$ $ [\{g(O;\beta,\eta_{0,N})^Tg(O;\beta,\eta_{0,N})\}^2]/N \rightarrow 0$; (b) There exists a constant $C>0$ such that for all $\beta \in \mathcal{B}$, $1/C \leq \xi_{min}(\widebar{\Omega}(\beta,\eta_{0,N}))\leq \xi_{max}(\widebar{\Omega}(\beta,\eta_{0,N}))\leq C$, where $\xi_{\min}$ and $\xi_{\max}$ denote the smallest and largest eigenvalues, respectively. (c) $\sup_{\beta \in \mathcal{B}}\Vert \widehat{\Omega}(\beta,\eta_{0,N})-\widebar{\Omega}(\beta,\eta_{0,N})\Vert_F =o_{p}(1)$; (d) For all $a,b\in \mathbb{R}^m$ and $\beta, \beta'\in \mathcal{B}$, there exists a constant $C>0$ such that 
$\vert a^T \{\widebar{\Omega}(\beta',\eta_{0,N})-\widebar{\Omega}(\beta,\eta_{{0,N}})\}b\vert \leq C\Vert a\Vert \Vert b \Vert \Vert \beta'-\beta\Vert$; (e) For every constant $C'>0$, there exists a constant $C>0$ and a random variable $\widehat{M}=O_p(1)$ such that for all $\beta$,$\beta'\in \mathcal{B}$ with $\Vert \delta(\beta)\Vert\leq C'$ and $\Vert \delta(\beta')\Vert\leq C'$, we have $\sqrt{N}\Vert \overline{g}(\beta',\eta_{0,N})-\overline{g}({\beta},\eta_{0,N}) \Vert /\mu_N \leq C\Vert \delta({\beta}')-\delta(\beta)\Vert$ and $\sqrt{N}\Vert \widehat{g}(\beta',\eta_{0,N})-\widehat{g}({\beta},\eta_{0,N}) \Vert /\mu_N \leq \widehat{M}\Vert \delta({\beta}')-\delta(\beta)\Vert$.
\label{Assumption: convergence of g-hat}
\end{assumption}
Assumption \ref{Assumption: convergence of g-hat} imposes regularity conditions on the moment function when $\eta = \eta_{0,N}$, and restricts how the number of moment conditions $m$ may grow with the sample size $N$. If $g(o; \beta, \eta_{0,N})$ is uniformly bounded, then Assumption \ref{Assumption: convergence of g-hat}(a) holds whenever $m^2 / N = o(1)$. Assumption \ref{Assumption: convergence of g-hat}(b) requires that $\widebar{\Omega}(\beta, \eta_{0,N})$ is positive definite with eigenvalues uniformly bounded above and below. Assumption \ref{Assumption: convergence of g-hat}(c) is about the convergence of the sample covariance matrix $\widehat{\Omega}$ to its population counterpart $\widebar{\Omega}$. For any fixed $\beta$, this condition is satisfied under $m^2 / N = o(1)$ if all components of $g(O; \beta, \eta_{0,N})$ are uniformly bounded. Moreover, when $g(o; \beta, \eta_{0,N})$ has a simple structure—such as separability as in equation \eqref{Equation: separable score function}—this condition is straightforward to verify.
Finally, Assumptions \ref{Assumption: convergence of g-hat}(d) and (e) are also easy to verify in the examples we provide.

{
The next two assumptions concern nuisance parameter estimation and represent our new contributions for handling many weak moments. Assumption \ref{Assumption: Global Neyman orthogonality} imposes global Neyman orthogonality on the entire parameter space $\mathcal{B}$ and a continuous second-order Gateaux derivative condition. Assumption \ref{Assumption: consistency, nuisance estimation} specifies the convergence rates required for the nuisance parameter estimators.}

\begin{assumption}[Global Neyman Orthogonality and Continuity of Second-Order Gateaux Derivative] The moment function $g(o;\beta,\eta)$ satisfies global Neyman orthogonality. In addition, for all $\beta\in \mathcal{B}$, $t\in[0,1]$, and $\eta\in \tilde{\mathcal{T}}_N$, the second order Gateaux derivative $D^{(2)}_{\beta,t,g}(\eta)$ exists  and is continuous in $t$.
 \label{Assumption: Global Neyman orthogonality}
\end{assumption}

To introduce the next assumption, we first define the following notation. Let $\mathcal{G}^{(j)}_{\eta}=\{g^{(j)}(o;\beta,\eta), \beta \in \mathcal{B}\}$, $\mathcal{G}^{(j,k)}_{\eta}=\{G^{(j,k)}(o;\beta,\eta), \beta \in \mathcal{B}\}$, $\mathcal{G}^{(j,k,r)}_{\eta}=\left\{\frac{\partial^2}{\partial \beta^{(k)}\partial \beta^{(r)}}g^{(j)}(o;\beta,\eta), \beta \in\mathcal{B}\right\}$, where $g^{(j)}(o;\beta,\eta)$ denotes the $j$th coordinate of $g(o;\beta,\eta)$, and $G^{(j,k)}(o;\beta,\eta)$ denotes the $(j,k)$th entry of $G(o;\beta,\eta)$. For any two function classes $\mathcal{G}_1$ and $\mathcal{G}_2$, define their product class as
$\mathcal{G}_1\cdot\mathcal{G}_2 = \{g_1\cdot g_2, g_1\in \mathcal{G}_1, g_2\in \mathcal{G}_2\}$. Let $\mathcal{E}_\eta^{(j)}$ denote the envelope function of $\mathcal{G}_\eta^{(j)}$ as defined in \citet{chernozhukov2014gaussian}. Similarly, define envelope functions $\mathcal{E}_\eta^{(j,k)}$ and $\mathcal{E}_\eta^{(j,k,r)}$ for the other classes.

\begin{assumption}[Nuisance Parameter Estimation]
Let $I\subset \{1,\dots, N\}$ be a random subset of size $N/L$. Suppose the nuisance parameter estimator $\widehat{\eta}$, constructed using only observations outside of $I$,  belongs to a realization set $\mathcal{T}_N\subset \tilde{\mathcal{T}}_N$ with probability at least $1-\Delta_N$, where $\Delta_N=o(1)$. Assume that $\mathcal{T}_N$ contains the true value $\eta_{0,N}$ and satisfies the following conditions:
    \begin{enumerate}[label=(\alph*)]
    \item { Let $\mathcal{G}=\{\mathcal{G}_\eta^{(j)},\mathcal{G}_\eta^{(i)}\times\mathcal{G}_{\eta'}^{(j)},i,j=1,...,m,\eta,\eta'\in\mathcal{T}_N\}$ be a set of function classes. For each $\mathcal{F}\in \mathcal{G}$, let $F$ be an envelope function. 
    We assume that for all $0<\epsilon\leq 1$, $\sup_{Q}\log N(\epsilon\Vert F\Vert_{Q,2},\mathcal{F},\Vert \cdot \Vert_{Q,2})\leq v\log(a/\epsilon)$, where $a$ and $v$ are constants which does not depend on $\mathcal{F}$, $N(\cdot)$ denotes the covering number for $\mathcal{F}$ and the supreme is taken over all finitely discrete probability measures $Q$ \citep{chernozhukov2014gaussian}. Additionally, assume $\Vert F\Vert_{P_{0,N},q}<\infty$ for all $q\geq 1$.}
    \item Define the following statistical rates for all $j,k \in \{1,...,m\}$:
    \begin{align*}
        &r^{(j)}_N: = \sup_{\eta\in \mathcal{T}_N,\beta\in \mathcal{B}}\left(\mathbb{E}_{P_{0,N}}\left\vert g^{(j)}(O;\beta,\eta)-g^{(j)}(O;\beta,\eta_{0,N})\right\vert^2\right)^{1/2}, \\
        &\lambda^{(j)}_N:=\sup_{\beta\in \mathcal{B},t\in(0,1),\eta\in \mathcal{T}_N}\left\vert \frac{\partial^2}{\partial t^2} \mathbb{E}_{P_{0,N}}g^{(j)}(O;\beta,(1-t)\eta_{0,N}+t\eta))\right\vert,\\
        & r^{(j,k)}_N:= \sup_{\beta \in \mathcal{B},\eta\in \mathcal{T}_N}\left(\mathbb{E}_{P_{0,N}}\left|g^{(j)}(\beta,\eta)g^{(k)}(\beta,\eta)-g^{(j)}(\beta,\eta_{0,N})g^{(k)}(\beta,\eta_{0,N})\right|^2\right)^{1/2}.
        \end{align*}
        We assume there exits $\epsilon>0$ such that:
        \begin{align*}
        & a_N \Bigg(\sqrt{\log{(1/a_N})}+N^{\epsilon}\Bigg) + N^{-1/2+\epsilon}\left(\log (1/a_N)+1\right)\leq  \delta_N, \quad \text{for all $a_N \in \{r_N^{(j)},r_N^{(j,k)}\}$},\\
        & \lambda^{(j)}_N \leq N^{-1/2}\delta_N,\quad \delta_N = o(\min(\sqrt{N}/{m},\mu_N/\sqrt{m})).
    \end{align*}
    \end{enumerate}
    \label{Assumption: consistency, nuisance estimation}
\end{assumption}
    Assumption \ref{Assumption: consistency, nuisance estimation}(a) requires all function classes in $\mathcal{G}$ to be VC-type classes \citep{van1996weak,chernozhukov2014gaussian}, which is a standard and mild condition that restricts the complexity of the function classes. For example, the set of linear combinations of a finite number of fixed functions $\{\sum_{i=1}^p{\beta}^{(i)}f_i:\beta \in \mathcal{B}\}$ is a VC class \citep{van2000asymptotic}. Moreover, when both $\mathcal{G}_\eta^{(i)}$ and $\mathcal{G}_{\eta'}^{(j)}$ are bounded VC-type classes, their product $\mathcal{G}_\eta^{(i)}\times \mathcal{G}_{\eta'}^{(j)}$ is also a VC-type class. 

Assumption \ref{Assumption: consistency, nuisance estimation}(b) is a mild condition on the convergence rates of nuisance estimators. Here, $r_N^{(j)}$ and $r_N^{(j,k)}$ are the first-order bias from estimating the nuisance parameters, which are typically driven by the slowest-converging nuisance estimator among the set. Because $\epsilon$ can be chosen arbitrarily small, $\delta_N$  is effectively of the order $a_N\sqrt{\log(1/a_N)}$, which slightly exceeds $a_N$ and can be interpreted as the slowest convergence rate among the nuisance parameter estimators. In the examples presented in Section 4, the second derivative term $\lambda^{(j)}_N$ typically involves products of convergence rates for two or more nuisance estimators; see the Supplemental Materials for more details.  Under standard asymptotic settings where $m$ is finite and $\mu_N=\sqrt{N}$, it suffices for $\delta_N=o(\sqrt{N})$ and $\lambda_N^{(j)}=o(1)$, which aligns with the usual conditions for achieving doubly robust consistency under cross-fitting, where the final estimator remains consistent as long as one set of nuisance models converges sufficiently fast even if the other set does not converge to the true nuisance parameter.

Unlike the standard GMM estimator, the weighting matrix in the two-step CUE objective function $\widehat{\Omega}(\beta,\eta)$ is estimated and varies with $\beta$ and $\eta$.  As a result, additional assumptions are required to ensure its convergence.

\begin{assumption}[Uniform convergence of the weighting matrix]
The following conditions hold: (a) $\sup_{\beta \in \mathcal{B},\eta \in \mathcal{T}_N} \Big\Vert \widebar{\Omega}(\beta,\eta)-\widebar{\Omega}(\beta,\eta_{0,N})\Big\Vert = o(1)$; (b) $\sup_{\beta \in \mathcal{B}} \Big\Vert \widehat{\Omega}(\beta,\eta_{0,N})-\widebar{\Omega}(\beta,\eta_{0,N}) \Big\Vert = o_{p}(1)$.
    \label{Assumption: consistency, matrix estimation}
\end{assumption}

Assumption \ref{Assumption: consistency, matrix estimation}(a) imposes an additional constraint on the convergence rate of the nuisance parameter estimators, which generally needs to be verified on a case-by-case basis; see the examples in Section 4. 
 Assumption \ref{Assumption: consistency, matrix estimation}(b) is a mild condition on the convergence of $\widehat{\Omega}(\beta,\eta_{0,N})$ to its population counterpart. For any fixed $\beta$, a sufficient condition for this to hold is $m^2/N = o(1)$. While uniform convergence over $\beta \in \mathcal{B}$ is a slightly stronger requirement, it remains reasonable in many practical settings.

\begin{theorem}[Consistency] Under Assumptions 1-\ref{Assumption: consistency, matrix estimation} and $m^2/N\rightarrow 0$, we have $\delta(\widehat{\beta})=o_p(1)$, where $\delta(\beta) = S^T_N (\beta-\beta_{0})/\mu_N$.
    \label{Theorem: Consistency}
\end{theorem}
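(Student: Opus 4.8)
The plan is to follow the M-estimation route of \citet{newey2009generalized} but carry the nuisance-estimation error through every step. Write $\widehat Q(\beta,\widehat\eta) = \widehat g^T(\beta,\widehat\eta)\widehat\Omega(\beta,\widehat\eta)^{-1}\widehat g(\beta,\widehat\eta)/2$ and recall $\widehat\beta$ minimizes it. Since $\widehat\beta$ is a minimizer, $\widehat Q(\widehat\beta,\widehat\eta)\le \widehat Q(\beta_0,\widehat\eta)$. The strategy is: (i) show $\widehat Q(\beta_0,\widehat\eta) = O_p(m/N)$, so that the minimized value is $O_p(m/N)$; (ii) obtain a lower bound of the form $\widehat Q(\beta,\widehat\eta)\gtrsim \tfrac{1}{N}\bigl(\Vert\delta(\beta)\Vert\wedge c\bigr)^2$ up to lower-order terms, using Assumptions \ref{Assumption: global identifiability} and \ref{Assumption: convergence of g-hat}; (iii) combine (i) and (ii) to conclude $\Vert\delta(\widehat\beta)\Vert = o_p(1)$. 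The multiplicative factor $\tfrac{\sqrt N}{\mu_N}$ is the natural rescaling that turns $\Vert\widehat g\Vert$ into $\Vert\delta\Vert$, via the identifiability assumptions, so throughout I track $\tfrac{\sqrt N}{\mu_N}\Vert\widehat g(\beta,\widehat\eta)\Vert$ uniformly in $\beta$.

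\textbf{Key steps.} First I would control the nuisance perturbation: show
\[
\sup_{\beta\in\mathcal B}\frac{\sqrt N}{\mu_N}\bigl\Vert \widehat g(\beta,\widehat\eta)-\widehat g(\beta,\eta_{0,N})\bigr\Vert = o_p(1).
\]
Decompose this difference into (a) an empirical-process term $\widehat g(\beta,\widehat\eta)-\widehat g(\beta,\eta_{0,N}) - \bigl(\widebar g(\beta,\widehat\eta)-\widebar g(\beta,\eta_{0,N})\bigr)$ and (b) a bias term $\widebar g(\beta,\widehat\eta)-\widebar g(\beta,\eta_{0,N})$. For (a), condition on the out-of-fold data (so $\widehat\eta$ is fixed), apply a maximal inequality for VC-type classes (Assumption \ref{Assumption: consistency, nuisance estimation}(a)) coordinatewise, and sum the $m$ coordinates; each coordinate contributes $O_p\bigl(N^{-1/2}(r_N^{(j)}\sqrt{\log(1/r_N^{(j)})} + N^{-1/2+\epsilon}\log(1/r_N^{(j)}))\bigr)$, so the $\ell_2$-aggregate over $m$ coordinates times $\tfrac{\sqrt N}{\mu_N}$ is controlled by $\tfrac{\sqrt m}{\mu_N}\delta_N$, which is $o(1)$ by the last display of Assumption \ref{Assumption: consistency, nuisance estimation}(b). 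For (b), this is where global Neyman orthogonality enters decisively: a second-order Taylor expansion of $t\mapsto \widebar g^{(j)}(\beta,(1-t)\eta_{0,N}+t\widehat\eta)$ around $t=0$ kills the first-order term by Assumption \ref{Assumption: Global Neyman orthogonality}, leaving only $\tfrac12 D^{(2)}_{\beta,\tilde t,g^{(j)}}(\widehat\eta)$ bounded by $\lambda_N^{(j)}\le N^{-1/2}\delta_N$; aggregating over $m$ coordinates and multiplying by $\tfrac{\sqrt N}{\mu_N}$ gives $\tfrac{\sqrt m}{\mu_N}\delta_N = o(1)$ again. Similarly I would show $\sup_{\beta,\eta\in\mathcal T_N}\Vert\widehat\Omega(\beta,\widehat\eta)-\widebar\Omega(\beta,\eta_{0,N})\Vert = o_p(1)$ using Assumptions \ref{Assumption: convergence of g-hat}(c), \ref{Assumption: consistency, nuisance estimation}, and \ref{Assumption: consistency, matrix estimation}, together with $r_N^{(j,k)}$; combined with Assumption \ref{Assumption: convergence of g-hat}(b) this gives that $\widehat\Omega(\beta,\widehat\eta)^{-1}$ has eigenvalues uniformly bounded above and below with probability tending to one.

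\textbf{Assembling the argument.} With these uniform controls, evaluate at $\beta_0$: since $\widehat g(\beta_0,\eta_{0,N}) = O_p(\sqrt{m/N})$ (mean zero, $m$ coordinates; the cited Supplemental-Materials fact) and the nuisance perturbation at $\beta_0$ is $o_p(\mu_N/\sqrt N)\cdot O_p(\ldots)$ hence lower order, we get $\widehat Q(\beta_0,\widehat\eta) = O_p(m/N)$, so $\widehat Q(\widehat\beta,\widehat\eta) = O_p(m/N)$, i.e. $\tfrac{N}{m}\widehat g^T(\widehat\beta,\widehat\eta)\widehat\Omega^{-1}\widehat g(\widehat\beta,\widehat\eta) = O_p(1)$, which forces $\tfrac{\sqrt N}{\sqrt m}\Vert\widehat g(\widehat\beta,\widehat\eta)\Vert = O_p(1)$, hence $\tfrac{\sqrt N}{\mu_N}\Vert\widehat g(\widehat\beta,\widehat\eta)\Vert = O_p(\sqrt m/\mu_N) = O_p(1)$ by Assumption \ref{Assumption: weak moment condition}(a). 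Now transfer this to $\eta_{0,N}$ via the uniform nuisance bound: $\tfrac{\sqrt N}{\mu_N}\Vert\widehat g(\widehat\beta,\eta_{0,N})\Vert = O_p(1)$. Then Assumption \ref{Assumption: global identifiability}(b) directly yields $\Vert\delta(\widehat\beta)\Vert \le C\cdot O_p(1) + \widehat M = O_p(1)$, giving tightness of $\delta(\widehat\beta)$. Finally, to upgrade $O_p(1)$ to $o_p(1)$, I would refine the bound on $\widehat g$ at the minimizer: argue $\widehat Q(\widehat\beta,\widehat\eta)/\widehat Q(\beta_0,\widehat\eta)$-type comparison plus Assumption \ref{Assumption: convergence of g-hat}(e) (Lipschitz control of $\widehat g$ in $\delta$ on the now-established bounded set $\Vert\delta(\beta)\Vert\le C'$) shows $\sqrt N\Vert\widehat g(\widehat\beta,\eta_{0,N})\Vert/\mu_N = o_p(1)$ when combined with the population identifiability Assumption \ref{Assumption: global identifiability}(a) and the quadratic lower bound on $\widehat Q$; concretely, on $\{\Vert\delta(\widehat\beta)\Vert\le C'\}$ the objective is bounded below by a constant times $\Vert\widebar g(\widehat\beta,\eta_{0,N})\Vert^2$ up to $o_p(m/N)$ fluctuation, and since the objective is $O_p(m/N)$ we get $\Vert\widebar g(\widehat\beta,\eta_{0,N})\Vert = o_p(\mu_N/\sqrt N)$, whence $\Vert\delta(\widehat\beta)\Vert = o_p(1)$ by Assumption \ref{Assumption: global identifiability}(a).

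\textbf{Main obstacle.} The principal difficulty is the uniform-in-$\beta$ control of the nuisance perturbation $\tfrac{\sqrt N}{\mu_N}\sup_\beta\Vert\widehat g(\beta,\widehat\eta)-\widehat g(\beta,\eta_{0,N})\Vert = o_p(1)$ together with the analogous statement for $\widehat\Omega$ and for $\widehat\Omega^{-1}$: one must aggregate $m$ coordinatewise empirical-process bounds without losing more than a $\sqrt m$ factor, handle the product classes $\mathcal G_\eta^{(i)}\times\mathcal G_{\eta'}^{(j)}$ entering $\widehat\Omega$, and crucially exploit global (not merely pointwise-at-$\beta_0$) Neyman orthogonality so that the deterministic bias term carries the factor $\lambda_N^{(j)}\le N^{-1/2}\delta_N$ uniformly over $\beta\in\mathcal B$ rather than only at $\beta_0$. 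This is precisely the step where, absent orthogonality, the bias would be of order $\tfrac{\sqrt{Nm}}{\mu_N}\delta_N$ and could diverge; orthogonality is what converts it into $\tfrac{\sqrt m}{\mu_N}\delta_N = o(1)$. Everything else — evaluating at $\beta_0$, inverting the weighting matrix, and applying the identifiability assumptions — is comparatively routine bookkeeping.
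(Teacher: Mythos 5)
Your handling of the nuisance terms is essentially the paper's own route: cross-fitting plus coordinatewise VC-type maximal inequalities for the empirical-process part, and global Neyman orthogonality to kill the first-order bias so only $\sqrt{m}\lambda_N \le \sqrt{m}\,\delta_N/\sqrt{N}$ survives, giving $\sup_{\beta}\tfrac{\sqrt N}{\mu_N}\Vert\widehat g(\beta,\widehat\eta)-\widehat g(\beta,\eta_{0,N})\Vert=o_p(1)$ and the analogous control of $\widehat\Omega$; likewise your tightness step $\Vert\delta(\widehat\beta)\Vert=O_p(1)$ via $\widehat Q(\widehat\beta,\widehat\eta)\le\widehat Q(\beta_0,\widehat\eta)=O_p(m/N)$ and Assumption \ref{Assumption: global identifiability}(b) matches the paper's lemma. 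The genuine gap is the final upgrade from $O_p(1)$ to $o_p(1)$. Your claimed lower bound --- that on $\{\Vert\delta(\beta)\Vert\le C'\}$ the objective dominates $c\Vert\widebar g(\beta,\eta_{0,N})\Vert^2$ up to an $o_p(m/N)$ fluctuation --- is not available: writing $\widehat g(\beta,\eta_{0,N})=\widebar g(\beta,\eta_{0,N})+(\widehat g-\widebar g)(\beta,\eta_{0,N})$, the sampling fluctuation has squared norm of exact order $m/N$ (a sum of $m$ mean-zero coordinates each of size $N^{-1/2}$), and under many weak moments $m/N$ is of the \emph{same} order as the signal scale $\mu_N^2/N$, since only $m/\mu_N^2=O(1)$ is assumed. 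So your comparison yields only $\Vert\widebar g(\widehat\beta,\eta_{0,N})\Vert^2=O_p(m/N)$, i.e. $\sqrt N\Vert\widebar g(\widehat\beta,\eta_{0,N})\Vert/\mu_N=O_p(\sqrt m/\mu_N)=O_p(1)$ --- tightness again, not consistency. This is exactly why the crude ``sup-norm consistency of the objective'' argument breaks down in this regime.

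What the paper does instead, and what your proposal is missing, is a \emph{centered} uniform approximation: it compares $\widehat Q(\beta,\widehat\eta)$ with $Q(\beta,\eta_{0,N})=\widebar g(\beta,\eta_{0,N})^T\widebar\Omega(\beta,\eta_{0,N})^{-1}\widebar g(\beta,\eta_{0,N})/2+m/(2N)$ and proves $\sup_{\Vert\delta(\beta)\Vert\le C}N\mu_N^{-2}\vert\widehat Q(\beta,\widehat\eta)-Q(\beta,\eta_{0,N})\vert=o_p(1)$. Because the continuous-updating weighting makes the noise term $m/(2N)$ constant in $\beta$, it cancels when one differences the chain $Q(\widehat\beta,\eta_{0,N})\le\widehat Q(\widehat\beta,\widehat\eta)+\gamma\le\widehat Q(\beta_0,\widehat\eta)+\gamma\le Q(\beta_0,\eta_{0,N})+2\gamma$, leaving $N\mu_N^{-2}\Vert\widebar g(\widehat\beta,\eta_{0,N})\Vert^2=o_p(1)$ and hence $\delta(\widehat\beta)=o_p(1)$ via Assumption \ref{Assumption: global identifiability}(a). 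Your $A_N$/$B_N$-type nuisance bounds do feed into this, but the known-nuisance piece (the paper's $C_N$ term, inherited from Newey--Windmeijer and using Assumptions \ref{Assumption: convergence of g-hat}(a),(d),(e)) cannot be replaced by the eigenvalue-bound-plus-triangle-inequality estimate you propose; without that sharper, centered uniform convergence and the differencing step that exploits the $\beta$-independence of the noise, the argument stalls at $\Vert\delta(\widehat\beta)\Vert=O_p(1)$.
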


    The consistency result differs from the standard results in the double machine learning literature in two important ways. First, the conclusion $S_N^T(\widehat{\beta}-\beta_0)/\mu_N = o_p(1) $ implies that different linear combinations of $\beta$ converge at different rates, and also implies that $\Vert \widehat{\beta}-\beta_0\Vert = o_p(1)$. This makes the result stronger than the usual notion of consistency.  Second, our consistency result depends on the global Neyman orthogonality assumption due to the need to account for many weak moments.  In contrast, conventional estimators typically achieve consistency under strong identification as long as the nuisance parameters are estimated consistently.

\subsection{Asymptotic normality}
\label{subsection: ASN}
Additional assumptions are required to establish asymptotic normality. The next assumption is about the differentiability of $g(o;\beta,\eta)$ with respect to $\beta$. 
\begin{assumption}
    The function $g(o;\beta,\eta)$ is twice continuously differentiable with respect to $\beta$ in a neighborhood of $\beta_0$, denoted as $\mathcal{B}'$. \label{Assumption: differentiability}
\end{assumption}

The next assumption specifies convergence rate conditions for the nuisance estimators and strengthens Assumption \ref{Assumption: consistency, nuisance estimation}.

\begin{assumption}[Nuisance Parameter Estimation]
Let $I\subset \{1,\dots, N\}$ be a random subset of size $N/L$. Suppose the nuisance parameter estimator $\widehat{\eta}$, constructed using only observations outside of $I$,  belongs to a realization set $\mathcal{T}_N\subset \tilde{\mathcal{T}}_N$ with probability {at least} $1-\Delta_N$, where $\Delta_N=o(1)$. Assume that $\mathcal{T}_N$ contains the true value $\eta_{0,N}$ and satisfies the following conditions:
    \begin{enumerate}[label=(\alph*)]
    \item Let $\mathcal{G}'=\{\mathcal{G}_\eta^{(i)},\mathcal{G}_\eta^{(i,k)},\mathcal{G}_\eta^{(i,k,r)},\mathcal{G}_\eta^{(i)}\times\mathcal{G}_{\eta'}^{(j)},\mathcal{G}_\eta^{(i,k)}\times \mathcal{G}_{\eta'}^{(j,r)},\mathcal{G}_\eta^{(i)}\times\mathcal{G}_{\eta'}^{(j,k)}, i,j=1,...,m,k,r=1,...,p,\eta,\eta'\in\mathcal{T}_N\}$ be a set of function classes.
     We assume that for all $0<\epsilon\leq 1$, $\sup_{Q}\log N(\epsilon\Vert F\Vert_{Q,2},\mathcal{F},\Vert \cdot \Vert_{Q,2})\leq v\log(a/\epsilon)$, where $a$ and $v$ are constants which does not depend on $\mathcal{F}$. Additionally, assume $\Vert F\Vert_{P_{0,N},q}<\infty$ for all $q\geq 1$.
    \item Define the following statistical rates for all $j,k,l,r$:
    \begin{align*}
        &r^{(j)}_N: = \sup_{\eta\in \mathcal{T}_N,\beta\in \mathcal{B}}\left(\mathbb{E}_{P_{0,N}}\left\vert g^{(j)}(O;\beta,\eta)-g^{(j)}(O;\beta,\eta_{0,N})\right\vert^2\right)^{1/2} \\
        &\lambda^{(j)}_N:=\sup_{\beta\in \mathcal{B},t\in(0,1),\eta\in \mathcal{T}_N}\left\vert \frac{\partial^2}{\partial t^2} \mathbb{E}_{P_{0,N}}g^{(j)}(O;\beta,(1-t)\eta_{0,N}+t\eta)\right\vert\\
        & r^{(j,l)}_N:=  \sup_{\beta \in \mathcal{B},\eta\in \mathcal{T}_N} \left(\mathbb{E}_{P_{0,N}}\left|g^{(j)}(\beta,\eta)g^{(k)}(\beta,\eta)-g^{(j)}(\beta,\eta_{0,N})g^{(k)}(\beta,\eta_{0,N})\right|^2\right)^{1/2} \\
        & r^{(j),(k)}_N:= \sup_{\beta \in \mathcal{B}',\eta\in \mathcal{T}_N} \left(\mathbb{E}_{P_{0,N}}\left|\frac{\partial g^{(j)}(\beta,\eta)}{\partial \beta^{(l)}}-\frac{\partial g^{(j)}(\beta,\eta_{0,N})}{\partial \beta^{(l)}}\right|^2\right)^{1/2} \\
         & r^{(j,k,l)}_N:= \sup_{\beta \in \mathcal{B'},\eta\in \mathcal{T}_N} \left(\mathbb{E}_{P_{0,N}}\left|\frac{\partial^2g^{(j)}(\beta,\eta)}{\partial \beta^{(l)}\partial \beta^{(r)}}-\frac{\partial^2g^{(j)}(\beta,\eta_{0,N})}{\partial \beta^{(l)}\partial \beta^{(r)}}\right|^2\right)^{1/2}  \\
        & r^{(j),(k,l)}_N:= \sup_{\beta \in \mathcal{B}',\eta\in \mathcal{T}_N} \left(\mathbb{E}_{P_{0,N}}\left|g^{(j)}(\beta,\eta)\frac{\partial g^{(k)}(\beta,\eta)}{\partial \beta^{l}}-g^{(j)}(\beta,\eta_{0,N})\frac{\partial g^{(k)}(\beta,\eta_{0,N})}{\partial \beta^{(l)}}\right|^2\right)^{1/2} \\
        & r^{(j,k,l,r)}_N:= \sup_{\beta\in \mathcal{B}',\eta\in \mathcal{T}_N} \left(\mathbb{E}_{P_{0,N}}\left|\frac{\partial g^{(j)}(\beta,\eta)}{\partial\beta^{(r)}}\frac{\partial g^{(k)}(\beta,\eta)}{\partial \beta^{(l)}}-\frac{\partial g^{(j)}(\beta,\eta_{0,N})}{\partial\beta^{(r)}}\frac{\partial g^{(k)}(\beta,\eta_{0,N})}{\partial \beta^{(l)}}\right|^2\right)^{1/2}.
        \end{align*}
        We assume there exits $\epsilon>0$ such that:
        \begin{align*}
        & a_N \Bigg(\sqrt{\log{(1/a_N})}+N^{\epsilon}\Bigg) + N^{-1/2+\epsilon}\left(\log (1/a_N)+1\right)\leq  \delta_N, \\
        &\text{ for $a_N \in \{ r^{(j)}_{N},r_N^{(j,l)},r_N^{(j),(k)},r_N^{(j,k,l)},r_N^{(j),(k,l)},r_N^{(j,k,l,r)}\}$}, \quad \lambda^{(j)}_N \leq N^{-1/2}\delta_N,\quad  \delta_N = o(1/\sqrt{m}).
    \end{align*}
    \end{enumerate}
     \label{Assumption: general score regularity}
\end{assumption}

 Under standard asymptotics where $m$ is finite and $\mu_N=\sqrt{N}$, this assumption requires $\delta_N=o(1)$ and $\lambda^{(j)}_N=o(1/\sqrt{N})$, which aligns with the typical rate conditions for double robustness \citep{chernozhukov2018double,andrea2021BKA}. Under the many weak moments asymptotic regime, however, stricter requirements are needed:  $\delta_N=o(1/\sqrt{m})$ and $\lambda^{(j)}_N\leq\delta_N/\sqrt{N}$. These conditions imply that each nuisance estimator must converge faster than $1/\sqrt{m}$, and that any product of two such convergence rates must be faster than $1/\sqrt{mN}$, reflecting the requirements needed to ensure valid inference in the presence of many weak moment conditions.

Define
\begin{align*}
    &\Omega^{(k)}(o;\beta,\eta) = g(o;\beta,\eta)\frac{\partial g(o;\beta,\eta)^T}{\partial \beta^{(k)}} ,\Omega^{(kl)}(o;\beta,\eta) = g(o;\beta,\eta)\frac{\partial g(o;\beta,\eta)^T}{\partial \beta^{(k)}\partial \beta^{(l)}}, \\
    &\Omega^{(k,l)}(o;\beta,\eta) = \frac{\partial g(o;\beta,\eta)^T}{\partial \beta^{(k)}}\frac{\partial g(o;\beta,\eta)^T}{\partial \beta^{(l)}}.
\end{align*}

\begin{assumption}
    The following conditions holds for $A(o;\beta,\eta) \in \{\Omega, \Omega^k, \Omega^{kl}, \Omega^{k,l}\}$: (a) $\sup_{\beta \in \mathcal{B}',\eta \in \mathcal{T}_N} \Big\Vert \widebar{A}(O;\beta,\eta)-\widebar{A}(O;\beta,\eta_{0,N}) \Big\Vert = o_{p}(1/\sqrt{m})$; (b) $\sup_{\beta \in \mathcal{B}'} \Big\Vert \widehat{A}(\beta,\eta_{0,N})-\widebar{A}(O;\beta,\eta_{0,N}) \Big\Vert = o_{p}(1/\sqrt{m})$.\label{Assumption: ASN matrix estimation} 
\end{assumption}
Compared to Assumption \ref{Assumption: consistency, matrix estimation}, Assumption \ref{Assumption: ASN matrix estimation} imposes stronger conditions on nuisance parameter convergence and the estimation accuracy of high-dimensional matrices.

There are several additional technical assumptions, including conditions on moments, convergence of the second-order derivatives of the objective function, and uniform convergence and smoothness conditions for $\Omega,\Omega^{k},\Omega^{kl},\Omega^{k,l}$. See Assumptions S1-S2 in the Supplemental Material for details.

{ Building on the general setting above, we also consider a special case where the moment function is \emph{separable}, under which the theoretical conditions simplify.} A moment function $g(o;\beta,\eta)$ is said to be separable if it can be written in the form 
\begin{align}
    g(o;\beta,\eta) = \sum_{b=1}^B g^{[b]}(o;\eta)h^{[b]}(\beta), \label{Equation: separable score function}
\end{align}
where each $g^{[b]}(o;\eta), $ for $b = 1,...,B$, is an $m\times q$ matrix that depends only on the nuisance parameter $\eta$, and each $h^{[b]}(\beta)$ is a $q\times 1$ vector-valued function that depends only on the parameter of interest $\beta$. The functions $h^{[b]}(\beta)$ are assumed to be twice continuously differentiable. We refer to functions of the form \eqref{Equation: separable score function} as separable moment functions because the nuisance parameters and the parameter of interest enter the model in a structurally separated way. { Simplified assumptions for separable moment functions, which are easier to verify, are provided in Supplemental Material S2.}

\begin{theorem}
    Suppose Assumptions \ref{Assumption: weak moment condition}-\ref{Assumption: ASN matrix estimation} and Assumptions S1-S2 in the Supplemental Material,  hold for a general moment function, or Assumptions \ref{Assumption: weak moment condition}-\ref{Assumption: convergence of g-hat}, \ref{Assumption: consistency, matrix estimation},\ref{Assumption: ASN matrix estimation}, and Assumptions S1-S3 in the Supplemental Material hold for a separable moment function. If  $m^3/N \rightarrow 0$, then we have
    \begin{equation*}
        S_N^{T}(\widehat{\beta}-\beta_0)\rightsquigarrow N(0,V),
    \end{equation*}
    where $V=H^{-1}+H^{-1}\Lambda H^{-1}$, $S_N^{-1}\mathbb{E}_{P_{0,N}}[(U^\perp)^{T}\Omega^{-1}U^\perp](S^{-1}_N)^{T}{\to} \Lambda$,  $H$ is defined in Assumption \ref{Assumption: weak moment condition}, and $\rightsquigarrow$ denotes weak convergence. The components $U^\perp$ are given by $U^{\perp,(k)} = G^{(k)}(O;\beta_0,\eta_{0,N})-\widebar{G}^{(k)}-\mathbb{E}_{P_{0,N}}[{G}^{(k)}(O;\beta_0,\eta_{0,N})g^T(O;\beta_0,\eta_{0,N})]\widebar{\Omega}g(O;\beta_0,\eta_{0,N})$, $U^\perp = [U^{\perp,(1)},...,U^{\perp,(p)}]$.\label{Theorem: consistency and ASN}
\end{theorem}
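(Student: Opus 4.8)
The plan is to analyze the first-order condition $\frac{\partial}{\partial\beta}\widehat{Q}(\widehat\beta,\widehat\eta)=0$ together with a second-order Taylor expansion of $\widehat Q$ around $\beta_0$, exploiting consistency (Theorem~\ref{Theorem: Consistency}) to localize $\widehat\beta$ to the neighborhood $\mathcal{B}'$. The starting point is the decomposition of the CUE score: writing $\widehat Q(\beta,\widehat\eta)=\tfrac12\widehat g(\beta,\widehat\eta)^T\widehat\Omega(\beta,\widehat\eta)^{-1}\widehat g(\beta,\widehat\eta)$, the gradient equals $\widehat D(\beta,\widehat\eta)^T\widehat\Omega(\beta,\widehat\eta)^{-1}\widehat g(\beta,\widehat\eta)$ where $\widehat D^{(k)}(\beta,\widehat\eta)=\widehat G^{(k)}(\beta,\widehat\eta)-\widehat\Omega^{(k)}(\beta,\widehat\eta)\widehat\Omega(\beta,\widehat\eta)^{-1}\widehat g(\beta,\widehat\eta)$ is the CUE-projected derivative (the sample analogue of the $U^\perp$ object, which is precisely the residual of $G^{(k)}$ after projecting out $g$ in the $\widebar\Omega^{-1}$-metric). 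First I would replace $\widehat\eta$ by $\eta_{0,N}$ throughout, showing via global Neyman orthogonality (Assumption~\ref{Assumption: Global Neyman orthogonality}), the permanence property (Lemma~\ref{Lemma: permanence of global neyman orthogonality}), the VC-type entropy bounds and the rate conditions of Assumptions~\ref{Assumption: general score regularity}--\ref{Assumption: ASN matrix estimation} that the difference in the gradient, its $\beta$-derivative, and the relevant quadratic pieces are all $o_p(\mu_N^2/N)$ after appropriate $S_N$-rescaling; this is where cross-fitting makes the empirical-process terms vanish and orthogonality kills the bias terms (of order $r_N^{(\cdot)}$ and $\lambda_N^{(\cdot)}$) up to the required $o(1/\sqrt m)$ precision.

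Once the nuisance is effectively known, the argument reduces to the known-$\eta$ CUE analysis in the spirit of \citet{newey2009generalized}, but carried out with the $S_N$-anisotropic rescaling. I would expand $0=\widehat D(\widehat\beta,\eta_{0,N})^T\widehat\Omega(\widehat\beta,\eta_{0,N})^{-1}\widehat g(\widehat\beta,\eta_{0,N})$ to first order in $\widehat\beta-\beta_0$: the Hessian term converges, after pre- and post-multiplication by $S_N^{-1}$, to $H$ (by Assumption~\ref{Assumption: weak moment condition}(b) and the Hessian-convergence fact quoted after that assumption, plus Assumption~S2-type control of the second derivatives), while the ``score'' $\widehat D(\beta_0,\eta_{0,N})^T\widehat\Omega^{-1}\widehat g(\beta_0,\eta_{0,N})$ must be shown, after scaling by $(S_N^{-1})^T$, to be asymptotically normal. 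Here the key is a U-statistic decomposition: $\sqrt N\,\widehat g(\beta_0,\eta_{0,N})$ contributes the classical GMM influence-function term $\widebar G^T\widebar\Omega^{-1}(\sqrt N\,\widehat g)$ — scaled by $S_N^{-1}$ this is $O_p(1)$ and Gaussian with variance $H$ — and the cross term between the centered $\widehat D(\beta_0,\cdot)-\widebar D$ and $\widehat g$ produces a second-order degenerate U-statistic whose scaled variance converges to $\Lambda$. The condition $m^3/N\to0$ is exactly what is needed for a martingale/U-statistic CLT (e.g. a de Jong-type or Newey--Windmeijer-type central limit theorem) to apply to this degenerate quadratic form and for the remainder terms (third-order Taylor error, $\widehat\Omega^{-1}-\widebar\Omega^{-1}$ replacement errors) to be negligible; I would verify the Lindeberg-type conditions using Assumption~\ref{Assumption: convergence of g-hat}(a) and the bounded-eigenvalue condition~\ref{Assumption: convergence of g-hat}(b).

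For the separable case one substitutes Assumptions~S3 for S1--S2: separability makes $\widehat\Omega$, $\widehat\Omega^{(k)}$, etc.\ bilinear in a small number of sample averages of $g^{[b]}(O;\eta)$-type matrices, so their uniform convergence in $\beta$ (and the differentiability in $\beta$) follows from convergence of those finitely many averages, and the entropy and smoothness conditions on the $\beta$-dependence are automatic from twice-differentiability of the $h^{[b]}$; the rest of the argument is identical. The main obstacle I expect is the second one: establishing the CLT for the degenerate U-statistic term while simultaneously controlling the interaction between nuisance estimation and this quadratic piece — i.e.\ showing that replacing $\widehat\eta$ by $\eta_{0,N}$ inside the U-statistic (not just inside the first-order term) costs only $o_p(\mu_N/\sqrt N)$ after rescaling. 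This requires orthogonality not just of $g$ but of $G$ (hence the permanence lemma), the product-class entropy bounds in Assumption~\ref{Assumption: general score regularity}(a), and the sharpened matrix-convergence rates of Assumption~\ref{Assumption: ASN matrix estimation}; bookkeeping the many cross terms at the required $o(1/\sqrt m)$ precision, uniformly over $\mathcal B'$, is the technically heaviest part. Finally, assembling the two independent-looking Gaussian contributions gives $S_N^T(\widehat\beta-\beta_0)\rightsquigarrow N(0,H^{-1}+H^{-1}\Lambda H^{-1})$, which is the claimed $V$.
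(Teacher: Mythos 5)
Your proposal follows essentially the same route as the paper's proof: localize via consistency, expand the first-order condition $\partial\widehat{Q}(\widehat{\beta},\widehat{\eta})/\partial\beta=0$ around $\beta_0$, use global Neyman orthogonality (with the permanence lemma for $G$), cross-fitting, and the rate and matrix conditions of Assumptions \ref{Assumption: general score regularity}--\ref{Assumption: ASN matrix estimation} to replace $\widehat{\eta}$ by $\eta_{0,N}$ in both the rescaled score and Hessian — including the nuisance contribution to the U-statistic cross terms, which is precisely what the paper's Lemmas S10 and S12 control — and then appeal to the known-nuisance CUE asymptotics with the $U^\perp$ projection structure. The only (immaterial) difference is that after this reduction the paper directly invokes Theorem 3 of \citet{newey2009generalized} for the influence-function-plus-degenerate-U-statistic limit, whereas you sketch re-deriving the de Jong-type CLT under $m^3/N\to 0$; your stated remainder rates (e.g.\ $o_p(\mu_N^2/N)$ for the score replacement) are slightly looser than what the $NS_N^{-1}$-scaling actually requires, but the argument structure is the paper's.
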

{The columns of $U^\perp$ are the population least squares residuals from  regressing $G(O;\beta_0,\eta_{0,N})-\widebar{G}$ on $g(O;\beta_0,\eta_{0,N})$.} 
Theorem \ref{Theorem: consistency and ASN} shows that the asymptotic distribution of the proposed two-step CUE estimator is the same as that of $\arg\min_{\beta}\widehat{Q}(\beta,\eta_{0,N})$ \citep{newey2009generalized}. In other words, the asymptotic distribution is unaffected by whether the nuisance parameter is known or estimated. 
While this property also appears in the double machine learning literature when using an estimating equation or GMM in the second step \citep{chernozhukov2018double,Chernozhukov2022locallyrobust}, 
our result extends it to the case where CUE is used as the second step under many weak moment asymptotics. Importantly, the required conditions and technical arguments differ significantly, since we must account for the impact of nuisance estimation on the U-statistic term and ensure it remains asymptotically negligible. In Supplemental Material S3, we provide a consistent variance estimator.

\subsection{Over-identification test}
We propose an over-identification test as a diagnostic tool, extending the traditional $J$-test for over-identification \citep{hansen1982GMM}. In practice, it is often important to test  whether all moment conditions are satisfied simultaneously—that is, whether $\mathbb{E}_{P_{0,N}}[g(O; \beta_0, \eta_{0,N})] = 0$. The null hypothesis is violated if at least one moment condition does not hold. Our version of the over-identification test is given below.
\begin{theorem}[Over-identification test] Assume all assumptions in Theorem \ref{Theorem: consistency and ASN} hold. Under the null hypothesis $H_0:\mathbb{E}_{P_{0,N}}[g(O;\beta_0,{\eta}_{0,N})]=0$, we have
    \begin{align*}
        P_{0,N}(2N\widehat{Q}(\widehat{\beta},\widehat{\eta})\geq \chi^2_{1-\alpha}(m-p))\rightarrow \alpha
    \end{align*}
    as $N\rightarrow \infty$, where $\chi^2_{1-\alpha}(m-p)$ denotes the $1-\alpha$ quantile of the chi-square distribution with  $m-p$ degree of freedom.
    \label{theorem: over-identification test}
\end{theorem}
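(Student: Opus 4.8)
The plan is to establish the over-identification test by analyzing the asymptotic behavior of $2N\widehat{Q}(\widehat{\beta},\widehat{\eta})$ and showing it converges in distribution to a $\chi^2_{m-p}$ random variable under $H_0$. The starting point is the objective function evaluated at the estimator, $2N\widehat{Q}(\widehat{\beta},\widehat{\eta}) = N\widehat{g}(\widehat{\beta},\widehat{\eta})^T\widehat{\Omega}(\widehat{\beta},\widehat{\eta})^{-1}\widehat{g}(\widehat{\beta},\widehat{\eta})$. First I would use the consistency result (Theorem \ref{Theorem: Consistency}) and the asymptotic normality machinery from Theorem \ref{Theorem: consistency and ASN} to obtain a quadratic expansion of $\sqrt{N}\widehat{g}(\widehat{\beta},\widehat{\eta})$ around $\beta_0$. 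Writing $\widehat{g}(\widehat{\beta},\widehat{\eta}) = \widehat{g}(\beta_0,\eta_{0,N}) + \widehat{G}_*(\widehat{\beta}-\beta_0) + (\text{negligible})$ for an appropriate average Jacobian $\widehat{G}_*$, and noting that the first-order condition for $\widehat{\beta}$ forces the projection of $\sqrt{N}\widehat{g}(\widehat{\beta},\widehat{\eta})$ onto the column space of $\widehat{\Omega}^{-1/2}\widehat{G}_*$ to vanish asymptotically, one obtains that $\widehat{\Omega}^{-1/2}\sqrt{N}\widehat{g}(\widehat{\beta},\widehat{\eta})$ is asymptotically the residual of projecting an $m$-dimensional asymptotically-Gaussian vector onto a $p$-dimensional subspace.

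The key intermediate step is to show that $\sqrt{N}\,\widehat{\Omega}(\beta_0,\eta_{0,N})^{-1/2}\widehat{g}(\beta_0,\eta_{0,N})$ behaves like a standard Gaussian vector in $\mathbb{R}^m$ in the sense that relevant quadratic forms converge: specifically $N\widehat{g}(\beta_0,\eta_{0,N})^T\widehat{\Omega}^{-1}\widehat{g}(\beta_0,\eta_{0,N}) - m \rightsquigarrow$ suitably normalized, or more precisely that the quadratic form minus its mean, appropriately scaled, is asymptotically negligible relative to what is needed. Here I would invoke the many-weak-moment central limit theory underlying \citet{newey2009generalized}: under $m^3/N\to 0$ and the moment regularity conditions (Assumption \ref{Assumption: convergence of g-hat}, Assumptions S1-S2), a martingale/U-statistic CLT gives that $\big(N\widehat{g}^T\widehat{\Omega}^{-1}\widehat{g} - m\big)/\sqrt{2m}$ is asymptotically normal, and more importantly that after projecting out the $p$ estimated directions the residual quadratic form $2N\widehat{Q}(\widehat{\beta},\widehat{\eta})$ has the same limiting law as a $\chi^2$ with $m-p$ degrees of freedom — because the standardized $\chi^2_{m-p}$ itself, $(\chi^2_{m-p}-(m-p))/\sqrt{2(m-p)}$, converges to $N(0,1)$ as $m\to\infty$, and $m-p \sim m$. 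The crucial contribution here, beyond \citet{newey2009generalized}, is that the nuisance estimation error $\widehat{\eta}_{l(i)}$ must be shown not to perturb either the first-order term or the U-statistic term in this expansion; this is exactly what Assumptions \ref{Assumption: Global Neyman orthogonality}, \ref{Assumption: general score regularity}, and \ref{Assumption: ASN matrix estimation} deliver, and the bookkeeping already done in the proof of Theorem \ref{Theorem: consistency and ASN} can be reused to replace $\widehat{g}(\beta,\widehat{\eta})$ by $\widehat{g}(\beta,\eta_{0,N})$ up to $o_p(1/\sqrt{m})$ uniformly.

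Concretely the steps are: (i) show $2N\widehat{Q}(\widehat{\beta},\widehat{\eta}) = 2N\widehat{Q}(\widehat{\beta},\eta_{0,N}) + o_p(\sqrt{m})$ using the nuisance-robustness arguments (Neyman orthogonality plus the rate conditions $\delta_N = o(1/\sqrt{m})$, $\lambda_N^{(j)}\le\delta_N/\sqrt N$), together with Assumption \ref{Assumption: ASN matrix estimation} to replace $\widehat{\Omega}(\widehat\beta,\widehat\eta)$ by $\widehat{\Omega}(\widehat\beta,\eta_{0,N})$; (ii) Taylor-expand in $\beta$ around $\beta_0$ using Assumption \ref{Assumption: differentiability} and the consistency $\delta(\widehat\beta)=o_p(1)$, and substitute the first-order characterization of $S_N^T(\widehat\beta-\beta_0)$ from the proof of Theorem \ref{Theorem: consistency and ASN}, so that $\sqrt N\widehat{\Omega}^{-1/2}\widehat{g}(\widehat\beta,\eta_{0,N}) = (I_m - \widehat{P})\sqrt N\widehat\Omega^{-1/2}\widehat{g}(\beta_0,\eta_{0,N}) + o_p(1)$ in the appropriate norm, where $\widehat{P}$ is the rank-$p$ projection onto the span of $\widehat\Omega^{-1/2}\widehat G$; (iii) apply the many-weak-moment quadratic CLT to conclude that $2N\widehat{Q}(\widehat\beta,\eta_{0,N})$, which equals the squared norm of that residual vector, has $(2N\widehat Q - (m-p))/\sqrt{2(m-p)} \rightsquigarrow N(0,1)$; (iv) match this with the tail of the $\chi^2_{m-p}$ distribution, whose $(1-\alpha)$-quantile satisfies $\big(\chi^2_{1-\alpha}(m-p) - (m-p)\big)/\sqrt{2(m-p)} \to z_{1-\alpha}$, to get the stated probability statement. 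The main obstacle I anticipate is step (ii)–(iii): carefully controlling the projection residual and the cross terms between the first-order influence part and the U-statistic part so that they do not contribute at order $\sqrt m$, and in particular verifying that the rank-$p$ projection can be removed "for free" in the $m\to\infty$ limit — this requires the quadratic CLT to hold jointly with the linear expansion of $\widehat\beta$ and a uniform control of $\widehat\Omega(\beta,\eta_{0,N})^{-1}$ over a shrinking neighborhood of $\beta_0$, pushing on exactly the $m^3/N\to0$ condition.
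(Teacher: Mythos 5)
Your overall strategy is sound and shares the two essential reductions with the paper's proof: replacing $\widehat{\eta}$ by $\eta_{0,N}$ at the $o_p(\sqrt{m})$ scale using global Neyman orthogonality and the rate conditions, and observing that because $m\to\infty$ the test statistic and the $\chi^2_{1-\alpha}(m-p)$ quantile need only be matched through their common normal approximation, so the ``$-p$'' degrees-of-freedom correction is immaterial. Where you diverge is in how the estimation of $\beta$ is handled. The paper never establishes your projection representation; instead it expands $\widehat{Q}(\beta_0,\widehat{\eta})$ around $\widehat{\beta}$ (the first-order term vanishes by the first-order condition), invokes Lemma \ref{Lemma: convergence of second order derivative of Q} to replace $\widehat{\eta}$ by $\eta_{0,N}$ in the normalized Hessian, and uses $S_N^T(\widehat{\beta}-\beta_0)=O_p(1)$ together with the Newey--Windmeijer analysis to conclude $2N[\widehat{Q}(\beta_0,\widehat{\eta})-\widehat{Q}(\widehat{\beta},\widehat{\eta})]=O_p(1)=o_p(\sqrt{m-p})$; combined with $N[\widehat{Q}(\beta_0,\widehat{\eta})-\widehat{Q}(\beta_0,\eta_{0,N})]=o_p(\sqrt{m})$ this reduces the statistic to $2N\widehat{Q}(\beta_0,\eta_{0,N})$ up to $o_p(\sqrt{m})$, to which the quadratic-form CLT of \citet{newey2009generalized} applies directly. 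That buys a much shorter argument and avoids any claim about which $p$-dimensional subspace is annihilated.

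The weak link in your plan is step (ii). The CUE first-order condition is not $\widehat{G}^T\widehat{\Omega}^{-1}\widehat{g}=0$: it contains the derivative of the weighting matrix, so the vector exactly orthogonal to $\widehat{\Omega}^{-1}\widehat{g}(\widehat{\beta},\widehat{\eta})$ is the CUE ``$D$'' matrix (the Jacobian with its projection onto $g$ removed), not $\widehat{G}$. Moreover, under weak identification $\widebar{G}$ has magnitude of order $\mu_N/\sqrt{N}$, comparable to the sampling noise in $\widehat{G}$, so the classical strong-identification expansion that yields the residual-of-projection representation for $\sqrt{N}\widehat{\Omega}^{-1/2}\widehat{g}(\widehat{\beta},\eta_{0,N})$ with an $o_p(1)$ remainder does not transfer; making it rigorous would essentially mean redoing the U-statistic analysis of Theorem \ref{Theorem: consistency and ASN} inside the quadratic form. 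As you yourself note, the rank-$p$ projection is ultimately removed ``for free'' because its contribution is $O_p(p)=O_p(1)$, negligible relative to $\sqrt{m}$ --- but that very observation makes the exact projection identity unnecessary. Replace step (ii) by the optimization/mean-value bound: $2N[\widehat{Q}(\beta_0,\widehat{\eta})-\widehat{Q}(\widehat{\beta},\widehat{\eta})]$ is a quadratic form in $S_N^T(\widehat{\beta}-\beta_0)=O_p(1)$ with a bounded-in-probability normalized Hessian (after swapping $\widehat{\eta}$ for $\eta_{0,N}$ via Lemma \ref{Lemma: convergence of second order derivative of Q}), hence $O_p(1)$. With that substitution, your steps (i), (iii) applied at $(\beta_0,\eta_{0,N})$ rather than to the projected residual, and (iv) deliver the theorem along the paper's lines.
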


The over-identification test can be viewed as a model specification test that assesses whether the moment condition model is correctly specified. For example, when moment conditions are based on instrumental variables, this test serves as a diagnostic for instrument validity. However, rejection may arise from other forms of misspecification, such as incorrect functional forms or invalid nuisance parameter estimation.




\section{Examples}\label{sec:examples}
In this section, we present three examples from causal inference where we construct globally Neyman orthogonal moment conditions and apply our general theory to each example.

\subsection{Additive structural mean model with many instrumental variables} 
{Let the full data be denoted by $\{\tilde{O}_i=((Y_i(a),a\in\mathcal{A}),A_i,Z_i,X_i,U_i), i = 1,...,N\}$, which are i.i.d. draws from the true data-generating distribution ${P}_{0,N}$}. Here, $Y$ is a continuous outcome, $A$ denotes treatment (which may be continuous or categorical) with support $\mathcal{A}$, $Z$ is an $m$-dimensional vector of instrumental variables, $X$ represents baseline covariates, and $U$ are unobserved confounders between $A$ and $Y$. The observed data consist of ${O_i = (Y_i, A_i, Z_i, X_i), i = 1, \dots, N}$. {In this example and the following two examples, we slightly abuse the notation $P_{0,N}$ by also using it to denote the observed data distribution implied by the true data-generating process.}

Let $Y(z,a)$ be the potential outcome \citep{rubin1972causal} under instrument value $z$ and treatment value $a$. To construct moment conditions, we assume (a) consistency: $Y=Y(z,a)$ if $A=a,Z=z$; (b) exclusion restriction: $Y(z,a) = Y(a)$, therefore $Y(z,a)$ can be written as $Y(a)$; (c) latent ignorability: $Y(a)\indep (A,Z)|X,U$; (d) IV independence:  $Z\indep U|X$; and (e) IV relevance: $Z$ is associated with $A$ given $X$. Under these assumptions, a commonly used model is the additive structural mean model (ASMM) \citep{robins1994snmm,stijn2014snmm}:
\begin{equation*}
    \mathbb{E}_{{P}_{0,N}}[Y(a)-Y(0)|A=a,Z=z,X=x,U=u]=\gamma(a,z,x;\beta_0),
\end{equation*}
where $\{\gamma(a,z,x;\beta),\beta \in \mathcal{B}\}$ is a user-specified function class with $\gamma(0,z,x;\beta)=0$ and $\gamma(a,z,x;0)=0$, and $\mathcal{B}$ is a compact subset of $\mathbb{R}^p$. This formulation implicitly assumes that the unobserved confounder $U$ does not modify the treatment effect. For ease of exposition, we consider the simple specification $\gamma(a, z, x; \beta)=\beta a$. In this case, the orthogonal moment function can be defined as ${g}(O;\beta,\eta) = (Y-\eta_Y(X)-\beta A +\beta \eta_A(X))(Z-\eta_Z(X))$,
where $\eta=(\eta_Z,\eta_Y,\eta_A)$. Here, $\eta_Y$ and $ \eta_A$ are functions of $X$ with true values $\eta_{Y,0,N}(X) =  \mathbb{E}_{P_{0,N}}[Y|X]$ and $\eta_{A,0,N}(X) = \mathbb{E}_{P_0,N}[A|X]$, $\eta_Z = (\eta_{Z^{(1)}},...,\eta_{Z^{(m)}})$ with true values $\eta_{Z^{(k)},0, N}(X) =  \mathbb{E}_{P_{0,N}}[Z^{(k)}|X]$, for $k = 1,...,m$. {
Under ASMM, $\widebar{G} = -\mathbb{E}_{P_{0,N}}[(A-\eta_{A,0,N}(X))(Z-\eta_{Z,0,N}(X))]$. Thus, our many weak moment asymptotic regime corresponds to weak conditional correlation between $A$ and $Z$ given $X$.
}

{
To satisfy the convergence rate conditions on nuisance parameter estimation in Assumption \ref{Assumption: general score regularity}, the following conditions are needed.} Let $I\subset \{1,\dots, N\}$ be a random subset of size $N/L$. Suppose the nuisance parameter estimator $\widehat{\eta}$, constructed using only observations outside of $I$, satisfies the following conditions: with $P_{0,N}$- probability  at least $1-\Delta_N$, for $j=1,...,m, l = 1,...,L$, $\Vert \widehat{\eta}_{A,l}-{\eta}_{A,0,N}\Vert_{P_{0,N},2}\leq \delta_N$, $\Vert \widehat{\eta}_{Y,l}-{\eta}_{Y,0,N}\Vert_{P_{0,N},2}\leq \delta_N$, $\Vert \widehat{\eta}_{Z^{(j)},l}-{\eta}_{Z^{(j)},0,N}\Vert_{P_{0,N},2}\leq \delta_N/m$, $\Vert \widehat{\eta}_{Y,l}-{\eta}_{Y,0,N}\Vert_{P_{0,N},\infty}\leq C$, $\Vert \widehat{\eta}_{A,l}-{\eta}_{A,0,N}\Vert_{P_{0,N},\infty}\leq C$, $\Vert \widehat{\eta}_{Z^{(j)},l}-{\eta}_{Z^{(j)},0,N}\Vert_{P_{0,N},\infty}\leq C$, $(\Vert \widehat{\eta}_{A,l}-{\eta}_{A,0,N}\Vert_{P_{0,N},2}+\Vert \widehat{\eta}_{Y,l}-{\eta}_{Y,0,N}\Vert_{P_{0,N},2})\times \Vert \widehat{\eta}_{Z^{(j)},l}-{\eta}_{Z^{(j)},0,N}\Vert_{P_{0,N},2}\leq N^{-1/2}\delta_N$, where $\delta_N=o(1/\sqrt{m})$, $\Delta_N=o(1)$,  and $C>0$ is a constant. Together with other regularity conditions specified in Supplemental Material S4.1, this condition provides sufficient conditions for the assumptions in Theorem \ref{Theorem: consistency and ASN}, thereby ensuring consistency and asymptotic normality of the two-step CUE under ASMM.

These rate conditions impose mild requirements on the nuisance estimator convergence rates.  Suppose $m$ is $O(N^{1/3-\epsilon})$ for some $0<\epsilon\leq 1/3$. Then the required convergence rate for each instrument nuisance estimator  $\widehat{\eta}_{Z^{(j)},l}$ is $o(N^{-1/2+3\epsilon/2})$. When $\epsilon$ is close to zero, an approximately parametric rate is needed due to the high-dimensionality of $Z$. In contrast, the convergence rate requirements for estimating  $\eta_{A,0,N}$ and $\eta_{Y,0,N}$ are much milder and can be as slow as $o(N^{{-1/6+\epsilon/2}})$.

\subsection{Multiplicative structural mean model with many instrumental variables}
Next, we give an example where the moment condition is nonlinear in the  parameter of interest. Building on the same notation and setup as the ASMM example, we now consider the case where $A \in \{0,1,...,K\}$ is an ordinal treatment.  We consider the following  multiplicative structural mean model (MSMM): 
\begin{equation*}
    \log \frac{\mathbb{E}_{P_{0,N}}[Y(a)|A=a,Z=z,U=u,X=x]}{\mathbb{E}_{P_{0,N}}[Y(0)|A=a,Z=z,U=u,X=x]} =\gamma(a,z,x;\beta_0).
\end{equation*}
Here, $\gamma(a,z,x;\beta)$ is again a user-specified function satisfying $\gamma(0,z,x;\beta)=0$. A common and widely used choice is $\gamma(a,z,x;\beta)=\beta a$, though effect modification by covariates can also be incorporated. The goal is to estimate $\beta_0$, which encodes the conditional average causal effect on the multiplicative scale. As in the ASMM setting, an initial moment condition can be defined by $\mathbb{E}_{P_{0,N}}[\tilde{g}(O;\beta_0,\eta_{Z,0,N})]=0$, where $\tilde{g}(O;\beta_0,\eta_{Z,0,N}) = Y\exp(-\beta_0 A)(Z-\eta_{Z,0,N}(X))$, and $\eta_{Z,0,N}$is defined as before. A global orthogonal form of the moment condition is
\footnotesize
\begin{equation*}    \mathbb{E}_{P_{0,N}}\left[\Big(Y\exp(-\beta A)-\sum_{a\in \{0,1,..,K\}}\mathbb{E}_{P_{0,N}}[Y|X,A=a]\exp(-\beta a)P_{0,N}(A=a|X)\Big)(Z-\mathbb{E}_{P_{0,N}}[Z|X])\right] = 0.
\end{equation*}
\normalsize
For ease of presentation, we focus on the case where $A$ is binary. In this setting, the orthogonal moment function is given by
\begin{equation*}
    g(o;\beta,\eta) = \Big(Y\exp(-\beta A)-\sum_{a\in \{0,1\}}\eta_{Y,(a)}(X)\exp(-\beta a)\eta_{A,(a)}(X)\Big)(Z-\eta_{Z}(X)),
\end{equation*}
where $\eta = (\eta_{Y,(0)},\eta_{Y,(1)},\eta_{A,(1)},\eta_Z)$, with true values $\eta_{Y,(a),0,N}(a,X) = \mathbb{E}_{P_{0,N}}[Y|A=a,X]$, $\eta_{A,(1),0,N}(X) = P_{0,N}(A=1|X)$, and $\eta_{Z,0,N}(X) = \mathbb{E}_{P_{0,N}}[Z|X]$.  The convergence rate conditions for the nuisance parameter estimators are similar to those in the ASMM example. Sufficient conditions for Theorem \ref{Theorem: consistency and ASN} to hold for the MSMM example are provided in Supplemental Material S4.2.

\subsection{Proximal causal inference}

In this section, we present a structural mean model approach for proximal causal inference with many weak treatment proxies, extending the method of \citet{ett2024proximal} to accommodate high-dimensional proxy variables.

Let $P_{0,N}$ denote the true data-generating distribution. Let $A$ be a continuous treatment with reference level $0$, $Y$ a continuous outcome, $X$ a set of observed confounders, $U$ a set of unmeasured confounders, $Z$ an $m$-dimensional vector of treatment proxies, and $W$ a one-dimensional outcome proxy. We assume the following proximal structural mean model (PSMM) holds for the potential outcomes: $\mathbb{E}_{P_{0,N}}[Y(a)-Y(0)|Z,A=a,U,X] = \beta_{a,0} a$. To construct moment conditions for identifying $\beta_{a,0}$, we impose the following assumptions:
\begin{assumption}[Identification of proximal structural mean model] 
    (a) (Latent ignorability for treatment and treatment proxies) $(Y(0),W)\indep (A,Z)|U,X$. (b) (Outcome proxy) There exists a scalar $\beta_{w,0}$ such that $\mathbb{E}_{P_{0,N}}[Y(0)-\beta_{w,0} W|U,X] =  \mathbb{E}_{P_{0,N}}[Y(0)-\beta_{w,0} W|X]$.
    \label{Assumption: proxy_identification}
\end{assumption}

Assumption \ref{Assumption: proxy_identification}(a) is a causal identification assumption, which is also assumed in \citet{ett2024proximal} and \citet{liu2024regression}. It states that, conditional on both the observed covariates and the unmeasured confounders, the joint distribution of treatment and treatment proxies $(A,Z)$ is independent of the outcome and outcome proxy $(Y,W)$.
Assumption \ref{Assumption: proxy_identification}(b) can be justified under the following structural equation models:
\begin{align*}
    & \mathbb{E}_{P_{0,N}}[Y|A,Z,U,X] = \beta_0+\beta_{a,0}A+\beta_{u,0}U+f_1(X), \\
    & \mathbb{E}_{P_{0,N}}[W|A,Z,U,X] = \alpha_0+\alpha_{u,0}U+f_2(X),
\end{align*}
where $f_1$ and $f_2$ are unknown functions of $X$. Under this model, the outcome proxy assumption is satisfied with $\beta_{w,0} = \beta_{u,0}/\alpha_{u,0}$. This setting generalizes the models in \citet{ett2024proximal} and \citet{liu2024regression}, which assume that $f_1$ and $f_2$ are linear in $X$. Based on these assumptions, we establish identification of both $\beta_{a,0}$ and $\beta_{w,0}$.  

  Under Assumption \ref{Assumption: proxy_identification}, the following moment condition holds: $\mathbb{E}_{P_{0,N}}[g(O;\beta_0,\eta_0)] = 0$. The global Neyman orthogonal moment function is defined as 
\begin{align*}
g(O;\beta,\eta)=((A-\eta_A(X)),(Z-\eta_Z(X)))^T(Y-\beta_{a} A-\beta_{w}W-\eta_{Y}(X)+\beta_{a}\eta_{A}(X)+\beta_{w}\eta_{W}(X)), \label{Equation: Moment function for PSMM}
\end{align*}
where $\beta = (\beta_a,\beta_w)$ with true value $\beta_0 = (\beta_{a,0},\beta_{w,0})$, and $\eta = (\eta_Y,\eta_W,\eta_A,\eta_{Z})$ with true values $\eta_{Y,0,N}(X) = \mathbb{E}_{P_{0,N}}[Y|X]$, $\eta_{A,0,N}(X) = \mathbb{E}_{P_{0,N}}[A|X]$, $\eta_{W,0,N}(X) = \mathbb{E}_{P_{0,N}}[W|X]$, $\eta_{Z,0,N}(X) = \mathbb{E}_{P_{0,N}}[Z|X]$. In this example, the matrix $\widebar{G}$ is given by
\begin{align*}
    \widebar{G} = 
\mathbb{E}_{P_{0,N}}\left[((A-\mathbb{E}_{P_{0,N}}[A|X]),(Z-\mathbb{E}_{P_{0,N}}[Z|X]))^T(-(A-\mathbb{E}_{P_{0,N}}[A|X]),-(W-\mathbb{E}_{P_{0,N}}[W|X]))\right].
\end{align*}
Thus, Assumption \ref{Assumption: weak moment condition} implies that the conditional correlations between $Z$ and $A$, as well as between $Z$ and $W$, are weak given $X$. This suggests that the conditional correlation between $Z$ and the unmeasured confounder $U$ is also weak, meaning that $Z$ serves as a weak proxy for $U$.

The convergence rate conditions for the nuisance parameter estimators are similar to those in the ASMM example. Sufficient conditions for Theorem \ref{Theorem: consistency and ASN} to hold for the PSMM example are provided in Supplemental Material S4.3.


\section{Simulation}
\subsection{Additive and multiplicative structural mean models}

The data-generating processes for the ASMM and MSMM simulations are described below:

\noindent \textbf{Sample sizes:} We consider $5$ different sample sizes: $N=1000$, $2000$, $5000$, $10000$, $20000$.

\noindent  \textbf{Baseline covariates:} We consider a three-dimensional covariate vector $\boldsymbol{X}=(X_1, X_2,X_3)$ from a multivariate normal distribution with mean zero and covariance matrix $I_3+0.2J_3$, truncated to lie within the interval [-4, 4] for each component. Here, $I_3$ is the $3\times 3$ identity matrix and $J_3$ is a $3\times 3$ matrix with all entries equal to $1$. 

\noindent \textbf{Instrumental variable:}
The dimension of the instrumental variable $m$ is set as $m=4N^{-1/4}$, yielding values $m=22, 26, 33, 40, 47$ for the respective sample sizes. Each component $Z_j$ is generated using the model $Z_j = X_1+X_2+X_3+\epsilon_j$, $j = 1,...,m$, where $\epsilon_j \sim \text{uniform}[-3,3]$ independently.

\noindent \textbf{Treatment:}
We first generate a latent continuous treatment variable $A^*$ from the model $A^*=\alpha\sum_{j=1}^m N^{-1/2-j/(3m)} Z_j+X_1+\text{sin}(X_2)+\text{expit}(X_3)+U+N(0,1)$, where $U\sim \text{Uniform}[-4,4]$ is an unmeasured confounder. For ASMM, we set $\alpha = 1$ and define the observed treatment as $A=A^*$.  For MSMM, we set $\alpha = 1.5$ and $A = I(A^*>0.6)$. $N(0,1)$ is a normal error term with mean 0 and variance 1. Each instance of $N(0,1)$ in the following equations should be interpreted as an independent standard normal error term.

\noindent \textbf{Outcome:} 
For ASMM, we generate outcomes using the following model $Y = 1+3A-X_1+\sin X_2-X_3^2+X_2X_3+U+N(0,1)$. For MSMM, the potential outcomes and the observed outcome are generated as $Y(0) = 1+0.5X_1+0.5X_2-0.5X_3+0.5I(U>0.5) + 0.5N(0,1)$,$Y(1) = Y(0)\exp(1)$, $ Y=AY(1)+(1-A)Y(0)$.

The data-generating distributions are designed to reflect weak identification. Specifically, the effect of each instrument $Z_j$ on the latent treatment $A^*$ is given by $\alpha N^{-1/2-j/(3m)}$, which decreases as the sample size $N$ increases. This setup ensures that the instruments become progressively weak in larger samples. Moreover, instrument strength varies across $j$, allowing for heterogeneity in instrument relevance. We also introduce nonlinear terms in the generation of both $A^*$ and $Y$ as the proposed two-step CUE method can deal with nonlinearity in the nuisance models.

For each simulated dataset, we construct the proposed estimator with all nuisance functions estimated using the \texttt{R} package \texttt{SuperLearner} \citep{van2007super}. We include the random forest and generalized linear models in the SuperLearner library. We compare our method with the standard two-step GMM estimator using the optimal weighting matrix, which can be obtained the following two steps. First, obtain an initial estimate $\Tilde{\beta}_{\rm GMM}$ by minimizing the objective function $\widehat{Q}_{\text{GMM}}(\beta,\widehat{\eta}) := \widehat{g}^T(\beta,\widehat{\eta})^T\widehat{g}(\beta,\widehat{\eta})/2$; Second, obtain the final GMM estimate $\widehat{\beta}_{\rm GMM}$ by minimizing the objective function $\widehat{Q}_{\text{GMM}}(\beta,\widehat{\eta}) := \widehat{g}^T(\beta,\widehat{\eta})^TW\widehat{g}(\beta,\widehat{\eta})/2$, where the weighting matrix is calculated using the optimal choice $W = \widehat{\Omega}^{-1}(\Tilde{\beta}_{\rm GMM},\widehat{\eta})$ \citep{hansen1982GMM,chernozhukov2018double}. The nuisance estimators are constructed in the same way as for the proposed two-step CUE. Under strong identification and a fixed number of moment conditions, GMM is known to be asymptotically unbiased and normally distributed \citep{Chernozhukov2022locallyrobust}.

Table \ref{tab: simulation results for ASMM and MSMM} shows the simulation results for ASMM and MSMM.  In all settings, the two-step CUE consistently exhibits smaller bias and more accurate confidence interval coverage. In contrast, the two-step GMM shows larger positive bias and confidence intervals with under-coverage. The standard errors for CUE are generally larger than those for GMM, which is expected because the GMM variance formula omits the U-statistics term.
The rejection rates for the over-identification test are close to the nominal type I error level.

\begin{table}[htbp]
  \centering
  \scriptsize
  \caption{Simulation results for the ASMM, MSMM and PSMM. The column {rejection rate} reports the empirical type I error of the over-identification test at the nominal 0.05 level. Estimate presents the average point estimates. Bias reports the average estimation bias. SE is the average of the estimated standard errors, while SD refers to the empirical standard deviation of the estimates across 1000 simulations. Coverage rate indicates the empirical coverage probability of the 95\% Wald confidence interval. CUE, GMM and 2SLS denote the proposed two-step continuously updated estimator, the two-step generalized method of moments estimator and the two-stage least square estimator proposed by \citet{ett2024proximal}, respectively. For PSMM, we show the simulation results for $\beta_a$ and $\beta_w$ separately.}

    \begin{tabular}{cccccccccc}
    \toprule
          & N     & m     & rejection rate & Method & Estimate & Bias  & SD    & SE  & Coverage rate \\
    \midrule
    \multirow{10}[10]{*}{ASMM} & \multirow{2}[2]{*}{1000} & \multirow{2}[2]{*}{22} & \multirow{2}[2]{*}{0.067} & CUE   & 2.992 & -0.008 & 0.087 & 0.087 & 0.952 \\
          &       &       &       & GMM   & 3.067 & 0.067 & 0.072 & 0.076 & 0.801 \\
\cmidrule{2-10}          & \multirow{2}[2]{*}{2000} & \multirow{2}[2]{*}{26} & \multirow{2}[2]{*}{0.063} & CUE   & 2.994 & -0.006 & 0.063 & 0.063 & 0.955 \\
          &       &       &       & GMM   & 3.046 & 0.046 & 0.056 & 0.057 & 0.847 \\
\cmidrule{2-10}          & \multirow{2}[2]{*}{5000} & \multirow{2}[2]{*}{33} & \multirow{2}[2]{*}{0.053} & CUE   & 3.001 & 0.001 & 0.042 & 0.043 & 0.952 \\
          &       &       &       & GMM   & 3.032 & 0.032 & 0.039 & 0.041 & 0.851 \\
\cmidrule{2-10}          & \multirow{2}[2]{*}{10000} & \multirow{2}[2]{*}{40} & \multirow{2}[2]{*}{0.048} & CUE   & 2.999 & -0.001 & 0.031 & 0.031 & 0.954 \\
          &       &       &       & GMM   & 3.021 & 0.021 & 0.029 & 0.03  & 0.868 \\
\cmidrule{2-10}          & \multirow{2}[2]{*}{20000} & \multirow{2}[2]{*}{47} & \multirow{2}[2]{*}{0.061} & CUE   & 2.999 & -0.001 & 0.023 & 0.023 & 0.949 \\
          &       &       &       & GMM   & 3.014 & 0.014 & 0.022 & 0.022 & 0.894 \\
    \midrule
    \multirow{10}[10]{*}{MSMM} & \multirow{2}[2]{*}{1000} & \multirow{2}[2]{*}{22} & \multirow{2}[2]{*}{0.037} & CUE   & 0.899 & -0.101 & 0.736 & 1.919 & 0.949 \\
          &       &       &       & GMM   & 1.44  & 0.44  & 15.282 & 0.256 & 0.248 \\
\cmidrule{2-10}          & \multirow{2}[2]{*}{2000} & \multirow{2}[2]{*}{26} & \multirow{2}[2]{*}{0.056} & CUE   & 0.947 & -0.053 & 0.281 & 0.793 & 0.965 \\
          &       &       &       & GMM   & 1.334 & 0.334 & 1.064 & 0.171 & 0.238 \\
\cmidrule{2-10}          & \multirow{2}[2]{*}{5000} & \multirow{2}[2]{*}{33} & \multirow{2}[2]{*}{0.039} & CUE   & 0.977 & -0.023 & 0.143 & 0.136 & 0.966 \\
          &       &       &       & GMM   & 1.206 & 0.206 & 0.088 & 0.108 & 0.396 \\
\cmidrule{2-10}          & \multirow{2}[2]{*}{10000} & \multirow{2}[2]{*}{40} & \multirow{2}[2]{*}{0.045} & CUE   & 0.991 & -0.009 & 0.101 & 0.1   & 0.953 \\
          &       &       &       & GMM   & 1.155 & 0.155 & 0.072 & 0.084 & 0.432 \\
\cmidrule{2-10}          & \multirow{2}[2]{*}{20000} & \multirow{2}[2]{*}{47} & \multirow{2}[2]{*}{0.031} & CUE   & 0.996 & -0.004 & 0.072 & 0.072 & 0.947 \\
          &       &       &       & GMM   & 1.109 & 0.109 & 0.057 & 0.063 & 0.509 \\
    \midrule
    \multicolumn{1}{c}{\multirow{10}[10]{*}{PSMM, \newline{}$\beta_a$}} & \multirow{2}[2]{*}{1000} & \multirow{2}[2]{*}{22} & \multirow{2}[2]{*}{0.047} & CUE   & 2.879 & -0.121 & 0.532 & 1.288 & 0.946 \\
          &       &       &       & 2SLS  & 3.186 & 0.186 & 0.201 & 0.202 & 0.826 \\
\cmidrule{2-10}          & \multirow{2}[2]{*}{2000} & \multirow{2}[2]{*}{26} & \multirow{2}[2]{*}{0.043} & CUE   & 2.984 & -0.016 & 0.228 & 0.450 & 0.943 \\
          &       &       &       & 2SLS  & 3.123 & 0.123 & 0.153 & 0.153 & 0.846 \\
\cmidrule{2-10}          & \multirow{2}[2]{*}{5000} & \multirow{2}[2]{*}{33} & \multirow{2}[2]{*}{0.051} & CUE   & 3.013 & 0.013 & 0.113 & 0.111 & 0.948 \\
          &       &       &       & 2SLS  & 3.072 & 0.072 & 0.102 & 0.100 & 0.885 \\
\cmidrule{2-10}          & \multirow{2}[2]{*}{10000} & \multirow{2}[2]{*}{40} & \multirow{2}[2]{*}{0.033} & CUE   & 3.011 & 0.011 & 0.075 & 0.077 & 0.927 \\
          &       &       &       & 2SLS  & 3.045 & 0.045 & 0.073 & 0.074 & 0.893 \\
\cmidrule{2-10}          & \multirow{2}[2]{*}{20000} & \multirow{2}[2]{*}{47} & \multirow{2}[2]{*}{0.048} & CUE   & 3.005 & 0.005 & 0.050 & 0.051 & 0.944 \\
          &       &       &       & 2SLS  & 3.023 & 0.023 & 0.052 & 0.052 & 0.923 \\
    \midrule
    \multicolumn{1}{c}{\multirow{10}[10]{*}{\newline{}PSMM, $\beta_w$}} & \multirow{2}[2]{*}{1000} & \multirow{2}[2]{*}{22} & \multirow{2}[2]{*}{0.047} & CUE   & 1.150 & 0.150 & 0.633 & 1.546 & 0.957 \\
          &       &       &       & 2SLS  & 0.780 & -0.220 & 0.236 & 0.237 & 0.816 \\
\cmidrule{2-10}          & \multirow{2}[2]{*}{2000} & \multirow{2}[2]{*}{26} & \multirow{2}[2]{*}{0.043} & CUE   & 1.020 & 0.020 & 0.270 & 0.537 & 0.950 \\
          &       &       &       & 2SLS  & 0.852 & -0.148 & 0.180 & 0.179 & 0.848 \\
\cmidrule{2-10}          & \multirow{2}[2]{*}{5000} & \multirow{2}[2]{*}{33} & \multirow{2}[2]{*}{0.051} & CUE   & 0.987 & -0.013 & 0.133 & 0.130 & 0.952 \\
          &       &       &       & 2SLS  & 0.914 & -0.086 & 0.120 & 0.116 & 0.889 \\
\cmidrule{2-10}          & \multirow{2}[2]{*}{10000} & \multirow{2}[2]{*}{40} & \multirow{2}[2]{*}{0.033} & CUE   & 0.989 & -0.011 & 0.088 & 0.090 & 0.935 \\
          &       &       &       & 2SLS  & 0.946 & -0.054 & 0.086 & 0.087 & 0.896 \\
\cmidrule{2-10}          & \multirow{2}[2]{*}{20000} & \multirow{2}[2]{*}{47} & \multirow{2}[2]{*}{0.048} & CUE   & 0.996 & -0.004 & 0.059 & 0.059 & 0.944 \\
          &       &       &       & 2SLS  & 0.973 & -0.027 & 0.062 & 0.061 & 0.921 \\
    \bottomrule
    \end{tabular}%

  \label{tab: simulation results for ASMM and MSMM}%
\end{table}%

\subsection{Proximal causal inference}
We generate datasets according to the following data-generating process:

\noindent \textbf{Sample sizes:} We use $5$ different sample sizes: $N=1000$, $2000$, $5000$, $10000$, $20000$.

\noindent  \textbf{Baseline covariates:} The covariate vector is three-dimensional, $\boldsymbol{X}=(X_1, X_2,X_3)$. $\boldsymbol{X}\sim N(0,I_3+0.2J_3)$, where $I_3$ is the $3\times 3$ identity matrix, $J_3$ is a $3\times 3$ matrix with all elements equal to $1$. The covariates are truncated to lie within the interval $[-4,4]$.

\noindent \textbf{Treatment proxies:} The dimension of the treatment proxy is set to $m=4N^{-1/4}$, resulting in values of 
$22, 26, 33, 40, 47$ for the corresponding sample sizes. Each proxy variable $Z^{(j)},$ $j = 1,...,m$  is generated as $Z^{(j)} = X_1+X_2+X_3+0.5 m^{-1+3(j-1)/(4(m-1))}U+\epsilon_j$,
where $\epsilon^{(j)} \sim \text{unif}[-3,3]$ and $U\sim \text{unif}[-4,4]$ is an unmeasured confounder.

\noindent \textbf{Outcome proxy:} The outcome proxy $W$ is generated from $W = X_1+X_2+\text{expit}(X_3)+U+N(0,1).$

\noindent \textbf{Treatment:} The continuous treatment $A$ is generated as $A = X_1+X_2-X_3+U+N(0,1).$

\noindent \textbf{Outcome:} The continuous outcome is generated as $Y = 3A+\sin X_1-X^2_2-X_3+X_2X_3+U+N(0,1).$

Again, the data-generating distribution is designed to reflect weak moment conditions. Each instance of $N(0,1)$ in the previous equations should be interpreted as an independent standard normal error term. The effect of $U$ on $Z^{(j)}$ is $0.5 m^{-1+3(j-1)/4(m-1)}$, which decreases as the sample size increases,  reflecting that the correlation between $U$ and $Z^{(j)}$ becomes weaker in larger samples. This also allows for varying strengths across different treatment proxies. Additionally, we introduce nonlinear terms in the generation of both $Y$ and $W$.

We compare the two-step CUE with the two-stage least squares (2SLS) approach proposed by \citet{ett2024proximal}. The 2SLS procedure consists of two stages.  In the first step, a regression model of $W$ on $A,Z,X$ is fitted, and the fitted values are denoted as $\widehat{W}$. In the second stage, a regression model of $Y$ on $A$, $\widehat{W}$, and $X$ is fitted. The final estimates of $\beta_{a,0}$ and $\beta_{w,0}$  given by the estimated regression coefficients for $A$ and $\widehat{W}$, respectively. The 2SLS method is implemented using the  \texttt{ivreg} function in \texttt{R}.

Table \ref{tab: simulation results for ASMM and MSMM} presents the simulation results for proximal causal inference. As in the IV setting, we construct the proposed estimator using the \texttt{R} package \texttt{SuperLearner}  \citep{van2007super}  to estimate all nuisance functions, 
 and we include the random forest and generalized linear models in the SuperLearner library. When the sample size is 1000, both CUE and 2SLS exhibit some bias, although CUE still achieves nominal coverage for the 95\% confidence intervals. When the sample size exceeds 2000, CUE demonstrates small bias and accurate coverage. For 2SLS, we observe an upward bias in the estimate of $\beta_a$ and a downward bias in the estimate of $\beta_w$. The standard errors of CUE estimates are larger than those of 2SLS, as expected. The over-identification test maintains nominal type I error control across all simulation settings.

\vspace{-4mm}
\section{Revisiting the return of education example} \vspace{-2mm}
In this section, we revisit the analysis of compulsory schooling laws' effect on earnings by \citet{Angrist1991QJE}. Their study estimated the impact of educational attainment (measured in years of schooling) on the logarithm of weekly wages using data from the 1970 and 1980 U.S. Censuses. A key challenge in this analysis is unmeasured confounding from variables such as innate ability, which may be positively associated with both education and earnings, making naive regression estimates potentially biased.

One of the key insights from \citet{Angrist1991QJE} is the use of quarter of birth as an instrumental variable for education. Individuals born early in the calendar year tend to be older when starting school and are more likely to reach the legal dropout age with fewer years of schooling. This leads to a correlation between quarter of birth and educational attainment. Importantly, there is no clear mechanism through which quarter of birth directly affects earnings, making it a plausible instrument.

In our analysis, we control for observed covariates including race, region dummies, year dummies, age, and age squared. Following \citet{Angrist1991QJE}, we construct instrumental variables by interacting year dummies with quarter-of-birth dummies, resulting in a total of 29 instruments used for the analysis.

We replicate the OLS and 2SLS estimates for the 1930–1939 birth cohort as reported in \citet{Angrist1991QJE}. Table \ref{tab: real data analysis} presents our replication results, obtained using the \texttt{lm} and \texttt{2SLS} functions in \texttt{R}. However, as noted by \citet{Bound1995JASA}, the 2SLS estimates may still suffer from weak instrument bias, even in very large samples.

To illustrate the performance of our proposed methodology, we estimate the effect of educational attainment on wages using both two-step GMM and our proposed two-step CUE across three models:
\begin{itemize}
    \item Model 1: An ASMM using the original continuous education variable and log weekly wage as the outcome.
    \item Model 2: An ASMM using a binary education variable (cutoff at 12 years) and log weekly wage.
    \item Model 3: A MSMM using the same binary education variable and the weekly wage on the original (non-log) scale.
\end{itemize}
Let $A^*$ denote the continuous treatment, $A$ the binary treatment, and $Y$ the weekly wage on the original scale. In Model 1, the coefficient on $A^*$ estimates the average treatment effect on the log weekly wage, given by $\mathbb{E}_{P_{0,N}}[\log(Y(a^* + 1)) - \log(Y(a^*))]$. In Model 2, the coefficient on $A$ corresponds to the average treatment effect on the log weekly wage, $\mathbb{E}_{P_{0,N}}[\log(Y(1)) - \log(Y(0))]$, which is constant across values of $a^*$. In Model 3, the coefficient on $A$ represents the logarithm of the average treatment effect on the original weekly wage { (on the multiplicative scale),} that is, $\log(\mathbb{E}_{P_{0,N}}[Y(1)] / \mathbb{E}_{P_{0,N}}[Y(0)])$. For both GMM and the proposed two-step CUE, we use linear probability models for the instruments and estimate the nuisance functions for log weekly wage and education attainment using Super Learner with generalized linear models and random forest as candidate algorithms. The results are presented in Table \ref{tab: real data analysis}.

Comparing the original results from \citet{Angrist1991QJE} with those from Model 1, we find that the point estimate from GMM (0.0599) is almost identical to that from 2SLS (0.0556). The CUE estimate (0.0481) is smaller than both 2SLS and GMM. The standard error of CUE (0.0770) is much larger than those of 2SLS and GMM, which are 0.0354 and 0.0289, respectively.  A similar pattern holds in Models 2 and 3, where the CUE point estimates are consistently smaller than those from GMM, and the standard errors are larger. This pattern aligns with our simulation results, which also showed upward bias in GMM and larger standard errors for CUE. The over-identification tests for Models 1, 2, and 3 do not reject the null hypothesis, suggesting no empirical evidence of model misspecification. {Models 1, 2 and 3 are compatible under the strong null hypothesis that $Y(a^*)$ is constant for each individual regardless the value of $a^*$.}

\begin{table}[htbp]
  \centering
  \footnotesize
  \caption{Effects of education attainment on weekly wage using different methods. Original results refer to those reported in Table V of \citet{Angrist1991QJE}. Model 1: ASMM uses a continuous measure of education attainment as the treatment and log weekly wage as the outcome. Model 2: ASMM uses a binary treatment (education attainment above or below 12 years) with log weekly wage as the outcome. Model 3: MSMM uses the same binary treatment but models weekly wage on the original (non-log) scale.}

    \begin{tabular}{clccc}
    \toprule
          &       & Estimate   & SE  & 95\% CI \\
    \midrule
    \multirow{2}[2]{*}{Original results} & OLS & 0.0632 & 0.0003 & [0.0625,0.0390] \\
          & 2SLS  & 0.0599 & 0.0289 & [0.003,0.116] \\
    \midrule
    \multirow{2}[2]{*}{Model 1: ASMM} & GMM   & 0.0556 & 0.0354 & [-0.013,0.125] \\
          & CUE   & 0.0481 & 0.0770 & [-0.103,0.199] \\
    \midrule
    \multirow{2}[2]{*}{Model 2: ASMM } & GMM   & 0.315 & 0.194 & [-0.065,0.695] \\
          & CUE   & 0.305 & 0.294 & [-0.271,0.881] \\
    \midrule
    \multirow{2}[2]{*}{Model 3: MSMM} & GMM   & 0.381 & 0.179 & [0.030,0.731] \\
          & CUE   & 0.232 & 0.238 & [-0.234,0.698] \\
    \bottomrule
    \end{tabular}%

  \label{tab: real data analysis}%
\end{table}%

\vspace{-6mm}
\section{Discussion}

{In this paper, we develop a general framework for estimation and inference under many weak moment conditions with function-valued nuisance parameters. Our framework reveals several distinctive features that set it apart from existing literature. First, global Neyman orthogonality plays a critical role in achieving consistency, not merely asymptotic normality, under many weak moments. This stands in contrast to standard semiparametric inference, where orthogonality primarily facilitates asymptotic normality but is not required for consistency. Second, our asymptotic theory requires faster convergence rates for nuisance parameter estimators than existing strongly-identified settings. Third, our estimator converges at a rate slower than $\sqrt{N}$ and its asymptotic expansion includes both the standard GMM influence function term and a non-negligible U-statistic term, necessitating careful control over how nuisance parameter estimation affects both components.

Building on these insights, we propose a two-step CUE combined with cross-fitting that accommodates flexible, possibly nonparametric, first-stage estimation. We establish high-level conditions for consistency and asymptotic normality, enabling future researchers to apply our results to new problems by verifying these conditions. We demonstrate this versatility through three applications: the additive structural mean model (ASMM) and multiplicative structural mean model (MSMM) with many weak instruments, and proximal causal inference with many weak treatment proxies. To the best of our knowledge, our work is the first to study weak identification of the target parameter in proximal causal inference. We also develop an over-identification test for assessing model specification.

Several directions for future research remain open. First, in standard double machine learning, estimators can achieve semiparametric efficiency when the moment function corresponds to the efficient score. Investigating whether our estimator achieves optimality in an analogous sense would require developing new notions of semiparametric efficiency under many weak moment asymptotics. Second, extending our identification and estimation strategies for proximal causal inference to longitudinal settings and nonlinear models would broaden the framework's applicability. Third, developing methods for estimation and inference of weakly identified function-valued parameters, such as dose-response curves and conditional treatment effects, represents an important avenue for future work.}

\onehalfspacing

\spacingset{.85}
\bibliographystyle{apalike}
\bibliography{paper-ref}

 \newpage
 \spacingset{1.0}
 \begin{center}
    \Large Supplementary materials for “GMM with Many Weak Moment Conditions and Nuisance Parameters: General Theory and Applications to Causal Inference” 
\end{center}
\renewcommand{\thesection}{S\arabic{section}}
\setcounter{page}{1}
\setcounter{section}{0}

\renewcommand{\thefigure}{S.\arabic{figure}}

\setcounter{figure}{0}

\renewcommand{\thetable}{S\arabic{table}}

\setcounter{table}{0}

\renewcommand{\thetheorem}{S\arabic{theorem}}

\renewcommand{\theproposition}{S\arabic{proposition}}

\setcounter{theorem}{0}

\renewcommand{\thelemma}{S\arabic{lemma}}

\setcounter{lemma}{0}

\renewcommand{\theremark}{S\arabic{remark}}

\setcounter{remark}{0}

\renewcommand{\theassumption}{S\arabic{assumption}}

\setcounter{assumption}{0}
Let $P$ be the true data-generating distribution (later in our proofs, $P$ will be taken as $P_{0,N}$). We will use the following empirical process notations:
\begin{align*}
    &Pf := \int f dP,\\
    &\mathbb{P}_nf := \int f d\mathbb{P}_n = \frac{1}{n}\sum_{i=1}^n f(O_i),\\
    & \mathbb{G}_n f : = \sqrt{n}(\mathbb{P}_n-P)f.
\end{align*}
We also use $\mathbb{P}_{n,l}$ to denote the empirical distribution in the $l$-th fold, that is, $\mathbb{P}_{n,l}f = \frac{1}{N_{l}}\sum_{l(i)= l}f(O_i)$, where $N_l = \sum_{i=1}^N I\{l(i)=l\}$ is the sample size in the $l$-th fold and $I\{\}$ is indicator function. $\mathbb{G}_{n,l}$ can be defined in a similar way, that is, $\mathbb{G}_{n,l} = \sqrt{N_l}(\mathbb{P}_{n,l}-P)f$. The notation $a_N \lesssim b_N$ means $a_N$ is less than $b_N$ up to a constant which does not depend  on  $N$.

We will use the following notations repeatedly throughout the supplementary material:
\begin{align*}
& g(\beta,\eta):o\mapsto g(o;\beta,\eta),\\
& \Omega(\beta,\eta):o\mapsto g(o;\beta,\eta)g^T(o;\beta,\eta) ,\mkern9mu G(\beta,\eta):o\mapsto \frac{\partial g(o;\beta,\eta)}{\partial \beta} \\
&\text{For $h$ = $g,G$ or $\Omega$, }\mkern9mu h_i(\beta,\eta) = h(O_i;\beta,\eta),\mkern9mu \widehat{h}_l(\beta,\eta) = \mathbbm{P}_{n,l} h(\beta,\eta)\\
&h(\beta,\eta),\mkern9mu  h_i = h_i(\beta_0,\eta_{0,N}),\mkern9mu  \widebar{h}(\beta,\eta) = P_{0,N}h(\beta,\eta), \mkern9mu\widebar{h} = \widebar{h}(\beta_0,\eta_{0,N}),\\
    & Q(\beta,\eta) = \widebar{g}(\beta,\eta)^T\widebar{\Omega}(\beta,\eta)^{-1}\widebar{g}(\beta,\eta)/2+m/(2N)\\
    & \widetilde{Q}(\beta,\eta) = \widehat{g}(\beta,{\eta})^T \widebar{\Omega}(\beta,\eta_0)^{-1}\widehat{g}(\beta,{\eta})/2 \\
    & \widehat{Q}(\beta,{\eta}) = \widehat{g}(\beta,{\eta})^T\widehat{\Omega}(\beta,{\eta})^{-1}\widehat{g}(\beta,{\eta})/2.
\end{align*}

\section{Additional regularity conditions for asymptotic normality}
In this section, we present some additional regularity conditions for asymptotic normality.
\begin{assumption}
    (a) $(\mathbb{E}_{P_{0,N}}[\Vert g(O;\beta_0,\eta_{0,N}) \Vert^4]+\mathbb{E}_{P_{0,N}}[\Vert G(O;\beta,\eta_{0,N}) \Vert_{F}^4])m/N\rightarrow 0$. (b) For a constant $C$ and $j = 1,...,p$,
    \begin{align*}
      &\left\Vert \mathbb{E}_{P_{0,N}}[G(O;\beta,\eta_{0,N})G^T(O;\beta,\eta_{0,N})]\right\Vert\leq C, \quad \left\Vert \mathbb{E}_{P_{0,N}}\left[\frac{\partial G(O;\beta_0,\eta_{0,N})}{\partial \beta^{(j)}}\frac{\partial G^T(O;\beta_0,\eta_{0,N})}{\partial \beta^{(j)}}\right]\right\Vert\leq C,\\
      &\sqrt{N} \left\Vert \mathbb{E}_{P_{0,N}}\left[\frac{\partial G(O;\beta_0,\eta_{0,N})}{\partial\beta^{(j)}}\right](S_N^{-1})^T\right\Vert_F\leq C.
    \end{align*}
    (c) For $\Bar{\beta}\xrightarrow[]{p} \beta_0$, then for $k=1,...,p$,
    \begin{align*}
       \left \Vert \sqrt{N}\left[\widehat{G}(\Bar{\beta},\eta_{0,N})-\widehat{G}({\beta}_0,\eta_{0,N})\right](S_N^{-1})^T \right\Vert = o_{p}(1),\quad\Bigg\Vert \sqrt{N}\Bigg[\frac{\partial \widehat{G}(\Bar{\beta},\eta_{0,N})}{\partial \beta^{(k)}}-\frac{\partial \widehat{G}({\beta}_0,\eta_{0,N})}{\partial \beta^{(k)}}\Bigg] (S_N^{-1})^T\Bigg\Vert= o_{p}(1).
    \end{align*}
\label{Assumption: further restriction for moments.}
\end{assumption}

A sufficient condition for \ref{Assumption: further restriction for moments.}(a) to hold is that every element of $g$ and $G$ is uniformly bounded and $m^3/N=o(1)$. The second and third inequalities of Assumption \ref{Assumption: further restriction for moments.} (b) and Assumption \ref{Assumption: further restriction for moments.} (c) hold trivially if $g(o;\beta,\eta)$ is linear in $\beta$ (and hence $G(o;\beta,\eta)$ does not depend on $\beta$). Assumption  \ref{Assumption: further restriction for moments.} (c) is used for   obtaining the asymptotic distribution of $\hat{\beta}-\beta$ by showing the convergence of the second order derivative of objective evaluated at $\widebar{\beta}$ converges to its counterpart evaluated at $\widebar{\beta}$ with $\eta$ fixed at $\eta_{0,N}$.

\begin{assumption}
    For all $\beta,\Tilde{\beta}$ in a neighborhood $\mathcal{B}'$ of $\beta_0$ and $A(o;\beta,\eta)$ equal to $\Omega^k(o;\beta,\eta)$, $\Omega^{kl}(o;\beta,\eta)$ or $\Omega^{k,l}(o;\beta,\eta)$, we have (a) $\sup_{\beta \in \mathcal{B}'}\Vert \widehat{A}(\beta,\eta_{0,N})-\widebar{A}(\beta,\eta_{0,N})\Vert \xrightarrow[]{p}0$, (b) $|a'[A(\Tilde{\beta},\eta_{0,N})-A(\beta,\eta_{0,N}))]b|\leq C\Vert a\Vert \Vert b\Vert \Vert \Tilde{\beta}-\beta \Vert$, where $\widehat{A}(\beta,\eta) = \frac{1}{N}\sum_{i=1}^NA(O_i;\beta,\eta)$, $\widebar{A}(\beta,\eta) = \mathbb{E}_{P_{0,N}}[A(O;\beta,\eta)]$.
    \label{Assumption: Further restrictions for uniform convergence}
\end{assumption}

Assumption \ref{Assumption: Further restrictions for uniform convergence} is also made in \citet{newey2009generalized}, which imposes further assumptions on Uniform convergence and smoothness conditions for $\Omega,\Omega^{k},\Omega^{kl},\Omega^{k,l}$.

\section{Additional regularity condition for separable moment functions}

\begin{assumption}[Separable score regularity and nuisance parameters estimation]\quad

    \begin{enumerate}[label=(\alph*)]
    \item $g^{[b]}(o;\eta)$ satisfies global Neyman orthogonality for any $b\in \{1,2,...,B\}$ on $\mathcal{B}$. The second-order Gateux derivative $D^{(2)}_{\beta,t,g^{[b]}}(\eta)$ is continuous.
    \item Let $I\subset \{1,\dots, N\}$ be a random subset of size $N/L$. Suppose the nuisance parameter estimator $\widehat{\eta}$, constructed using only observations outside of $I$,  belongs to a realization set $\mathcal{T}_N\subset \tilde{\mathcal{T}}_N$ with probability {at least} $1-\Delta_N$, where $\Delta_N=o(1)$. Assume that $\mathcal{T}_N$ contains the true value $\eta_{0,N}$ and satisfies the following conditions: 

    Define the following statistical rates for all $j,k \in \{1,...,m\},l,r\in\{1,...,q\}$, $b_1,b_2\in\{1,...,B\}$:
    \begin{align*}
        &r^{(b_1),(j,q)}_N: = \sup_{\eta\in \mathcal{T}_N}\left(\mathbb{E}_{P_{0,N}}\left\vert g^{[b_1],(j,q)}(O;\eta)-g^{[b_1],(j,q)}(O;\eta_{0,N})\right\vert^2\right)^{1/2} \\
        &\lambda^{(j)}_N:=\sup_{\beta\in \mathcal{B},t\in(0,1),\eta\in \mathcal{T}_N}\left\vert \frac{\partial^2}{\partial t^2} \mathbb{E}_{P_{0,N}}g^{(j)}(O;\beta,(1-t)\eta_{0,N}+t\eta)\right\vert\\
        & r^{(b_1,b_2),(j,k,l,r)}_N:= \sup_{\eta \in \mathcal{T}_N} \left(\mathbb{E}_{P_{0,N}}\left|g^{[b_1],(j,l)}(O;\eta)g^{[b_2],(k,r)}(O;\eta)-g^{[b_1],(j,l)}(O;\eta_{0,N})g^{[b_2],(k,r)}(O;\eta_{0,N})\right|^2\right)^{1/2},
        \end{align*}
      where $g^{[b],(j,l)}(o;\eta)$ denotes the $(j,l)$th element of the matrix $g^{[b]}(o;\eta)$. We assume there exists $\epsilon>0$ such that:
        \begin{align*}
        & a_N\leq  \delta_N, \quad {\lambda^{(j)}_N} \leq N^{-1/2}\delta_N,\quad  \delta_N = o(1/\sqrt{m}),
    \end{align*}
    for $a_N \in \{r^{(b_1),(j,q)}_N, r^{(b_1,b_2),(j,k,l,r)}_N\}$.
    \end{enumerate}
    \label{Assumption: separable score}
\end{assumption}
Similar to Assumption 8, Assumption \ref{Assumption: separable score} imposes conditions on the convergence rates of nuisance estimators, but specifically for separable moment functions. A key simplification is that the definitions of  $r^{(b_1),(j,q)}_N$ and $r^{(b_1,b_2),(j,k,l,r)}_N$ do not involve taking the supremum over $\beta$, which makes these conditions more straightforward to verify.  Assumption 9 is also easier to check for separable moment functions because the supremum over $\beta$ can be bounded by a constant due to the compactness of $\mathcal{B}$. This separation enables the use of concentration inequalities for norms of high-dimensional random matrices \citep{tropp2016expected}. 

\section{A consistent variance estimator}

We now provide a consistent estimator for the asymptotic variance. Let $\widehat{H} = \frac{\partial^2\widehat{Q}(\widehat{\beta},\widehat{\eta})}{\partial\beta \partial \beta^T}$, $ \widehat{D}^{(j)}(\beta) = {\frac{1}{N}\sum_{i=1}^N\frac{\partial g(O_i;\beta,\widehat{\eta}_{l(i)})}{\partial\beta_j}}-\frac{1}{N}\sum_{i=1}^N\frac{\partial g(O_i;\beta,\widehat{\eta}_{l(i)})}{\partial \beta_j}g(O_i;\beta,\widehat{\eta}_{l(i)})^T\widehat{\Omega}(\beta,\widehat{\eta})^{-1}\widehat{g}(\beta,\widehat{\eta})$, \quad $\widehat{D} = (\widehat{D}^{(1)}(\widehat{\beta}),...,\widehat{D}^{(p)}(\widehat{\beta}))$, $\widehat{\Omega} = \widehat{\Omega}(\widehat{\beta},\widehat{\eta})$. Then, the variance estimator is given by  $\widehat{V}/N$, where $\widehat{V} = \widehat{H}^{-1}\widehat{D}^T\widehat{\Omega}^{-1}\widehat{D}\widehat{H}^{-1}$. The following theorem establishes that $\widehat{V}$ is a consistent variance estimator.

\begin{theorem}[Consistency of variance estimator] Suppose all assumptions in Theorem \ref{Theorem: consistency and ASN} hold. Then,
    \begin{align*} S_N^T\widehat{V}S_N/N\xrightarrow[]{p}V.
    \end{align*}
    \label{Theorem: consistency of variance estimator}
    Furthermore, for any vector $c$, if there exists a sequence $r_N$ and a nonzero vector $c^*\neq 0$ such that $r_NS_N^{-1}c\rightarrow c^*$, then
    \begin{align*}
        \frac{c^T(\widehat{\beta}-\beta_0)}{\sqrt{c^T\widehat{V}c/N}}\rightsquigarrow  N(0,1).
    \end{align*}
    \label{theorem: variance estimator}
\end{theorem}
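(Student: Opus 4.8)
The second display will follow from the first statement together with Theorem~\ref{Theorem: consistency and ASN}, so the main task is to prove $S_N^{T}\widehat V S_N/N\xrightarrow[]{p}V$. I would start from a purely algebraic rearrangement: writing $\widebar H_N:=N S_N^{-1}\widehat H(S_N^{-1})^{T}$, so that $\widehat H^{-1}=N(S_N^{-1})^{T}\widebar H_N^{-1}S_N^{-1}$, we get
\begin{equation*}
S_N^{T}\widehat V S_N/N=\widebar H_N^{-1}\Bigl(N S_N^{-1}\widehat D^{T}\widehat\Omega^{-1}\widehat D(S_N^{-1})^{T}\Bigr)\widebar H_N^{-1}.
\end{equation*}
Because $V=H^{-1}(H+\Lambda)H^{-1}$ and $H$ is nonsingular, by the continuous mapping theorem it suffices to prove \textbf{(i)} $\widebar H_N\xrightarrow[]{p}H$ and \textbf{(ii)} $N S_N^{-1}\widehat D(\widehat\beta)^{T}\widehat\Omega(\widehat\beta,\widehat\eta)^{-1}\widehat D(\widehat\beta)(S_N^{-1})^{T}\xrightarrow[]{p}H+\Lambda$. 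Claim (i) is the convergence of the (scaled) Hessian of the CUE objective already established within the proof of Theorem~\ref{Theorem: consistency and ASN}: it uses Assumption~\ref{Assumption: weak moment condition}(b), the consistency $\delta(\widehat\beta)=o_p(1)$ from Theorem~\ref{Theorem: Consistency}, the smoothness and uniform-convergence conditions on $\Omega,\Omega^{k},\Omega^{kl},\Omega^{k,l}$ in Assumptions~\ref{Assumption: consistency, matrix estimation}, \ref{Assumption: ASN matrix estimation}, \ref{Assumption: further restriction for moments.} and \ref{Assumption: Further restrictions for uniform convergence}, and the global Neyman orthogonality of $g$ (hence of $G=\partial_\beta g$, by Lemma~\ref{Lemma: permanence of global neyman orthogonality}) to neutralize the first-step estimation error.

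For (ii) I would proceed in three stages. \emph{Stage~1:} replace $(\widehat\beta,\widehat\eta)$ by $(\beta_0,\eta_{0,N})$. Using $\delta(\widehat\beta)=o_p(1)$, the differentiability Assumption~\ref{Assumption: differentiability}, and the Lipschitz-in-$\beta$ bounds of Assumptions~\ref{Assumption: Further restrictions for uniform convergence} and \ref{Assumption: consistency, matrix estimation}, the map $\beta\mapsto N S_N^{-1}\widehat D(\beta,\eta_{0,N})^{T}\widehat\Omega(\beta,\eta_{0,N})^{-1}\widehat D(\beta,\eta_{0,N})(S_N^{-1})^{T}$ is stochastically equicontinuous around $\beta_0$, so $\widehat\beta$ may be replaced by $\beta_0$; the replacement $\widehat\eta\to\eta_{0,N}$ is handled by the same cross-fitting argument as in Theorem~\ref{Theorem: consistency and ASN}, using global Neyman orthogonality of $g$ (hence of $G$) together with the rate conditions on $g$, $\partial_\beta g$ and their products in Assumption~\ref{Assumption: general score regularity} and the matrix conditions in Assumption~\ref{Assumption: ASN matrix estimation}. \emph{Stage~2:} since $\widebar g(\beta_0,\eta_{0,N})=0$, a short calculation gives, for each $j$,
\begin{equation*}
\widehat D^{(j)}(\beta_0,\eta_{0,N})=\widebar G^{(j)}+\frac1N\sum_{i=1}^{N}U_i^{\perp,(j)}+\rho_N^{(j)},
\end{equation*}
where $U_i^{\perp,(j)}$ is the population least-squares residual of $G^{(j)}(O_i;\beta_0,\eta_{0,N})-\widebar G^{(j)}$ on $g(O_i;\beta_0,\eta_{0,N})$ appearing in Theorem~\ref{Theorem: consistency and ASN}, and $\rho_N^{(j)}$ collects the errors from replacing $\widehat\Omega(\beta_0,\eta_{0,N})^{-1}$ by $\widebar\Omega^{-1}$ and $\frac1N\sum_i (\partial g_i/\partial\beta^{(j)})\,g_i^{T}$ by $\mathbb{E}_{P_{0,N}}[(\partial g/\partial\beta^{(j)})g^{T}]$; $\rho_N^{(j)}$ is of smaller order under $m^{3}/N\to0$ and Assumptions~\ref{Assumption: consistency, matrix estimation} and \ref{Assumption: ASN matrix estimation}.

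\emph{Stage~3:} substituting this into the sandwich and applying the $N S_N^{-1}(\cdot)(S_N^{-1})^{T}$ scaling produces three pieces. The \emph{signal} piece $N S_N^{-1}\widebar G^{T}\widehat\Omega^{-1}\widebar G(S_N^{-1})^{T}\to H$ by Assumption~\ref{Assumption: weak moment condition}(b) and $\widehat\Omega(\beta_0,\eta_{0,N})\to\widebar\Omega$. The \emph{cross} piece $2N S_N^{-1}\widebar G^{T}\widehat\Omega^{-1}\bigl(\tfrac1N\sum_i U_i^{\perp}\bigr)(S_N^{-1})^{T}$ has mean zero, and since $\widebar G(S_N^{-1})^{T}=O(N^{-1/2})$ the relevant variance lies along the fixed direction $\widebar\Omega^{-1}\widebar G$, hence is $O(\Vert\widebar G\Vert^{2})=O(\mu_N^{2}/N)$, giving an overall variance $O(1/\mu_N^{2})=o(1)$, so this piece is $o_p(1)$. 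The \emph{quadratic} piece $\tfrac1N\sum_{i,i'}S_N^{-1}(U_i^{\perp})^{T}\widehat\Omega^{-1}U_{i'}^{\perp}(S_N^{-1})^{T}$ splits into its diagonal, which satisfies a law of large numbers with limit $S_N^{-1}\mathbb{E}_{P_{0,N}}[(U^{\perp})^{T}\widebar\Omega^{-1}U^{\perp}](S_N^{-1})^{T}\to\Lambda$, and its off-diagonal part, a degenerate second-order U-statistic of variance $O(m/\mu_N^{4})=o(1)$ (using $m/\mu_N^{2}=O(1)$ and $\mu_N\to\infty$); the $\rho_N$ remainders are negligible by Stage~2. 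Adding the three pieces gives (ii), hence the first statement. \textbf{The main obstacle} is Stage~1 together with Stage~3: pushing the nuisance replacement and the $\widehat\beta\to\beta_0$ reduction through the whole sandwich $\widehat D^{T}\widehat\Omega^{-1}\widehat D$ — whose entries mix $g$, $\partial_\beta g$, $\partial^{2}_\beta g$ and their products — and then controlling all cross terms and the off-diagonal U-statistic remainder; this is where $m^{3}/N\to0$, the strengthened nuisance rates of Assumption~\ref{Assumption: general score regularity} and the matrix conditions of Assumption~\ref{Assumption: ASN matrix estimation} enter essentially, reusing the empirical-process and U-statistic machinery developed for Theorem~\ref{Theorem: consistency and ASN}.

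\textbf{The $t$-statistic.} For the second display, note $c^{T}(\widehat\beta-\beta_0)=r_N^{-1}(r_N S_N^{-1}c)^{T}S_N^{T}(\widehat\beta-\beta_0)$ and $c^{T}\widehat V c/N=r_N^{-2}(r_N S_N^{-1}c)^{T}\bigl(S_N^{T}\widehat V S_N/N\bigr)(r_N S_N^{-1}c)$, so the ratio equals $(r_N S_N^{-1}c)^{T}S_N^{T}(\widehat\beta-\beta_0)$ divided by $\bigl[(r_N S_N^{-1}c)^{T}(S_N^{T}\widehat V S_N/N)(r_N S_N^{-1}c)\bigr]^{1/2}$. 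Using $r_N S_N^{-1}c\to c^{*}$, Theorem~\ref{Theorem: consistency and ASN}, part (i) above, Slutsky's theorem, and $(c^{*})^{T}Vc^{*}>0$ (since $V=H^{-1}+H^{-1}\Lambda H^{-1}$ is positive definite — $H$ nonsingular, $\Lambda$ positive semidefinite — and $c^{*}\neq0$), this converges weakly to $(c^{*})^{T}Z/\sqrt{(c^{*})^{T}Vc^{*}}\sim N(0,1)$ with $Z\sim N(0,V)$.
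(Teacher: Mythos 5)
Your proof is correct and its skeleton coincides with the paper's: the same sandwich identity $S_N^{T}\widehat V S_N/N=\widebar H_N^{-1}\bigl(NS_N^{-1}\widehat D^{T}\widehat\Omega^{-1}\widehat D(S_N^{-1})^{T}\bigr)\widebar H_N^{-1}$, the Hessian convergence via Lemma \ref{Lemma: convergence of second order derivative of Q}, and the nuisance replacement carried out through exactly the rates the paper uses ($\Vert\widehat G(\widehat\beta,\widehat\eta)-\widehat G(\widehat\beta,\eta_{0,N})\Vert=o_p(1/\sqrt N)$, $\Vert\widehat\Omega(\widehat\beta,\widehat\eta)-\widehat\Omega(\widehat\beta,\eta_{0,N})\Vert=o_p(1/\sqrt m)$, etc.), and the same Slutsky argument for the $t$-statistic. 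Where you diverge is the middle matrix: the paper, after showing $\sqrt N S_N^{-1}[\widehat D^{T}(\widehat\beta,\widehat\eta)-\widehat D^{T}(\widehat\beta,\eta_{0,N})]=o_p(1)$ and $\sqrt N S_N^{-1}\widehat D^{T}(\widehat\beta,\eta_{0,N})=O_p(1)$, simply invokes Newey and Windmeijer (2009) for $NS_N^{-1}\tilde D^{T}\widehat\Omega(\widehat\beta,\eta_{0,N})^{-1}\tilde D(S_N^{-1})^{T}\to_p HVH\;(=H+\Lambda)$, whereas you re-derive that infeasible limit from scratch via the expansion $\widehat D^{(j)}(\beta_0,\eta_{0,N})=\widebar G^{(j)}+\tfrac1N\sum_i U_i^{\perp,(j)}+\rho_N^{(j)}$ and the signal/cross/quadratic decomposition, additionally replacing $\widehat\beta$ by $\beta_0$ (the paper achieves the same reduction through Assumption \ref{Assumption: further restriction for moments.}(c) inside the citation). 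Your route is longer but more transparent about where $\Lambda$ originates; the paper's is shorter by leveraging the existing CUE result. One point worth making explicit in your Stage 3: the claimed orders $O(1/\mu_N^{2})$ for the cross piece and $O(m/\mu_N^{4})$ for the off-diagonal U-statistic are only attainable if you bound the relevant second moments using the spectral-norm condition $\Vert\mathbb{E}_{P_{0,N}}[G^{(k)}G^{(k)T}]\Vert\le C$ of Assumption \ref{Assumption: further restriction for moments.}(b) (applying it to one factor and the trace bound to the other); a crude Frobenius/trace bound on both factors gives only $O(m^{2}/\mu_N^{4})=O(1)$, which would not suffice under $m/\mu_N^{2}\asymp 1$. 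With that refinement spelled out, your argument goes through.
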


In the standard double machine learning literature, the asymptotic variance is usually estimated by the empirical variance of the influence function with plug-in nuisance estimates  \citep{chernozhukov2018double}. However, this conventional variance estimator ignores the contribution of the U-statistics term to the asymptotic variance and therefore tends to underestimate the variance under many weak moment conditions.

\section{More details and formal asymptotic results for the three examples}

\subsection{ASMM}
We considered a commonly used specification for the ASMM, which is $\gamma(a, z, x; \beta) = \beta a$ in the main body of our paper. Another widely used form is $\gamma(a, z, x; \beta) = a(\beta^{(1)} + x^T\beta^{(2)})$, which allows the treatment effect to vary linearly with baseline covariates $X$.

The construction of moment condtions can be generalized to general SNMM. Under the ASMM, we have
\begin{align*}
    &\mathbb{E}_{{P}_{0,N}}[Y-\gamma(A,Z,X;\beta_0)|Z,X]=\mathbb{E}_{P_0,N}[\mathbb{E}_{{P}_{0,N}}[Y-\gamma(A,Z,X;\beta_0)|Z,X,A,U]|Z,X] \\
    =& \mathbb{E}_{{P}_{0,N}}[\mathbb{E}_{{P}_{0,N}}[Y(0)|Z,X,A,U]|Z,X]=\mathbb{E}_{\tilde{P}_{0,N}}[Y(0)|X].
\end{align*}
Similarly, we can show that $\mathbb{E}_{P_{0,N}}[Y-\gamma(A,Z,X;\beta_0)|X] =\mathbb{E}_{{P}_{0,N}}[Y(0)|X]$, which implies $\mathbb{E}_{P_{0,N}}[Y-\gamma(A,Z,X;\beta_0)|X]=\mathbb{E}_{P_{0,N}}[Y-\gamma(A,Z,X;\beta_0)|Z,X]$. Therefore, define the moment function $\tilde{g}(O; \beta, \eta_Z) = (Y - \gamma(A, Z, X; \beta))(Z - \eta_Z(X))$, where $\eta_Z$ is a function of $X$, and its true value is given by $\eta_{Z,0,N}(X) = \mathbb{E}_{P_{0,N}}[Z \mid X]$. Then, $\mathbb{E}_{P_{0,N}}[\tilde{g}(O; \beta_0, \eta_{Z,0,N})] = 0$, which defines a valid moment condition for identifying and estimating $\beta_0$. 


For consistency and asymptotic normality of our proposed two-step CUE in the presence of high-dimensional weak instrumental variables, we need the following regularity assumptions.
\begin{assumption}[Regularity assumptions for ASMM] We assume the following conditions hold: (a)  The variables $(X,Z)$ are uniformly bounded. There exists a constant $C$ such that $\mathbb{E}_{P_{0,N}}[Y^{4}|X]<C$, $\mathbb{E}_{P_{0,N}}[A^{4}|X]<C$, $\mathbb{E}_{P_{0,N}}[A^{2}|Z,X]<C$. (b) There exists a positive constant $H$ such that $\mu_N^{-2} N\widebar{G}^T\widebar{\Omega}^{-1}\widebar{G}\rightarrow H$, where $\mu_N\rightarrow\infty$, $\mu_N=O(\sqrt{N})$, $m/\mu_N^2$ is bounded for all $N$, and $m^3/N\rightarrow 0$. (c) There exists a constant $C$ such that $\frac1CI_m\preceq\mathbb{E}_{P_{0,N}}[(Z-\mathbb{E}_{P_{0,N}}[Z|X])(Z-\mathbb{E}_{P_{0,N}}[Z|X])^T|X] \preceq CI_m$. (d) Let $I\subset \{1,\dots, N\}$ be a random subset of size $N/L$. Suppose the nuisance parameter estimator $\widehat{\eta}$, constructed using only observations outside of $I$, satisfies the following conditions: with $P_{0,N}$- probability  at least $1-\Delta_N$, for $j=1,...,m$, $\Vert \widehat{\eta}_{A,l}-{\eta}_{A,0,N}\Vert_{P_{0,N},2}\leq \delta_N$, $\Vert \widehat{\eta}_{Y,l}-{\eta}_{Y,0,N}\Vert_{P_{0,N},2}\leq \delta_N$, $\Vert \widehat{\eta}_{Z^{(j)},l}-{\eta}_{Z^{(j)},0,N}\Vert_{P_{0,N},2}\leq \delta_N/m$, $\Vert \widehat{\eta}_{Y,l}-{\eta}_{Y,0,N}\Vert_{P_{0,N},\infty}\leq C$, $\Vert \widehat{\eta}_{A,l}-{\eta}_{A,0,N}\Vert_{P_{0,N},\infty}\leq C$, $\Vert \widehat{\eta}_{Z^{(j)},l}-{\eta}_{Z^{(j)},0,N}\Vert_{P_{0,N},\infty}\leq C$, $\Vert \widehat{\eta}_{A,l}-{\eta}_{A,0,N}\Vert_{P_{0,N},2}\times \Vert \widehat{\eta}_{Z^{(j)},l}-{\eta}_{Z^{(j)},0,N}\Vert_{P_{0,N},2}\leq N^{-1/2}\delta_N$, $\Vert \widehat{\eta}_{Y,l}-{\eta}_{Y,0,N}\Vert_{P_{0,N},2}\times \Vert \widehat{\eta}_{Z^{(j)},l}-{\eta}_{Z^{(j)},0,N}\Vert_{P_{0,N},2}\leq N^{-1/2}\delta_N$, where $\delta_N=o(1/\sqrt{m})$, $\Delta_N=o(1)$,  and $C>0$ is a constant.
    \label{Assumption: Additive SMM}
\end{assumption}

    Assumption \ref{Assumption: Additive SMM}(d) also requires that all $m+2$ nuisance estimates simultaneously belong to the realization set. In the standard double machine learning literature \citep{chernozhukov2018double}, where $m$ is fixed, this is not a concern because the number of nuisance parameters remains constant. However, under many weak asymptotics, the number of nuisance parameters
$\eta_{Z^{(j)},0,N}$ increases with $m$, making it more challenging to control their convergence rate uniformly. If $\widehat{\eta}_{Z^{(j)},l}$ is estimated using nonlinear least squares, then under appropriate regularity conditions, its $L_2$ error concentrates within an interval of width $O(\sqrt{\log N/N})$ with probability at least $1-C/N$ (see Theorem 5.2 of \citet{koltchinskii2011oracle}). Therefore, the joint probability that all $\widehat{\eta}_{Z^{(j)},l}, $ for $j=1,\dots, m$, lie within such intervals is at least $1-Cm/N$. Since $m/N=o(1)$, it is possible to control the convergence rates of all  $(\widehat{\eta}_{Z^{(j)},l})$ simultaneously, even as $m$ goes to infinity. 

The consistency and asymptotic normality of the two-step CUE estimator in the presence of high-dimensional weak instrumental variables $Z$ is established in the following theorem.
\begin{theorem}
   Suppose Assumption \ref{Assumption: Additive SMM} holds. Then the CUE estimator $\widehat{\beta}$ satisfies all regularity conditions stated in Theorem 2 for separable moment functions. Therefore, $\widehat{\beta}$ is consistent and asymptotically normal, with its asymptotic distribution given by Theorem 2.
    \label{Theorem: ASMM}
\end{theorem}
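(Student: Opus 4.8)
The strategy is entirely one of verification: I check that Assumption~\ref{Assumption: Additive SMM} implies every hypothesis of Theorem~\ref{Theorem: consistency and ASN} in its separable-moment form, namely Assumptions~\ref{Assumption: weak moment condition}--\ref{Assumption: convergence of g-hat}, \ref{Assumption: consistency, matrix estimation}, \ref{Assumption: ASN matrix estimation} and Assumptions~S1--S3, and then invoke that theorem. Throughout I write $\tilde Y_\eta=Y-\eta_Y(X)$, $\tilde A_\eta=A-\eta_A(X)$, $\tilde Z_\eta=Z-\eta_Z(X)$, with true-value versions $\tilde Y,\tilde A,\tilde Z$ at $\eta_{0,N}$ and nuisance errors $\Delta_Y,\Delta_A,\Delta_{Z^{(j)}}$; I specialize to $\gamma(a,z,x;\beta)=\beta a$, so $p=1$, $\tilde S_N=1$, $S_N=\mu_N$ (the general linear form $\gamma=a(\beta^{(1)}+x^\top\beta^{(2)})$ is identical with heavier bookkeeping). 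First I record the separable structure $g(o;\beta,\eta)=g^{[1]}(o;\eta)+g^{[2]}(o;\eta)\beta$ with $g^{[1]}=\tilde Y_\eta\tilde Z_\eta$, $g^{[2]}=-\tilde A_\eta\tilde Z_\eta$ ($B=2$, $q=1$, $h^{[1]}\equiv1$, $h^{[2]}(\beta)=\beta$, both $C^\infty$), and note that $g$ is affine in $\beta$, so $G(o;\beta,\eta)=-\tilde A_\eta\tilde Z_\eta$ is $\beta$-free and $\partial_\beta G\equiv0$; this makes $\Omega^{kl}\equiv0$, annihilates all $\beta$-derivative-of-$G$ terms in Assumptions~S1--S2, and renders every matrix in Assumptions~\ref{Assumption: consistency, matrix estimation}, \ref{Assumption: ASN matrix estimation}, S1, S2 affine in $\beta$.

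Next, Assumption~S3. Since $\eta_{Y,0,N},\eta_{A,0,N},\eta_{Z^{(j)},0,N}$ are conditional expectations given $X$, a direct Gâteaux computation shows that, for every $\beta\in\mathcal B$, perturbing $\eta_Y$ or $\eta_A$ yields a term $\propto\mathbb E_{P_{0,N}}[(\cdot)(X)\,\mathbb E_{P_{0,N}}[\tilde Z\mid X]]=0$ and perturbing $\eta_Z$ yields $-\mathbb E_{P_{0,N}}[\mathbb E_{P_{0,N}}[\tilde Y-\beta\tilde A\mid X]\,(\cdot)(X)]=0$; hence $D^{(1)}_{0,\beta,g}\equiv0$ (global Neyman orthogonality). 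As $g$ is bilinear in the perturbation direction, $\partial_t^2\,\mathbb E_{P_{0,N}}[g^{(j)}(O;\beta,(1-t)\eta_{0,N}+t\eta)]$ is constant in $t$, hence continuous, and up to sign equals $2\,\mathbb E_{P_{0,N}}[(\Delta_Y-\beta\Delta_A)(X)\Delta_{Z^{(j)}}(X)]$; Cauchy--Schwarz and compactness of $\mathcal B$ give $\lambda_N^{(j)}\lesssim\max\{\Vert\Delta_Y\Vert_{P_{0,N},2},\Vert\Delta_A\Vert_{P_{0,N},2}\}\,\Vert\Delta_{Z^{(j)}}\Vert_{P_{0,N},2}\le N^{-1/2}\delta_N$ by the product-rate conditions in Assumption~\ref{Assumption: Additive SMM}(d). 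The $L^2$ rates $r_N^{(b_1),(j,1)}$, $r_N^{(b_1,b_2),(j,k,1,1)}$ are bounded by telescoping the (at most fourfold) products into sums of terms each carrying a factor $\Delta_\bullet$; conditioning on $X$ and using $\mathbb E[Y^4\mid X],\mathbb E[A^4\mid X]<C$ and boundedness of $(X,Z)$ bounds each by a constant times $\Vert\Delta_Y\Vert_{P_{0,N},2}$, $\Vert\Delta_A\Vert_{P_{0,N},2}$ or $\Vert\Delta_{Z^{(j)}}\Vert_{P_{0,N},2}\le\delta_N/m\le\delta_N$, so all are $\lesssim\delta_N=o(1/\sqrt m)$.

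For the matrix conditions I argue conditionally on the complementary fold, on which $\widehat\eta_l$ is a fixed element of the realization set (Assumption~\ref{Assumption: Additive SMM}(d) guarantees all $m+2$ nuisance estimators satisfy the stated bounds simultaneously with probability $1-o(1)$). Separability gives $\widebar{A}(\beta,\eta)=\sum_{b,b'}\widebar{A}^{[b,b']}(\eta)h^{[b]}(\beta)h^{[b']}(\beta)^\top$ for $A\in\{\Omega,\Omega^k,\Omega^{k,l}\}$ with $h^{[b]}$ bounded on $\mathcal B$, so $\sup_{\beta,\eta}\Vert\widebar{A}(\beta,\eta)-\widebar{A}(\beta,\eta_{0,N})\Vert$ reduces to a finite sum of operator-norm differences of fixed $m\times m$ matrices; bounding $u^\top(\cdot)v$ over unit vectors by a product of $L^2(P_{0,N})$ norms and using that the aggregate instrument error obeys $\Vert Z-\eta_Z\Vert_{P_{0,N},2}\le(\sum_j(\delta_N/m)^2)^{1/2}=\delta_N/\sqrt m$ --- which offsets the $\Vert\tilde Z\Vert\lesssim\sqrt m$ incurred in passing from quadratic forms to operator norms --- gives $\sup_{\beta,\eta}\Vert\widebar{A}(\beta,\eta)-\widebar{A}(\beta,\eta_{0,N})\Vert\lesssim\delta_N=o(1/\sqrt m)$. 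For the empirical parts, $\widehat{A}(\beta,\eta_{0,N})$ is an $h(\beta)$-bounded linear combination of finitely many sample averages of $m\times m$ matrices, so a matrix Rosenthal/Bernstein inequality \citep{tropp2016expected}, with the conditional-moment bounds giving $\Vert\mathrm{Var}\Vert\lesssim m^2$, yields $\sup_\beta\Vert\widehat{A}(\beta,\eta_{0,N})-\widebar{A}(\beta,\eta_{0,N})\Vert=o_p(1/\sqrt m)$ under $m^3/N\to0$ (a fortiori the $o_p(1)$ in Assumption~\ref{Assumption: consistency, matrix estimation}); the fourth-moment and operator-norm bounds of Assumptions~S1--S2 and the Lipschitz-in-$\beta$ condition follow likewise using affineness. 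Finally, Assumption~\ref{Assumption: weak moment condition} is exactly Assumption~\ref{Assumption: Additive SMM}(b); and Assumptions~\ref{Assumption: global identifiability}, \ref{Assumption: convergence of g-hat} follow from $\widebar g(\beta,\eta_{0,N})=\widebar{G}(\beta-\beta_0)$, $\widehat g(\beta,\eta_{0,N})=\widehat g(\beta_0,\eta_{0,N})+\widehat{G}(\beta-\beta_0)$ with $\Vert\widehat g(\beta_0,\eta_{0,N})\Vert=O_p(\sqrt{m/N})$, $\Vert\widehat{G}-\widebar{G}\Vert=O_p(\sqrt{m/N})$, and $\Vert\widebar{G}\Vert\asymp\mu_N/\sqrt N$ (the last because Assumption~\ref{Assumption: Additive SMM}(a),(c) force $\widebar\Omega(\beta,\eta_{0,N})$ to be well conditioned, whence $\widebar{G}^\top\widebar\Omega^{-1}\widebar{G}\asymp\Vert\widebar{G}\Vert^2$ and Assumption~\ref{Assumption: Additive SMM}(b) fixes the scale), so that $\sqrt N\,\Vert\widebar g(\beta,\eta_{0,N})\Vert/\mu_N\asymp|\delta(\beta)|$ and likewise empirically --- the identification arithmetic being that of \citet{newey2009generalized} for the linear IV model. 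Assembling these verifications, Theorem~\ref{Theorem: consistency and ASN} applies.

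I expect the main obstacle to be the operator-norm control, at the rate $o_p(1/\sqrt m)$, of the high-dimensional matrices $\widehat\Omega,\widehat\Omega^k,\widehat\Omega^{k,l}$ and of their population limits under nuisance perturbation: a naïve entrywise or Frobenius bound loses a factor $m$ and fails, so one must combine (i) the separable structure to strip off the $\beta$-dependence before concentrating, (ii) the per-coordinate instrument rate $\delta_N/m$ (rather than $\delta_N$) to obtain the aggregate rate $\delta_N/\sqrt m$ that cancels $\Vert\tilde Z\Vert\lesssim\sqrt m$, and (iii) a dimension-free matrix concentration inequality whose leading term scales with $\Vert\mathrm{Var}\Vert^{1/2}$. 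Making all of this rate bookkeeping close simultaneously under $m^3/N\to0$, $\delta_N=o(1/\sqrt m)$ and the product-rate conditions of Assumption~\ref{Assumption: Additive SMM}(d) is the technical crux; by comparison the Neyman-orthogonality and identification verifications are routine.
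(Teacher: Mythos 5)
Your proposal is correct and follows essentially the same route as the paper's proof: a condition-by-condition verification that exploits the linearity of the ASMM moment in $\beta$, direct Gateaux computations for global Neyman orthogonality and the second-order term, Cauchy--Schwarz/conditional-moment bounds with the per-coordinate $\delta_N/m$ instrument rates to control the population $m\times m$ matrix perturbations, and Tropp-type matrix concentration for the empirical matrix terms under $m^3/N\to 0$, exactly as in Supplemental Material S9. The only place you are looser than the paper is the variance-parameter bookkeeping in the matrix concentration step (the paper computes $\nu\lesssim m/N$ rather than your coarser $m^2$-type bound, which is what makes the $o_p(1/\sqrt{m})$ rate close comfortably under $m^3/N\to 0$), but this does not change the argument.
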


\subsection{MSMM}

\begin{assumption}[Regularity assumptions for MSMM] We assume the following conditions hold: (a) The variables $(X,Z)$ are uniformly bounded. There exists a constant $C$ such that $\mathbb{E}_{P_{0,N}}[Y^{4}|A,X]<C$. (b) There exists a positive constant $H$ such that $\mu_N^{-2} N\widebar{G}^T\widebar{\Omega}^{-1}\widebar{G}\rightarrow H$, where $\mu_N \rightarrow\infty$, $\mu_N=O(\sqrt{N})$, $m/\mu_N^2$ is bounded for all $N$, and $m^3/N\rightarrow 0$. (c) There exists a constant $C$ such that $\frac1CI_m\preceq \mathbb{E}_{P_{0,N}}[(Z-\mathbb{E}_{P_{0,N}}[Z|X])(Z-\mathbb{E}_{P_{0,N}}[Z|X])^T|X]  \preceq CI_m$. (d) Let $I\subset \{1,\dots, N\}$ be a random subset of size $N/L$. Suppose the nuisance parameter estimator $\widehat{\eta}$, constructed using only observations outside of $I$, satisfies the following conditions: with $P_{0,N}$- probability  at least $1-\Delta_N$, $\Vert \widehat{\eta}_{A,(1),l}-{\eta}_{A,(1),0,N}\Vert_{P_{0,N},2}\leq \delta_N$, $\Vert \widehat{\eta}_{Y,(a),l}-{\eta}_{Y,(a),0,N}\Vert_{P_{0,N},2}\leq \delta_N$, $\Vert \widehat{\eta}_{Z^{(j)},l}-{\eta}_{Z^{(j)},0,N}\Vert_{P_{0,N},2}\leq \delta_N/m$, $\Vert \widehat{\eta}_{A,(1),l}-{\eta}_{A,(1),0,N}\Vert_{P_{0,N},\infty}\leq C$, $\Vert \widehat{\eta}_{Y,(a),l}-{\eta}_{Y,(a),0,N}\Vert_{P_{0,N},\infty}\leq C$, $\Vert \widehat{\eta}_{Z^{(j)},l}-{\eta}_{Z^{(j)},0,N}\Vert_{P_{0,N},\infty}\leq C$, $\Vert \widehat{\eta}_{A,(1),l}-{\eta}_{A,(1),0,N}\Vert_{P_{0,N},2}\times \Vert \widehat{\eta}_{Z^{(j)},l}-{\eta}_{Z^{(j)},0,N}\Vert_{P_{0,N},2}\leq N^{-1/2}\delta_N$, $\Vert \widehat{\eta}_{Y,(a),l}-{\eta}_{Y,(a),0,N}\Vert_{P_{0,N},2}\times \Vert \widehat{\eta}_{Z^{(j)},l}-{\eta}_{Z^{(j)},0,N}\Vert_{P_{0,N},2}\leq N^{-1/2}\delta_N$, where $\delta_N=o(1/\sqrt{m})$, $\Delta_N=o(1)$, and $C>0$ is a constant.
\label{Assumption: MSMM}
\end{assumption}

In this example,
\begin{align*}
    \widebar{G}=-\exp(-\beta_0)\mathbb{E}_{P_{0,N}}\left[\left(YI\{A=1\}-\mathbb{E}_{P_{0,N}}[YI\{A=1\}|X]\right)(Z-\mathbb{E}_{P_{0,N}}[Z|X])\right],
\end{align*}
so Assumption \ref{Assumption: MSMM}(b) implies that the conditional correlation between $YI\{A=a\}$ and $Z$ given $X$ is weak. Since $Z$ affects $Y$ only through $A$, this weak correlation can still be interpreted as weak identification from a small conditional correlation between $A$ and $Z$.

The consistency and asymptotic normality of the two-step CUE estimator under high-dimensional weak instrumental variables $Z$ is established in the following theorem:
\begin{theorem}
    Suppose Assumption \ref{Assumption: MSMM} holds.  Then the CUE estimator $\widehat{\beta}$ satisfies all regularity conditions stated in Theorem 2 for separable moment functions. Therefore, $\widehat{\beta}$  is consistent and asymptotically normal, with its asymptotic distribution given by Theorem 2.
    \label{Theorem: MSMM}
\end{theorem}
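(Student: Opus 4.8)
The plan is to verify that the MSMM moment function satisfies every high-level condition that Theorem~\ref{Theorem: consistency and ASN} invokes in its \emph{separable} form --- namely Assumptions~\ref{Assumption: weak moment condition}--\ref{Assumption: convergence of g-hat}, \ref{Assumption: consistency, matrix estimation}, \ref{Assumption: ASN matrix estimation}, and Supplemental Assumptions~\ref{Assumption: further restriction for moments.}--\ref{Assumption: separable score} --- and then quote that theorem. The argument runs parallel to the ASMM case (Theorem~\ref{Theorem: ASMM}); the new features are that $g$ is nonlinear in $\beta$ through $e^{-\beta}$ and that its bracket is bilinear (not linear) in the nuisances. First I would record the separable decomposition: for binary $A$, $e^{-\beta A}=(1-A)+Ae^{-\beta}$ puts $g$ into the form \eqref{Equation: separable score function} with $B=2$, $q=1$, $h^{[1]}(\beta)\equiv 1$, $h^{[2]}(\beta)=e^{-\beta}$ (smooth on the compact $\mathcal{B}$), and
\begin{align*}
  g^{[1]}(o;\eta) &= \big[ Y(1-A) - \eta_{Y,(0)}(X)\big(1-\eta_{A,(1)}(X)\big) \big]\big(Z-\eta_Z(X)\big),\\
  g^{[2]}(o;\eta) &= \big[ YA - \eta_{Y,(1)}(X)\eta_{A,(1)}(X) \big]\big(Z-\eta_Z(X)\big).
\end{align*}
Since $\beta$ is scalar, $p=1$, $\tilde S_N=1$, $S_N=\mu_N$, so Assumption~\ref{Assumption: weak moment condition} is exactly Assumption~\ref{Assumption: MSMM}(b).

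Next, for identifiability and basic regularity I would use $\mathbb{E}_{P_{0,N}}[YI\{A=a\}-\eta_{Y,(a),0,N}\eta_{A,(a),0,N}\mid X]=0$ to obtain the closed form $\widebar g(\beta,\eta_{0,N})=\widebar G\,\big(1-e^{-(\beta-\beta_0)}\big)$; Assumptions~\ref{Assumption: MSMM}(a),(c) make the eigenvalues of $\widebar\Omega(\beta,\eta_{0,N})$ bounded and bounded away from zero (Assumption~\ref{Assumption: convergence of g-hat}(b)), hence by Assumption~\ref{Assumption: MSMM}(b) $\sqrt N\|\widebar G\|/\mu_N\asymp 1$, and since $|1-e^{-x}|\asymp|x|$ on the compact $\mathcal{B}$ this gives the population part of Assumption~\ref{Assumption: global identifiability}, while a uniform law of large numbers over the scalar VC-type class $\{g(\cdot;\beta,\eta_{0,N}):\beta\in\mathcal{B}\}$ gives the empirical part. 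Boundedness of $(X,Z)$ together with $\mathbb{E}_{P_{0,N}}[Y^4\mid A,X]<C$ yield $\mathbb{E}_{P_{0,N}}\|g(\beta,\eta_{0,N})\|^4\lesssim m^2$ and the same for $G$, which gives Assumption~\ref{Assumption: convergence of g-hat}(a) and Supplemental Assumption~\ref{Assumption: further restriction for moments.}(a); the remaining parts of \ref{Assumption: further restriction for moments.} follow from $\mathbb{E}_{P_{0,N}}[\partial_\beta^2 g(\beta_0)]=-\widebar G$ (again of norm $\asymp\mu_N/\sqrt N$) and stochastic equicontinuity of the VC-type derivative classes.

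The weighting-matrix conditions (Assumptions~\ref{Assumption: convergence of g-hat}(c), \ref{Assumption: consistency, matrix estimation}, \ref{Assumption: ASN matrix estimation}, Supplemental Assumption~\ref{Assumption: Further restrictions for uniform convergence}) are where separability pays off: $\widehat\Omega(\beta,\eta)=\sum_{b_1,b_2=1}^{2}h^{[b_1]}(\beta)h^{[b_2]}(\beta)\,\widehat\Omega^{[b_1,b_2]}(\eta)$ with $\widehat\Omega^{[b_1,b_2]}(\eta)=N^{-1}\sum_i g_i^{[b_1]}(\eta)g_i^{[b_2]}(\eta)^T$ free of $\beta$ (and $\Omega^k,\Omega^{kl},\Omega^{k,l}$ likewise with $h$ replaced by its derivatives), so uniform-in-$\beta$ control reduces to: (i) $\mathbb{E}_{P_{0,N}}\|\widehat\Omega^{[b_1,b_2]}(\eta_{0,N})-\widebar\Omega^{[b_1,b_2]}(\eta_{0,N})\|_F^2\lesssim m^2/N$, which is $o_p(1)$ in Frobenius norm and $o_p(1/\sqrt m)$ in spectral norm once $m^3/N\to 0$; and (ii) $\sup_{\eta\in\mathcal{T}_N}\|\widebar\Omega^{[b_1,b_2]}(\eta)-\widebar\Omega^{[b_1,b_2]}(\eta_{0,N})\|=o(1/\sqrt m)$, obtained by a Cauchy--Schwarz estimate of $v^{T}(\widebar\Omega^{[b_1,b_2]}(\eta)-\widebar\Omega^{[b_1,b_2]}(\eta_{0,N}))v$ together with $\sum_j(r^{(j)}_N)^2\lesssim m\delta_N^2\to 0$. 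For Assumption~\ref{Assumption: separable score}, global Neyman orthogonality of each $g^{[b]}$ is checked by direct Gâteaux differentiation along $\eta_t=(1-t)\eta_{0,N}+t\eta$: the first-order term splits into a piece where $Z-\eta_Z$ multiplies a function of $X$ (killed by $\mathbb{E}[Z-\mathbb{E}[Z\mid X]\mid X]=0$) and a piece where $(Z-\eta_Z)$ multiplies $\mathbb{E}[YI\{A=a\}-\eta_{Y,(a),0,N}\eta_{A,(a),0,N}\mid X]=0$; continuity of $D^{(2)}_{\beta,t,g^{[b]}}$ is immediate because the map is polynomial in $t$. The rates $r^{(b_1),(j,q)}_N$ and $r^{(b_1,b_2),(j,k,l,r)}_N$ telescope into $L^2$-differences of the individual nuisance estimators (linearized via the $L^\infty$ bounds in Assumption~\ref{Assumption: MSMM}(d)) and are $\lesssim\delta_N$; and $\lambda^{(j)}_N$, after using $\mathbb{E}[Z-\mathbb{E}[Z\mid X]\mid X]=0$ to annihilate the terms in which a pure product of nuisance perturbations multiplies $Z-\mathbb{E}[Z\mid X]$, reduces to bilinear ``nuisance-perturbation $\times\,\Delta_{Z^{(j)}}$'' cross-terms bounded by $N^{-1/2}\delta_N$ via the product-rate conditions of Assumption~\ref{Assumption: MSMM}(d); with $\delta_N=o(1/\sqrt m)$ this gives Assumption~\ref{Assumption: separable score} (absorbing universal constants into $\delta_N$). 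Feeding these into Theorem~\ref{Theorem: consistency and ASN} (separable form) yields $S_N^{T}(\widehat\beta-\beta_0)\rightsquigarrow N(0,V)$.

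The main obstacle is the simultaneous control of the growing collection $\{\widehat\eta_{Z^{(j)}}\}_{j\le m}$ of instrument-regression nuisances: one must (i) argue that all of them lie in the realization set $\mathcal{T}_N$ with probability $1-o(1)$ via a union bound that survives because $m/N\to 0$ (as discussed after Assumption~\ref{Assumption: MSMM}), and (ii) show that the abstract rates $r^{(j)}_N$, $r^{(j,k)}_N$, $\lambda^{(j)}_N$ --- which the proof of Theorem~\ref{Theorem: consistency and ASN} sums or maximizes over $j,k$ --- are controlled by the per-coordinate rate $\|\widehat\eta_{Z^{(j)}}-\eta_{Z^{(j)},0,N}\|_{P_{0,N},2}\le\delta_N/m$ and the product rates $\le N^{-1/2}\delta_N$ of Assumption~\ref{Assumption: MSMM}(d) without losing the $\sqrt m$ and $\sqrt{mN}$ factors the many-weak-moment regime ($m^3/N\to 0$ being essentially tight for asymptotic normality) can only barely afford. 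Carefully tracking which nuisance cross-products actually survive in the second-order Gâteaux derivative $\lambda^{(j)}_N$ --- made tractable by separability and by the $L^\infty$/$L^4$ moment bounds in Assumption~\ref{Assumption: MSMM}(a),(d) --- is where the bulk of the verification lies.
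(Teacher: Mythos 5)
Your plan follows essentially the same route as the paper's own verification: the separable decomposition $e^{-\beta A}=(1-A)+Ae^{-\beta}$ with $B=2$, the closed form $\widebar g(\beta,\eta_{0,N})=\widebar G\bigl(1-e^{-(\beta-\beta_0)}\bigr)$ combined with $|1-e^{-x}|\asymp|x|$ on the compact $\mathcal{B}$ for identifiability, eigenvalue sandwiching of $\widebar\Omega$ via conditioning on $(Z,X)$, direct G\^ateaux differentiation of each $g^{[b]}$ for global Neyman orthogonality, and rate bookkeeping for $r_N$ and $\lambda_N^{(j)}$ using the $L^2$/$L^\infty$ conditions of Assumption \ref{Assumption: MSMM}(d). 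Your handling of $\lambda_N^{(j)}$ is in fact sharper than the paper's: by noting that $\mathbb{E}[Z-\mathbb{E}[Z\mid X]\mid X]=0$ annihilates the term in which the pure $\Delta_{Y,(1)}\Delta_{A,(1)}$ product multiplies $Z-\eta_{Z,0,N}(X)$, you avoid needing a $Y\times A$ product-rate condition that Assumption \ref{Assumption: MSMM}(d) does not actually supply (the paper's displayed bound quietly uses it). Your Frobenius-norm bound $\mathbb{E}\|\widehat\Omega^{[b_1,b_2]}(\eta_{0,N})-\widebar\Omega^{[b_1,b_2]}(\eta_{0,N})\|_F^2\lesssim m^2/N$, giving $o_p(1/\sqrt m)$ under $m^3/N\to0$, is a legitimate (cruder but sufficient) substitute for the paper's matrix-concentration argument.

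The one place where your sketch, as written, would not deliver the required rate is the population part of Assumption \ref{Assumption: ASN matrix estimation}(a): you justify $\sup_{\eta\in\mathcal{T}_N}\|\widebar\Omega^{[b_1,b_2]}(\eta)-\widebar\Omega^{[b_1,b_2]}(\eta_{0,N})\|=o(1/\sqrt m)$ via Cauchy--Schwarz ``together with $\sum_j(r^{(j)}_N)^2\lesssim m\delta_N^2\to0$,'' but that bound only yields $\sqrt m\,\delta_N=o(1)$, which is the consistency-level requirement of Assumption \ref{Assumption: consistency, matrix estimation}(a), not the $o(1/\sqrt m)$ needed for asymptotic normality. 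To close this you must exploit the finer structure, exactly as in the paper's ASMM verification: split the difference into the part where the instrument residual $(Z-\eta_Z)(Z-\eta_Z)^T$ changes, bounded elementwise by the per-coordinate rate $\|\widehat\eta_{Z^{(j)}}-\eta_{Z^{(j)},0,N}\|_{P_{0,N},2}\le\delta_N/m$ so that the Frobenius norm is $\lesssim\delta_N$, and the part where only the scalar bracket changes, bounded in spectral norm by $\bigl\|\mathbb{E}_{P_{0,N}}[(Z-\eta_{Z,0,N})(Z-\eta_{Z,0,N})^T\,|\,X]\bigr\|\le C$ times the $L^1$ norm of the bracket perturbation, again $\lesssim\delta_N=o(1/\sqrt m)$. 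You already invoke both ingredients elsewhere in your plan, so this is a repairable imprecision rather than a wrong approach, but as stated that step fails.
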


\subsection{PSMM}

For consistency and asymptotic normality of the CUE estimator in the presence of many weak treatment proxies, we require the following regularity conditions:
\begin{assumption}[Regularity assumptions for proximal causal inference example] We assume the following conditions hold: (a) The variables $(A,W,Z)$ are uniformly bounded. There exists a constant $C>0$ such that $\mathbb{E}_{P_{0,N}}[Y^4|A,X,Z]<C$ and $\mathbb{E}_{P_{0,N}}[W^4|A,X,Z]<C$. (b) Assumption 1 holds, and $m^3/N\rightarrow 0$. (c) $\frac1C I_{m+1}\preceq \mathbb{E}_{P_{0,N}}[(\tilde{Z}-\mathbb{E}_{P_{0,N}}[\tilde{Z}|X])(Z-\mathbb{E}_{P_{0,N}}[\tilde{Z}|X])^T|X]  \preceq   CI_{m+1}$, where $\tilde{Z} = (A,Z^T)^T$. (d) Let $I\subset \{1,\dots, N\}$ be a random subset of size $N/L$. Suppose the nuisance parameter estimator $\widehat{\eta}$, constructed using only observations outside of $I$, satisfies the following conditions: with $P_{0,N}$-probability no less than $1-\Delta_N$, $\Vert \widehat{\eta}_{A,l}-{\eta}_{A,0,N}\Vert_{P_{0,N},2}\leq \delta_N$, $\Vert \widehat{\eta}_{Y,l}-{\eta}_{Y,0,N}\Vert_{P_{0,N},2}\leq \delta_N$,$\Vert \widehat{\eta}_{W,l}-{\eta}_{W,0,N}\Vert_{P_{0,N},2}\leq \delta_N$,$\Vert \widehat{\eta}_{Z^{(j)},l}-{\eta}_{Z^{(j)},0,N}\Vert_{P_{0,N},2}\leq \delta_N/m$,$\Vert \widehat{\eta}_{A,l}-{\eta}_{A,0,N}\Vert_{P_{0,N},\infty}\leq C$, $\Vert \widehat{\eta}_{Y,l}-{\eta}_{Y,0,N}\Vert_{P_{0,N},\infty}\leq C$,$\Vert \widehat{\eta}_{W,l}-{\eta}_{W,0,N}\Vert_{P,\infty}\leq C$,$\Vert \widehat{\eta}_{Z^{(j)},l}-{\eta}_{Z^{(j)},0,N}\Vert_{P_{0,N},\infty}\leq C$,$\Vert \widehat{\eta}_{Z^{(j)},l}-\eta_{Z^{(j)},0,N}\Vert_{P_{0,N},2}(\Vert \widehat{\eta}_{Y,l}-\eta_{Y,0,N}\Vert_{P_{0,N},2}+\Vert \widehat{\eta}_{A,l}-\eta_{A,0,N}\Vert_{P_{0,N},2}+\Vert \widehat{\eta}_{W,0}-\eta_{W,0,N}\Vert_{P_{0,N},2})\leq N^{-1/2}\delta_N$, $\Vert \widehat{\eta}_{A,l}-\eta_{A,0,N}\Vert_{P_{0,N},2}(\Vert \widehat{\eta}_{Y,l}-\eta_{Y,0,N}\Vert_{P_{0,N},2}+\Vert \widehat{\eta}_{A,l}-\eta_{A,0,N}\Vert_{P_{0,N},2}+\Vert \widehat{\eta}_{W,l}-\eta_{W,0,N}\Vert_{P_{0,N},2})\leq N^{-1/2}\delta_N$, where $\delta_N=o(1/\sqrt{m})$, $\Delta_N=o(1)$, and $C>0$ is a constant.
\label{Assumption: PSMM}
\end{assumption}

In this example, the matrix $\widebar{G}$ is given by
\begin{align*}
    \widebar{G} = 
    \mathbb{E}_{P_{0,N}}\left[\begin{pmatrix*}[r]
  A-\mathbb{E}_{P_{0,N}}[A|X] \\
  Z-\mathbb{E}_{P_{0,N}}[Z|X]
\end{pmatrix*}(-(A-\mathbb{E}_{P_{0,N}}[A|X]),-(W-\mathbb{E}_{P_{0,N}}[W|X]))\right].
\end{align*}
Thus, Assumption \ref{Assumption: PSMM}(b) implies that the conditional correlations between $Z$ and $A$, as well as between $Z$ and $W$, are weak given $X$. This, in turn, suggests that the conditional correlation between $Z$ and the unmeasured confounder $U$ is also weak, meaning that $Z$ serves as a weak proxy for $U$. Assumption \ref{Assumption: PSMM}(c) is reasonable under this setting, as weak dependence between $Z$ and $U$ given $X$ implies that the components $Z^{(i)}$ and $Z^{(j)}$ are only weakly correlated with each other given $X$ when $i\neq j$.

The following theorem establishes the consistency and asymptotic normality of the two-step CUE estimator.
\begin{theorem}
   Suppose Assumption \ref{Assumption: PSMM} holds. Then the CUE estimator $\widehat{\beta}$ satisfies all the regularity conditions stated in Theorem 2 for separable moment functions. Therefore, $\widehat{\beta}$ is consistent and asymptotically normal, with the asymptotic distribution as stated in Theorem 2.
    \label{thm: PSMM}
\end{theorem}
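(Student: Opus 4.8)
The plan is to show that the PSMM moment function is separable, then to verify --- under Assumption~\ref{Assumption: PSMM} --- each of the high-level hypotheses invoked by the separable-moment branch of Theorem~\ref{Theorem: consistency and ASN}, namely Assumptions~\ref{Assumption: weak moment condition}--\ref{Assumption: convergence of g-hat}, \ref{Assumption: consistency, matrix estimation}, \ref{Assumption: ASN matrix estimation} together with the supplemental Assumptions~\ref{Assumption: further restriction for moments.}--\ref{Assumption: separable score}, after which the conclusion is immediate. Writing $\widetilde{Z}=(A,Z^\top)^\top$, $\eta_{\widetilde{Z}}=(\eta_A,\eta_Z^\top)^\top$, and the residuals $\varepsilon_Y=Y-\eta_Y(X)$, $\varepsilon_A=A-\eta_A(X)$, $\varepsilon_W=W-\eta_W(X)$, the moment function reads $g(O;\beta,\eta)=(\widetilde{Z}-\eta_{\widetilde{Z}}(X))(\varepsilon_Y-\beta_a\varepsilon_A-\beta_w\varepsilon_W)$, which is separable in the sense of \eqref{Equation: separable score function} with $B=3$, $g^{[1]}=(\widetilde{Z}-\eta_{\widetilde{Z}})\varepsilon_Y$, $g^{[2]}=-(\widetilde{Z}-\eta_{\widetilde{Z}})\varepsilon_A$, $g^{[3]}=-(\widetilde{Z}-\eta_{\widetilde{Z}})\varepsilon_W$, $h^{[1]}(\beta)=1$, $h^{[2]}(\beta)=\beta_a$, $h^{[3]}(\beta)=\beta_w$. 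Because $g$ is affine in $\beta$, the derivative $G=\partial g/\partial\beta=[g^{[2]},g^{[3]}]$ is free of $\beta$ and all second-order $\beta$-derivatives of $g$ vanish, so the $\beta$-differentiability requirements are automatic and all rate quantities in Assumptions~\ref{Assumption: general score regularity} and \ref{Assumption: further restriction for moments.}--\ref{Assumption: separable score} that involve $\beta$-derivatives of order $\ge2$ are identically zero.

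Next I would verify global Neyman orthogonality. Each $g^{[b]}$ is a product of a factor with conditional mean zero given $X$ and a factor whose conditional mean given $X$ vanishes at the truth, since $\eta_{Y,0,N}(X)=\mathbb{E}[Y\mid X]$, $\eta_{A,0,N}(X)=\mathbb{E}[A\mid X]$, $\eta_{W,0,N}(X)=\mathbb{E}[W\mid X]$ and $\eta_{Z,0,N}(X)=\mathbb{E}[Z\mid X]$; differentiating $t\mapsto\mathbb{E}_{P_{0,N}}[g^{[b]}(O;(1-t)\eta_{0,N}+t\eta)]$ at $t=0$ yields two cross terms, each annihilated by iterated expectation, so each $g^{[b]}$ --- and hence $g$, by Lemma~\ref{Lemma: permanence of global neyman orthogonality}(b) --- is globally Neyman orthogonal, which is Assumption~\ref{Assumption: separable score}(a). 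The second-order Gateaux derivative is constant in $t$, hence continuous, and is bounded by products of $L^2$ nuisance errors by Cauchy--Schwarz, so $\lambda^{(j)}_N\le N^{-1/2}\delta_N$ follows from the product-rate conditions of Assumption~\ref{Assumption: PSMM}(d). For identifiability (Assumption~\ref{Assumption: global identifiability}), affine-ness and the moment equation at $\beta_0$ give $\widebar{g}(\beta,\eta_{0,N})=\widebar{G}(\beta-\beta_0)$ exactly; combining $\widebar{G}^\top\widebar{\Omega}^{-1}\widebar{G}=N^{-1}S_N(H+o(1))S_N^\top$ (from Assumption~\ref{Assumption: weak moment condition}(b)) with nonsingularity of $H$, the lower bound on $\widetilde{S}_N\widetilde{S}_N^\top$, and uniform eigenvalue bounds on $\widebar{\Omega}(\beta,\eta_{0,N})$ (from boundedness of $(A,W,Z)$, the fourth-moment bounds on $Y,W$, Assumption~\ref{Assumption: PSMM}(c), and a mild nondegeneracy of the structural residual) gives $\|\widebar{g}(\beta,\eta_{0,N})\|\gtrsim\mu_N N^{-1/2}\|\delta(\beta)\|$; part (b) follows from the same identity applied to $\widehat{g}(\beta,\eta_{0,N})-\widehat{g}(\beta_0,\eta_{0,N})=\widehat{G}(\beta,\eta_{0,N})(\beta-\beta_0)$ together with $\sup_\beta\|\widehat{G}(\beta,\eta_{0,N})-\widebar{G}\|=o_p(1)$, which holds since $m^2/N\to0$.

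I would then dispose of the moment-regularity, entropy and rate conditions. Every component of $g$, $G$, $\Omega$ and its derivative analogues is a fixed polynomial of degree $\le4$ in the bounded variables $(A,W,Z,X)$ and in $Y$ with finite fourth conditional moment, with $\beta$ in the compact set $\mathcal{B}\subset\mathbb{R}^2$; combined with $m^3/N\to0$ (Assumption~\ref{Assumption: PSMM}(b)) this gives the moment bounds in Assumptions~\ref{Assumption: convergence of g-hat}(a) and \ref{Assumption: further restriction for moments.} and the VC-type entropy bounds in Assumptions~\ref{Assumption: general score regularity}(a) and \ref{Assumption: separable score} (classes affine in $\beta$ times bounded functions are VC-type). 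All statistical rates $r^{(\cdot)}_N$ reduce to $L^2$ nuisance errors or products thereof, and Assumption~\ref{Assumption: PSMM}(d) bounds each single error by $\delta_N=o(1/\sqrt{m})$, each treatment-proxy error by $\delta_N/m$, and each product of two errors by $N^{-1/2}\delta_N$ --- exactly the $\delta_N=o(1/\sqrt{m})$, $\lambda^{(j)}_N\le N^{-1/2}\delta_N$ requirements, the $m^{-1}$ factor absorbing the $\sqrt{m}$ accumulation over the $m$ proxy coordinates. The uniform matrix-convergence conditions (Assumptions~\ref{Assumption: consistency, matrix estimation}, \ref{Assumption: ASN matrix estimation}, \ref{Assumption: Further restrictions for uniform convergence}) split into an $\eta=\eta_{0,N}$ part, handled by matrix concentration for sums of bounded random matrices \citep{tropp2016expected} made uniform over $\beta$ via a covering of $\mathcal{B}$ and Lipschitz-in-$\beta$ bounds, and a nuisance-perturbation part, again controlled by Assumption~\ref{Assumption: PSMM}(d) with its $m^{-1}$-scaled proxy rates. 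Finally, the requirement that all $m+1$ nuisance estimators simultaneously lie in the realization set follows from a union bound over the $m$ proxy regressions, each concentrating with probability $1-O(1/N)$, so the joint event has probability $1-O(m/N)=1-o(1)$.

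The hard part will be the nuisance-perturbation half of the matrix-convergence step: establishing the $o_p(1/\sqrt{m})$ uniform-in-$\beta$ rates for $\widehat{\Omega}(\beta,\eta)$ and its first- and second-derivative analogues as $m\to\infty$ with $m^3/N\to0$. This requires, simultaneously, sharp $(m+1)$-dimensional matrix concentration for the empirical averages at $\eta_{0,N}$ uniformly over $\beta\in\mathcal{B}$, and a careful accounting of how the $(m+1)$-vector of nuisance errors propagates through the bilinear forms defining $\widehat{Q}$ and its derivatives --- where the $m^{-1}$-scaling imposed on $\|\widehat{\eta}_{Z^{(j)}}-\eta_{Z^{(j)},0,N}\|$ in Assumption~\ref{Assumption: PSMM}(d) is precisely what cancels the $\sqrt{m}$ growth of the Euclidean norm of the proxy-error vector. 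Once these uniform bounds are in hand, Theorem~\ref{Theorem: consistency and ASN} applies directly and yields consistency together with $S_N^\top(\widehat{\beta}-\beta_0)\rightsquigarrow N(0,V)$.
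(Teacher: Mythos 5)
Your proposal follows essentially the same route as the paper's proof: exploit the affine (separable) structure of the PSMM moment function, verify global Neyman orthogonality and the bound $\lambda_N^{(j)}\leq N^{-1/2}\delta_N$ by direct Gateaux-derivative computation and Cauchy--Schwarz, obtain identifiability from $\widebar{g}(\beta,\eta_{0,N})=\widebar{G}(\beta-\beta_0)$ combined with Assumption \ref{Assumption: weak moment condition}(b) and the eigenvalue bounds on $\widebar{\Omega}$, handle the matrix conditions by matrix concentration at $\eta_{0,N}$ plus the $m^{-1}$-scaled proxy rates, and then invoke Theorem \ref{Theorem: consistency and ASN}. The one imprecision is in Assumption \ref{Assumption: global identifiability}(b): the needed control is the rate $\Vert \widehat{G}-\widebar{G}\Vert=O_p(\sqrt{m/N})$, so that $(\sqrt{N}/\mu_N)\Vert(\widehat{G}-\widebar{G})(\beta-\beta_0)\Vert=O_p(\sqrt{m}/\mu_N)=O_p(1)$ using $m/\mu_N^2=O(1)$ (this is how the paper constructs and bounds $\widehat{M}$), rather than merely $\sup_\beta\Vert\widehat{G}(\beta,\eta_{0,N})-\widebar{G}\Vert=o_p(1)$ justified by $m^2/N\to 0$.
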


\section{Bounding supremum of empirical processes}
In this section, we will briefly review useful tools for bounding the supremum of empirical processes. Let $P$ be the underlying data generating distribution, $O_1,O_2,...,O_n$ are iid sampled from $P$, the support of $O_i$ is denoted as $\mathcal{O}$. Let $\mathbb{P}_n$ be the empirical measure as defined before. Let $\mathcal{F}$ be a class of measurable functions mapping from $\mathcal{O}$ to $\mathbb{R}$ with envelope function $F$.

It is often of interest to study the convergence rates suprema of empirical processes. Let 
\begin{align*}
    \Vert \mathbb{G}_n \Vert_{\mathcal{F}} = \sup_{f \in  \mathcal{F}}\vert\mathbb{G}_nf\vert.
\end{align*}
Our goal is to give a high-probability tail bound for $\Vert \mathbb{G}_n \Vert_{\mathcal{F}}$.   Chapter 2.14 of \citet{van1996weak} described such techniques. The technique we used here is based on Dudley's entropy integral \citep{van1996weak}. The uniform entropy is defined as
\begin{equation*}
    J(\delta,\mathcal{F}|F,L_2) = \sup_{Q}\int_0^\delta \sqrt{1+\log N(\epsilon \Vert F\Vert_{Q,2},\mathcal{F},L_2(Q))}d\epsilon
\end{equation*}
where the supremum is taken over all discreet probability measures $Q$ with $\Vert F\Vert_{Q,2}>0$.


Suppose $\mathcal{F}$ is a VC-type class, which means
\begin{equation*}
    \log \sup_{Q} N(\epsilon \Vert F\Vert_{Q,2},\mathcal{F},\Vert\cdot \Vert_{Q,2})\leq v\log(a/\epsilon)
\end{equation*}
for some constants $a\geq e$ and $v\geq 1$, where $N(\epsilon \Vert F\Vert_{Q,2},\mathcal{F},\Vert\cdot \Vert_{Q,2})$ is the covering number, then we have the following result

\begin{lemma}
\begin{equation*}
    \mathbb{E}_P[\Vert \mathbb{G}_n \Vert_{\mathcal{F}}]\leq K \Bigg(\sqrt{v\sigma^2\log\Big(\frac{a\Vert F\Vert_{P,2}}{\sigma}\Big)}+\frac{v\Vert M\Vert_{P,2}}{\sqrt{n}}\log \Big(\frac{a\Vert F\Vert_{P,2}}{\sigma}\Big)\Bigg)
\end{equation*}
    where $K$ is a absolute constant. Furthermore, for $t\geq 1$, with probability bigger than $1-t^{-q/2}$,
\begin{align*}
    \Vert \mathbb{G}_n\Vert_{\mathcal{F}}\leq (1+\alpha)E_P[\Vert \mathbb{G} \Vert_{\mathcal{F}}] +K(q)\Bigg((\sigma+n^{-1/2}\Vert M \Vert_{P,q})\sqrt{t}+\alpha^{-1}n^{-1/2}\Vert M\Vert_{P,2}t\Bigg)
\end{align*}
$\forall \alpha >0$, where $c$ is a constant, $\Vert M\Vert_{P,q}\leq n^{1/q}\Vert F\Vert_{P,q}$ and $K(q)>0$ is a constant only depending on $q$.

\label{lemma: concentration of supremum ep}
\end{lemma}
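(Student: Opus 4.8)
The plan is to follow the standard chaining-plus-concentration route, which is exactly the argument behind Corollary 5.1 of \citet{chernozhukov2014gaussian} and Section~2.14 of \citet{van1996weak}. First I would establish the in-expectation bound by symmetrization: introducing i.i.d.\ Rademacher variables $(\varepsilon_i)_{i=1}^n$ independent of the data, the symmetrization inequality gives $\mathbb{E}_P\Vert \mathbb{G}_n\Vert_{\mathcal{F}} \le 2\,\mathbb{E}\Vert n^{-1/2}\sum_{i=1}^n \varepsilon_i f(O_i)\Vert_{\mathcal{F}}$. Conditionally on the data, the symmetrized process is sub-Gaussian with respect to the random $L_2(\mathbb{P}_n)$ pseudometric, so Dudley's entropy integral bound yields a conditional bound of the order $\|F\|_{\mathbb{P}_n,2}\,J(\theta_n,\mathcal{F}|F,L_2)$, where $\theta_n = \sup_{f\in\mathcal{F}}\|f\|_{\mathbb{P}_n,2}/\|F\|_{\mathbb{P}_n,2}$ is a random localization radius. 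Using the VC-type hypothesis $\log N(\epsilon\|F\|_{Q,2},\mathcal{F},\|\cdot\|_{Q,2})\le v\log(a/\epsilon)$ one evaluates $J(\delta,\mathcal{F}|F,L_2)\lesssim \delta\sqrt{v\log(a/\delta)}$, and taking expectations over the data while controlling $\mathbb{E}[\theta_n^2]$ by $\sigma^2/\|F\|_{P,2}^2$ up to a lower-order envelope term produces the two stated pieces: the leading sub-Gaussian term $\sqrt{v\sigma^2\log(a\|F\|_{P,2}/\sigma)}$ and the remainder $n^{-1/2}v\|M\|_{P,2}\log(a\|F\|_{P,2}/\sigma)$, where $M = \max_{1\le i\le n}F(O_i)$.

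Second, for the high-probability quantile bound I would invoke Talagrand's concentration inequality for the supremum of an empirical process (in the Bousquet/Massart form, as presented in \citet{chernozhukov2014gaussian}). This gives that $\Vert\mathbb{G}_n\Vert_{\mathcal{F}}$ concentrates around its mean with a sub-Gaussian deviation of order $\sigma\sqrt{t}$ and a sub-exponential deviation of order $n^{-1/2}\|M\|_{P,\infty}t$; a truncation argument replaces $\|M\|_{P,\infty}$ by the weaker $L^q$ quantity $\|M\|_{P,q}\le n^{1/q}\|F\|_{P,q}$, and splitting constants with the free parameter $\alpha>0$—inflating the mean term to $(1+\alpha)\mathbb{E}_P[\Vert\mathbb{G}_n\Vert_{\mathcal{F}}]$ to absorb a cross term—yields the inequality valid with probability at least $1-t^{-q/2}$, with $K(q)$ depending only on $q$.

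The main obstacle I anticipate is the bookkeeping in the first step: extracting the sharp logarithmic factor $\log(a\|F\|_{P,2}/\sigma)$ rather than the crude $\log a$ requires the \emph{local} version of Dudley's bound keyed to the variance parameter $\sigma^2 = \sup_{f\in\mathcal{F}}Pf^2$, together with a careful control of the random radius $\theta_n$—one must bound $\mathbb{E}[\sup_{f\in\mathcal{F}}\|f\|_{\mathbb{P}_n,2}^2]$ by $\sigma^2$ up to a term that itself reenters the chaining bound, a standard but delicate fixed-point (peeling) argument. The desymmetrization and envelope-truncation steps are routine by comparison. Since all the required pieces are established in \citet{van1996weak} and \citet{chernozhukov2014gaussian}, in the paper I would state the VC-type hypothesis and the moment condition $\|F\|_{P,q}<\infty$ and conclude by citing those references.
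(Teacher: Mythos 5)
Your proposal is consistent with what the paper actually does: the paper gives no proof of this lemma at all, stating only that ``this result can be found in \citet{chernozhukov2018double}'' (whose maximal inequality is in turn built on \citet{chernozhukov2014gaussian} and \citet{van1996weak}), and your symmetrization-plus-local-Dudley sketch for the expectation bound is precisely the standard argument behind that citation, so concluding by citing those references is exactly in the spirit of the paper. The one substantive caveat concerns the tail bound: in the cited sources the deviation inequality with polynomial confidence level $1-t^{-q/2}$ is obtained from a Fuk--Nagaev-type bound derived via the Boucheron--Bousquet--Lugosi--Massart moment inequalities for empirical processes followed by Markov's inequality on $q$-th moments, not from Talagrand's exponential inequality; with an envelope controlled only in $L^q$, your Talagrand-plus-truncation route can be pushed through but requires extra care (the truncation error is itself handled by Markov, which is where the $t^{-q/2}$ tail and the $n^{-1/2}\Vert M\Vert_{P,q}\sqrt{t}$ term really come from), so the moment-inequality path is the cleaner way to recover the stated constants $K(q)$ and the $(1+\alpha)$-inflated mean term.
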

 This result can be found in \citet{chernozhukov2018double}. We will mainly use a special case of this result in our paper. Let $t = n^{\frac{2}{q}}$, then we have with probability bigger than $1-\frac{1}{n}$, 
 \begin{align*}
     &\Vert \mathbb{G}_n\Vert_{\mathcal{F}}\\
     \leq & (1+\alpha)K\Bigg(\sigma\sqrt{v\log\Big(\frac{a\Vert F\Vert_{P,2}}{\sigma}\Big)}+\frac{v\Vert M\Vert_{P,2}}{\sqrt{n}}\log \Big(\frac{a\Vert F\Vert_{P,2}}{\sigma}\Big) \Bigg) +\\
     &K(q)\Bigg(\Big(\sigma+\frac{\Vert M \Vert_{P,q}}{\sqrt{n}}\Big)n^{\frac{1}{q}}+\frac{1}{\alpha \sqrt{n}}\Vert M \Vert_{P,2}n^{\frac{2}{q}}\Bigg)
 \end{align*}
 where I used the fact that $\Vert M \Vert_{P,2}\leq \Vert M \Vert_{P,q}$ for $q\geq 2$, we further have $\Vert M \Vert_{P,q}\leq n^{1/q}\Vert F\Vert_{P,q}$.

 The reason why we choose $t = n^{\frac{2}{q}}$ is that, in this case, $m^2t^{-q/2} = m^2/(n^{1/2})^2$, which is $o(1)$ if we assume $m = o(n^{1/2})$.
 \section{Other useful results}

In our paper, we need to bound terms like $\Vert \frac{1}{N}\sum_{i=1}^Ng_i(\beta_0,\eta_{0,N})g_i^T(\beta_0,\eta_{0,N})-\mathbb{E}_{P_{0,N}}g_i(\beta_0,\eta_{0,N})g_i^T(\beta_0,\eta_{0,N})\Vert$ and $\Vert \frac{1}{N}\sum_{i=1}^NG_i(\beta_0,\eta_{0,N})g_i^T(\beta_0,\eta_{0,N})-\mathbb{E}_{P_{0,N}}G_i(\beta_0,\eta_{0,N})g_i^T(\beta_0,\eta_{0,N})\Vert$. Therefore, we need some results for bounding the norm of random matrices. We will mainly use the following result, which is Theorem 1 of \citet{tropp2016expected}.

\begin{lemma}
    \textbf{(The expected norm of an independent sum of random matrices).} Let $X_1,...,X_N$ be a sequence of i.i.d. 
$d_1\times d_2$ random matrices with $\mathbb{E}[X_i]=0$ for all $i$. Let
$S := \sum_{i=1}^NX_i$, define the matrix variance parameter to be
\begin{equation*}
    \nu(S) = \max\Big\{\Big\Vert \sum_{i=1}^N\mathbb{E}[X_iX_i^T]\Big\Vert, \Big\Vert \sum_{i=1}^N\mathbb{E}[X_i^TX_i]\Big\Vert\Big\},
\end{equation*}
and the large deviation parameter to be
\begin{equation*}
    L:=(\mathbb{E}\max_i\Vert X_i\Vert^2)^{1/2}.
\end{equation*}

Define the dimensional constant
\begin{align*}
    C(d_1,d_2):=4(1+2\lceil \log(d_1+d_2)\rceil).
\end{align*}
Then
\begin{align*}
    \sqrt{\nu(S)/4}+L/4\leq (\mathbb{E}\Vert S\Vert)^{1/2}\leq \sqrt{C(d_2,d_2)\nu(S)}+C(d_2,d_2)L.
\end{align*}\label{lemma: Tropp's inequality}
\end{lemma}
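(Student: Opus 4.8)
The plan is to recognize that this is exactly Theorem~1 of \citet{tropp2016expected}, so at the level of the paper we simply invoke that result; below I sketch the self-contained route one would take. Throughout, the central quantity is the root-mean-square operator norm $(\mathbb{E}\Vert S\Vert^{2})^{1/2}$. The first reduction is to pass to the Hermitian case via the self-adjoint dilation $\mathcal{H}(X)=\bigl[\begin{smallmatrix}0&X\\ X^{*}&0\end{smallmatrix}\bigr]$, a $(d_1+d_2)$-dimensional Hermitian matrix satisfying $\Vert\mathcal{H}(X)\Vert=\Vert X\Vert$ and $\mathcal{H}(X)^{2}=\mathrm{diag}(XX^{*},X^{*}X)$. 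Applying $\mathcal{H}$ to each $X_i$ turns $S$ into a sum of i.i.d.\ centered Hermitian matrices whose operator norm equals $\Vert S\Vert$ and whose matrix variance equals $\nu(S)$, since $\sum_i\mathbb{E}[\mathcal{H}(X_i)^2]=\mathrm{diag}(\sum_i\mathbb{E}[X_iX_i^{*}],\sum_i\mathbb{E}[X_i^{*}X_i])$. It therefore suffices to prove the two-sided bound for Hermitian summands of dimension $d=d_1+d_2$.

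For the lower bound I would use two elementary facts. Since the operator norm is convex, Jensen gives $\mathbb{E}\Vert S\Vert^{2}=\mathbb{E}\Vert S^{2}\Vert\ge\Vert\mathbb{E}[S^{2}]\Vert=\Vert\sum_i\mathbb{E}[X_i^{2}]\Vert=\nu(S)$, where independence and $\mathbb{E}X_i=0$ kill the cross terms. Separately, a symmetrization argument shows $\mathbb{E}\max_i\Vert X_i\Vert\le 2\,\mathbb{E}\Vert S\Vert$, whence $L\le 2\,(\mathbb{E}\Vert S\Vert^{2})^{1/2}$. Combining $(\mathbb{E}\Vert S\Vert^{2})^{1/2}\ge\sqrt{\nu(S)}$ and $(\mathbb{E}\Vert S\Vert^{2})^{1/2}\ge L/2$ by averaging yields $\sqrt{\nu(S)/4}+L/4\le(\mathbb{E}\Vert S\Vert^{2})^{1/2}$.

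The upper bound is the substantive part, handled by the trace-moment method. For each integer $p\ge 1$ I would bound $\mathbb{E}\,\mathrm{tr}(S^{2p})$ and then convert it to a norm bound via $\Vert S\Vert^{2p}\le\mathrm{tr}(S^{2p})\le d\,\Vert S\Vert^{2p}$. Expanding $\mathbb{E}\,\mathrm{tr}(S^{2p})=\sum_j\mathbb{E}\,\mathrm{tr}(X_j S^{2p-1})$, introducing an independent copy $X_j'$ of each $X_j$, and symmetrizing renders each term as $\tfrac12\mathbb{E}\,\mathrm{tr}\big((X_j-X_j')(S^{2p-1}-(S^{(j)})^{2p-1})\big)$, where $S^{(j)}$ replaces $X_j$ by $X_j'$. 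Telescoping $A^{2p-1}-B^{2p-1}=\sum_{q}A^{q}(A-B)B^{2p-2-q}$ and applying the matrix Hölder inequality to each trace produces a self-improving recursion of the form $\mathbb{E}\,\mathrm{tr}(S^{2p})\lesssim(2p-1)\big(\nu(S)+L^{2}\big)\,(\mathbb{E}\,\mathrm{tr}(S^{2p}))^{1-1/p}$. Solving it gives $(\mathbb{E}\,\mathrm{tr}(S^{2p}))^{1/2p}\lesssim\sqrt{(2p-1)\nu(S)}+(2p-1)\,(\mathbb{E}\max_i\Vert X_i\Vert^{2p})^{1/2p}$, and choosing $p\approx\lceil\log d\rceil$ trades the dimensional factor $d^{1/2p}=O(1)$ against the growth in $p$, producing the constant $C(d_1,d_2)=4(1+2\lceil\log(d_1+d_2)\rceil)$.

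The main obstacle is the recursion at the heart of the upper bound: executing the symmetrization-and-telescoping step and then applying matrix Hölder so that the right-hand side reassembles exactly into the variance proxy $\nu(S)$ and the large-deviation term $L$, with the factor $(2p-1)$ tracked precisely enough that the final optimization over $p$ delivers the stated logarithmic constant. Since this is verbatim Theorem~1 of \citet{tropp2016expected}, in the paper we invoke that result directly; the sketch above indicates how one would make it self-contained.
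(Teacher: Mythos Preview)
Your proposal is correct and matches the paper's treatment exactly: the paper does not prove this lemma but simply states it as Theorem~1 of \citet{tropp2016expected} and invokes it directly. Your self-contained sketch is a bonus beyond what the paper provides; note also that the quantity in the lemma should be $(\mathbb{E}\Vert S\Vert^{2})^{1/2}$ as you write, not $(\mathbb{E}\Vert S\Vert)^{1/2}$ as printed---this appears to be a typo in the paper's statement.
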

\ref{lemma: Tropp's inequality} is useful for establishing that the empirical mean of high-dimensional random matrices concentrates around its expectation.

\begin{lemma}
    Let $X$ be a $m\times m$ positive semidefinite random matrix, $Y$ be a random variable with $|Y|\leq K $ a.s., then
    \begin{align*}
        \Vert \mathbb{E}XY\Vert \leq 2\Vert \mathbb{E}XK\Vert.
    \end{align*}
\end{lemma}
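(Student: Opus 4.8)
The plan is to reduce everything to comparisons in the positive semidefinite (Loewner) order, exploiting that $X\succeq 0$ almost surely and that $Y$ is a bounded scalar. First I would split $Y$ into its positive and negative parts, $Y = Y^{+}-Y^{-}$ with $Y^{+}=\max(Y,0)$ and $Y^{-}=\max(-Y,0)$, so that $0\le Y^{+}\le K$ and $0\le Y^{-}\le K$ almost surely. Then $\mathbb{E}[XY] = \mathbb{E}[XY^{+}]-\mathbb{E}[XY^{-}]$, and since $Y^{\pm}$ are nonnegative scalars while $X\succeq 0$, the matrices $XY^{+}$ and $XY^{-}$ are positive semidefinite for every realization, hence so are $\mathbb{E}[XY^{+}]$ and $\mathbb{E}[XY^{-}]$.

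Next I would bound each term. For every realization, $K-Y^{+}\ge 0$ together with $X\succeq 0$ gives $X(K-Y^{+})\succeq 0$, i.e.\ $XY^{+}\preceq KX$; taking expectations yields $0\preceq \mathbb{E}[XY^{+}]\preceq K\,\mathbb{E}[X]=\mathbb{E}[XK]$. Monotonicity of the spectral norm on the positive semidefinite cone then gives $\Vert\mathbb{E}[XY^{+}]\Vert\le\Vert\mathbb{E}[XK]\Vert$, and identically $\Vert\mathbb{E}[XY^{-}]\Vert\le\Vert\mathbb{E}[XK]\Vert$. Applying the triangle inequality for the spectral norm,
\[
\Vert\mathbb{E}[XY]\Vert \le \Vert\mathbb{E}[XY^{+}]\Vert + \Vert\mathbb{E}[XY^{-}]\Vert \le 2\,\Vert\mathbb{E}[XK]\Vert,
\]
which is the claim. (One can in fact dispense with the factor $2$: writing $\Vert\mathbb{E}[XY]\Vert=\sup_{\Vert v\Vert=1}\vert\mathbb{E}[Y\,v^{T}Xv]\vert$ and using $\vert Y\,v^{T}Xv\vert\le K\,v^{T}Xv$ pointwise gives the sharper $\Vert\mathbb{E}[XY]\Vert\le\Vert\mathbb{E}[XK]\Vert$; the statement is kept with the looser constant since that is all that is used downstream.)

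I do not expect a genuine obstacle here. The only point requiring a moment of care is the passage from the matrix inequality $XY^{+}\preceq KX$ to the positive semidefiniteness of $X(K-Y^{+})$, which relies on $X$ being a \emph{symmetric} positive semidefinite matrix (so that scaling by the nonnegative scalar $K-Y^{+}$ preserves the Loewner order), together with the standard fact that the spectral norm is monotone with respect to the Loewner order on symmetric matrices. Both are elementary.
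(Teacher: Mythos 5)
Your proof is correct and follows essentially the same route as the paper: decompose $Y=Y^{+}-Y^{-}$, bound each piece by comparison with $K$ (your Loewner-order step is just the paper's $\sup_{\Vert v\Vert=1} v^{T}\mathbb{E}[XY^{\pm}]v$ argument rephrased), and finish with the triangle inequality. Your parenthetical observation that the factor $2$ can be dropped via $\Vert\mathbb{E}[XY]\Vert=\sup_{\Vert v\Vert=1}\vert\mathbb{E}[Y\,v^{T}Xv]\vert\le\Vert\mathbb{E}[XK]\Vert$ is also valid, though the paper only needs the looser constant.
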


\begin{proof}
    \begin{align*}
        \Vert \mathbb{E}XY\Vert= \Vert \mathbb{E}X(Y^+-Y^-)\Vert\leq  \Vert \mathbb{E}XY^+\Vert+ \Vert \mathbb{E}XY^-\Vert
        \end{align*}
        For $\Vert \mathbb{E}XY^+\Vert$, by the definition of $\Vert \cdot \Vert$, we have
        \begin{align*}
            &\Vert \mathbb{E}XY^+\Vert = \sup_{\Vert v\Vert=1} v^T\mathbb{E}XY^+v = \sup_{\Vert v\Vert=1} \mathbb{E}v^TXY^+v\\
            \leq & \sup_{\Vert v\Vert=1} \mathbb{E}v^TXKv=\sup_{\Vert v\Vert=1} v^T\mathbb{E}XKv = \Vert \mathbb{E}XK\Vert
        \end{align*}
        Similarly, we have$ \Vert EXY^-\Vert\leq \Vert EXK\Vert$, therefore,
        \begin{align*}
        \Vert EXY\Vert \leq 2\Vert EXK\Vert.
    \end{align*}
\end{proof}

\begin{lemma}
    Let $X,Z$ be $m\times m$ positive semidefinite random matrices with $X\leq Z$. Let $Y$ be a positive random variable. Then $\Vert \mathbb{E}XY\Vert \leq \Vert \mathbb{E}ZY\Vert$.
\end{lemma}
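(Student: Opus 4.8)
The plan is to reduce the spectral-norm inequality to a pointwise comparison of quadratic forms. First I would observe that, since $X$ is positive semidefinite almost surely and $Y$ is an almost-surely positive scalar random variable, the product $XY$ is positive semidefinite almost surely, so $\mathbb{E}[XY]$ is positive semidefinite; the same holds for $\mathbb{E}[ZY]$. For a symmetric positive semidefinite matrix $A$, the spectral norm equals its largest eigenvalue and therefore admits the variational representation $\Vert A\Vert = \sup_{\Vert v\Vert=1} v^T A v$. Hence it suffices to compare $v^T\mathbb{E}[XY]v$ with $v^T\mathbb{E}[ZY]v$ uniformly over unit vectors $v$.

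Next I would fix a unit vector $v$ and use linearity of expectation, together with the fact that $v$ is deterministic, to write $v^T\mathbb{E}[XY]v = \mathbb{E}[(v^TXv)\,Y]$ and $v^T\mathbb{E}[ZY]v = \mathbb{E}[(v^TZv)\,Y]$. The hypothesis $X\preceq Z$ (almost surely) gives $v^TXv \le v^TZv$ almost surely, and multiplying this scalar inequality by the almost-surely positive random variable $Y$ preserves it, so $(v^TXv)\,Y \le (v^TZv)\,Y$ almost surely. Taking expectations yields $v^T\mathbb{E}[XY]v \le v^T\mathbb{E}[ZY]v$. Finally I would take the supremum over $\Vert v\Vert = 1$ on both sides and invoke the variational formula once more to conclude $\Vert\mathbb{E}[XY]\Vert \le \Vert\mathbb{E}[ZY]\Vert$.

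There is no real obstacle here; this is essentially the same argument used in the preceding lemma, and I would present it in the same style. The only two points deserving a word of care are (i) the existence of the expectations $\mathbb{E}[XY]$ and $\mathbb{E}[ZY]$, which I would note holds under the integrability/boundedness conditions in force throughout the paper, and (ii) confirming that $\Vert\cdot\Vert$ denotes the spectral norm, so that the characterization $\Vert A\Vert=\sup_{\Vert v\Vert=1}v^TAv$ for positive semidefinite $A$ is legitimate; this matches the notational convention fixed earlier.
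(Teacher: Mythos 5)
Your proof is correct. The route differs slightly from the paper's: you establish the Loewner-order comparison $\mathbb{E}[XY]\preceq\mathbb{E}[ZY]$ by fixing a unit vector $v$, passing to the scalar inequality $(v^TXv)Y\le (v^TZv)Y$ almost surely, taking expectations, and then converting to the spectral norm via the variational formula $\Vert A\Vert=\sup_{\Vert v\Vert=1}v^TAv$ valid for symmetric positive semidefinite matrices. The paper instead writes $\mathbb{E}[XY]=\mathbb{E}[(X-Z)Y]+\mathbb{E}[ZY]$ and applies Weyl's inequality, $\lambda_{\max}(A+B)\le\lambda_{\max}(A)+\lambda_{\max}(B)$, together with the observation that $\mathbb{E}[(X-Z)Y]$ is negative semidefinite, so its largest eigenvalue is nonpositive. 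Both arguments ultimately rest on the same fact, namely that $\mathbb{E}[(Z-X)Y]\succeq 0$; yours avoids invoking Weyl's inequality and is stylistically closer to the paper's proof of the immediately preceding lemma, while the paper's version is a one-line eigenvalue computation. Your two cautionary remarks (integrability of $\mathbb{E}[XY]$ and $\mathbb{E}[ZY]$, and that $\Vert\cdot\Vert$ is the spectral norm so the variational characterization applies) are exactly the right hypotheses to flag, and neither causes any difficulty here.
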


\begin{proof}
    \begin{align*}
        &\Vert \mathbb{E}XY\Vert = \lambda_{\max}(\mathbb{E}XY) = \lambda_{\max}(\mathbb{E}((X-Z)Y+ZY)\leq \lambda_{\max}(\mathbb{E}(X-Z)Y)+\lambda_{\max}(\mathbb{E}ZY)\\
        \leq &\lambda_{\max}(\mathbb{E}ZY) = \Vert \mathbb{E}ZY\Vert,
    \end{align*}
    where the first inequality is due to Weyl's inequality, the second inequality is because $\mathbb{E}(X-Z)Y$ is negative-semidefinite.
\end{proof}

The following lemma is Lemma A0 of \citet{newey2009generalized}.
\begin{lemma}
    If $A$ be  positive semidefinite matrix, if $\Vert \widehat{A}-A\Vert_F\xrightarrow[]{p}0$, $\lambda_{\min}(A)\geq 1/C$, $\lambda_{\max}(A)\leq C$, then with probability approaching 1 $\xi_{\min}(\widehat{A})\geq 1/(2C)$ and $\xi_{\max}\leq 2C$.
\end{lemma}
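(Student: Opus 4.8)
The plan is to derive the result from the standard eigenvalue perturbation bound (Weyl's inequality) together with the elementary comparison between the spectral and Frobenius norms. Since the lemma is applied to empirical covariance-type matrices, $\widehat{A}$ is symmetric, so Weyl's inequality applies: ordering eigenvalues so that $\xi_j(\cdot)$ denotes the $j$-th eigenvalue in increasing order, one has $|\xi_j(\widehat{A})-\lambda_j(A)|\leq \Vert \widehat{A}-A\Vert$ for every $j$, where $\Vert\cdot\Vert$ is the spectral norm. In particular this holds for the indices attaining the minimum and maximum, giving $|\xi_{\min}(\widehat{A})-\lambda_{\min}(A)|\leq \Vert \widehat{A}-A\Vert$ and $|\xi_{\max}(\widehat{A})-\lambda_{\max}(A)|\leq \Vert \widehat{A}-A\Vert$. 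Combining with $\Vert \widehat{A}-A\Vert\leq \Vert \widehat{A}-A\Vert_F$ yields the same two bounds with the Frobenius norm on the right-hand side.

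Next I would convert the hypothesis $\Vert \widehat{A}-A\Vert_F\xrightarrow{p}0$ into a high-probability event. Taking $\epsilon = 1/(2C)$ in the definition of convergence in probability, the event $\mathcal{E}_N=\{\Vert \widehat{A}-A\Vert_F\leq 1/(2C)\}$ satisfies $P_{0,N}(\mathcal{E}_N)\to 1$. On $\mathcal{E}_N$, the perturbation bounds give
\[
\xi_{\min}(\widehat{A})\geq \lambda_{\min}(A)-\tfrac{1}{2C}\geq \tfrac{1}{C}-\tfrac{1}{2C}=\tfrac{1}{2C},\qquad \xi_{\max}(\widehat{A})\leq \lambda_{\max}(A)+\tfrac{1}{2C}\leq C+\tfrac{1}{2C}.
\]
Finally, the hypotheses $1/C\leq \lambda_{\min}(A)\leq \lambda_{\max}(A)\leq C$ force $C\geq 1$, so $1/(2C)\leq 1/2\leq C$ and hence $\xi_{\max}(\widehat{A})\leq 2C$ on $\mathcal{E}_N$. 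Both claimed bounds therefore hold with probability approaching one.

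As for difficulties, there is essentially no real obstacle here: this is a routine matrix-perturbation argument. The only points deserving a moment of care are confirming that $\widehat{A}$ is symmetric so that Weyl's inequality produces bounds on eigenvalues (rather than singular values) — which holds in every application of this lemma in the paper — and the constant bookkeeping ensuring $C\geq 1$, which is what lets $C+1/(2C)$ be absorbed into $2C$.
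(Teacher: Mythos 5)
Your proof is correct. The paper does not prove this lemma itself—it simply imports it as Lemma A0 of \citet{newey2009generalized}—and your argument (Weyl's inequality for symmetric matrices, $\Vert\cdot\Vert\leq\Vert\cdot\Vert_F$, and taking $\epsilon=1/(2C)$ in the definition of convergence in probability) is exactly the standard proof underlying that cited result; the two points you flag are indeed the only ones needing care, and both are fine here, since the lemma is only ever applied to symmetric second-moment matrices such as $\widehat{\Omega}$, and $1/C\leq\lambda_{\min}(A)\leq\lambda_{\max}(A)\leq C$ forces $C\geq 1$ so that $C+1/(2C)\leq 2C$.
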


The following lemma is Corollary 7.1 of \citet{zhang2021concentration}.
\begin{lemma}
   Let $\{X_i\}_{i=1}^N$ be identically distributed but not necessarily independent and assume $E(|X_1|^p)<\infty, (p\geq 1)$. Then $\mathbb{E}\max_{1\leq i\leq n}|X_i| = o(n^{1/p})$.
\end{lemma}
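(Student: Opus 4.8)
The plan is to reduce to nonnegative variables and then upgrade a crude $L^p$ bound into the sharper $o(n^{1/p})$ rate by a truncation argument that uses integrability rather than independence. Put $Y_i=|X_i|\ge 0$; the $Y_i$ are identically distributed with $\mu_p:=\mathbb{E}[Y_1^p]<\infty$. The starting observation is that, since $x\mapsto x^p$ is convex and increasing on $[0,\infty)$ for $p\ge 1$, Jensen together with $\max_{i\le n}Y_i^p\le\sum_{i=1}^n Y_i^p$ gives $\mathbb{E}[\max_{1\le i\le n}Y_i]\le(\mathbb{E}[\max_{1\le i\le n}Y_i^p])^{1/p}\le(\sum_{i=1}^n\mathbb{E}[Y_i^p])^{1/p}=\mu_p^{1/p}n^{1/p}$, which only delivers the order $O(n^{1/p})$; the improvement to $o(n^{1/p})$ must come from the fact that $\mathbb{E}[Y_1^p\,\mathbbm{1}\{Y_1>t\}]\to 0$ as $t\to\infty$, which is automatic because $Y_1^p$ is integrable. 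Note that nothing in the argument uses independence — only linearity of expectation and equality of the marginal laws.

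Next I would fix $\varepsilon>0$, set the \emph{growing} threshold $M_n=\varepsilon n^{1/p}$, and use the deterministic inequality $\max_{1\le i\le n}Y_i\le M_n+\max_{1\le i\le n}\bigl(Y_i\,\mathbbm{1}\{Y_i>M_n\}\bigr)\le M_n+\sum_{i=1}^n Y_i\,\mathbbm{1}\{Y_i>M_n\}$. Taking expectations and using that the $Y_i$ are identically distributed yields $\mathbb{E}[\max_{1\le i\le n}Y_i]\le M_n+n\,\mathbb{E}[Y_1\,\mathbbm{1}\{Y_1>M_n\}]$. On the event $\{Y_1>M_n\}$ one has $Y_1\le M_n^{\,1-p}Y_1^p$, so $n\,\mathbb{E}[Y_1\,\mathbbm{1}\{Y_1>M_n\}]\le n M_n^{\,1-p}\,\mathbb{E}[Y_1^p\,\mathbbm{1}\{Y_1>M_n\}]=\varepsilon^{1-p}\,n^{1/p}\,\mathbb{E}[Y_1^p\,\mathbbm{1}\{Y_1>\varepsilon n^{1/p}\}]$.

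Dividing through by $n^{1/p}$ then gives $\mathbb{E}[\max_{1\le i\le n}Y_i]/n^{1/p}\le\varepsilon+\varepsilon^{1-p}\,\mathbb{E}[Y_1^p\,\mathbbm{1}\{Y_1>\varepsilon n^{1/p}\}]$. Since $\varepsilon n^{1/p}\to\infty$ and $\mathbb{E}[Y_1^p]<\infty$, dominated convergence forces the last expectation to $0$, so $\limsup_{n\to\infty}\mathbb{E}[\max_{1\le i\le n}Y_i]/n^{1/p}\le\varepsilon$; letting $\varepsilon\downarrow0$ finishes the proof. I do not anticipate a genuine obstacle: the only point to get right is that the truncation level must scale like $n^{1/p}$, so that both the ``bulk'' term $M_n$ and the ``tail'' term are $o(n^{1/p})$ rather than merely $O(n^{1/p})$, and that the tail term is controlled purely by the uniform integrability of the single variable $Y_1^p$ — which is precisely why the conclusion survives when the $X_i$ are only identically distributed and possibly dependent.
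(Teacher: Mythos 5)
Your proof is correct. Note that the paper itself does not prove this lemma at all: it simply quotes it as Corollary 7.1 of \citet{zhang2021concentration}, so there is no internal argument to compare against. Your truncation argument is the standard self-contained derivation of that fact, and every step checks out: the deterministic bound $\max_i Y_i \le M_n + \sum_i Y_i\,\mathbbm{1}\{Y_i>M_n\}$, the passage to expectations using only that the marginals agree (no independence), the estimate $Y_1\le M_n^{1-p}Y_1^p$ on $\{Y_1>M_n\}$ valid because $p\ge 1$, the exponent bookkeeping $n\,M_n^{1-p}=\varepsilon^{1-p}n^{1/p}$ with $M_n=\varepsilon n^{1/p}$, and the dominated convergence step killing $\mathbb{E}[Y_1^p\,\mathbbm{1}\{Y_1>\varepsilon n^{1/p}\}]$ for each fixed $\varepsilon$ before letting $\varepsilon\downarrow 0$ (the blow-up of $\varepsilon^{1-p}$ is harmless because the limits are taken in that order). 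The key insight — that the truncation level must grow like $n^{1/p}$ so that both the bulk and tail contributions are $o(n^{1/p})$, with the tail controlled purely by uniform integrability of the single variable $Y_1^p$ — is exactly what makes the conclusion hold under identical distribution alone. What your write-up buys over the paper's bare citation is transparency: it makes explicit that the result needs nothing beyond equal marginal laws and $\mathbb{E}|X_1|^p<\infty$, which is precisely the setting in which the paper invokes it.
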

 \section{Technical lemmas}
In this section, we will present some technical lemmas that will be used for subsequent proof.

\begin{proposition} \citep{Chernozhukov2022locallyrobust}. Suppose that $\eta(P_{1,N}) = \eta_{1,N} \in \mathcal{T}_{N}$, $\eta(P_{0,N}) = \eta_{0,N}$. let $\mathcal{P}_N = \{P_{t,N}, t \in [0,1]\}$ be a collection of distributions such that $P_{t,N} = (1-t)P_{0,N}+tP_{1,N}$ and $\eta(P_{t,N}) = (1-t)\eta_{0,N}+t\eta_{1,N}$. Suppose that For all $\beta \in \mathcal{B}_1$ and $\eta_{1,N}\in \mathcal{T}_N$,(a) there exists a function $\phi(O;\beta,\eta)$ satisfying $\frac{d}{dt}\mathbb{E}_{P_{0,N}}[\Tilde{g}(O;\beta,\eta(P_t))] = \int \phi(o;\beta,\eta_{0,N})dP_1(o),\mathbb{E}_{P_{0,N}}[\phi(O;\beta,\eta_{0,N})] = 0$, $\mathbb{E}_{P_{0,N}}[\phi(O;\beta,\eta_{0,N})^2]<\infty$, (b) $\int \phi(o;\beta,\eta(P_{t,N}))dP_{t,N}(o)$ for $t\in[0,1]$ and  (c) $\int \phi(o;\beta,\eta(P_t))dP_{0,N}(o)$ and $\int \phi(o;\beta,\eta(P_t))dP_{1,N}(o)$ are continuous at $t = 0$, then $g(\beta,\eta) = \Tilde{g}(\beta,\eta)+\phi(\beta,\eta)$ is Neyman orthogonal on $\mathcal{B}_1$.
\end{proposition}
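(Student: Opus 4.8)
The plan is to verify the defining identity of Definition~\ref{Definition: Neyman Orthogonality} directly: for every $\beta\in\mathcal{B}_1$ and every $\eta_{1,N}\in\mathcal{T}_N$ I want to show $D^{(1)}_{0,\beta,g}(\eta_{1,N})=0$, i.e.
\[
\frac{\partial}{\partial t}\mathbb{E}_{P_{0,N}}\bigl[\tilde g(O;\beta,(1-t)\eta_{0,N}+t\eta_{1,N})\bigr]\Big|_{t=0}
+\frac{\partial}{\partial t}\mathbb{E}_{P_{0,N}}\bigl[\phi(O;\beta,(1-t)\eta_{0,N}+t\eta_{1,N})\bigr]\Big|_{t=0}=0 .
\]
The key observation is that by the construction of the submodel we have $\eta(P_{t,N})=(1-t)\eta_{0,N}+t\eta_{1,N}$, so each Gateaux derivative above coincides with the $t$-derivative at $0$ of the same functional evaluated along the path $P_{t,N}$, with the outer expectation still taken under the fixed measure $P_{0,N}$. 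Hence the first summand equals $\frac{d}{dt}\mathbb{E}_{P_{0,N}}[\tilde g(O;\beta,\eta(P_{t,N}))]|_{t=0}$, which by hypothesis (a) is exactly $\int\phi(o;\beta,\eta_{0,N})\,dP_{1,N}(o)$. It remains to show the second summand is the negative of this.

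For the second summand write $a(t):=\int\phi(o;\beta,\eta(P_{t,N}))\,dP_{0,N}(o)$ and $b(t):=\int\phi(o;\beta,\eta(P_{t,N}))\,dP_{1,N}(o)$; the second Gateaux derivative above is $a'(0)$. Using the mixture identity $dP_{t,N}=(1-t)\,dP_{0,N}+t\,dP_{1,N}$,
\[
\int\phi(o;\beta,\eta(P_{t,N}))\,dP_{t,N}(o)=(1-t)\,a(t)+t\,b(t),
\]
and by hypothesis (b) the left-hand side vanishes for all $t\in[0,1)$, so $a(t)=-\tfrac{t}{1-t}\,b(t)$. Since $a(0)=\mathbb{E}_{P_{0,N}}[\phi(O;\beta,\eta_{0,N})]=0$ by hypothesis (a), the difference quotient $\tfrac{a(t)-a(0)}{t}=-\tfrac{b(t)}{1-t}$ converges as $t\downarrow0$ to $-b(0)$ by the continuity in (c); therefore $a'(0)=-b(0)=-\int\phi(o;\beta,\eta_{0,N})\,dP_{1,N}(o)$. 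Adding the two summands gives $D^{(1)}_{0,\beta,g}(\eta_{1,N})=0$, and since $\beta\in\mathcal{B}_1$ and $\eta_{1,N}\in\mathcal{T}_N$ were arbitrary, $g(\beta,\eta)=\tilde g(\beta,\eta)+\phi(\beta,\eta)$ is Neyman orthogonal on $\mathcal{B}_1$.

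The only genuinely delicate point — and the one I would treat most carefully — is reconciling the Gateaux derivative that defines Neyman orthogonality, which perturbs the nuisance \emph{argument} while holding the evaluating measure fixed at $P_{0,N}$, with the pathwise derivative of the influence function, which moves the distribution $P_{t,N}$ itself. The bridge is precisely the linearity hypothesis $\eta(P_{t,N})=(1-t)\eta_{0,N}+t\eta_{1,N}$ together with the mixture decomposition of $dP_{t,N}$, after which the product rule leaves a cross term that is annihilated by the $P_{0,N}$-mean-zero property of $\phi$; hypotheses (b) and (c) are exactly what is needed to differentiate at the endpoint $t=0$ rigorously. I would also flag the scope caveat: the argument delivers orthogonality only in directions $\eta_{1,N}\in\mathcal{T}_N$ along which a valid submodel with the stated linearity exists, which is why the conclusion is stated relative to $\mathcal{T}_N$; upgrading to all of $\tilde{\mathcal{T}}_N$ as in Definition~\ref{Definition: Neyman Orthogonality} requires that every relevant nuisance value be reachable by such a path.
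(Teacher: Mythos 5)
Your proof is correct. Note that the paper does not prove this proposition itself—it imports it from \citet{Chernozhukov2022locallyrobust} (their Theorem 1)—and your argument is essentially the standard one behind that result: hypothesis (a) identifies the Gateaux derivative of the $\tilde g$ term along the linear nuisance path with $\int\phi(o;\beta,\eta_{0,N})\,dP_{1,N}(o)$, and the mean-zero identity $\int\phi(o;\beta,\eta(P_{t,N}))\,dP_{t,N}(o)=0$ forces the $\phi$ term to contribute exactly the negative of that quantity. Your variant of the second step—solving $(1-t)a(t)+t\,b(t)=0$ for $a(t)$ and passing to the limit of the difference quotient—is a slightly more elementary route than the usual product-rule differentiation of the identity at $t=0$, and it has the small bonus of showing that only the continuity of $t\mapsto\int\phi(o;\beta,\eta(P_{t,N}))\,dP_{1,N}(o)$ at $t=0$ from condition (c) is actually needed (continuity of the $P_{0,N}$-integral then follows for free). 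Your reading of the truncated condition (b) as the mean-zero property under each $P_{t,N}$ is the intended one, and the scope caveat you flag—that the argument yields $D^{(1)}_{0,\beta,g}(\eta)=0$ only for directions $\eta\in\mathcal{T}_N$ reachable by such a linear submodel, whereas Definition \ref{Definition: Neyman Orthogonality} quantifies over $\tilde{\mathcal{T}}_N$—is an accurate observation about how the proposition is stated.
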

\begin{lemma}[Permanence properties of Neyman orthogonality]     \label{Lemma: permanence of global neyman orthogonality}
(a) Suppose  $g(o;\beta,\eta)$ satisfies global Neyman orthogonality,   and that the partial derivative   $\frac{\partial g(o;\beta,\eta)}{\partial \beta}$ exists for all $\beta \in \mathcal{B}_1\subset \mathcal{B}$. If there exists an integrable function $h(o)$ such that 
 $\vert \frac{\partial g(o;\beta,\eta)}{\partial \beta}\vert\leq h(o)$ almost surely,  then $\frac{\partial g(o;\beta,\eta)}{\partial \beta}$ is Neyman orthogonal on $\mathcal{B}_1$. (b) Suppose  $g_1(o;\beta,\eta)$ and $g_2(o;\beta,\eta)$ are both Neyman orthogonal on $\mathcal{B}_1$, and let $h_1(\beta)$ and $h_2(\beta)$ be functions of $\beta$.  Then the linear combination $g_1(o;\beta,\eta)h_1(\beta)+g_2(o;\beta,\eta)h_2(\beta)$ also satisfies Neyman orthogonality on $\mathcal{B}_1$.
\end{lemma}

\begin{proof}
    For (a), the existence of integrable $h$ such that $\vert \frac{\partial g(o;\beta,\eta)}{\partial \beta}\vert\leq h(o)$ implies derivatives and expectations can be exchanged. Therefore,
    \begin{align*}
        &\frac{\partial}{\partial t}\mathbb{E}_{P_{0,N}}\Bigg[\frac{\partial}{\partial \beta}g(O;\beta,(1-t)\eta_{0,N}+t\eta_{1,N})\Bigg]\Bigg\vert_{t=0}\\
        =&\frac{\partial}{\partial \beta}\Bigg\{\frac{\partial}{\partial t}\mathbb{E}_{P_{0,N}}\Bigg[g(O;\beta,(1-t)\eta_{0,N}+t\eta_{1,N})\Bigg]\Bigg\vert_{t=0}\Bigg\}\\
        =&0.
    \end{align*}
    (b) is obvious.
\end{proof}

\begin{lemma}
    Suppose $g(o;\beta,\eta)$ is a separable moment function such that
    \begin{equation*}
        g(o;\beta,\eta) = \sum_{b=1}^Bg^{[b]}(o;\eta)h^{[b]}(\beta).
    \end{equation*}
    Furthermore, suppose $h^{[b]}(\beta), b = 1,...,B$ are continuously differentiable functions of $\beta$. Then under Assumption \ref{Assumption: separable score},
    \begin{align*}
    &\sup_{\beta \in \mathcal{B}}\Big\Vert (\mathbb{P}_{n,l}-P_{0,N}) (g(\beta,\widehat{\eta}_l)-g(\beta,{\eta}_{0,N})) \Big\Vert = O_{P_{0,N}}\Big(\sqrt{\frac{m}{N}}\delta_N\Big),\\
    &\sup_{\beta \in \mathcal{B}}\Vert (\mathbb{P}_{n,l}-P_{0,N})(G(\beta,\widehat{\eta}_{l})-G(\beta,\eta_{0,N})) \Vert = O_{P_{0,N}}\Bigg(\sqrt{\frac{m}{N}}\delta_N\Bigg).
\end{align*}
\label{Lemma: empirical process term of g, separable case}
\end{lemma}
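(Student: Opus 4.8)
The plan is to exploit the separable structure $g(o;\beta,\eta)=\sum_{b=1}^B g^{[b]}(o;\eta)h^{[b]}(\beta)$ to decouple the dependence on $\beta$ from the dependence on the nuisance parameter; this eliminates any need for a uniform-in-$\beta$ maximal inequality over a random function class and reduces the statement to a conditional second-moment bound. First I would write
\begin{equation*}
(\mathbb{P}_{n,l}-P_{0,N})\big(g(\beta,\widehat{\eta}_l)-g(\beta,\eta_{0,N})\big)=\sum_{b=1}^{B} M_b\, h^{[b]}(\beta),\qquad M_b:=(\mathbb{P}_{n,l}-P_{0,N})\big(g^{[b]}(\widehat{\eta}_l)-g^{[b]}(\eta_{0,N})\big),
\end{equation*}
where each $M_b$ is an $m\times q$ random matrix that does \emph{not} depend on $\beta$. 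Differentiating in $\beta$ gives $(\mathbb{P}_{n,l}-P_{0,N})\big(G(\beta,\widehat{\eta}_l)-G(\beta,\eta_{0,N})\big)=\sum_{b=1}^{B} M_b\,\partial h^{[b]}(\beta)/\partial\beta$ with the \emph{same} matrices $M_b$. Since $\mathcal{B}$ is compact and each $h^{[b]}$ is twice continuously differentiable, $\sup_{\beta\in\mathcal{B}}\Vert h^{[b]}(\beta)\Vert$ and $\sup_{\beta\in\mathcal{B}}\Vert\partial h^{[b]}(\beta)/\partial\beta\Vert$ are finite, so both quantities in the lemma are bounded by a fixed constant times $\sum_{b=1}^{B}\Vert M_b\Vert_F$. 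As $B$ is fixed, it suffices to prove $\Vert M_b\Vert_F=O_{P_{0,N}}\big(\sqrt{m/N}\,\delta_N\big)$ for each fixed $b$.

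For that I would invoke the cross-fitting structure: condition on $\mathcal{F}_l:=\sigma(\{O_i:i\in I_l^c\})$, with respect to which $\widehat{\eta}_l$ is measurable and the in-fold observations $\{O_i:i\in I_l\}$ are i.i.d.\ $P_{0,N}$ and independent. Let $\mathcal{A}_N:=\{\widehat{\eta}_l\in\mathcal{T}_N\}$, which is $\mathcal{F}_l$-measurable with $P_{0,N}(\mathcal{A}_N^{c})\le\Delta_N=o(1)$, and recall $\eta_{0,N}\in\mathcal{T}_N$. Each entry $(M_b)^{(j,r)}$ is an average of $N/L$ i.i.d.\ centered terms conditional on $\mathcal{F}_l$, so its conditional variance equals $\tfrac{L}{N}$ times the $P_{0,N}$-second moment of $g^{[b],(j,r)}(\widehat{\eta}_l)-g^{[b],(j,r)}(\eta_{0,N})$; on $\mathcal{A}_N$ that second moment is at most $\delta_N^2$ by the definition of the first-order rates in Assumption~\ref{Assumption: separable score} (a supremum over $\mathcal{T}_N$, bounded there by $\delta_N$). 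Summing the $mq$ entries gives $\mathbb{E}\big[\Vert M_b\Vert_F^2\mid\mathcal{F}_l\big]\le qL\,m\,\delta_N^2/N$ on $\mathcal{A}_N$; then integrating against $\mathbbm{1}_{\mathcal{A}_N}$, applying Jensen's inequality, and Markov's inequality yields $\Vert M_b\Vert_F\,\mathbbm{1}_{\mathcal{A}_N}=O_{P_{0,N}}\big(\sqrt{m/N}\,\delta_N\big)$. Since $P_{0,N}(\mathcal{A}_N^{c})\to0$, the indicator can be dropped, giving the claim.

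The computation is otherwise routine; the step needing care is the cross-fitting bookkeeping — ensuring $\mathcal{A}_N$ is $\mathcal{F}_l$-measurable so that the conditional-variance bound (valid only when $\widehat{\eta}_l\in\mathcal{T}_N$) combines cleanly with the vanishing probability of $\mathcal{A}_N^{c}$. It is worth noting that, unlike the general (non-separable) case, no entropy or VC-type condition on the moment function is required here, precisely because the supremum over $\beta$ passes through the deterministic, uniformly bounded maps $h^{[b]}$ and $\partial h^{[b]}/\partial\beta$ rather than through a $\beta$-indexed random function class.
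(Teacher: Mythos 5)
Your proof is correct and follows essentially the same route as the paper's: decompose via the separable structure so the nuisance-dependent matrices $M_b$ do not depend on $\beta$, pull out $\sup_{\beta\in\mathcal{B}}\Vert h^{[b]}(\beta)\Vert$ (and $\sup_{\beta\in\mathcal{B}}\Vert \partial h^{[b]}(\beta)/\partial\beta\Vert$ for the $G$ part), and bound the conditional Frobenius-norm second moment of each $M_b$ entrywise by the rates in Assumption S3, giving $m\delta_N^2/N$. Your explicit handling of the event $\{\widehat{\eta}_l\in\mathcal{T}_N\}$ and its $\mathcal{F}_l$-measurability is a slightly more careful rendering of a step the paper leaves implicit, but it is not a different argument.
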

\begin{proof}
Direct calculation yields
    \begin{align*}
        & \sup_{\beta \in \mathcal{B}}\Big\Vert (\mathbb{P}_{n,l}-P_{0,N}) (g(\beta,\widehat{\eta}_l)-g(\beta,{\eta}_{0,N})) \Big\Vert\\
         = &\sup_{\beta \in \mathcal{B}}\left\Vert (\mathbb{P}_{n,l}-P_{0,N}) \left(\sum_{i=1}^B \left(g^{[b]}(\widehat{\eta}_l)-g^{[b]}({\eta}_{0,N})\right)h^{[b]}(\beta))\right) \right\Vert\\
         \leq & \sum_{b=1}^B \left\Vert (\mathbb{P}_{n,l}-P_{0,N})  \left(g^{[b]}(\widehat{\eta}_l)-g^{[b]}({\eta}_{0,N})\right) \right\Vert \sup_{\beta \in B}\left\Vert h^{[b]}(\beta))\right\Vert\\
         \lesssim & \sum_{b=1}^B \left\Vert (\mathbb{P}_{n,l}-P_{0,N})  \left(g^{[b]}(\widehat{\eta}_l)-g^{[b]}({\eta}_{0,N})\right) \right\Vert,
    \end{align*}
    where the third inequality is due to triangle inequality, the fourth inequality is due to the boundedness of $h^{[b]}(\beta)$. 

    For any $b\in \{1,...,B\}$, we consider
    \begin{align*}
       &\frac{1}{{N}}\mathbb{E}_{P_{0,N}}\left[ \left\Vert \mathbb{G}_{n,l}  \left(g^{[b]}(\widehat{\eta}_l)-g^{[b]}({\eta}_{0,N})\right) \right\Vert^2\Bigg\vert O_i, i\in I^c_l\right]\\
       \leq &\frac{1}{{N}}\mathbb{E}_{P_{0,N}}\left[ \left\Vert g^{[b]}(\widehat{\eta}_l)-g^{[b]}({\eta}_{0,N})\right\Vert^2_F\Bigg\vert O_i, i\in I^c_l\right]\\
       \leq &\sup_{\eta\in\mathcal{T}_N}\frac{1}{{N}}\mathbb{E}_{P_{0,N}}\left[ \left\Vert  g^{[b]}(\eta)-g^{[b]}({\eta}_{0,N})\right\Vert^2_F\Bigg\vert O_i, i\in I^c_l\right]\\
      \leq &  \sup_{\eta\in\mathcal{T}_N}\frac{1}{\sqrt{N}}\mathbb{E}_{P_{0,N}}\left\Vert  g^{[b]}(\eta)-g^{[b]}({\eta}_{0,N}) \right\Vert^2_F\\
       = & \sup_{\eta\in\mathcal{T}_N}\frac{1}{{N}}\sum_{j=1}^m\sum_{r=1}^q\mathbb{E}_{P_{0,N}}\left\vert  g^{[b](j,r)}(\eta)-g^{[b](j,r)}({\eta}_{0,N}) \right\vert^2\\
       \lesssim & \frac{m\delta^2_N}{N}.
    \end{align*}
    where the last line is due to Assumption \ref{Assumption: separable score}. Therefore,
    \begin{equation*}
        \sup_{\beta \in \mathcal{B}}\Big\Vert (\mathbb{P}_{n,l}-P_{0,N}) (g(\beta,\widehat{\eta}_l)-g(\beta,{\eta}_{0,N})) \Big\Vert = O_p\left(\sqrt{\frac{m}{N}}\delta_N\right).
    \end{equation*}
    Similarly, we can prove that
    \begin{equation*}
        \sup_{\beta \in \mathcal{B}}\Vert (\mathbb{P}_{n,l}-P_{0,N})(G(\beta,\widehat{\eta}_{l})-G(\beta,\eta_{0,N})) \Vert = O_{p}\Bigg(\sqrt{\frac{m}{N}}\delta_N\Bigg).
    \end{equation*}
\end{proof}

\begin{lemma} 
Suppose Assumption 5 holds, we have
    \begin{align*}
    \sup_{\beta \in \mathcal{B}}\Big\Vert (\mathbb{P}_{n,l}-P_{0,N}) (g(\beta,\widehat{\eta}_l)-g(\beta,{\eta}_{0,N})) \Big\Vert = O_{p}\Big(\sqrt{\frac{m}{N}}\delta_N\Big).
\end{align*}
Suppose Assumption 8 holds, we further have
\begin{equation*}
    \sup_{\beta \in \mathcal{B}}\Vert (\mathbb{P}_{n,l}-P_{0,N})(G(\beta,\widehat{\eta}_{l})-G(\beta,\eta_{0,N})) \Vert = O_{p}\Bigg(\sqrt{\frac{m}{N}}\delta_N\Bigg).
\end{equation*}
\label{lemma: empirical process term of g}
\end{lemma}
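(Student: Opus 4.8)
The plan is to reduce, via the cross-fitting construction, to a \emph{fixed} nuisance value and then apply the maximal inequality for VC-type function classes (Lemma~S1) separately to each of the $m$ coordinates of the moment vector, finally summing the $m$ coordinatewise bounds.

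First I would condition on the auxiliary sample $(O_i)_{i\in I_l^c}$ used to build $\widehat\eta_l$. On the event $\{\widehat\eta_l\in\mathcal{T}_N\}$, which has probability at least $1-\Delta_N=1-o(1)$, the estimator $\widehat\eta_l$ is a fixed element of $\mathcal{T}_N$, while $(O_i)_{i\in I_l}$ remain i.i.d.\ $P_{0,N}$ and independent of the auxiliary sample, so that $\mathbb{G}_{n,l}$ is a genuine empirical process over the fold $I_l$. Writing $\Delta g^{(j)}_\beta:=g^{(j)}(\cdot;\beta,\widehat\eta_l)-g^{(j)}(\cdot;\beta,\eta_{0,N})$, I would invoke the elementary bound
\[
\sup_{\beta\in\mathcal{B}}\big\Vert\mathbb{G}_{n,l}(g(\beta,\widehat\eta_l)-g(\beta,\eta_{0,N}))\big\Vert^2
=\sup_\beta\sum_{j=1}^m\big(\mathbb{G}_{n,l}\Delta g^{(j)}_\beta\big)^2
\le\sum_{j=1}^m\sup_\beta\big(\mathbb{G}_{n,l}\Delta g^{(j)}_\beta\big)^2 ,
\]
which trades the joint supremum over $\beta$ for a sum of $m$ marginal suprema, at the price of a later union bound over $j$ that is affordable precisely because $m/N\to0$ in the ambient setting.

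Next, for each fixed $j$ I would note that $\mathcal{F}_j:=\{\Delta g^{(j)}_\beta:\beta\in\mathcal{B}\}$ is contained in the difference (sumset) of the VC-type classes $\mathcal{G}^{(j)}_{\widehat\eta_l}$ and $\mathcal{G}^{(j)}_{\eta_{0,N}}$, hence is itself VC-type with constants $(a,v)$ not depending on $j$ or on $\widehat\eta_l$ (Assumption~5(a)), with envelope $\mathcal{E}^{(j)}_{\widehat\eta_l}+\mathcal{E}^{(j)}_{\eta_{0,N}}$ having finite $L^q(P_{0,N})$ norm for every $q$, and with $L^2(P_{0,N})$ radius at most $r_N^{(j)}$ by definition. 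Applying the tail form of Lemma~S1 with $\alpha=1$ and $t=N^{2/q}$ for a large fixed $q$ (so that $\Vert M\Vert_{P,2}\le\Vert M\Vert_{P,q}\le N^{1/q}\Vert F\Vert_{P,q}$ contributes only a factor $N^{\epsilon}$ with $\epsilon=\epsilon(q)$ arbitrarily small), together with the first-moment bound in the same lemma, gives with conditional probability at least $1-CN^{-1}$
\[
\Vert\mathbb{G}_{n,l}\Vert_{\mathcal{F}_j}\;\lesssim\; r_N^{(j)}\big(\sqrt{\log(1/r_N^{(j)})}+N^{\epsilon}\big)+N^{-1/2+\epsilon}\big(\log(1/r_N^{(j)})+1\big)\;\le\;\delta_N ,
\]
where the last inequality is exactly Assumption~5(b) with $a_N=r_N^{(j)}$ (using monotonicity of $x\mapsto x\sqrt{\log(a/x)}$ for small $x$, which is applicable since $r_N^{(j)}\le\delta_N\to0$). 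A union bound over $j=1,\dots,m$ fails with probability at most $Cm/N=o(1)$, and on its complement $\sum_{j=1}^m\sup_\beta(\mathbb{G}_{n,l}\Delta g^{(j)}_\beta)^2\lesssim m\,\delta_N^2$. Dividing by $\sqrt{N_l}\asymp\sqrt N$ and then integrating out the conditioning (all the relevant events have probability tending to $1$) yields the first claim, $\sup_\beta\Vert(\mathbb{P}_{n,l}-P_{0,N})(g(\beta,\widehat\eta_l)-g(\beta,\eta_{0,N}))\Vert=O_p(\sqrt{m/N}\,\delta_N)$.

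The bound for $G$ is structurally identical: I would run the same argument entrywise on the $m\times p$ matrix-valued $G$, using that Assumption~8(a) places the entry classes $\mathcal{G}^{(i,k)}_\eta$ in the VC-type collection $\mathcal{G}'$ and that Assumption~8(b) controls the corresponding $L^2$ radii $r_N^{(j),(k)}$; summing the $mp$ squared coordinatewise terms (with $p$ fixed) reproduces the $\sqrt m$ factor. The main obstacle is bookkeeping rather than a conceptual difficulty: one must check that the difference classes remain VC-type with constants uniform in $(j,\eta)$ over $\mathcal{T}_N$, and that the envelope-moment contributions appearing in Lemma~S1 genuinely fit inside the $N^{\epsilon}$ slack already budgeted into Assumptions~5(b)/8(b); once those points are secured, the coordinatewise maximal inequality plus the $m/N=o(1)$ union bound delivers the stated $O_p(\sqrt{m/N}\,\delta_N)$ rate.
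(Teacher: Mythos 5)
Your proposal matches the paper's own proof essentially step for step: the same coordinatewise reduction $\sup_\beta\Vert\cdot\Vert^2\le\sum_{j=1}^m\sup_\beta(\cdot)^2$, the same conditioning on the auxiliary fold so that $\widehat\eta_l$ is treated as a fixed element of $\mathcal{T}_N$, the same application of the VC-type maximal inequality (Lemma S1) with $t=N^{2/q}$ and $\sigma=r_N^{(j)}$, the same invocation of Assumption 5(b) (resp.\ 8(b)) to bound the result by $\delta_N$, the same union bound over $j$ costing $m/N=o(1)$, and the same entrywise treatment of $G$. The only differences are cosmetic (your explicit remark that the difference class inherits the VC-type property with the summed envelope, which the paper asserts implicitly), so the argument is correct and equivalent to the paper's.
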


\begin{proof}
First we notice that
    \begin{align*}
        &\sup_{\beta \in \mathcal{B}}\Big\Vert \mathbb{G}_{n,l} (g(\beta,\widehat{\eta}_l)-g(\beta,{\eta}_{0,N})) \Big\Vert\\
        =&\sup_{\beta \in \mathcal{B}}\sqrt{\sum_{j=1}^m \Big(\mathbb{G}_{n,l} (g^{(j)}(\beta,\widehat{\eta}_l)-g^{(j)}(\beta,{\eta}_{0,N}))\Big)^2}\\
        \leq & \sqrt{\sum_{j=1}^m \Big(\sup_{\beta \in \mathcal{B}}\Big\vert\mathbb{G}_{n,l} (g^{(j)}(\beta,\widehat{\eta}_l)-g^{(j)}(\beta,{\eta}_{0,N}))\Big\vert\Big)^2} \label{lemma sqrt}
    \end{align*}

    Since we have already assumed that, for any $1\leq j\leq m$, and $\eta\in \mathcal{T}_N$ the function class
    \begin{align*}
        \mathcal{G}^{(j)}_\eta = \{(g^{(j)}(\beta,\eta)-g^{(j)}(\beta,{\eta}_{0,N})), \beta \in B\}
    \end{align*}
    is a VC-type class such that
    \begin{align*}
        \log \sup_{Q} N(\epsilon \Vert F\Vert_{Q,2},\mathcal{G}_\eta^{(j)}, \Vert \cdot \Vert_{Q,2})\leq v\log(a/\epsilon), \quad 0< \epsilon <1.
    \end{align*}
    
    We apply Lemma \ref{lemma: concentration of supremum ep} conditional on $I^c_l$, so $\widehat{\eta}_l$ can be treated as fixed. then we have that with probability $1-mN^{-1}$,
    \begin{align*}
        &\sup_{g\in \mathcal{G}_{\eta}^{(j)}}|\mathbb{G}_{n,l} g|\\
        \leq & (1+\alpha)K\Bigg(\sigma\sqrt{v\log\Big(\frac{a\Vert F\Vert_{P,2}}{\sigma}\Big)}+\frac{v\Vert M\Vert_{P,2}}{\sqrt{N}}\log \Big(\frac{a\Vert F\Vert_{P,2}}{\sigma}\Big) \Bigg) +\\
     &K(q)\Bigg(\Big(\sigma+\frac{\Vert M \Vert_{P,q}}{\sqrt{N}}\Big)N^{\frac{1}{q}}+\frac{1}{\alpha \sqrt{N}}\Vert M \Vert_{P,2}N^{\frac{2}{q}}\Bigg)
    \end{align*}
    holds for all $1\leq j\leq m$ simultaneously and for any $\alpha>0$, $\sup_{g\in \mathcal{G}^{(j)}_\eta} \Vert g\Vert_{P,2}^2\leq \sigma^2\leq \Vert F\Vert_{P,2}^2$. By the assumption $\Vert F\Vert_{P,q}<\infty$ for all $q$, taking $\sigma$ as $r^{(j)}_N$, we have with probability larger than $1-mN^{-1}$,

    \small\begin{equation*}
        \begin{split}
            &\sup_{g\in \mathcal{G}_{\eta}^{(j)}}|\mathbb{G}_n g|\\
        \leq & (1+\alpha)K\Bigg(r^{(j)}_N\sqrt{v\log\Big(\frac{a\Vert F\Vert_{P_{0,N},2}}{r^{(j)}_N}\Big)}+\frac{vN^{1/q}\Vert F\Vert_{P_{0,N},q}}{\sqrt{N}}\log \Big(\frac{a\Vert F\Vert_{P_{0,N},2}}{r^{(j)}_N}\Big) \Bigg) \\
        &+K(q)\Bigg(\Big(r^{(j)}_N+\frac{N^{1/q}\Vert F \Vert_{P_{0,N},q}}{\sqrt{N}}\Big)N^{\frac{1}{q}}+\frac{N^{1/q}\Vert F \Vert_{P_{0,N},q}}{\alpha \sqrt{N}}N^{\frac{2}{q}}\Bigg)\\
        \lesssim & r^{(j)}_N\sqrt{\log \frac{1}{r^{(j)}_N}}+N^{-1/2+\epsilon}\log \frac{1}{r^{(j)}_N}+r^{(j)}_NN^{\epsilon}+N^{-1/2+\epsilon}\\
        \lesssim & r^{(j)}_N \Bigg(\sqrt{\log\frac{1}{r^{(j)}_N}}+N^{\epsilon}\Bigg) + N^{-1/2+\epsilon}\left(\frac{1}{\log r^{(j)}_N}+1\right)\\
        \leq & \delta_N
        \end{split}
    \end{equation*}\normalsize
for all $1\leq j\leq m$. For the second inequality, we choose $q$ such that $q\geq 3/\epsilon$.

To sum up,
    \begin{equation*}
        \sup_{\beta \in \mathcal{B}}\Big\Vert \mathbb{G}_{n,l} (g(\beta,\widehat{\eta}_l)-g(\beta,{\eta}_{0,N})) \Big\Vert = O_{p}(\sqrt{m}\delta_N).
    \end{equation*}

    Similarly, we can prove that
    \begin{equation*}
        \sup_{\beta \in \mathcal{B}}\Big\Vert \mathbb{G}_{n,l} (G(\beta,\widehat{\eta}_l)-G(\beta,{\eta}_{0,N})) \Big\Vert = O_{p}(\sqrt{m}\delta_N)
    \end{equation*}
    under Assumption 8.
\end{proof}

\begin{lemma}
Suppose $g$ is global Neyman orthogonal and either Assumption \ref{Assumption: consistency, nuisance estimation} or Assumption \ref{Assumption: general score regularity} holds, then for $k\in I_l$
    \begin{equation*}
        \sup_{\beta \in \mathcal{B}} \Big\Vert \mathbb{E}_{P_{0,N}}[g(O_k;\beta,\widehat{\eta}_l)|(O_i)_{i\in I_l^c}]-\mathbb{E}_{P_{0,N}}[g(O_k;\beta,{\eta}_{0,N})] \Big\Vert = O_p(\sqrt{m}\lambda_N),
    \end{equation*}
    where $\lambda_N^{(j)}\leq \lambda_N$ for all $j$.
    \label{Lemma: rate for the first order bias}
\end{lemma}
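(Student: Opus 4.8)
The plan is to exploit global Neyman orthogonality to cancel the first-order term in a Taylor expansion of the population moment along the path $t\mapsto(1-t)\eta_{0,N}+t\widehat{\eta}_l$, so that the bias from estimating $\eta$ is controlled entirely by the second-order Gateaux derivative, whose magnitude is precisely $\lambda_N$.

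First I would condition on the out-of-fold sample $(O_i)_{i\in I_l^c}$. For $k\in I_l$, the observation $O_k$ is independent of $(O_i)_{i\in I_l^c}$, while $\widehat{\eta}_l$ is a measurable function of it; hence $\mathbb{E}_{P_{0,N}}[g(O_k;\beta,\widehat{\eta}_l)\mid(O_i)_{i\in I_l^c}]=\widebar{g}(\beta,\widehat{\eta}_l)$, i.e., the deterministic map $\eta\mapsto\mathbb{E}_{P_{0,N}}[g(O;\beta,\eta)]$ evaluated at the realized nuisance estimate. The argument then proceeds on the event $\mathcal{A}_l=\{\widehat{\eta}_l\in\mathcal{T}_N\}$, which under either Assumption \ref{Assumption: consistency, nuisance estimation} or Assumption \ref{Assumption: general score regularity} has probability at least $1-\Delta_N=1-o(1)$; on $\mathcal{A}_l$, since $\mathcal{T}_N\subset\tilde{\mathcal{T}}_N$, the estimate $\widehat{\eta}_l$ is an admissible direction for the Gateaux derivatives.

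Next, fixing $\beta\in\mathcal{B}$ and a coordinate $j\in\{1,\dots,m\}$, I would set $h(t)=\mathbb{E}_{P_{0,N}}[g^{(j)}(O;\beta,(1-t)\eta_{0,N}+t\widehat{\eta}_l)]$ for $t\in[0,1]$, so that $h(0)=\widebar{g}^{(j)}(\beta,\eta_{0,N})$ and $h(1)$ is the $j$-th coordinate of $\mathbb{E}_{P_{0,N}}[g(O_k;\beta,\widehat{\eta}_l)\mid(O_i)_{i\in I_l^c}]$. Global Neyman orthogonality of $g$ (Definition \ref{Definition: Neyman Orthogonality}) forces $h'(0)=D^{(1)}_{0,\beta,g^{(j)}}(\widehat{\eta}_l)=0$ for every $\beta\in\mathcal{B}$, while $h''(t)=D^{(2)}_{t,\beta,g^{(j)}}(\widehat{\eta}_l)$ exists and is bounded in absolute value by $\lambda_N^{(j)}$ on $(0,1)$ (both implicit in the definition of $\lambda_N^{(j)}$ in Assumptions \ref{Assumption: consistency, nuisance estimation} and \ref{Assumption: general score regularity}, and continuity of $D^{(2)}$ in $t$ is the content of Assumption \ref{Assumption: Global Neyman orthogonality}). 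Taylor's theorem with the integral form of the remainder then yields $|h(1)-h(0)|=\big|\int_0^1(1-s)h''(s)\,ds\big|\le\tfrac12\lambda_N^{(j)}\le\tfrac12\lambda_N$, where $\lambda_N$ is any uniform bound with $\lambda_N^{(j)}\le\lambda_N$; this bound is uniform over $\beta\in\mathcal{B}$ because the supremum over $\beta$ is already taken in the definition of $\lambda_N^{(j)}$.

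Finally, assembling the $m$ coordinates, on $\mathcal{A}_l$ one obtains $\sup_{\beta\in\mathcal{B}}\Vert\mathbb{E}_{P_{0,N}}[g(O_k;\beta,\widehat{\eta}_l)\mid(O_i)_{i\in I_l^c}]-\widebar{g}(\beta,\eta_{0,N})\Vert\le(\sum_{j=1}^m\tfrac14\lambda_N^2)^{1/2}=\tfrac12\sqrt{m}\,\lambda_N$; since $P_{0,N}(\mathcal{A}_l^c)\le\Delta_N\to0$, this delivers the claimed $O_p(\sqrt{m}\,\lambda_N)$ rate (indeed a deterministic bound on a probability-$(1-o(1))$ event). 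I expect the main obstacle to be a clean justification of the Taylor step — that $h$ is absolutely continuous with absolutely continuous derivative so the integral remainder applies, that $h'(0)=0$ genuinely holds as a one-sided derivative at the endpoint, and that the conditional-expectation identity $\mathbb{E}_{P_{0,N}}[g(O_k;\beta,\widehat{\eta}_l)\mid(O_i)_{i\in I_l^c}]=\widebar{g}(\beta,\widehat{\eta}_l)$ is rigorously valid — but each of these is a direct consequence of the cross-fitting independence structure together with the differentiability and integrability already encoded in Assumptions \ref{Assumption: Global Neyman orthogonality}, \ref{Assumption: consistency, nuisance estimation}, and \ref{Assumption: general score regularity}; the remaining steps are routine bookkeeping with the definition of $\lambda_N^{(j)}$ and the event $\mathcal{A}_l$.
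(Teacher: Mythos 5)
Your proof is correct and follows essentially the same route as the paper's: condition on the out-of-fold data, Taylor-expand each coordinate of the population moment along the line segment from $\eta_{0,N}$ to $\widehat{\eta}_l$, kill the first-order term by global Neyman orthogonality, bound the remainder by $\lambda_N^{(j)}$, assemble the $m$ coordinates into the $\sqrt{m}\,\lambda_N$ bound, and absorb the event $\{\widehat{\eta}_l\in\mathcal{T}_N\}$ of probability $1-\Delta_N$ into the $O_p$ statement. The only cosmetic difference is your use of the integral form of the Taylor remainder where the paper uses the mean-value (Lagrange) form, which changes nothing.
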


\begin{proof}
    With probability $1-o(1)$, $\widehat{\eta}_l\in \mathcal{T}_N$. Notice that
    \begin{align*}
        &\sup_{\beta \in \mathcal{B}} \Big\Vert \mathbb{E}_{P_{0,N}}[g(O_k;\beta,\widehat{\eta}_l)|(O_i)_{i\in I_l^c}]-\mathbb{E}_{P_{0,N}}[g(O_k;\beta,{\eta}_{0,N})] \Big\Vert\\
        =&\sup_{\beta \in \mathcal{B}}\sqrt{\sum_{j=1}^m(\mathbb{E}_{P_{0,N}}[g^{(j)}(O_k;\beta,\widehat{\eta}_l)|(O_i)_{i\in I_l^c}]-\mathbb{E}_{P_{0,N}}[g^{(j)}(O_k;\beta,{\eta}_{0,N})])^2}\\
        \leq & \sqrt{\sum_{j=1}^m(\sup_{\beta \in \mathcal{B}}\vert \mathbb{E}_{P_{0,N}}[g^{(j)}(O_k;\beta,\widehat{\eta}_l)|(O_i)_{i\in I_l^c}]-\mathbb{E}_{P_{0,N}}[g^{(j)}(O_k;\beta,{\eta}_{0,N})]\vert)^2}
    \end{align*}
    Therefore, we only need to bound
    \begin{equation*}
        \sup_{\beta \in \mathcal{B}}\vert \mathbb{E}_{P_{0,N}}[g^{(j)}(O_k;\beta,\widehat{\eta}_l)|(O_i)_{i\in I_l^c}]-\mathbb{E}_{P_{0,N}}[g^{(j)}(O_k;\beta,{\eta}_{0,N})]\vert
    \end{equation*}
    for every $j$. Let $f_l(t;\beta,j) = \mathbb{E}_{P_{0,N}}[g^{(j)}(O_k;\beta,\eta_{0,N}+t(\widehat{\eta}_l-\eta_{0,N}))|(O_i)_{i\in I_l^c}]-\mathbb{E}_{P_{0,N}}[g^{(j)}(O_k;\beta,{\eta}_{0,N})]$, $t\in [0,1]$. Then by Taylor expansion, we have for some $\tilde{t}\in [0,t]$,
    \begin{equation*}
        f_l(t;\beta,j) = f_l(0;\beta,j)+f'_l(0;\beta,j)+f''_l(\tilde{t};\beta,j).
    \end{equation*}
    Notice that
    \begin{align*}
        &\vert f_l(0;\beta,j)\vert =\vert \mathbb{E}_{P_{0,N}}[g^{(j)}(O_k;\beta,\eta_{0,N})|(O_i)_{i\in I_l^c}]-E[g^{(j)}(O_k;\beta,{\eta}_{0,N})]\vert = 0, \\
        &\vert f'_l(0;\beta,j) \vert\leq \sup_{\eta\in \mathcal{T}_N} \left\vert\frac{\partial }{\partial t}\mathbb{E}_{P_{0,N}}g^{(j)}(O;\beta,\eta_{0,N}+t(\eta-\eta_{0,N}))\right\vert =0, \\
        & \sup_{\beta \in \mathcal{B}}\vert f''_l(\tilde{t};\beta,j) \vert \leq \sup_{\beta\in \mathcal{B},t\in(0,1),\eta\in \mathcal{T}_N}\left\vert \frac{\partial^2 }{\partial t^2}\mathbb{E}_{P_{0,N}}g^{(j)}(O;\beta,\eta_0+t(\eta-\eta_{0,N}))\right\vert\leq \lambda_N,
    \end{align*}
    where the first equation is due to $\mathbb{E}_{P_{0,N}}[g^{(j)}(O_k;\beta,\eta_{0,N})|(O_i)_{i\in I_l^c}]=\mathbb{E}[g^{(j)}(O_k;\beta,\eta_{0,N})]$, the second line is due to Neyman orthogonality, the third line is due to Assumption \ref{Assumption: consistency, nuisance estimation} or Assumption \ref{Assumption: general score regularity}.  Here we $\lambda_N$ is a upper bound for $\lambda^{(j)}_N$, which can be taken as $N^{-1/2}\delta_N$ under Assumptions  \ref{Assumption: consistency, nuisance estimation} or \ref{Assumption: general score regularity}. Therefore, 
    \begin{equation*}
        \sup_{\beta \in \mathcal{B}} \Big\Vert \mathbb{E}_{P_{0,N}}[g(O_k;\beta,\widehat{\eta}_l)|(O_i)_{i\in I_l^c}]-\mathbb{E}_{P_{0,N}}[g(O_k;\beta,{\eta}_0)] \Big\Vert\leq \sqrt{m\lambda^{2}_N} = \sqrt{m}\lambda_N
    \end{equation*}
    holds with probability $1-o(1)$, hence
    \begin{equation*}
        \sup_{\beta \in \mathcal{B}} \Big\Vert \mathbb{E}_{P_{0,N}}[g(O_k;\beta,\widehat{\eta}_l)|(O_i)_{i\in I_l^c}]-\mathbb{E}_{P_{0,N}}[g(O_k;\beta,{\eta}_{0,N})] \Big\Vert = O_p(\sqrt{m}\lambda_N)
    \end{equation*}.
\end{proof}

\begin{lemma}
Suppose $g$ is global Neyman orthogonal, and either Assumption \ref{Assumption: consistency, nuisance estimation} or Assumption \ref{Assumption: general score regularity} holds, we have
\begin{equation*}
    \sup_{\beta\in \mathcal{B}}\Vert \widehat{g}(\beta,\widehat{\eta})-\widehat{g}(\beta,{\eta}_{0,N}) \Vert = O_p\Big(\sqrt{\frac{m}{N}}\delta_N+\sqrt{m}\lambda_N\Big) 
\end{equation*} \label{lemma: ghat rate}
\end{lemma}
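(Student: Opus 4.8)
The plan is to split $\widehat{g}(\beta,\widehat{\eta}) - \widehat{g}(\beta,\eta_{0,N})$ fold by fold, and within each fold into a mean-zero empirical-process term and a conditional-bias term, exploiting the sample-splitting structure. Since the folds have equal size $N_l = N/L$ and $\widehat{\eta}_{l(i)} = \widehat{\eta}_l$ for $i\in I_l$,
\begin{equation*}
\widehat{g}(\beta,\widehat{\eta}) - \widehat{g}(\beta,\eta_{0,N}) = \sum_{l=1}^L \frac{N_l}{N}\,\mathbb{P}_{n,l}\big(g(\beta,\widehat{\eta}_l) - g(\beta,\eta_{0,N})\big),
\end{equation*}
and adding and subtracting $P_{0,N}$ inside each summand gives, for each $l$,
\begin{equation*}
\mathbb{P}_{n,l}\big(g(\beta,\widehat{\eta}_l) - g(\beta,\eta_{0,N})\big) = (\mathbb{P}_{n,l} - P_{0,N})\big(g(\beta,\widehat{\eta}_l) - g(\beta,\eta_{0,N})\big) + P_{0,N}\big(g(\beta,\widehat{\eta}_l) - g(\beta,\eta_{0,N})\big),
\end{equation*}
where in the last term $P_{0,N}$ integrates over a fresh draw of $O$ with $\widehat{\eta}_l$ held fixed. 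This interpretation is legitimate because $\widehat{\eta}_l$ depends only on $(O_i)_{i\in I_l^c}$, which is independent of the observations indexed by $I_l$; concretely $P_{0,N}\big(g(\beta,\widehat{\eta}_l) - g(\beta,\eta_{0,N})\big) = \mathbb{E}_{P_{0,N}}[g(O_k;\beta,\widehat{\eta}_l)\mid (O_i)_{i\in I_l^c}] - \mathbb{E}_{P_{0,N}}[g(O_k;\beta,\eta_{0,N})]$ for any $k\in I_l$.

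For the empirical-process term I would invoke Lemma \ref{lemma: empirical process term of g} (which holds under Assumption \ref{Assumption: consistency, nuisance estimation}, and a fortiori under the stronger Assumption \ref{Assumption: general score regularity}), giving $\sup_{\beta\in\mathcal{B}}\big\Vert (\mathbb{P}_{n,l}-P_{0,N})(g(\beta,\widehat{\eta}_l)-g(\beta,\eta_{0,N}))\big\Vert = O_p(\sqrt{m/N}\,\delta_N)$; its proof is a coordinatewise application of the maximal inequality of Lemma \ref{lemma: concentration of supremum ep}, conditional on $(O_i)_{i\in I_l^c}$, using the VC-type entropy bound on the classes $\mathcal{G}^{(j)}_{\eta}$ and the $L^2$-rate $r_N^{(j)}$, followed by summing $m$ squared coordinate bounds to produce the $\sqrt m$ factor. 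For the bias term, global Neyman orthogonality of $g$ lets me invoke Lemma \ref{Lemma: rate for the first order bias}, giving $\sup_{\beta\in\mathcal{B}}\big\Vert P_{0,N}(g(\beta,\widehat{\eta}_l)-g(\beta,\eta_{0,N}))\big\Vert = O_p(\sqrt{m}\,\lambda_N)$: a Taylor expansion along the path $t\mapsto (1-t)\eta_{0,N}+t\widehat{\eta}_l$ has a vanishing zeroth-order term (by the conditioning) and a vanishing first-order term (by Neyman orthogonality), so only the second-order curvature, bounded coordinatewise by $\lambda_N^{(j)}\le\lambda_N$ via Assumption \ref{Assumption: consistency, nuisance estimation} or \ref{Assumption: general score regularity}, survives.

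Combining the two bounds by the triangle inequality and summing over the $L$ folds — with $L$ fixed and $\sum_{l=1}^L N_l/N = 1$ — would yield $\sup_{\beta\in\mathcal{B}}\Vert \widehat{g}(\beta,\widehat{\eta})-\widehat{g}(\beta,\eta_{0,N})\Vert = O_p\big(\sqrt{m/N}\,\delta_N + \sqrt{m}\,\lambda_N\big)$; the event $\bigcup_{l=1}^L\{\widehat{\eta}_l\notin\mathcal{T}_N\}$ has probability at most $L\Delta_N = o(1)$ and is absorbed into the $O_p$ statement. I expect the only genuinely delicate point to be the use of cross-fitting independence to replace the random bias term by the deterministic functional $\eta\mapsto P_{0,N}g(\beta,\eta)$ evaluated at $\widehat{\eta}_l$, since this is precisely what allows Neyman orthogonality and the $\lambda_N$ curvature bound to be brought to bear; without sample splitting one would instead need a Donsker-type complexity condition and the clean separation of the two rates would be lost. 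Everything else is bookkeeping with the triangle inequality over finitely many folds.
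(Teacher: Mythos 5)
Your proposal is correct and follows essentially the same route as the paper's own proof: reduce to a single fold, split $\mathbb{P}_{n,l}\bigl(g(\beta,\widehat{\eta}_l)-g(\beta,\eta_{0,N})\bigr)$ into the centered empirical-process term handled by Lemma \ref{lemma: empirical process term of g} and the conditional-bias term handled via cross-fitting, Neyman orthogonality, and the second-order bound of Lemma \ref{Lemma: rate for the first order bias}, then combine by the triangle inequality over the finitely many folds. Your extra remarks on the $L\Delta_N$ exceptional event and the fold-weighting are just the bookkeeping the paper leaves implicit.
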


\begin{proof}
    We only need to prove that 
    \begin{align*}
        \sup_{\beta\in \mathcal{B}}\Vert \widehat{g}_l(\beta,\widehat{\eta}_l)-\widehat{g}_l(\beta,{\eta}_{0,N}) \Vert = O_p\Big(\sqrt{\frac{m}{N}}\delta_N+\sqrt{m}\lambda_N\Big),
    \end{align*}
    where $\widehat{g}_l(\beta,\eta) = \mathbb{P}_{n,l}g(\beta,\eta)$. The result follows by noticing that
    \begin{align*}
        &\sup_{\beta\in \mathcal{B}}\Vert \widehat{g}_l(\beta,\widehat{\eta}_l)-\widehat{g}_l(\beta,{\eta}_{0,N}) \Vert\\
        \leq & \frac{1}{\sqrt{N}}\sup_{\beta \in \mathcal{B}}\Big\Vert \mathbb{G}_{n,l} (g(\beta,\widehat{\eta}_l)-g(\beta,{\eta}_{0,N})) \Big\Vert + \sup_{\beta \in \mathcal{B}} \Big\Vert \mathbb{E}_{P_{0,N}}[g(O_k;\beta,\widehat{\eta}_l)|(O_i)_{i\in I_l^c}]-\mathbb{E}_{P_{0,N}}[g(O_k;\beta,{\eta}_{0,N})] \Big\Vert
    \end{align*}
\end{proof}

\begin{remark}
    Under Assumption \ref{Assumption: general score regularity}, we further have
    \begin{equation*}
        \sup_{\beta\in \mathcal{B}}\Vert \widehat{g}(\beta,\widehat{\eta})-\widehat{g}(\beta,{\eta}_{0,N}) \Vert = o_P(1/\sqrt{N}).
    \end{equation*}
\end{remark}

\begin{lemma}
Under Assumption \ref{Assumption: consistency, nuisance estimation}, we have
    \begin{align*}
        \Vert \widehat{\Omega}(\beta_0,\widehat{\eta})-\overline{\Omega}(\beta_0,\eta_{0,N})\Vert = O_p\Big(\frac{m}{\sqrt{N}}\delta_N\Big). 
    \end{align*} \label{lemma: Omega uniform convergence}
\end{lemma}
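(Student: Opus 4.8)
The content of the lemma is the effect of substituting the cross-fitted estimate $\widehat\eta$ for $\eta_{0,N}$ in the empirical weighting matrix at $\beta_0$; the pure sampling error $\widehat\Omega(\beta_0,\eta_{0,N})-\overline\Omega(\beta_0,\eta_{0,N})$ is $o_p(1)$ by the random-matrix concentration bound of Lemma~\ref{lemma: Tropp's inequality} under $m^2/N\to0$ (this is Assumption~\ref{Assumption: consistency, matrix estimation}(b)), so it suffices to bound $\widehat\Omega(\beta_0,\widehat\eta)-\widehat\Omega(\beta_0,\eta_{0,N})$. Throughout I work on the event $\{\widehat\eta_l\in\mathcal{T}_N,\ l=1,\dots,L\}$, which has probability $1-o(1)$ since $\Delta_N=o(1)$. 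With $\Omega(\beta,\eta):o\mapsto g(o;\beta,\eta)g(o;\beta,\eta)^\top$ and $\overline\Omega_l:=\mathbb{E}_{P_{0,N}}[\Omega(O;\beta_0,\widehat\eta_l)\mid (O_i)_{i\in I_l^c}]$, decompose
\begin{align*}
\widehat\Omega(\beta_0,\widehat\eta)-\widehat\Omega(\beta_0,\eta_{0,N})
&=\sum_{l=1}^L\frac{N_l}{N}(\mathbb{P}_{n,l}-P_{0,N})\bigl(\Omega(\beta_0,\widehat\eta_l)-\Omega(\beta_0,\eta_{0,N})\bigr)\\
&\quad+\sum_{l=1}^L\frac{N_l}{N}\bigl(\overline\Omega_l-\overline\Omega(\beta_0,\eta_{0,N})\bigr)\ =:\ (\mathrm I)+(\mathrm{II}),
\end{align*}
and bound the spectral norm of each $m\times m$ matrix by its Frobenius norm, i.e.\ entrywise.

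For $(\mathrm I)$, fix a fold $l$ and condition on $(O_i)_{i\in I_l^c}$, which freezes $\widehat\eta_l$; the $(j,k)$ entry of $(\mathbb{P}_{n,l}-P_{0,N})(\Omega(\beta_0,\widehat\eta_l)-\Omega(\beta_0,\eta_{0,N}))$ is then an average of $N_l$ i.i.d.\ centered variables with conditional variance at most $\mathbb{E}_{P_{0,N}}[\{g^{(j)}g^{(k)}(\beta_0,\widehat\eta_l)-g^{(j)}g^{(k)}(\beta_0,\eta_{0,N})\}^2\mid I_l^c]/N_l\le (r_N^{(j,k)})^2/N_l\le\delta_N^2/N_l$ by the definition of $r_N^{(j,k)}$ in Assumption~\ref{Assumption: consistency, nuisance estimation}. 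Summing the $m^2$ entries and applying Markov's inequality gives $\|(\mathrm I)\|\le\|(\mathrm I)\|_F=O_p(m\delta_N/\sqrt N)$. (If a version uniform in $\beta\in\mathcal{B}$ were required, one would instead invoke the VC-type entropy bound of Assumption~\ref{Assumption: consistency, nuisance estimation}(a) with the maximal inequality Lemma~\ref{lemma: concentration of supremum ep}, exactly as in the proof of Lemma~\ref{lemma: empirical process term of g}.)

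The term $(\mathrm{II})$ is the conditional bias from nuisance estimation and is the main obstacle, since $\Omega=gg^\top$ is not Neyman orthogonal even though $g$ is. Writing $a_l:=g(\cdot;\beta_0,\widehat\eta_l)-g(\cdot;\beta_0,\eta_{0,N})$ and $g_0:=g(\cdot;\beta_0,\eta_{0,N})$, expand $\Omega(\beta_0,\widehat\eta_l)-\Omega(\beta_0,\eta_{0,N})=a_la_l^\top+a_lg_0^\top+g_0a_l^\top$. The quadratic part is immediate: $\|\mathbb{E}[a_la_l^\top\mid I_l^c]\|\le\mathbb{E}[\|a_l\|^2\mid I_l^c]=\sum_j\|a_l^{(j)}\|_{P_{0,N},2}^2\le m\delta_N^2$ by the definition of $r_N^{(j)}$. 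For each entry of the cross term $\mathbb{E}[a_lg_0^\top\mid I_l^c]$, Taylor-expand $t\mapsto\mathbb{E}_{P_{0,N}}[g^{(j)}(O;\beta_0,(1-t)\eta_{0,N}+t\widehat\eta_l)\,g_0^{(k)}\mid I_l^c]$ to second order at $t=0$: the zeroth-order value cancels against $\overline\Omega(\beta_0,\eta_{0,N})$, the first-order coefficient is forced to be negligible by global Neyman orthogonality of $g$ (Assumption~\ref{Assumption: Global Neyman orthogonality}) together with the product/conditional-mean-zero structure of $g_0$, and the second-order remainder is controlled uniformly in $t$ and in the Gateaux direction by $\lambda_N^{(j)}\le N^{-1/2}\delta_N$ (via continuity of the second Gateaux derivative). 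Hence each entry of $\mathbb{E}[a_lg_0^\top\mid I_l^c]$ is $O(\lambda_N+\delta_N^2)$, so $\|(\mathrm{II})\|\le\|(\mathrm{II})\|_F=O_p\bigl(m(\lambda_N+\delta_N^2)\bigr)=O_p(m\delta_N/\sqrt N)$, using $\lambda_N\le N^{-1/2}\delta_N$ and $\delta_N=o(\sqrt N/m)$. Combining $(\mathrm I)$ and $(\mathrm{II})$ by the triangle inequality, then adding the separately controlled term $\widehat\Omega(\beta_0,\eta_{0,N})-\overline\Omega(\beta_0,\eta_{0,N})$, yields the claim.

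The genuinely hard step is the vanishing (to first order) of the cross-term coefficient, i.e.\ showing that the bias of the weighting matrix is of second order: this cannot follow from Neyman orthogonality of $g$ alone, and must exploit the specific structure of the orthogonalized moment function (in the examples of Section~\ref{sec:examples}, $g_0$ factors into residuals that are conditionally mean zero given the baseline covariates, which annihilates these cross terms after conditioning on the $X$-measurable nuisance perturbations). Should only a first-order bound be available in a given application, one falls back on the operator-norm estimate $\|\mathbb{E}[a_lg_0^\top\mid I_l^c]\|\le\|\mathbb{E}[a_la_l^\top\mid I_l^c]\|^{1/2}\,\|\overline\Omega\|^{1/2}=O_p(\sqrt m\,\delta_N)$ (using the eigenvalue bound of Assumption~\ref{Assumption: convergence of g-hat}(b)), which is still $o_p(1)$ under Assumption~\ref{Assumption: consistency, nuisance estimation} and suffices for the uses of the lemma.
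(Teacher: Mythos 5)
Your treatment of the empirical-process part is exactly the paper's argument: condition on the fold so that $\widehat\eta_l$ is fixed, bound each entry's conditional variance by $(r_N^{(j,k)})^2/N_l\leq \delta_N^2/N_l$, pass to the Frobenius norm over the $m^2$ entries, and apply Markov to get $O_p(m\delta_N/\sqrt N)$. In fact this is the only term the paper's own proof of Lemma \ref{lemma: Omega uniform convergence} explicitly bounds; the remaining two pieces of the three-term decomposition, $(\mathbb{P}_{n,l}-P_{0,N})\Omega(\beta_0,\eta_{0,N})$ and $P_{0,N}[\Omega(\beta_0,\widehat\eta_l)-\Omega(\beta_0,\eta_{0,N})]$, are not handled there but are delegated, where the lemma is actually used, to Assumption \ref{Assumption: consistency, matrix estimation} or \ref{Assumption: ASN matrix estimation} (see Remark \ref{Remark: Decompositions of matrix terms}). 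So on the part the paper proves, you match it; the difficulties you run into on the other parts reflect a looseness in the lemma's statement (which is asserted under Assumption \ref{Assumption: consistency, nuisance estimation} alone) rather than an idea you failed to find in the paper.

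That said, two of your explicit claims do not hold as stated. First, for the conditional bias term (your (II)), the quadratic piece gives $\|\mathbb{E}[a_la_l^\top\mid I_l^c]\|\leq m\delta_N^2$, and $m\delta_N^2$ is \emph{not} $O(m\delta_N/\sqrt N)$ under Assumption \ref{Assumption: consistency, nuisance estimation}, since $\delta_N=o(\min(\sqrt N/m,\mu_N/\sqrt m))$ does not force $\delta_N=O(N^{-1/2})$ (e.g.\ $\delta_N\asymp N^{-1/4}$, $m\asymp N^{1/4}$ gives $m\delta_N^2\asymp N^{-1/4}\gg m\delta_N/\sqrt N\asymp N^{-1/2}$); so your chain $O_p(m(\lambda_N+\delta_N^2))=O_p(m\delta_N/\sqrt N)$ is arithmetically wrong. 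Second, the first-order vanishing of the cross term $\mathbb{E}[a_lg_0^\top\mid I_l^c]$ indeed does not follow from Neyman orthogonality of $g$ (since $gg^\top$ is not orthogonal), as you yourself note; the paper never claims it, and instead assumes the population-level control $\sup_{\beta,\eta}\|\widebar\Omega(\beta,\eta)-\widebar\Omega(\beta,\eta_{0,N})\|=o(1)$ (or $o(1/\sqrt m)$) as a separate, example-verified condition. Your Cauchy--Schwarz fallback $\|\mathbb{E}[a_lg_0^\top\mid I_l^c]\|\lesssim\sqrt m\,\delta_N$ is fine as an inequality, but it is $o(1)$ only under the stronger rate $\delta_N=o(1/\sqrt m)$ of Assumption \ref{Assumption: general score regularity}, not under Assumption \ref{Assumption: consistency, nuisance estimation} alone; similarly, invoking Assumption \ref{Assumption: consistency, matrix estimation}(b) for $(\mathbb{P}_n-P_{0,N})\Omega(\beta_0,\eta_{0,N})$ yields only $o_p(1)$, not the lemma's displayed rate, and goes beyond the lemma's stated hypotheses. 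In short: keep your fold-wise second-moment bound (it is the paper's proof), but do not claim the stated rate for the bias and true-nuisance sampling terms from Assumption \ref{Assumption: consistency, nuisance estimation}; they are meant to be absorbed by Assumptions \ref{Assumption: consistency, matrix estimation}/\ref{Assumption: ASN matrix estimation} downstream.
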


\begin{proof}
We only need to derive the convergence rate for  $\Vert \widehat{\Omega}_l(\beta_0,\widehat{\eta}_l)-\overline{\Omega}(\beta_0,\eta_{0,N})\Vert$ for some $l$, where $\widehat{\Omega}_l(\beta,{\eta}) = \mathbb{P}_{n,l}\Omega(\beta,\eta)$. Note that
\begin{align*}
    &\Vert \widehat{\Omega}_l(\beta_0,\widehat{\eta}_l)-\overline{\Omega}(\beta_0,\eta_{0,N})\Vert\\
    =& \Vert (\mathbb{P}_{n,l}-P_{0,N})(\Omega(\beta_0,\widehat{\eta}_l)-\Omega(\beta_0,{\eta}_{0,N})) \Vert + \Vert (\mathbb{P}_{n,l}-P_{0,N})\Omega(\beta_0,{\eta}_{0,N}) \Vert  + \Vert P_{0,N}[\Omega(\beta_0,\widehat{\eta}_l)-\overline{\Omega}(\beta_0,\eta_{0,N})] \Vert
\end{align*}

To derive the convergence rate for each term, we will vectorize the matrice $\widehat{\Omega}_l(\beta_0,\widehat{\eta}_l)$ and $\overline{\Omega}(\beta_0,\eta_{0,N})$ as $m^2$ dimensional vectors. Then we can apply the same techniques we used in Lemma \ref{lemma: empirical process term of g} and \ref{Lemma: rate for the first order bias}. For the first term, 
    \begin{align*}
        &\mathbb{E}_{P_{0,N}}\Vert \sqrt{n}(\mathbb{P}_{n,l}-P_{0,N})(\Omega(\beta_0,\widehat{\eta}_l)-\Omega(\beta_0,{\eta}_{0,N})) \Vert^2\\
        \leq & \mathbb{E}_{P_{0,N}}\Vert \widehat{\Omega}_l(\beta_0,\widehat{\eta}_l)-\overline{\Omega}(\beta_0,\eta_{0,N})\Vert_F^2\\
        \leq & \sup_{\eta\in \mathcal{T}_n}\mathbb{E}_{P_{0,N}}[\Vert \text{vec}(\widehat{\Omega}_l(\beta_0,{\eta}))-\text{vec}(\overline{\Omega}(\beta_0,\eta_{0,N}))\Vert^2] \\
        \leq & \sum_{j=1}^m \sum_{k=1}^m r^{(j,k)}_n
        \leq  m^2\delta^2_N.
    \end{align*}
    The second line is because the spectral norm is always upper bounded by the Frobenius Norm. The third line is due to cross-fitting, so that the randomness of $\widehat{\eta}_l$ can be ignored. Therefore,
    \begin{align*}
        \Vert (\mathbb{P}_{n,l}-P_{0,N})(\Omega(\beta_0,\widehat{\eta}_l)-\Omega(\beta_0,{\eta}_{0,N})) \Vert = O_p\Big(\frac{m}{\sqrt{N}}\delta_N\Big).
    \end{align*}
\end{proof}

\begin{remark}
    Under the assumption $\delta_N = o(1/\sqrt{m})$, we further have
    \begin{equation*}
        \sup_{\beta\in \mathcal{B}}\Vert (\mathbb{P}_{n,l}-P_{0,N})(\Omega(\beta,\widehat{\eta}_l)-\Omega(\beta,{\eta}_{0,N})) \Vert=o_p(1/\sqrt{m}).
    \end{equation*}
\end{remark}

\begin{lemma}
Under Assumption \ref{Assumption: consistency, nuisance estimation}, we have
    \begin{align*}
        \sup_{\beta \in \mathcal{B}}\Vert (\mathbb{P}_{n}-P_{0,N})(\Omega(\beta,\widehat{\eta})-\Omega(\beta,{\eta}_{0,N})) \Vert = O_p\Big(\frac{m}{\sqrt{N}}\delta_N\Big).  
    \end{align*} 
    If we further assume Assumption 
    \ref{Assumption: general score regularity} or Assumption \ref{Assumption: separable score} holds, then
    \begin{align*}
        \sup_{\beta \in \mathcal{B}}\Vert (\mathbb{P}_{n}-P_{0,N})(\Omega(\beta,\widehat{\eta})-\Omega(\beta,{\eta}_{0,N})) \Vert = o_p\Big(\sqrt{\frac{m}{N}}\Big).  
    \end{align*}
    \label{Lemma: Empirical process term for matrix}
\end{lemma}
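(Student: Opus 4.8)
The plan is to follow the template of Lemmas~\ref{lemma: empirical process term of g} and \ref{lemma: Omega uniform convergence}, upgrading the fixed-$\beta_0$ bound of Lemma~\ref{lemma: Omega uniform convergence} to a bound uniform over $\beta\in\mathcal{B}$. First I would use the cross-fitting decomposition: since the folds partition $\{1,\dots,N\}$ into $L$ equal pieces and $L$ is fixed, $(\mathbb{P}_n-P_{0,N})\big(\Omega(\beta,\widehat\eta)-\Omega(\beta,\eta_{0,N})\big)=\frac{1}{L}\sum_{l=1}^L(\mathbb{P}_{n,l}-P_{0,N})\big(\Omega(\beta,\widehat\eta_l)-\Omega(\beta,\eta_{0,N})\big)$, so it suffices to bound $\sup_{\beta\in\mathcal{B}}\big\|(\mathbb{P}_{n,l}-P_{0,N})(\Omega(\beta,\widehat\eta_l)-\Omega(\beta,\eta_{0,N}))\big\|=\sqrt{L/N}\,\sup_{\beta}\big\|\mathbb{G}_{n,l}(\Omega(\beta,\widehat\eta_l)-\Omega(\beta,\eta_{0,N}))\big\|$ for each $l$. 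Throughout I would condition on the out-of-fold data $(O_i)_{i\in I_l^c}$, on the event $\{\widehat\eta_l\in\mathcal{T}_N\}$ of probability $1-\Delta_N=1-o(1)$, so that $\widehat\eta_l$ is a fixed element of $\mathcal{T}_N$ and $(O_i)_{i\in I_l}$ are i.i.d.\ from $P_{0,N}$.

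Next I would vectorize the $m\times m$ matrix and use $\|A\|\le\|A\|_F$, so that $\|\mathbb{G}_{n,l}(\Omega(\beta,\widehat\eta_l)-\Omega(\beta,\eta_{0,N}))\|$ is at most $\big(\sum_{j,k=1}^m\big(\sup_{\beta\in\mathcal{B}}|\mathbb{G}_{n,l}(\Omega^{(j,k)}(\beta,\widehat\eta_l)-\Omega^{(j,k)}(\beta,\eta_{0,N}))|\big)^2\big)^{1/2}$, where $\Omega^{(j,k)}(o;\beta,\eta)=g^{(j)}(o;\beta,\eta)g^{(k)}(o;\beta,\eta)$. For each pair $(j,k)$ the class $\big\{g^{(j)}(\cdot;\beta,\widehat\eta_l)g^{(k)}(\cdot;\beta,\widehat\eta_l)-g^{(j)}(\cdot;\beta,\eta_{0,N})g^{(k)}(\cdot;\beta,\eta_{0,N}):\beta\in\mathcal{B}\big\}$ is, as an element of the product family $\mathcal{G}_\eta^{(j)}\times\mathcal{G}_{\eta'}^{(k)}$, a VC-type class with constants $a,v$ not depending on $(j,k)$ and with envelope in every $L^q(P_{0,N})$ by Assumption~\ref{Assumption: consistency, nuisance estimation}(a), and its $L^2(P_{0,N})$-radius is at most $r_N^{(j,k)}$ by the definition of $r_N^{(j,k)}$. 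Applying the maximal inequality of Lemma~\ref{lemma: concentration of supremum ep} conditionally on $(O_i)_{i\in I_l^c}$, exactly as in the proof of Lemma~\ref{lemma: empirical process term of g} (with $\sigma=r_N^{(j,k)}$, $t=N^{2/q}$, $q\ge 3/\epsilon$), gives that with probability at least $1-m^2N^{-1}$, for all $(j,k)$ simultaneously, $\sup_{\beta}|\mathbb{G}_{n,l}(\Omega^{(j,k)}(\beta,\widehat\eta_l)-\Omega^{(j,k)}(\beta,\eta_{0,N}))|\lesssim r_N^{(j,k)}\big(\sqrt{\log(1/r_N^{(j,k)})}+N^{\epsilon}\big)+N^{-1/2+\epsilon}\big(\log(1/r_N^{(j,k)})+1\big)\le\delta_N$, the last step being Assumption~\ref{Assumption: consistency, nuisance estimation}(b) with $a_N=r_N^{(j,k)}$; the union bound over the $m^2$ pairs is legitimate because $m^2/N\to 0$. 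Summing in quadrature over the $m^2$ entries yields $\sup_{\beta}\|\mathbb{G}_{n,l}(\Omega(\beta,\widehat\eta_l)-\Omega(\beta,\eta_{0,N}))\|=O_p\big(\sqrt{m^2\delta_N^2}\big)=O_p(m\delta_N)$, and dividing by $\sqrt N$ and combining the $L$ folds gives the first claim, $O_p(m\delta_N/\sqrt N)$.

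For the improvement, both Assumption~\ref{Assumption: general score regularity} and Assumption~\ref{Assumption: separable score} impose $\delta_N=o(1/\sqrt m)$. Under Assumption~\ref{Assumption: general score regularity} the product classes $\mathcal{G}_\eta^{(i)}\times\mathcal{G}_{\eta'}^{(j)}$ and the rate $r_N^{(j,k)}$ reappear in parts (a)--(b) of that assumption, so the display above goes through verbatim and $m\delta_N/\sqrt N=\sqrt{m/N}\cdot\sqrt m\,\delta_N=o(\sqrt{m/N})$. Under Assumption~\ref{Assumption: separable score} I would instead mimic Lemma~\ref{Lemma: empirical process term of g, separable case}: writing $\Omega(\beta,\eta)=\big(\sum_b g^{[b]}(\eta)h^{[b]}(\beta)\big)\big(\sum_{b'} g^{[b']}(\eta)h^{[b']}(\beta)\big)^T$, I would factor out the uniformly bounded $h^{[b]}(\beta)$ (compactness of $\mathcal{B}$, continuity of $h^{[b]}$), which removes the supremum over $\beta$ and reduces matters to a second-moment bound on the $\beta$-free matrix differences $g^{[b_1]}(\widehat\eta_l)g^{[b_2]}(\widehat\eta_l)^T-g^{[b_1]}(\eta_{0,N})g^{[b_2]}(\eta_{0,N})^T$, whose entries have $L^2(P_{0,N})$-norm at most $r^{(b_1,b_2),(j,k,l,r)}_N\le\delta_N$; with $B$ fixed this again gives $O_p(m\delta_N/\sqrt N)=o_p(\sqrt{m/N})$.

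The step I expect to require the most care is the bookkeeping behind the union bound: one must pick $t=N^{2/q}$ with $q$ large enough ($q\ge 3/\epsilon$) so that $m^2 t^{-q/2}=m^2/N\to 0$ makes the tail summable over all $m^2$ entries, and one must check that the product \emph{difference} classes inherit the VC-type property with a single pair of constants and envelopes controlled uniformly in $(j,k)$ and in the conditionally-fixed value of $\widehat\eta_l$---both of which are exactly what Assumption~\ref{Assumption: consistency, nuisance estimation}(a) (resp.\ Assumption~\ref{Assumption: general score regularity}(a)) supplies. No idea beyond the proofs of Lemmas~\ref{lemma: empirical process term of g} and \ref{lemma: Omega uniform convergence} is needed.
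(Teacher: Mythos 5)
Your proposal is correct and follows essentially the same route as the paper's proof: bound the spectral norm by the Frobenius norm, vectorize $\Omega$ into $m^2$ entries of the form $g^{(j)}g^{(k)}$, apply the conditional maximal inequality of Lemma \ref{lemma: concentration of supremum ep} entrywise with $\sigma=r_N^{(j,k)}$ (union bound over the $m^2$ entries, summing in quadrature to get $O_p(m\delta_N/\sqrt N)$), and then use $\delta_N=o(1/\sqrt m)$ for the refinement, handling the separable case by mimicking Lemma \ref{Lemma: empirical process term of g, separable case}. The cross-fitting/conditioning bookkeeping and the treatment of the product-difference classes as VC-type match what the paper does implicitly, so no substantive difference.
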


\begin{proof}
 We will vectorize the matrices ${\Omega}(\beta,\widehat{\eta}_l)$ and ${\Omega}(\beta,\eta_{0,N})$ as $m^2$ dimensional vectors. Then we can apply the same techniques we used in Lemma \ref{lemma: empirical process term of g} and \ref{Lemma: rate for the first order bias}. For the first term, we have
\begin{align*}
    &\sup_{\beta\in \mathcal{B}}\Vert (\mathbb{P}_{n,l}-P_{0,N})(\Omega(\beta,\widehat{\eta}_l)-\Omega(\beta,{\eta}_{0,N}))\Vert\\
    \leq & \sup_{\beta\in \mathcal{B}}\Vert (\mathbb{P}_{n,l}-P_{0,N})(\Omega(\beta,\widehat{\eta}_l)-\Omega(\beta,{\eta}_{0,N}))\Vert_F\\
    = &\sup_{\beta\in \mathcal{B}}\Vert (\mathbb{P}_{n,l}-P_{0,N})(\text{vec}(\Omega(\beta,\widehat{\eta}_l))-\text{vec}(\Omega(\beta,{\eta}_{0,N})))\Vert
\end{align*}
Note that $\text{vec}(\Omega(\beta,\eta))$ is a $m^2$-dimensional vector with each component of the form $g^{(j)}(\beta,\eta)g^{(k)}(\beta,\eta)$, with $i,k \in\{1,...,m\}$. Following the similar derivations as in \ref{lemma: empirical process term of g}, we have
\begin{align*}
    \sup_{\beta \in \mathcal{B}}\Vert (P_{n,l}-P)(\Omega(\beta,\widehat{\eta}_l)-\Omega(\beta,{\eta}_{0,N}))\Vert = O_p\Big(\sqrt{\frac{m^2}{N}}\delta_N\Big),
\end{align*}
The only difference here is that $\text{vec}(\Omega(\beta,\eta))$ is a $m^2$-dimensional vector, so here the size is $O_p\Big(\sqrt{\frac{m^2}{N}}\delta_N\Big)$ instead of $O_p\Big(\sqrt{\frac{m}{N}}\delta_N\Big)$. If we further assume Assumption \ref{Assumption: general score regularity} holds, then  
\begin{align*}
        \sup_{\beta \in \mathcal{B}}\Vert (\mathbb{P}_{n,l}-P_{0,N})(\Omega(\beta,\widehat{\eta}_l)-\Omega(\beta,{\eta}_{0,N})) \Vert = o_p\Big(\sqrt{\frac{m}{N}}\Big).  
    \end{align*}
     since $\delta_N = o(1/\sqrt{m})$. The proof under Assumption \ref{Assumption: separable score} holds is very similar to that of Lemma \ref{Lemma: empirical process term of g, separable case} and is therefore omitted.
\end{proof}

\begin{remark}
    Combing Lemma \ref{Lemma: Empirical process term for matrix} with Assumption \ref{Assumption: consistency, matrix estimation} or Assumption \ref{Assumption: ASN matrix estimation} allows us to drive convergence rate of 
    \begin{equation*}
        \Vert \widehat{\Omega}(\beta,\widehat{\eta})-\overline{\Omega}(\beta,\eta_{0,N})\Vert
    \end{equation*}
    by noticing that 
    \begin{align*}
    &\Vert \widehat{\Omega}_l(\beta,\widehat{\eta}_l)-\overline{\Omega}(\beta,\eta_{0,N})\Vert\\
    =& \Vert (\mathbb{P}_{n,l}-P_{0,N})(\Omega(\beta,\widehat{\eta}_l)-\Omega(\beta,{\eta}_{0,N})) \Vert + \Vert (\mathbb{P}_{n,l}-P_{0,N})\Omega(\beta,{\eta}_{0,N}) \Vert  + \Vert P_{0,N}[\Omega(\beta,\widehat{\eta}_l)-{\Omega}(\beta,\eta_{0,N})] \Vert. 
\end{align*}\label{Remark: Decompositions of matrix terms}
\end{remark}

\begin{lemma}
Under Assumption \ref{Assumption: convergence of g-hat}, 
\begin{equation*}
    \Big\Vert \widehat{g}(\beta_0,\eta_{0,N}) \Big\Vert = O_p\Big(\sqrt{\frac{m}{N}}\Big).
\end{equation*}
    \label{lemma: g hat rate 2}
\end{lemma}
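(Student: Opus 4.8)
The plan is to bound the second moment of $\widehat{g}(\beta_0,\eta_{0,N})$ directly and then invoke Markov's inequality. Note first that, since $\eta_{0,N}$ is the true nuisance value, no cross-fitting is involved here: $\widehat{g}(\beta_0,\eta_{0,N}) = \frac{1}{N}\sum_{i=1}^N g(O_i;\beta_0,\eta_{0,N})$ is an ordinary i.i.d. average, and by the moment condition \eqref{Equation: moment condition} each summand has mean zero under $P_{0,N}$.

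The key computation is then
\begin{align*}
\mathbb{E}_{P_{0,N}}\big\Vert \widehat{g}(\beta_0,\eta_{0,N})\big\Vert^2
&= \frac{1}{N^2}\sum_{i=1}^N \mathbb{E}_{P_{0,N}}\big\Vert g(O_i;\beta_0,\eta_{0,N})\big\Vert^2
= \frac{1}{N}\,\mathbb{E}_{P_{0,N}}\big\Vert g(O;\beta_0,\eta_{0,N})\big\Vert^2
= \frac{1}{N}\,\mathrm{tr}\big(\widebar{\Omega}\big),
\end{align*}
where the cross terms vanish by independence and the mean-zero property, and the last equality uses $\widebar{\Omega} = \mathbb{E}_{P_{0,N}}[g(O;\beta_0,\eta_{0,N})g(O;\beta_0,\eta_{0,N})^T]$. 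By Assumption \ref{Assumption: convergence of g-hat}(b), $\xi_{\max}(\widebar{\Omega}) \le C$, so $\mathrm{tr}(\widebar{\Omega}) \le C m$, giving $\mathbb{E}_{P_{0,N}}\Vert \widehat{g}(\beta_0,\eta_{0,N})\Vert^2 \le Cm/N$.

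Finally, by Markov's inequality applied to $\Vert \widehat{g}(\beta_0,\eta_{0,N})\Vert^2$, for any $\epsilon>0$ we have $P_{0,N}\big(\Vert \widehat{g}(\beta_0,\eta_{0,N})\Vert > K\sqrt{m/N}\,\big) \le C/K^2$, which can be made smaller than any prescribed level by choosing $K$ large; hence $\Vert \widehat{g}(\beta_0,\eta_{0,N})\Vert = O_p(\sqrt{m/N})$.

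There is essentially no serious obstacle in this argument: it is a routine second-moment bound, and the only input beyond the i.i.d. structure is the uniform upper bound on the eigenvalues of $\widebar{\Omega}$ from Assumption \ref{Assumption: convergence of g-hat}(b). If a high-probability (rather than $O_p$) statement were desired one could instead apply the matrix concentration bound of Lemma \ref{lemma: Tropp's inequality} to $\sum_i g_i$, but for the stated $O_p$ conclusion the Chebyshev-type argument above is sufficient and cleanest.
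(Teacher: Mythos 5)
Your proposal is correct and matches the paper's own argument essentially line for line: the paper also computes $\mathbb{E}_{P_{0,N}}\Vert\widehat{g}(\beta_0,\eta_{0,N})\Vert^2 = \frac{1}{N}\operatorname{tr}\widebar{\Omega}(\beta_0,\eta_{0,N}) \lesssim m/N$ using the mean-zero i.i.d. structure and the eigenvalue bound in Assumption \ref{Assumption: convergence of g-hat}(b), then concludes via Markov's inequality. No differences worth noting.
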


\begin{proof}
We only need to bound $\mathbb{E}_{P_{0,N}}\Big\Vert \widehat{g}(\beta_0,\eta_{0,N}) \Big\Vert^2$:
    \begin{align*}
         & \mathbb{E}_{P_{0,N}}\left\Vert\widehat{g}(\beta_0,\eta_{0,N}) \right\Vert^2 = \mathbb{E}_{P_{0,N}}\widehat{g}^T(\beta_0,\eta_{0,N})\widehat{g}(\beta_0,\eta_{0,N})=\frac{1}{N}\mathbb{E}_{P_{0,N}}\text{tr}[g^T(\beta_0,\eta_{0,N})g(\beta_0,\eta_{0,N})]\\
         =& \frac{1}{N}\mathbb{E}_{P_{0,N}}\text{tr}[g(O;\beta_0,\eta_{0,N})g^T(\beta_0,\eta_{0,N})] = \frac{1}{N}\text{tr}\widebar{\Omega}(\beta_0,\eta_{0,N})\lesssim \frac{m}{N}
    \end{align*}
    Therefore,
    \begin{equation*}
        \Big\Vert \widehat{g}(\beta_0,\eta_{0,N}) \Big\Vert = O_p\Big(\sqrt{\frac{m}{N}}\Big).
    \end{equation*}
\end{proof}

\begin{lemma}
Suppose $m^2/N\rightarrow 0$, $m^2/\mu_N=O(1)$ and Assumptions \ref{Assumption: weak moment condition},\ref{Assumption: global identifiability},\ref{Assumption: convergence of g-hat},\ref{Assumption: Global Neyman orthogonality},\ref{Assumption: consistency, nuisance estimation},\ref{Assumption: consistency, matrix estimation} hold, then
    \begin{align*}
        \Vert \delta(\widehat{\beta})\Vert = O_p(1).
    \end{align*}
\end{lemma}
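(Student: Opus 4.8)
The plan is to exploit that $\widehat\beta$ minimizes $\beta\mapsto\widehat Q(\beta,\widehat\eta)$, so that $\widehat Q(\widehat\beta,\widehat\eta)\le\widehat Q(\beta_0,\widehat\eta)$, together with the global identification condition Assumption \ref{Assumption: global identifiability}(b), which gives $\|\delta(\widehat\beta)\|\le C\sqrt N\,\|\widehat g(\widehat\beta,\eta_{0,N})\|/\mu_N+\widehat M$ with $\widehat M=O_p(1)$. Thus it is enough to show $\sqrt N\,\|\widehat g(\widehat\beta,\eta_{0,N})\|/\mu_N=O_p(1)$, and I would establish this in three steps: (i) a uniform-in-$\beta$ two-sided eigenvalue bound for $\widehat\Omega(\beta,\widehat\eta)$; (ii) an $O_p(\sqrt{m/N}\max(1,\delta_N))$ bound on $\|\widehat g(\widehat\beta,\widehat\eta)\|$ via optimality; and (iii) passage from $\widehat\eta$ to $\eta_{0,N}$ via Lemma \ref{lemma: ghat rate}.

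\textbf{Step 1.} I would decompose $\widehat\Omega(\beta,\widehat\eta)-\widebar\Omega(\beta,\eta_{0,N})$ as in Remark \ref{Remark: Decompositions of matrix terms} into the empirical-process term on the nuisance increment, the empirical-process term at the truth, and the plug-in population bias. The first is $O_p(m\delta_N/\sqrt N)=o_p(1)$ by Lemma \ref{Lemma: Empirical process term for matrix} and $\delta_N=o(\sqrt N/m)$ (Assumption \ref{Assumption: consistency, nuisance estimation}(b)); the second is $o_p(1)$ by Assumption \ref{Assumption: consistency, matrix estimation}(b); the third is $o(1)$ by Assumption \ref{Assumption: consistency, matrix estimation}(a), since $\widehat\eta\in\mathcal T_N$ with probability $1-o(1)$. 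Hence $\sup_{\beta\in\mathcal B}\|\widehat\Omega(\beta,\widehat\eta)-\widebar\Omega(\beta,\eta_{0,N})\|=o_p(1)$, and since Assumption \ref{Assumption: convergence of g-hat}(b) keeps the eigenvalues of $\widebar\Omega(\beta,\eta_{0,N})$ in $[1/C,C]$ uniformly in $\beta$, Weyl's inequality (equivalently, Lemma A0 of \citet{newey2009generalized}) yields, with probability tending to one, $\tfrac1{2C}\le\xi_{\min}(\widehat\Omega(\beta,\widehat\eta))\le\xi_{\max}(\widehat\Omega(\beta,\widehat\eta))\le 2C$ for all $\beta\in\mathcal B$, and the same for $\widehat\Omega(\beta,\widehat\eta)^{-1}$.

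\textbf{Steps 2--3.} On the event of Step 1, $(4C)^{-1}\|\widehat g(\beta,\widehat\eta)\|^2\le\widehat Q(\beta,\widehat\eta)\le C\|\widehat g(\beta,\widehat\eta)\|^2$ for all $\beta$. Lemma \ref{lemma: g hat rate 2} gives $\|\widehat g(\beta_0,\eta_{0,N})\|=O_p(\sqrt{m/N})$, while Lemma \ref{lemma: ghat rate} --- which uses the global Neyman orthogonality of $g$ (Assumption \ref{Assumption: Global Neyman orthogonality}) and Assumption \ref{Assumption: consistency, nuisance estimation}, with $\lambda_N\le N^{-1/2}\delta_N$ absorbing the $\sqrt m\,\lambda_N$ term --- gives $\sup_{\beta\in\mathcal B}\|\widehat g(\beta,\widehat\eta)-\widehat g(\beta,\eta_{0,N})\|=O_p(\sqrt{m/N}\,\delta_N)$. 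Combining these, $\|\widehat g(\beta_0,\widehat\eta)\|=O_p(\sqrt{m/N}\max(1,\delta_N))$, so optimality gives $\|\widehat g(\widehat\beta,\widehat\eta)\|^2\le4C\,\widehat Q(\widehat\beta,\widehat\eta)\le4C\,\widehat Q(\beta_0,\widehat\eta)=O_p\big((m/N)\max(1,\delta_N^2)\big)$. Then, by the triangle inequality,
\[
\sqrt N\,\|\widehat g(\widehat\beta,\eta_{0,N})\|/\mu_N\le\sqrt N\,\|\widehat g(\widehat\beta,\widehat\eta)\|/\mu_N+\sqrt N\,\sup_{\beta}\|\widehat g(\beta,\widehat\eta)-\widehat g(\beta,\eta_{0,N})\|/\mu_N=O_p\!\Big(\tfrac{\sqrt m}{\mu_N}\max(1,\delta_N)\Big),
\]
which is $O_p(1)$ since $\sqrt m/\mu_N=O(1)$ (from $m/\mu_N^2=O(1)$ in Assumption \ref{Assumption: weak moment condition}, hence also under $m^2/\mu_N=O(1)$) and $\delta_N=o(\mu_N/\sqrt m)$ (Assumption \ref{Assumption: consistency, nuisance estimation}(b)). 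Substituting into Assumption \ref{Assumption: global identifiability}(b) gives $\|\delta(\widehat\beta)\|=O_p(1)$.

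\textbf{Main obstacle.} The delicate part is Step 1 --- obtaining a uniform-in-$\beta$, two-sided eigenvalue bound for the estimated weighting matrix with plug-in nuisance, which is where $m^2/N\to0$ (for the empirical-process fluctuations of the $m\times m$ matrix $\widehat\Omega$) and $\delta_N=o(\sqrt N/m)$ (for the plug-in bias) are used. Equally essential is that Lemma \ref{lemma: ghat rate} produces the bound $\sqrt{m/N}\,\delta_N$ rather than the crude first-order-bias bound $\sqrt m\,\delta_N$ that a non-orthogonal moment function would give: after multiplication by $\sqrt N/\mu_N$ the former is $o_p(1)$ while the latter is of order $\sqrt N\,\delta_N/\mu_N$, which need not vanish --- this is exactly why global Neyman orthogonality is already required for boundedness of $\delta(\widehat\beta)$ (and, downstream, for consistency), not only for asymptotic normality.
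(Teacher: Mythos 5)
Your proposal is correct and follows essentially the same route as the paper's proof: optimality of $\widehat{\beta}$ combined with uniform eigenvalue control of $\widehat{\Omega}(\cdot,\widehat{\eta})$, the rates $\Vert \widehat{g}(\beta_0,\eta_{0,N})\Vert=O_p(\sqrt{m/N})$ and $\sup_{\beta}\Vert \widehat{g}(\beta,\widehat{\eta})-\widehat{g}(\beta,\eta_{0,N})\Vert=O_p(\sqrt{m/N}\,\delta_N)$, and then Assumption \ref{Assumption: global identifiability}(b) with a triangle inequality and $\sqrt{m}/\mu_N=O(1)$. The only cosmetic difference is that you bound $\widehat{Q}(\beta_0,\widehat{\eta})$ directly by $C\Vert\widehat{g}(\beta_0,\widehat{\eta})\Vert^2$ using the eigenvalue bound, whereas the paper expands $\widehat{Q}(\beta_0,\widehat{\eta})-\widehat{Q}(\beta_0,\eta_{0,N})$ into three terms before invoking the same lemmas.
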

\begin{proof}
Direct calculation yields
\begin{align*}
    &\Vert \widehat{g}(\widehat{\beta},\widehat{\eta})\Vert^2 = \widehat{g}(\widehat{\beta},\widehat{\eta})^T\widehat{g}(\widehat{\beta},\widehat{\eta})\\
    \lesssim & \widehat{g}(\widehat{\beta},\widehat{\eta})^T\widehat{\Omega}(\widehat{\beta},\widehat{\eta})^{-1}\widehat{g}(\widehat{\beta},\widehat{\eta})=\widehat{Q}(\widehat{\beta},\widehat{\eta})\leq \widehat{Q}({\beta}_0,\widehat{\eta})\\
    \leq & \vert\widehat{Q}({\beta}_0,\widehat{\eta})-\widehat{Q}({\beta}_0,{\eta}_0)\vert+\widehat{Q}({\beta}_0,{\eta}_{0,N})\\
    \leq & \vert (\widehat{g}(\beta_0,\widehat{\eta})-\widehat{g}(\beta_0,\eta_{0,N})) \widehat{\Omega}^{-1}(\beta_0,\widehat{\eta})\widehat{g}(\beta_0,\widehat{\eta})\vert\\
    &+\vert \widehat{g}(\beta_0,\eta_{0,N})\widehat{\Omega}(\beta_0,\widehat{\eta})^{-1}(\widehat{\Omega}(\beta_0,\widehat{\eta})-\widehat{\Omega}(\beta_0,{\eta}_{0,N}))\widehat{\Omega}(\beta_0,{\eta}_{0,N})\vert\\
    &+\vert \widehat{g}(\beta_0,\eta_{0,N}) \widehat{\Omega}^{-1}(\beta_0,\widehat{\eta})(\widehat{g}(\beta_0,\widehat{\eta})-\widehat{g}(\beta_0,{\eta}_{0,N}))\vert +\widehat{Q}(\beta_0,\eta_{0,N}).
\end{align*}
Since
\begin{align*}
    &\Vert \widehat{g}(\beta_0,\widehat{\eta})-\widehat{g}(\beta_0,\eta_{0,N})\Vert = O_p\Big(\sqrt{\frac{m}{N}}\delta_N\Big),\\
    &\Vert \widehat{\Omega}(\beta_0,\widehat{\eta})-\widehat{\Omega}(\beta_0,{\eta}_{0,N})\Vert \leq \Vert \widehat{\Omega}(\beta_0,\widehat{\eta})-\widebar{\Omega}(\beta_0,{\eta}_{0,N})\Vert+\Vert\widebar{\Omega}(\beta_0,{\eta}_{0,N})-\widehat{\Omega}(\beta_0,{\eta}_{0,N})\Vert =o_p(1),\\
    &\Vert \widehat{g}(\beta_0,\eta_{0,N})\Vert = O_p\Big(\sqrt{\frac{m}{N}}\Big), \\
    &\widehat{Q}(\beta_0,\eta_{0,N})\lesssim \Vert \widehat{g}(\beta_0,\eta_{0,N})\Vert^2 = O_p\Big(\frac{m}{N}\Big),
\end{align*}
we have $\Vert \widehat{g}(\widehat{\beta},\widehat{\eta})\Vert = O_p(\sqrt{m/N})$. Therefore,
\begin{align*}
    \Vert \delta(\widehat{\beta})\Vert \leq& C\sqrt{N}\Vert \widehat{g}(\widehat{\beta},\eta_{0,N})\Vert/\mu_N+\widehat{M}\\
    \leq &C\sqrt{N}\Vert \widehat{g}(\widehat{\beta},\eta_{0,N})- \widehat{g}(\widehat{\beta},\widehat{\eta})\Vert/\mu_N+C\sqrt{N}\Vert \widehat{g}(\widehat{\beta},\widehat{\eta})\Vert/\mu_N+\widehat{M}\\
    = O_p(1).
\end{align*}
\end{proof}

\begin{lemma}
Assume conditions in Theorem \ref{Theorem: Consistency} hold, then
    \begin{equation*}
        \sup_{\beta \in \mathcal{B},\delta(\beta)\leq C}\Big\Vert \widehat{g}(\beta,\eta_{0,N}) \Big\Vert = O_{p}\Big(\frac{\mu_N}{\sqrt{N}}\Big) 
    \end{equation*}
    \label{lemma: g hat rate 3}
\end{lemma}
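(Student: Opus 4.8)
The plan is to bound the supremum by the value at $\beta_0$ plus a rescaled increment that is controlled directly by the Lipschitz-type condition in Assumption \ref{Assumption: convergence of g-hat}(e). For any $\beta$ in the set $\{\beta\in\mathcal B:\Vert\delta(\beta)\Vert\le C\}$, write
\[
\widehat g(\beta,\eta_{0,N}) \;=\; \widehat g(\beta_0,\eta_{0,N}) \;+\; \big(\widehat g(\beta,\eta_{0,N}) - \widehat g(\beta_0,\eta_{0,N})\big),
\]
and handle the two terms separately, uniformly over this set. Note that $\eta$ is held fixed at the truth $\eta_{0,N}$ throughout, so no empirical-process argument over nuisance function classes is required.

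First I would control $\widehat g(\beta_0,\eta_{0,N})$. Lemma \ref{lemma: g hat rate 2}, which is valid under the assumptions of Theorem \ref{Theorem: Consistency}, gives $\Vert\widehat g(\beta_0,\eta_{0,N})\Vert = O_p(\sqrt{m/N})$. Since Assumption \ref{Assumption: weak moment condition}(a) imposes $m/\mu_N^2 = O(1)$, we have $\sqrt{m/N}\lesssim \mu_N/\sqrt N$, and hence $\Vert\widehat g(\beta_0,\eta_{0,N})\Vert = O_p(\mu_N/\sqrt N)$.

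Next I would invoke Assumption \ref{Assumption: convergence of g-hat}(e) with radius $C' = C$, which supplies a single random variable $\widehat M = O_p(1)$ such that $\sqrt N\,\Vert\widehat g(\beta,\eta_{0,N}) - \widehat g(\beta_0,\eta_{0,N})\Vert/\mu_N \le \widehat M\,\Vert\delta(\beta) - \delta(\beta_0)\Vert$ simultaneously for all $\beta$ with $\Vert\delta(\beta)\Vert\le C$ (recall that $\delta(\beta_0) = S_N^T(\beta_0-\beta_0)/\mu_N = 0$, so $\Vert\delta(\beta_0)\Vert = 0 \le C$ and $\beta_0$ is admissible). Consequently,
\[
\sup_{\beta\in\mathcal B,\,\Vert\delta(\beta)\Vert\le C}\Vert\widehat g(\beta,\eta_{0,N}) - \widehat g(\beta_0,\eta_{0,N})\Vert \;\le\; \widehat M\,C\,\frac{\mu_N}{\sqrt N} \;=\; O_p\!\Big(\frac{\mu_N}{\sqrt N}\Big).
\]
Combining the two bounds by the triangle inequality yields $\sup_{\beta\in\mathcal B,\,\Vert\delta(\beta)\Vert\le C}\Vert\widehat g(\beta,\eta_{0,N})\Vert = O_p(\mu_N/\sqrt N)$, as claimed.

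There is no genuinely hard step here; the only point demanding care is the quantifier bookkeeping for Assumption \ref{Assumption: convergence of g-hat}(e) — one must apply it with the specific radius $C'=C$ so as to extract a \emph{uniform} random Lipschitz constant $\widehat M$ — together with the observation that the Lipschitz scale produced by that assumption is exactly of order $\mu_N/\sqrt N$, which both matches and dominates the pointwise rate $\sqrt{m/N}$ attained at $\beta_0$ via Lemma \ref{lemma: g hat rate 2}.
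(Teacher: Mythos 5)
Your proof is correct and matches the paper's own argument essentially step for step: the same triangle-inequality decomposition around $\beta_0$, the same appeal to Lemma \ref{lemma: g hat rate 2} together with $m/\mu_N^2 = O(1)$ for the first term, and the same use of the uniform random Lipschitz bound from Assumption \ref{Assumption: convergence of g-hat}(e) (with $\delta(\beta_0)=0$) for the increment. No discrepancies to report.
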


\begin{proof}
    \begin{align*}
        & \sup_{\beta \in \mathcal{B},\delta(\beta)\leq C}\Big\Vert \widehat{g}(\beta,\eta_{0,N}) \Big\Vert
        \leq  \sup_{\beta \in \mathcal{B},\delta(\beta)\leq C}\Big\Vert \widehat{g}(\beta,\eta_{0,N})-\widehat{g}(\beta_0,\eta_{0,N}) \Big\Vert + \Big\Vert \widehat{g}(\beta_0,\eta_{0,N})\Big\Vert\\
        \leq&  \frac{\mu_N}{\sqrt{N}}\widehat{M}\sup_{\beta \in \mathcal{B},\delta(\beta)\leq C}\Vert \delta(\beta)\Vert +O_p\Big(\sqrt{\frac{m}{{N}}}\Big)
        \leq O_p\Big({\frac{\mu_N}{\sqrt{N}}}\Big),
    \end{align*}
    where the second last inequality is due to Assumption \ref{Assumption: convergence of g-hat}, the last inequality is due to boundedness of $m/\mu^2_N$.
\end{proof}

\begin{lemma}
Assume conditions in Theorem \ref{Theorem: Consistency} hold, then
    \begin{equation*}
        \sup_{\beta \in \mathcal{B},\delta(\beta)\leq C}\Big\Vert \widehat{g}(\beta,\widehat{\eta}) \Big\Vert = O_p\Big(\frac{\mu_N}{\sqrt{N}}\Big).
    \end{equation*}
    \label{lemma: g rate 4}
\end{lemma}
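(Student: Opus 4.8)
The statement follows by combining two already-established bounds via the triangle inequality. First I would split, for every $\beta\in\mathcal{B}$ with $\Vert\delta(\beta)\Vert\leq C$,
\begin{equation*}
\Vert \widehat{g}(\beta,\widehat{\eta})\Vert \;\leq\; \Vert \widehat{g}(\beta,\widehat{\eta})-\widehat{g}(\beta,\eta_{0,N})\Vert \;+\; \Vert \widehat{g}(\beta,\eta_{0,N})\Vert,
\end{equation*}
and then take suprema over the set $\{\beta\in\mathcal{B}:\Vert\delta(\beta)\Vert\leq C\}$ on both sides, bounding the first term by the supremum over all of $\mathcal{B}$.

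\textbf{Second term.} By Lemma \ref{lemma: g hat rate 3} (whose hypotheses are exactly the conditions of Theorem \ref{Theorem: Consistency}), $\sup_{\beta\in\mathcal{B},\,\delta(\beta)\leq C}\Vert\widehat{g}(\beta,\eta_{0,N})\Vert = O_p(\mu_N/\sqrt{N})$, which is already of the claimed order.

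\textbf{First term.} Under the conditions of Theorem \ref{Theorem: Consistency}, $g$ is globally Neyman orthogonal (Assumption \ref{Assumption: Global Neyman orthogonality}) and Assumption \ref{Assumption: consistency, nuisance estimation} holds, so Lemma \ref{lemma: ghat rate} gives $\sup_{\beta\in\mathcal{B}}\Vert\widehat{g}(\beta,\widehat{\eta})-\widehat{g}(\beta,\eta_{0,N})\Vert = O_p(\sqrt{m/N}\,\delta_N+\sqrt{m}\,\lambda_N)$. It then remains to check that this is $o_p(\mu_N/\sqrt{N})$: since Assumption \ref{Assumption: consistency, nuisance estimation} imposes $\delta_N = o(\min(\sqrt{N}/m,\mu_N/\sqrt{m}))$ and $\lambda_N\leq N^{-1/2}\delta_N$, we get $\sqrt{m/N}\,\delta_N = N^{-1/2}\sqrt{m}\,\delta_N = o(\mu_N/\sqrt{N})$ from $\sqrt{m}\,\delta_N = o(\mu_N)$, and likewise $\sqrt{m}\,\lambda_N\leq N^{-1/2}\sqrt{m}\,\delta_N = o(\mu_N/\sqrt{N})$. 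Hence the first term is $o_p(\mu_N/\sqrt{N})$, and adding the two contributions yields the stated $O_p(\mu_N/\sqrt{N})$ rate.

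\textbf{Main obstacle.} There is no substantive difficulty here: the lemma is essentially a bookkeeping step that packages Lemmas \ref{lemma: ghat rate} and \ref{lemma: g hat rate 3} together. The only point requiring care is the rate comparison, namely confirming that the nuisance-estimation error $\sqrt{m/N}\,\delta_N+\sqrt{m}\,\lambda_N$ is negligible relative to $\mu_N/\sqrt{N}$; this is exactly where the condition $\delta_N = o(\mu_N/\sqrt{m})$ from Assumption \ref{Assumption: consistency, nuisance estimation} (and $\lambda_N\le N^{-1/2}\delta_N$) is used.
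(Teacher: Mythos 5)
Your proposal is correct and follows essentially the same route as the paper: the paper's proof uses the identical triangle-inequality decomposition, merely expanding the term $\sup_{\delta(\beta)\leq C}\Vert\widehat{g}(\beta,\eta_{0,N})\Vert$ into $\Vert\widehat{g}(\beta,\eta_{0,N})-\widehat{g}(\beta_0,\eta_{0,N})\Vert+\Vert\widehat{g}(\beta_0,\eta_{0,N})\Vert$ rather than invoking Lemma \ref{lemma: g hat rate 3} as a packaged result. Your explicit check that $\sqrt{m/N}\,\delta_N+\sqrt{m}\,\lambda_N$ is negligible relative to $\mu_N/\sqrt{N}$ via $\delta_N=o(\mu_N/\sqrt{m})$ and $\lambda_N\leq N^{-1/2}\delta_N$ is exactly the rate comparison the paper relies on implicitly.
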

\begin{proof}
\begin{align*}
    \sup_{\beta \in \mathcal{B},\delta(\beta)\leq C}\Big\Vert \widehat{g}(\beta,\widehat{\eta}) \Big\Vert\leq &\sup_{\beta \in \mathcal{B},\delta(\beta)\leq C}\Big\Vert \widehat{g}(\beta,\widehat{\eta})-\widehat{g}(\beta,{\eta}_{0,N}) \Big\Vert+\sup_{\beta \in \mathcal{B},\delta(\beta)\leq C}\Big\Vert \widehat{g}(\beta,{\eta}_{0,N})-\widehat{g}(\beta_0,{\eta}_{0,N}) \Big\Vert+\Big\Vert\widehat{g}(\beta_0,{\eta}_{0,N}) \Big\Vert\\
=&O_p\Big(\sqrt{\frac{m}{N}}\delta_N+\sqrt{m}\lambda_N\Big)+O_p\Big(\sqrt{\frac{m}{N}}\Big)+O_P\Big(\frac{\mu_N}{\sqrt{N}}\Big)=O_P\Big(\frac{\mu_N}{\sqrt{N}}\Big).
\end{align*}
\end{proof}

\begin{lemma}
Suppose $g$ is global Neyman orthogonal and continuously differentiable in a neighborhood of $\beta_0$, denoted as $\mathcal{B}'$.  Suppose in addition Assumption \ref{Assumption: general score regularity} holds, we have
\begin{equation*}
    \sup_{\beta\in \mathcal{B}'}\Vert \widehat{G}(\beta,\widehat{\eta})-\widehat{G}(\beta,{\eta}_{0,N}) \Vert = O_p\Big(\frac{1}{\sqrt{N}}\Big) 
\end{equation*} \label{lemma: Ghat rate}
\end{lemma}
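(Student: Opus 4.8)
The argument is a replay of the proof of Lemma~\ref{lemma: ghat rate} with $G=\partial g/\partial\beta$ in place of $g$, now exploiting the faster nuisance rates in Assumption~\ref{Assumption: general score regularity}. By cross-fitting it suffices to bound $\sup_{\beta\in\mathcal{B}'}\Vert\widehat{G}_l(\beta,\widehat{\eta}_l)-\widehat{G}_l(\beta,\eta_{0,N})\Vert$ fold by fold, where $\widehat{G}_l(\beta,\eta)=\mathbb{P}_{n,l}G(\beta,\eta)$. Conditioning on $(O_i)_{i\in I_l^c}$ so that $\widehat{\eta}_l$ is treated as fixed, write
\[
\widehat{G}_l(\beta,\widehat{\eta}_l)-\widehat{G}_l(\beta,\eta_{0,N})=(\mathbb{P}_{n,l}-P_{0,N})\bigl(G(\beta,\widehat{\eta}_l)-G(\beta,\eta_{0,N})\bigr)+\Bigl(\mathbb{E}_{P_{0,N}}[G(O;\beta,\widehat{\eta}_l)\mid(O_i)_{i\in I_l^c}]-\mathbb{E}_{P_{0,N}}[G(O;\beta,\eta_{0,N})]\Bigr),
\]
an empirical-process term plus a first-order bias term, and control each.

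For the empirical-process term, the second display of Lemma~\ref{lemma: empirical process term of g} (valid under Assumption~\ref{Assumption: general score regularity}) gives $\sup_{\beta\in\mathcal{B}'}\Vert(\mathbb{P}_{n,l}-P_{0,N})(G(\beta,\widehat{\eta}_l)-G(\beta,\eta_{0,N}))\Vert=O_p(\sqrt{m/N}\,\delta_N)$; since $\delta_N=o(1/\sqrt m)$ under that assumption, $\sqrt{m/N}\,\delta_N=\tfrac{1}{\sqrt N}\sqrt m\,\delta_N=o(1/\sqrt N)$. For the first-order bias term I argue as in Lemma~\ref{Lemma: rate for the first order bias}, entrywise on $G^{(j,k)}$, $j=1,\dots,m$, $k=1,\dots,p$. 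The preliminary step is that $G$ is Neyman orthogonal on $\mathcal{B}'$: global Neyman orthogonality of $g$ together with the integrable envelopes for the classes $\mathcal{G}^{(i,k)}_\eta$ guaranteed by Assumption~\ref{Assumption: general score regularity}(a) permits interchanging $\partial/\partial\beta$ with the Gateaux derivative in the nuisance direction, so Lemma~\ref{Lemma: permanence of global neyman orthogonality}(a) applies. Taylor expanding $t\mapsto\mathbb{E}_{P_{0,N}}[G^{(j,k)}(O;\beta,\eta_{0,N}+t(\widehat{\eta}_l-\eta_{0,N}))\mid(O_i)_{i\in I_l^c}]$ around $t=0$: the value at $t=0$ vanishes by independence of $\widehat{\eta}_l$ from the in-fold data, the derivative at $t=0$ vanishes by Neyman orthogonality of $G$, and the remainder is bounded uniformly in $\beta\in\mathcal{B}'$ and $\eta\in\mathcal{T}_N$ by a second-order Gateaux quantity of order $\lambda_N=N^{-1/2}\delta_N$. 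Summing the $mp$ squared entries and taking square roots shows the bias term is $O_p(\sqrt m\,\lambda_N)=O_p(\sqrt m\,N^{-1/2}\delta_N)=o_p(1/\sqrt N)$.

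Adding the two bounds over the finitely many folds $l=1,\dots,L$ yields $\sup_{\beta\in\mathcal{B}'}\Vert\widehat{G}(\beta,\widehat{\eta})-\widehat{G}(\beta,\eta_{0,N})\Vert=o_p(1/\sqrt N)=O_p(1/\sqrt N)$, as claimed. The only genuinely delicate point is the second-order Gateaux bound for $G$: one must justify swapping the $\beta^{(k)}$-derivative with the $t$-derivatives via a domination argument (using the envelopes in Assumption~\ref{Assumption: general score regularity}(a)) and then bound $\partial_t^2\,\mathbb{E}_{P_{0,N}}[G^{(j,k)}(O;\beta,(1-t)\eta_{0,N}+t\eta)]$ uniformly in $\beta$ by the same order $\lambda_N$ that governs the analogous quantity for $g^{(j)}$; everything else is a mechanical transcription of the $\widehat{g}$-arguments already established.
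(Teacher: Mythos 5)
Your proposal is correct and follows essentially the same route as the paper: the paper's own (very terse) proof first deduces Neyman orthogonality of $G$ on $\mathcal{B}'$ from the permanence property of Lemma \ref{Lemma: permanence of global neyman orthogonality}, and then declares the rest "similar to the proof of Lemma \ref{lemma: ghat rate}," i.e., exactly your fold-wise split into the empirical-process term (bounded via the second display of Lemma \ref{lemma: empirical process term of g}) and the first-order bias term (bounded via the orthogonality-plus-second-order-Gateaux Taylor argument), giving $O_p(\sqrt{m/N}\,\delta_N+\sqrt{m}\,\lambda_N)=o_p(1/\sqrt N)$. The "delicate point" you flag — that the second-order Gateaux bound must also hold for the entries of $G$, not just $g$ — is precisely the step the paper leaves implicit, so your treatment is, if anything, slightly more careful than the original.
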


\begin{proof}
    since $g$ is global Neyman orthogonal  and $g$ is continuously differentiable in a neighborhood of $\mathcal{B}'$, we have $G$ is also  Neyman orthogonal in $\mathcal{B}'$. The rest of proof is similar to the proof of \ref{lemma: ghat rate}, thus omitted.
\end{proof}

\begin{lemma}
Assume conditions for Theorem \ref{Theorem: consistency and ASN} hold, then
    \begin{equation*}
N\mu_N^{-1}\frac{\partial \widehat{Q}(\beta,\widehat{\eta})}{\partial \beta}\Bigg|_{\beta = \beta_0}=N\mu_N^{-1}\frac{\partial \tilde{Q}(\beta,\widehat{\eta})}{\partial \beta}\Bigg|_{\beta = \beta_0} +o_p(1).
    \end{equation*}
    \label{Lemma: equivalence of FO derivative}
\end{lemma}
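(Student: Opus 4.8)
The plan is to expand the gradient of $\widehat{Q}$ and identify the single term that differs from the gradient of $\widetilde{Q}$, then show it is negligible after scaling by $N\mu_N^{-1}$. Recall $\widehat{Q}(\beta,\widehat\eta) = \widehat g(\beta,\widehat\eta)^T\widehat\Omega(\beta,\widehat\eta)^{-1}\widehat g(\beta,\widehat\eta)/2$ while $\widetilde Q(\beta,\widehat\eta) = \widehat g(\beta,\widehat\eta)^T\widebar\Omega(\beta,\eta_{0,N})^{-1}\widehat g(\beta,\widehat\eta)/2$; the two differ only in whether the inner weighting matrix is the estimated $\widehat\Omega(\beta,\widehat\eta)$ or the deterministic $\widebar\Omega(\beta,\eta_{0,N})$. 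Differentiating in $\beta^{(k)}$, the derivative of $\widehat Q$ produces three kinds of terms: (i) $\widehat G^{(k)}(\beta,\widehat\eta)^T\widehat\Omega(\beta,\widehat\eta)^{-1}\widehat g(\beta,\widehat\eta)$ (and its transpose), and (ii) the term coming from differentiating the weighting matrix, $-\tfrac12\widehat g^T\widehat\Omega^{-1}\bigl(\partial_{\beta^{(k)}}\widehat\Omega(\beta,\widehat\eta)\bigr)\widehat\Omega^{-1}\widehat g$. The continuous-updating structure is precisely what kills term (ii): as in \citet{newey2009generalized}, at $\beta=\beta_0$ the first-order condition of CUE has no weighting-matrix-derivative contribution because $\partial_{\beta^{(k)}}\{\widehat g^T\widehat\Omega^{-1}\widehat g\} = 2\widehat G^{(k)T}\widehat\Omega^{-1}\widehat g - \widehat g^T\widehat\Omega^{-1}(\partial_{\beta^{(k)}}\widehat\Omega)\widehat\Omega^{-1}\widehat g$, and the same cancellation pattern holds for $\widetilde Q$ with $\widehat\Omega$ replaced by $\widebar\Omega(\cdot,\eta_{0,N})$. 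So the difference $N\mu_N^{-1}\bigl(\partial_{\beta^{(k)}}\widehat Q - \partial_{\beta^{(k)}}\widetilde Q\bigr)\big|_{\beta_0}$ reduces to a bounded number of terms, each a product of (a factor involving $\widehat g$ or $\widehat G$ evaluated near $\beta_0$), (a difference of inverse weighting matrices $\widehat\Omega(\beta_0,\widehat\eta)^{-1}-\widebar\Omega(\beta_0,\eta_{0,N})^{-1}$ or a derivative analogue), and (another $\widehat g$-type factor).

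First I would establish the relevant rates at $\beta_0$: $\|\widehat g(\beta_0,\widehat\eta)\| = O_p(\sqrt{m/N})$ — this follows by combining Lemma \ref{lemma: g hat rate 2} with Lemma \ref{lemma: ghat rate} (under Assumption \ref{Assumption: general score regularity} the nuisance contribution is $o_p(1/\sqrt N)$), and $\|\widehat G(\beta_0,\widehat\eta)(S_N^{-1})^T\| = O_p(1)$ by Assumption \ref{Assumption: further restriction for moments.} together with Lemma \ref{lemma: Ghat rate}. Second, using Remark \ref{Remark: Decompositions of matrix terms}, Lemma \ref{Lemma: Empirical process term for matrix}, and Assumption \ref{Assumption: ASN matrix estimation}, I would control $\|\widehat\Omega(\beta_0,\widehat\eta)-\widebar\Omega(\beta_0,\eta_{0,N})\| = o_p(1/\sqrt m)$, and by the eigenvalue bounds of Assumption \ref{Assumption: convergence of g-hat}(b) (which extend to $\widehat\Omega$ near $\beta_0$ via the Newey–Windmeijer-type Lemma stated above) transfer this to $\|\widehat\Omega(\beta_0,\widehat\eta)^{-1}-\widebar\Omega(\beta_0,\eta_{0,N})^{-1}\| = o_p(1/\sqrt m)$; analogously for the matrices $\Omega^{(k)},\Omega^{(k,l)}$ appearing when the weighting-derivative term is handled, using Assumption \ref{Assumption: ASN matrix estimation} and \ref{Assumption: Further restrictions for uniform convergence}.

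Third, I would assemble the bound. A representative term is $N\mu_N^{-1}\widehat G^{(k)}(\beta_0,\widehat\eta)^T\bigl(\widehat\Omega(\beta_0,\widehat\eta)^{-1}-\widebar\Omega(\beta_0,\eta_{0,N})^{-1}\bigr)\widehat g(\beta_0,\widehat\eta)$; writing $\widehat G^{(k)T} = \widehat G^{(k)T}(S_N^{-1})^T S_N^T$ and noting $N\mu_N^{-1}\|S_N^T\|\cdot\| \cdot \| = N\mu_N^{-1}\cdot O(\mu_N)\cdot(\cdot)/\sqrt N \cdot (\cdot)$ — more carefully, the scaling $N\mu_N^{-1}$ multiplies $O_p(1)\cdot o_p(1/\sqrt m)\cdot O_p(\sqrt{m/N})\cdot\|S_N\|$, and since $\|S_N\| = O(\sqrt N)$ and the relevant contraction happens along directions where the $S_N$-scaling is at most $\sqrt N$, this gives $N\mu_N^{-1}\cdot\sqrt N\cdot o_p(1/\sqrt m)\cdot\sqrt{m/N} = \sqrt N\mu_N^{-1}\cdot o_p(1) = o_p(1)$ using $m/\mu_N^2 = O(1)$ hence $\sqrt N/\mu_N \le \sqrt{N/m}\cdot\sqrt{m}/\mu_N = O(\sqrt{N/m})$... — I would instead organize the bookkeeping by always pairing one factor of $\widehat g(\beta_0,\widehat\eta) = O_p(\sqrt{m/N})$ with the overall $N$, giving $\sqrt{mN}$, then one factor $\mu_N^{-1}\|S_N\|\le C$ absorbed into the $\widehat G$ factor, and the remaining matrix difference $o_p(1/\sqrt m)$ exactly cancels the $\sqrt m$; this forces $m^2/N\to0$ type control on any leftover $\sqrt{m/N}$ factor, which is granted by $m^3/N\to0$ in Theorem \ref{Theorem: consistency and ASN}. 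The weighting-matrix-derivative discrepancy term is handled identically, with $\partial_{\beta^{(k)}}\widehat\Omega(\beta_0,\widehat\eta)-\partial_{\beta^{(k)}}\widebar\Omega(\beta_0,\eta_{0,N})$ in place of the matrix-inverse difference, controlled by Assumption \ref{Assumption: ASN matrix estimation} applied to $\Omega^{(k)}$ and $\Omega^{(k,l)}$; because this term carries two $\widehat g$ factors it is even smaller. Summing the finitely many ($O(p)$) terms and the $O(p)$ coordinates $k$ preserves $o_p(1)$.

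The main obstacle is the careful rate bookkeeping in the presence of the anisotropic scaling $S_N$: the naive bound $\|\widehat G^{(k)}\| = O_p(\sqrt m)$ (summing $m$ entries) is too crude and would not close, so one must exploit Assumption \ref{Assumption: further restriction for moments.}(b)–(c) to get $\|\widehat G(\beta_0,\widehat\eta)(S_N^{-1})^T\|_F = O_p(1)$ along the $S_N$-normalized directions rather than a raw $\sqrt m$ bound, and then verify that the bilinear forms that appear contract these $\widehat G$ factors against $\widehat g$ through the well-conditioned $\widehat\Omega^{-1}$ (Assumption \ref{Assumption: convergence of g-hat}(b)) so that the $S_N$ and $\mu_N^{-1}$ factors genuinely cancel. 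Once that structure is in place, every discrepancy term is a product of an $O_p(1)$ factor, a $\sqrt{m/N}$ factor from $\widehat g$, an $N\mu_N^{-1}$ prefactor reduced to $O(\sqrt N)$ after absorbing $\mu_N^{-1}\|S_N\|$, and an $o_p(1/\sqrt m)$ matrix-approximation factor, whose product is $o_p(\sqrt{mN}/\sqrt m \cdot 1/\sqrt N)\cdot o_p(1) = o_p(1)$ under $m^3/N\to0$.
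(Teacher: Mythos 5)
Your overall strategy is the same as the paper's: expand both gradients at $\beta_0$, reduce their difference to finitely many bilinear forms in which a weighting-matrix discrepancy ($\widehat\Omega(\beta_0,\widehat\eta)^{-1}-\widebar\Omega(\beta_0,\eta_{0,N})^{-1}$, or the analogous discrepancy for $\partial_{\beta^{(k)}}\Omega$) is sandwiched between $\widehat g$/$\widehat G$-type factors, and close the rates using $\Vert\widehat g(\beta_0,\widehat\eta)\Vert=O_p(\sqrt{m/N})$, the $o_p(1/\sqrt m)$ matrix rates from Assumption \ref{Assumption: ASN matrix estimation} and Lemma \ref{Lemma: Empirical process term for matrix}, and $\sqrt m/\mu_N=O(1)$. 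The paper organizes the same estimates differently: following \citet{ye2024genius} it first rewrites each gradient in the residualized form $\widebar G^{(k)T}\Omega^{-1}\widehat g+(N^{-1}\sum_i U_i^{(k)})^T\Omega^{-1}\widehat g$ (with $\check U_i^{(k)}$, $\tilde U_i^{(k)}$ absorbing the weighting-matrix-derivative term), and then verifies three displayed conditions.

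Two points in your writeup are genuinely problematic. First, the claim that ``at $\beta=\beta_0$ the first-order condition of CUE has no weighting-matrix-derivative contribution'' is false, and it contradicts the formula you display: the term $-\tfrac12\widehat g^T\widehat\Omega^{-1}(\partial_{\beta^{(k)}}\widehat\Omega)\widehat\Omega^{-1}\widehat g$ is of order $m/N$ at $\beta_0$, so after the $N\mu_N^{-1}$ scaling it is of order $m/\mu_N$, which does not vanish; it is precisely this term that produces the projected residual $U^\perp$ in Theorem \ref{Theorem: consistency and ASN}. Your argument survives only because you later treat the derivative-discrepancy term anyway, so this is a repairable confusion rather than a fatal one, but as stated it misrepresents the structure being exploited. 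Second, your rate bookkeeping for the representative term leans on $\mu_N^{-1}\Vert S_N\Vert\le C$ and on the step ``$\sqrt N\mu_N^{-1}\cdot o_p(1)=o_p(1)$''; the former is false under heterogeneous identification strengths (where $\Vert S_N\Vert\asymp\max_j\mu_{jN}$ can be of order $\sqrt N\gg\mu_N$) and the latter is a non sequitur since $\sqrt N/\mu_N\to\infty$ under weak identification. A crude bound $\Vert\widehat G^{(k)}(\beta_0,\widehat\eta)\Vert=O_p(1)$ then leaves you with $o_p(\sqrt N/\mu_N)$, which does not close. The paper avoids this exact trap by centering/residualizing: the term multiplying the matrix discrepancy is either $\widebar G^{(k)}$ (handled separately) or the centered sum $N^{-1}\sum_i\tilde U_i^{(k)}$, shown to be $O_p(\sqrt{m/N})$, so that only $\Vert S_N^{-1}\Vert\le\mu_N^{-1}$ and $\sqrt m/\mu_N=O(1)$ are needed. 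If you prefer your $S_N$-rotation idea, the clean version is to prove the vector statement $NS_N^{-1}(\partial_\beta\widehat Q-\partial_\beta\widetilde Q)|_{\beta_0}=o_p(1)$ directly, using $\Vert\widehat G(\beta_0,\widehat\eta)(S_N^{-1})^T\Vert=O_p(1/\sqrt N)$ (from Assumption \ref{Assumption: weak moment condition}(b) plus Lemma \ref{lemma: Ghat rate}), which does close; but that is not the per-coordinate $N\mu_N^{-1}$ claim as stated, so the decomposition into centered pieces is still the safer route.
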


\begin{proof}
    It suffices to show that for $k-$th coordinate of $\beta$, denoted as $\beta^{(k)}$ ($k\in \{1,...,p\}$), we have
    \begin{equation*}
        N\mu_N^{-1}\frac{\partial \widehat{Q}(\beta,\widehat{\eta})}{\partial \beta^{(k)}}\Bigg|_{\beta = \beta_0}=N\mu_N^{-1}\frac{\partial \tilde{Q}(\beta,\widehat{\eta})}{\partial \beta^{(k)}}\Bigg|_{\beta = \beta_0} +o_p(1).
    \end{equation*}
    Following the same derivation as in \citet{ye2024genius}, we have
    \begin{align*}
        &\frac{\partial \widehat{Q}(\beta,\widehat{\eta})}{\partial \beta^{(k)}}\Bigg|_{\beta = \beta_0}= \widebar{G}^{(k),T}\widehat{\Omega}(\beta_0,\widehat{\eta})^{-1}\widehat{g}(\beta_0,\widehat{\eta})+(N^{-1}\sum_{i = 1}^N \check{U}_i^{(k)})^T\widehat{\Omega}(\beta_0,\widehat{\eta})^{-1}\widehat{g}(\beta,\widehat{\eta}) \\
        &\frac{\partial \tilde{Q}(\beta,\widehat{\eta})}{\partial \beta^{(k)}}\Bigg|_{\beta = \beta_0}= \widebar{G}^{(k),T}\Bar{\Omega}(\beta_0,{\eta}_{0,N})^{-1}\widehat{g}(\beta_0,\widehat{\eta})+(N^{-1}\sum_{i = 1}^N \tilde{U}_i^{(k)})^T\widebar{\Omega}(\beta_0,{\eta}_{0,N})^{-1}\widehat{g}(\beta,\widehat{\eta}) 
    \end{align*}
    where
    \begin{align*}
        &\tilde{U}_i^{(k)} = G_i^{(k)}(\beta_0,\widehat{\eta}_{l(i)})-\overline{G}^{(k)}-\mathbb{E}_{P_{0,N}}(G_i^{(k)}g_i^T)\Bar{\Omega}^{-1}g_i(\beta_0,\widehat{\eta}_{l(i)}),\\
        &\check{U}_i^{(k)} = G_i^{(k)}(\beta_0,\widehat{\eta}_{l(i)})-\overline{G}^{(k)}-\Big(N^{-1}\sum_{i=1}^N G_i^{(k)}(\beta_0,\widehat{\eta}_{l(i)})g_i(\beta_0,\widehat{\eta}_{l(i)})\Big)\widehat{\Omega}(\beta_0,\widehat{\eta}_{l(i)})^{-1}g_i(\beta_0,\widehat{\eta}_{l(i)}).
    \end{align*}

    By contrasting the explicit formula for $N\mu_N^{-1}\frac{\partial \widehat{Q}(\beta,\widehat{\eta})}{\partial \beta^{(k)}}\Bigg|_{\beta = \beta_0}$ and $N\mu_N^{-1}\frac{\partial \tilde{Q}(\beta,\widehat{\eta})}{\partial \beta^{(k)}}\Bigg|_{\beta = \beta_0}$, we need to show the following three conditions:
    \begin{enumerate}
        \item $N\mu_N^{-1}(N^{-1}\sum_{i=1}^N (\check{U}_i^{(k)}-\tilde{U}_i^{(k)}))^T\widehat{\Omega}(\beta,\widehat{\eta})^{-1}\widehat{g}(\beta_0,\widehat{\eta}) = o_p(1)$
        \item $N\mu_N^{-1}(N^{-1}\sum_{i = 1}^N \tilde{U}_i^{(k)})^T\{\widehat{\Omega}(\beta_0,\widehat{\eta})^{-1}-\widebar{\Omega}^{-1}(\beta_0,\eta_{0,N})\}\widehat{g}(\beta_0,\widehat{\eta})=o_p(1)$
        \item $N\mu_N^{-1}\overline{G}^{(k)T}\{\widehat{\Omega}(\beta_0,\widehat{\eta})^{-1}-\widebar{\Omega}^{-1}(\beta_0,\eta_{0,N})\}\widehat{g}(\beta_0,\widehat{\eta})=o_p(1)$.
    \end{enumerate}

    We firstly show 1., by derivations in \citet{ye2024genius},
    \begin{align*}
        &|N\mu_N^{-1}(N^{-1}\sum_{i=1}^N (\check{U}_i^{(k)}-\tilde{U}_i^{(k)})\widehat{\Omega}(\beta,\widehat{\eta})^{-1}\widehat{g}(\beta_0,\widehat{\eta})|\\
        \leq & C\mu_N^{-1}N \Vert \widehat{g}(\beta_0,\widehat{\eta})\Vert^2\Big\{\Vert \widehat{\Omega}(\beta_0,\widehat{\eta})-\widebar{\Omega})\Vert+\Vert N^{-1}\sum_{i=1}^N G_i^{(k)}(\beta_0,\widehat{\eta}_{l(i)})g_i(\beta_0,\widehat{\eta}_{l(i)})- E(G_i^{(k)}g_i^T)\Vert\Big\}\\
        =& o_p(1)
    \end{align*}
    Since $\Vert \widehat{g}(\beta_0,\widehat{\eta})\Vert\leq \Vert \widehat{g}(\beta_0,\widehat{\eta})-\widehat{g}(\beta_0,{\eta}_{0,N})\Vert + \Vert \widehat{g}(\beta_0,\eta_{0,N})\Vert= O_p(\sqrt{m/N})$, $\Vert \widehat{\Omega}(\beta_0,\widehat{\eta})-\Omega\Vert = o_p(1/\sqrt{m})$, $\Vert N^{-1}\sum_{i=1}^N G_i^{(k)}(\beta_0,\widehat{\eta}_{l(i)})g_i(\beta_0,\widehat{\eta}_{l(i)})- \mathbb{E}_{P_{0,N}}(G_i^{(k)}g_i^T)\Vert=o_p(1/\sqrt{m})$ and $\sqrt{m}/\mu_n=O(1)$. Here we will show that   $\Vert N^{-1}\sum_{i=1}^N G_i^{(k)}(\beta_0,\widehat{\eta}_{l(i)})g_i(\beta_0,\widehat{\eta}_{l(i)})- \mathbb{E}_{P_{0,N}}(G_i^{(k)}g_i^T)\Vert=o_p(1/\sqrt{m})$. 
    It suffices to show that
    \begin{align*}
        \Vert\mathbb{P}_{n,l} G^{(k)}(\beta_0,\widehat{\eta})g^T(\beta_0,\widehat{\eta})-P_{0,N}G^{(k)}(\beta_0,{\eta}_{0,N})g^T(\beta,{\eta}_{0,N})\Vert = o_p(1/\sqrt{m}).
    \end{align*}
Following the similar derivations in Lemma \ref{Lemma: Empirical process term for matrix} and Remark \ref{Remark: Decompositions of matrix terms}, we have
\begin{align*}
    &\Vert\mathbb{P}_{n,l} G^{(k)}(\beta_0,\widehat{\eta})g^T(\beta_0,\widehat{\eta})-P_{0,N}G^{(k)}(\beta_0,{\eta}_{0,N})g^T(\beta,{\eta}_{0,N})\Vert_F\\
    \leq & \Vert (\mathbb{P}_{n,l}-P_{0,N})(\text{vec}(G^{(k)}(\beta_0,\widehat{\eta})g^T(\beta_0,\widehat{\eta}))-\text{vec}(G^{(k)}(\beta_0,{\eta}_{0,N})g^T(\beta_0,{\eta}_{0,N}))) \Vert \\
    &+ \Vert (\mathbb{P}_{n,l}-P_{0,N})\text{vec}(G^{(k)}(\beta_0,{\eta}_{0,N})g^T(\beta_0,{\eta}_{0,N})) \Vert \\
    &+ \Vert P_{0,N}(\text{vec}(G^{(k)}(\beta_0,\widehat{\eta})g^T(\beta_0,\widehat{\eta}))-\text{vec}(G^{(k)}(\beta_0,{\eta}_{0,N})g^T(\beta_0,{\eta}_{0,N}))) \Vert \\
    \leq & o_p\left(\sqrt{\frac{m}{N}}\right)+o_p\left(\frac{1}{\sqrt{m}}\right) +o\left(\frac{1}{\sqrt{m}}\right)\\
    =&o_p\left(\frac{1}{\sqrt{m}}\right).
\end{align*}
For the last inequality, we used the similar derivations as in \ref{Lemma: Empirical process term for matrix} and Assumption \ref{Assumption: ASN matrix estimation}.

For 2., we first show that 
\begin{align*}
    \left\Vert \frac{1}{\sqrt{mN}}\sum_{i=1}^N\tilde{U}^{(k)}_i \right\Vert=O_{p}(1).
\end{align*}

Recall
\begin{align*}
    U^{(k),\perp}_i=G^{(k)}_i-\widebar{G}^{(k)}-\mathbb{E}_{P_{0,N}}[G^{(k)}_ig^T_i]\widebar{\Omega}^{-1}g_i.
\end{align*}
To simplify notation, we write $U^{(k)}_i$ instead of $U^{(k),\perp}_i$. Therefore, $U^{(k)}_i$ is the residual for projecting $G_i^{(k)}-\widebar{G}^{(k)}$ onto the space spanned by $g_i$. Therefore, $\mathbb{E}_{P_{0,N}}(\Vert U_i^{(k)}\Vert^2)\leq \mathbb{E}_{P_{0,N}}(\Vert G_i^{(k)}\Vert^2)\leq Cm^2$. Using Markov inequality, we have 
\begin{align*}
    \frac{1}{\sqrt{mN}}\left\Vert \sum_{i=1}^NU^{(k)}_i \right\Vert=O_{p}(1).
\end{align*}
Next,
\begin{align*}
    &\left\Vert \frac{1}{\sqrt{N}}\sum_{i=1}^N(\tilde{U}^{(k)}_i-{U}^{(k)}_i) \right\Vert\\
    =&\left\Vert \frac{1}{\sqrt{N}}\sum_{i=1}^N \left(G_i^{(k)}(\beta_0,\widehat{\eta}_{l(i)})-G_i^{(k)}(\beta_0,\eta_{0,N})-\mathbb{E}_{P_{0,N}}[G^{(k)}_ig_i^T]\widebar{\Omega}^{-1}(g_i(\beta_0,\widehat{\eta}_{l(i)})-g_i)\right) \right\Vert\\
    \lesssim & \sqrt{N}\left\Vert \widehat{G}^{(k)}(\beta_0,\widehat{\eta})-\widehat{G}^{(k)}(\beta_0,{\eta}_{0,N})  \right\Vert+\sqrt{N}\left\Vert \widehat{g}(\beta_0,\widehat{\eta})-\widehat{g}(\beta_0,{\eta}_{0,N})  \right\Vert=o_{p}(1).
\end{align*}
For the last line, we used the fact that
\begin{align*}
   & \left\Vert \widehat{G}^{(k)}(\beta_0,\widehat{\eta})-\widehat{G}^{(k)}(\beta_0,{\eta}_{0,N})  \right\Vert = o_p\left(\frac{1}{\sqrt{N}}\right),\\
   &\left\Vert \widehat{g}(\beta_0,\widehat{\eta})-\widehat{g}(\beta_0,{\eta}_{0,N}) \right\Vert=o_p\left(\frac{1}{\sqrt{N}}\right),
\end{align*}
which are results from Lemma \ref{lemma: ghat rate} and Lemma \ref{lemma: Ghat rate}. We also use the fact that
\begin{align*}
    \mathbb{E}_{P_{0,N}}[G_i^{(k)}g_i^T]\leq C.
\end{align*}
This inequality can be derived using matrix Cauchy-Schwartz inequality, the assumption that all eignvalues of $\widebar{\Omega}$ are bounded above and below by some constants, and the Assumption that $\Vert\mathbb{E}_{P_{0,N}}G_iG_i^T\Vert \leq C$. 

Therefore, we conclude that
\begin{align*}
    \left\Vert \frac{1}{\sqrt{mN}}\sum_{i=1}^N\tilde{U}^{(k)}_i \right\Vert=O_{p}(1).
\end{align*}

    Following the derivations of \citet{ye2024genius}, we have
    \begin{align*}
        &N\mu_N^{-1}(N^{-1}\sum_{i = 1}^N \tilde{U}_i^{(k)})^T\{\widehat{\Omega}(\beta_0,\widehat{\eta})^{-1}-\Omega^{-1}\}\widehat{g}(\beta_0,\widehat{\eta})\\
        \lesssim & \Big\Vert \frac{1}{\sqrt{mN}}\sum_{i=1}^N \tilde{U}^{(k)}_i\Big\Vert \Vert \sqrt{m}\{\widehat{\Omega}(\beta_0,\widehat{\eta})-\widebar{\Omega}(\beta_0,\eta_{0,N})\}\Vert\Vert \mu_N^{-1}\sqrt{N}\widehat{g}(\beta_0,\widehat{\eta})\Vert\\
        =& o_p(1).
    \end{align*}
    The proof of 3. is similar to the proof of 2., thus omitted.
\end{proof}
\begin{lemma}
    \begin{align*}
       \left\Vert \frac{\partial \widehat{g}(\beta_0,\eta_{0,N})}{\partial \beta^{(k)}}  \right\Vert = O_p\left(\sqrt{\frac{m}{N}}\right),
        \left\Vert \frac{\partial^2 \widehat{g}(\beta_0,\eta_{0,N})}{\partial \beta^{(k)}\partial\beta^{(r)}}  \right\Vert = O_p\left(\sqrt{\frac{m}{N}}\right).
    \end{align*}
\end{lemma}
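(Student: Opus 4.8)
The plan is to follow the argument behind Lemma~\ref{lemma: g hat rate 2}, applied to the $\beta$-derivatives of $g$ rather than to $g$ itself. Observe first that $\frac{\partial \widehat{g}(\beta_0,\eta_{0,N})}{\partial \beta^{(k)}} = \widehat{G}^{(k)}(\beta_0,\eta_{0,N}) := \frac1N\sum_{i=1}^N G^{(k)}(O_i;\beta_0,\eta_{0,N})$, the $k$-th column of $\widehat{G}(\beta_0,\eta_{0,N})$, and decompose it as $\widehat{G}^{(k)}(\beta_0,\eta_{0,N}) = \bigl\{\widehat{G}^{(k)}(\beta_0,\eta_{0,N})-\widebar{G}^{(k)}\bigr\} + \widebar{G}^{(k)}$, separating a mean-zero i.i.d.\ average from a deterministic term. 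The same decomposition is used for the second derivative with $v(O):=\frac{\partial^2 g(O;\beta_0,\eta_{0,N})}{\partial\beta^{(k)}\partial\beta^{(r)}}$, which is the $k$-th column of $\frac{\partial G(O;\beta_0,\eta_{0,N})}{\partial\beta^{(r)}}$.

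For the mean-zero average, the second-moment computation of Lemma~\ref{lemma: g hat rate 2} gives $\mathbb{E}_{P_{0,N}}\bigl\Vert \widehat{G}^{(k)}(\beta_0,\eta_{0,N})-\widebar{G}^{(k)}\bigr\Vert^2 = N^{-1}\operatorname{tr}\operatorname{Cov}_{P_{0,N}}\!\bigl(G^{(k)}\bigr) \le N^{-1}\operatorname{tr}\mathbb{E}_{P_{0,N}}\bigl[G^{(k)}G^{(k),T}\bigr]$; since $GG^T=\sum_{j=1}^p G^{(j)}G^{(j),T}$ we have $\mathbb{E}_{P_{0,N}}[G^{(k)}G^{(k),T}]\preceq \mathbb{E}_{P_{0,N}}[GG^T]$, whose spectral norm is bounded by a constant (Assumption~S1(b)), so the trace is $\le Cm$, the second moment is $O(m/N)$, and Markov's inequality yields the bound $O_p(\sqrt{m/N})$. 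The analogous step for $v$ uses $\mathbb{E}_{P_{0,N}}[vv^T]\preceq \mathbb{E}_{P_{0,N}}\!\bigl[\tfrac{\partial G}{\partial\beta^{(r)}}\tfrac{\partial G^T}{\partial\beta^{(r)}}\bigr]$ and the bound on its norm in Assumption~S1(b), again producing $O_p(\sqrt{m/N})$. For the deterministic terms, $\Vert\widebar{G}^{(k)}\Vert\le\Vert\widebar{G}\Vert$, and Assumption~\ref{Assumption: weak moment condition}(b) together with the eigenvalue bounds on $\widebar{\Omega}$ in Assumption~\ref{Assumption: convergence of g-hat}(b) give $\Vert\widebar{G}\Vert^2\le C\lambda_{\max}\bigl(\widebar{G}^T\widebar{\Omega}^{-1}\widebar{G}\bigr)=C\lambda_{\max}\bigl(N^{-1}S_N(H+o(1))S_N^T\bigr)\lesssim N^{-1}\Vert S_N\Vert^2\lesssim N^{-1}\bigl(\max_{1\le j\le p}\mu_{jN}\bigr)^2$; while $\Vert\mathbb{E}_{P_{0,N}}[v]\Vert\le\Vert\mathbb{E}_{P_{0,N}}[\partial G/\partial\beta^{(r)}]\Vert_F\le\Vert\mathbb{E}_{P_{0,N}}[\partial G/\partial\beta^{(r)}](S_N^{-1})^T\Vert_F\,\Vert S_N\Vert\lesssim N^{-1/2}\max_j\mu_{jN}$ by the third inequality of Assumption~S1(b). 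Under the many-weak-moment scaling ($m/\mu_N^2=O(1)$, the rates $\mu_{jN}$ of common order $\mu_N$), both of these are $O(\sqrt{m/N})$, and combining the two pieces proves both identities.

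The routine parts are the centered averages: each reduces to bounding a trace against an $m\times m$ second-moment matrix with bounded spectral norm, then invoking Markov, exactly as in Lemma~\ref{lemma: g hat rate 2}. The step that needs care is the deterministic mean in each decomposition: bounding $\Vert\widebar{G}^{(k)}\Vert$ and $\Vert\mathbb{E}_{P_{0,N}}[v]\Vert$ by $O(\sqrt{m/N})$ hinges on combining the normalizations in Assumptions~\ref{Assumption: weak moment condition}(b) and S1(b) with control of $\Vert S_N\Vert$ in terms of $m$, i.e.\ on checking that $\Vert S_N\Vert\lesssim\sqrt{m}$ (equivalently, that $\sqrt{m/N}$ dominates the population gradient size $\Vert\widebar{G}\Vert\asymp\Vert S_N\Vert/\sqrt{N}$) in the regime under consideration, so that the deterministic term does not overwhelm the $O_p(\sqrt{m/N})$ stochastic contribution.
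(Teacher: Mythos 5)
Your handling of the centered averages is essentially the paper's entire proof: the paper simply computes $\mathbb{E}_{P_{0,N}}\bigl\Vert \partial\widehat{g}(\beta_0,\eta_{0,N})/\partial\beta^{(k)}\bigr\Vert^2 = \tfrac1N\operatorname{tr}\mathbb{E}_{P_{0,N}}\bigl[\tfrac{\partial g_i}{\partial\beta^{(k)}}\tfrac{\partial g_i^T}{\partial\beta^{(k)}}\bigr]\le Cm/N$ via Assumption \ref{Assumption: further restriction for moments.}(b) and invokes Markov, never splitting off the mean (equivalently, it silently drops the $\tfrac{N-1}{N}\Vert\widebar{G}^{(k)}\Vert^2$ cross term in that display). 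So your trace-plus-Markov step coincides with the paper, and your explicit decomposition into a centered average plus $\widebar{G}^{(k)}$ (resp.\ $\mathbb{E}_{P_{0,N}}[v]$) in fact surfaces a point the paper's one-line computation glosses over.

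The gap is in how you close the deterministic term. Your bounds $\Vert\widebar{G}^{(k)}\Vert\lesssim N^{-1/2}\Vert S_N\Vert\lesssim N^{-1/2}\max_j\mu_{jN}$ and $\Vert\mathbb{E}_{P_{0,N}}[v]\Vert\lesssim N^{-1/2}\max_j\mu_{jN}$ are correct, but the final claim that these are $O(\sqrt{m/N})$ "under $m/\mu_N^2=O(1)$" runs the inequality backwards: Assumption \ref{Assumption: weak moment condition} gives $\sqrt{m}\lesssim\mu_N\le\mu_{jN}$, hence $\sqrt{m/N}\lesssim\mu_{jN}/\sqrt{N}$, which is exactly the opposite of what you need. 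To conclude you would need $\max_j\mu_{jN}\lesssim\sqrt{m}$ (equivalently $\Vert\widebar{G}\Vert=O(\sqrt{m/N})$), and this is an additional condition not implied by the paper's assumptions even when all $\mu_{jN}$ are of common order: e.g.\ $m=\log N$, $\mu_{jN}=N^{1/3}$ satisfies Assumption \ref{Assumption: weak moment condition} yet $\mu_N/\sqrt{N}\gg\sqrt{m/N}$; moreover Assumption \ref{Assumption: weak moment condition}(b) together with the eigenvalue bounds on $\widebar{\Omega}$ forces $\Vert\widebar{G}^{(k)}\Vert\gtrsim\mu_N/\sqrt{N}$, so the mean term genuinely is of that order and cannot be argued away. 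You do flag this as "the step that needs care," but then assert it holds without justification; as written the argument does not establish the stated rate, and closing it requires either adding the condition $\max_j\mu_{jN}^2=O(m)$ or bounding the mean contribution by a different route (the paper's own proof avoids the issue only by treating the average as if it were centered).
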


\begin{proof}
    Assumption \ref{Assumption: further restriction for moments.} implies that
    \begin{align*}
        \left\Vert\mathbb{E}_{P_{0,N}}\Bigg[\frac{\partial \widehat{g}(\beta_0,\eta_{0,N})}{\partial \beta^{(k)}} \frac{\partial \widehat{g}(\beta_0,\eta_{0,N})^T}{\partial \beta^{(k)}} \Bigg] \right\Vert\leq C.
    \end{align*}
    Therefore,
    \begin{align*}
        \mathbb{E}_{P_{0,N}}\left\Vert \frac{\partial \widehat{g}(\beta_0,\eta_{0,N})}{\partial \beta^{(k)}}  \right\Vert^2=\mathbb{E}_{P_{0,N}}\Bigg[\frac{\partial \widehat{g}(\beta_0,\eta_{0,N})^T}{\partial \beta^{(k)}} \frac{\partial \widehat{g}(\beta_0,\eta_{0,N})}{\partial \beta^{(k)}} \Bigg] =\text{tr}\frac{1}{N}\mathbb{E}_{P_{0,N}}\Bigg[\frac{\partial {g}_i(\beta_0,\eta_{0,N})}{\partial \beta^{(k)}} \frac{\partial {g}_i(\beta_0,\eta_{0,N})^T}{\partial \beta^{(k)}}\Bigg]\leq \frac{m}{N}.
    \end{align*}
    Therefore,
    \begin{align*}
         \left\Vert \frac{\partial \widehat{g}(\beta_0,\eta_{0,N})}{\partial \beta^{(k)}}  \right\Vert = O_p\left(\sqrt{\frac{m}{N}}\right).
    \end{align*}
    Similarly, we can prove
    \begin{align*}
        \left\Vert \frac{\partial^2 \widehat{g}(\beta_0,\eta_{0,N})}{\partial \beta^{(k)}\partial\beta^{(r)}}  \right\Vert = O_p\left(\sqrt{\frac{m}{N}}\right).
    \end{align*}
\end{proof}

\begin{remark}
    A direct consequence for this lemma is that
    \begin{align*}
        \left\Vert \widehat{G}(\beta_0,\eta_{0,N})  \right\Vert = O_p\left(\sqrt{\frac{m}{N}}\right), \left\Vert \frac{\partial \widehat{G}(\beta_0,\eta_{0,N})}{\partial\beta^{(r)}}  \right\Vert = O_p\left(\sqrt{\frac{m}{N}}\right).
    \end{align*}
\end{remark}

    \begin{lemma}
    For any $\widebar{\beta}\xrightarrow[]{p}\beta_0$,
        \begin{align*}
            NS_N^{-1}\Bigg( \frac{\partial^2 \widehat{Q}(\widebar{\beta},\widehat{\eta})}{\partial \beta \partial \beta^T} -\frac{\partial^2 \widehat{Q}(\widebar{\beta},{\eta}_{0,N})}{\partial \beta \partial \beta^T}\Bigg)S_{N}^{-1T} = o_p(1).
        \end{align*}
        \label{Lemma: convergence of second order derivative of Q}
    \end{lemma}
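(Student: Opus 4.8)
The plan is to write out the second derivative $\partial^2 \widehat{Q}(\beta,\eta)/\partial\beta^{(k)}\partial\beta^{(r)}$ explicitly in terms of the building blocks $\widehat{g}(\beta,\eta)$, $\widehat{G}(\beta,\eta)$, the second derivatives $\partial^2\widehat{g}/\partial\beta^{(k)}\partial\beta^{(r)}$, and the weighting matrix $\widehat{\Omega}(\beta,\eta)^{-1}$ together with its first and second $\beta$-derivatives. Since $\widehat{Q} = \widehat g^T \widehat\Omega^{-1}\widehat g/2$, differentiating twice produces a finite sum of terms, each of which is a product of: (i) at most two factors drawn from $\{\widehat g,\ \partial\widehat g/\partial\beta,\ \partial^2\widehat g/\partial\beta^2\}$, (ii) at most two factors drawn from $\{\widehat\Omega^{-1},\ \widehat\Omega^{-1}(\partial\widehat\Omega)\widehat\Omega^{-1},\ \dots\}$ involving $\Omega^{(k)},\Omega^{(kl)},\Omega^{(k,l)}$. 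Then I would form the difference between the version with $\widehat\eta$ and the version with $\eta_{0,N}$, multiply on both sides by $NS_N^{-1}\cdot(S_N^{-1})^T$, and bound each resulting term by a telescoping argument: replace one factor at a time by its $\eta_{0,N}$-counterpart and control each single-factor replacement.

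The key inputs for the single-factor replacements are already available. First, $\Vert S_N^{-1}\Vert \lesssim \mu_N^{-1}$ and the scaling $N/\mu_N^2 = O(1)$ (Assumption \ref{Assumption: weak moment condition}), so the prefactor $NS_N^{-1}(\cdot)(S_N^{-1})^T$ contributes at most $O(N/\mu_N^2) = O(1)$ in operator norm after the $g$-type factors are accounted for; more precisely, each $\widehat g$-type factor evaluated near $\beta_0$ carries a $\mu_N/\sqrt N$ (Lemmas \ref{lemma: g hat rate 3}, \ref{lemma: g rate 4}) while $\widehat G$-type and higher-derivative factors carry $O_p(\sqrt{m/N})$ or $O_p(1)$, so the $N$ in front is absorbed. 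Second, the differences $\widehat g(\bar\beta,\widehat\eta)-\widehat g(\bar\beta,\eta_{0,N})$ and $\widehat G(\bar\beta,\widehat\eta)-\widehat G(\bar\beta,\eta_{0,N})$ are $o_p(1/\sqrt N)$ by Lemmas \ref{lemma: ghat rate}, \ref{lemma: Ghat rate} (using $\delta_N = o(1/\sqrt m)$), and analogous bounds for the second-derivative factors follow from Assumption \ref{Assumption: general score regularity} via the same empirical-process plus first-order-bias decomposition used in Lemma \ref{lemma: empirical process term of g}. Third, the differences of the $\Omega$-type matrices $\widehat A(\bar\beta,\widehat\eta)-\widehat A(\bar\beta,\eta_{0,N})$ for $A\in\{\Omega,\Omega^k,\Omega^{kl},\Omega^{k,l}\}$ are $o_p(1/\sqrt m)$ by Lemma \ref{Lemma: Empirical process term for matrix} combined with Assumption \ref{Assumption: ASN matrix estimation}; since at most $\sqrt m$ appears from expanding $\widehat g$ in a quadratic form (Cauchy–Schwarz on the $m$-vectors), this $1/\sqrt m$ exactly cancels. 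Finally, the invertibility and uniform boundedness of $\widehat\Omega^{-1}$ near $\beta_0$ follows from Assumption \ref{Assumption: convergence of g-hat}(b) and Lemma A0 of \citet{newey2009generalized} (quoted as Lemma \ref{lemma: g hat rate 2}'s neighbor in the excerpt), so inverses of perturbed matrices can be expanded as $\widehat\Omega(\widehat\eta)^{-1} - \widehat\Omega(\eta_{0,N})^{-1} = -\widehat\Omega(\widehat\eta)^{-1}(\widehat\Omega(\widehat\eta)-\widehat\Omega(\eta_{0,N}))\widehat\Omega(\eta_{0,N})^{-1}$ with controlled error.

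The main obstacle I anticipate is bookkeeping rather than a genuinely hard estimate: the second derivative of a ratio-type quadratic form has many terms (products of up to four factors), and each must be checked to see that the worst-case combination of $\mu_N/\sqrt N$, $\sqrt{m/N}$, and $1/\sqrt m$ factors, multiplied by the $N\Vert S_N^{-1}\Vert^2 = O(1/\mu_N^2)\cdot O(N)$ prefactor, actually yields $o_p(1)$. The genuinely delicate term is the one where \emph{two} $\Omega$-type factors are each replaced (contributing $(1/\sqrt m)^2 = 1/m$) but the $g$-type factors only supply $\mu_N^2/N$ and the Cauchy–Schwarz expansion supplies a single $\sqrt m$ — one must verify $N/\mu_N^2 \cdot \mu_N^2/N \cdot \sqrt m/m = 1/\sqrt m \to 0$, which is fine, but the accounting near $\bar\beta$ (not exactly $\beta_0$) requires the Lipschitz-in-$\beta$ conditions of Assumptions \ref{Assumption: convergence of g-hat}(d), \ref{Assumption: Further restrictions for uniform convergence}(b), and \ref{Assumption: further restriction for moments.}(c) to transfer the rates from $\beta_0$ to $\bar\beta$ uniformly over the consistency neighborhood $\Vert\delta(\bar\beta)\Vert = o_p(1)$. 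I would organize the proof as: (1) write the explicit second-derivative formula; (2) state the menu of rates for each factor type at $\bar\beta$; (3) do the telescoping replacement term-by-term, invoking the rates and the prefactor bound; (4) collect to conclude $o_p(1)$.
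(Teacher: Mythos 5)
Your proposal is correct and follows essentially the same route as the paper's proof: the paper likewise writes out the Hessian of $\widehat{Q}$ explicitly (as six terms $J_1,\dots,J_6$), forms the $\widehat\eta$-versus-$\eta_{0,N}$ difference, and bounds it by exactly the telescoping single-factor replacements you describe, using the same rate menu ($o_p(1/\sqrt N)$ for the $\widehat g$- and $\widehat G$-type differences via global Neyman orthogonality, $o_p(1/\sqrt m)$ for the $\Omega$-type matrix differences, $O_p(\mu_N/\sqrt N)$ for $\widehat g$ near $\beta_0$, eigenvalue bounds on $\widehat\Omega^{-1}$, and Assumption S1(c) to transfer from $\beta_0$ to $\bar\beta$). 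One caution: the aside ``$O(N/\mu_N^2)=O(1)$'' is literally false under weak identification — only the product of this prefactor with the $\mu_N^2/N$ supplied by the $\widehat g$-type factors is $O(1)$, as your own later accounting (e.g.\ $N/\mu_N^2\cdot\mu_N^2/N\cdot\sqrt m/m\to 0$) correctly reflects.
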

\begin{proof}
    We firstly calculate the explicit form of $\frac{\partial^2\widehat{Q}(\beta,\eta)}{\partial\beta^{(k)}\partial\beta^{(r)}}$.
Direct calculation yields,
\begin{align*}
    \frac{\partial\widehat{\Omega}(\beta,\eta)}{\partial \beta^{(k)}} =& \frac{\partial}{\partial \beta}\left[\frac{1}{N}\sum_{i=1}^Ng_i(\beta,\eta)g_i(\beta,\eta)^T\right]=\frac{1}{N}\sum_{i=1}^N \left[\frac{\partial g_i(\beta,\eta)}{\partial \beta^{(k)}}g_i(\beta,\eta)^T+g_i(\beta,\eta)\frac{\partial g_i(\beta,\eta)^T}{\partial \beta^{(k)}}\right],\\
    \frac{\partial^2\widehat{\Omega}(\beta,\eta)}{\partial \beta^{(k)}\partial \beta^{(r)}}=&\frac{2}{N}\sum_{i=1}^N \left[ \frac{g_i(\beta,\eta)}{\partial\beta^{(k)}\partial\beta^{(r)}}g_i(\beta,\eta)^T+\frac{\partial g_i(\beta,\eta)}{\partial \beta^{(k)}}\frac{\partial g_i(\beta,\eta)^T}{\partial \beta^{(r)}}\right],\\
    \frac{\partial\widehat{Q}(\beta,\eta)}{\partial\beta^{(k)}}=&\frac{\partial \widehat{g}(\beta,\eta)^T}{\partial \beta^{(k)}}\widehat{\Omega}(\beta,\eta)^{-1}\widehat{g}(\beta,\eta)-\frac{1}{2}\widehat{g}(\beta,\eta)\widehat{\Omega}^{-1}\frac{\partial \widehat{\Omega}(\beta,\eta)}{\partial \beta^{(k)}}\widehat{\Omega}^{-1}\widehat{g}(\beta,\eta),\\
    \frac{\partial^2\widehat{Q}(\beta,\eta)}{\partial\beta^{(k)}\partial\beta^{(r)}}=&\frac{\partial^2\widehat{g}(\beta,\eta)^T}{\partial \beta^{(k)}\partial \beta^{(r)}}\widehat{\Omega}(\beta,\eta)^{-1}\widehat{g}(\beta,\eta)\\
    &-\frac{\partial \widehat{g}(\beta,\eta)^T}{\partial \beta^{(k)}}\widehat{\Omega}^{-1}(\beta,\eta)\frac{\partial \widehat{g}(\beta,\eta)^T}{\partial \beta^{(r)}}\widehat{\Omega}^{-1}(\beta,\eta)\widehat{g}(\beta,\eta)\\
    &-\frac{\partial \widehat{g}(\beta,\eta)^T}{\partial \beta^{(r)}}\widehat{\Omega}^{-1}(\beta,\eta)\frac{\partial \widehat{g}(\beta,\eta)^T}{\partial \beta^{(k)}}\widehat{\Omega}^{-1}(\beta,\eta)\widehat{g}(\beta,\eta)\\
    &+\frac{\partial \widehat{g}(\beta,\eta)^T}{\partial \beta^{(k)}}\widehat{\Omega}(\beta,\eta)^{-1}\frac{\partial \widehat{g}(\beta,\eta)^T}{\partial \beta^{(r)}}\\
    &+\widehat{g}(\beta,\eta)^T\widehat{\Omega}(\beta,\eta)^{-1}\frac{\partial \Omega(\beta,\eta)}{\partial\beta^{(r)}}\widehat{\Omega}(\beta,\eta)^{-1}\frac{\partial \widehat{\Omega}(\beta,\eta)}{\partial\beta^{(k)}}\widehat{\Omega
    }(\beta,\eta)^{-1}\widehat{g}(\beta,\eta)\\
    &-\frac{1}{2}\widehat{g}(\beta,\eta)\widehat{\Omega}(\beta,\eta)^{-1}\frac{\partial^2\widehat{\Omega}(\beta,\eta)}{\partial \beta^{(r)}\partial \beta^{(k)}}\widehat{\Omega}(\beta,\eta)^{-1}\widehat{g}(\beta,\eta)\\
    =:&J^{(k,r)}_1(\beta,\eta)-J^{(k,r)}_2(\beta,\eta)-J^{(k,r)}_3(\beta,\eta)+J^{(k,r)}_4(\beta,\eta)+J^{(k,r)}_5(\beta,\eta)-\frac{1}{2}J^{(k,r)}_6(\beta,\eta).
\end{align*}

Define $J_i(\beta,\eta)$ to be the matrix with $(k,j)$-th element equal to $J^{(k,j)}_i(\beta,\eta)$ for $i = 1,...,6$. It remains to show that for all $i\in\{1,2,3,4,5,6\}$, we have
\begin{align*}
    NS_N^{-1}(J_i(\widebar{\beta},\widehat{\eta})-J_i(\widebar{\beta},{\eta}_{0,N}))S_{N}^{-1T} = o_p(1).
\end{align*}

We will firstly show that
\begin{align*}
     NS_N^{-1}(J_1(\widebar{\beta},\widehat{\eta})-J_1(\widebar{\beta},{\eta}_{0,N}))S_{N}^{-1T} = o_p(1),
\end{align*}
the proofs for other terms are similar thus omitted.

Firstly notice that
\begin{align*}
    J_1(\beta,\eta) = \left(\frac{\partial\widehat{G}(\beta,\eta)}{\partial \beta^{(1)}},...,\frac{\partial\widehat{G}(\beta,\eta)}{\partial \beta^{(p)}}\right)^T\widehat{\Omega}(\beta,\eta)^{-1}\widehat{g}(\beta,\eta).
\end{align*}
\begin{align*}
    &\Vert NS_N^{-1}(J_1(\widebar{\beta},\widehat{\eta})-J_1(\widebar{\beta},{\eta}_{0,N}))S_{N}^{-1T}\Vert \\
    \leq&\left\Vert NS_N^{-1}\left[\left(\frac{\partial\widehat{G}(\widebar{\beta},\widehat{\eta})}{\partial \beta^{(1)}},...,\frac{\partial\widehat{G}(\widebar{\beta},\widehat{\eta})}{\partial \beta^{(p)}}\right)-\left(\frac{\partial\widehat{G}(\widebar{\beta},\eta_{0,N})}{\partial \beta^{(1)}},...,\frac{\partial\widehat{G}(\widebar{\beta},\eta_{0,N})}{\partial \beta^{(p)}}\right)\right]^T\widehat{\Omega}(\widebar{\beta},\widehat{\eta})^{-1}\widehat{g}(\widebar{\beta},\widehat{\eta})S_{N}^{-1T}\right\Vert\\
    &+\left\Vert NS_N^{-1}\left(\frac{\partial\widehat{G}(\widebar{\beta},\eta_{0,N})}{\partial \beta^{(1)}},...,\frac{\partial\widehat{G}(\widebar{\beta},\eta_{0,N})}{\partial \beta^{(p)}}\right)^T\widehat{\Omega}(\widebar{\beta},\widehat{\eta})^{-1}(\widehat{\Omega}(\widebar{\beta},\widehat{\eta})-\widehat{\Omega}(\widebar{\beta},{\eta}_{0,N}))\widehat{\Omega}(\widebar{\beta},{\eta}_{0,N})^{-1}\widehat{g}(\widebar{\beta},\widehat{\eta})S_{N}^{-1T}\right\Vert\\
    &+\left\Vert NS_N^{-1}\left(\frac{\partial\widehat{G}(\widebar{\beta},\eta_{0,N})}{\partial \beta^{(1)}},...,\frac{\partial\widehat{G}(\widebar{\beta},\eta_{0,N})}{\partial \beta^{(p)}}\right)^T\widehat{\Omega}(\widebar{\beta},{\eta}_{0,N})^{-1}(\widehat{g}(\widebar{\beta},\widehat{\eta})-\widehat{g}(\widebar{\beta},{\eta}_{0,N}))S_{N}^{-1T}\right\Vert\\
    :=&J_{11}+J_{12}+J_{13}
\end{align*}
For $J_{11}$, since $\frac{\partial^2 g(\beta,\eta)}{\partial\beta^{(k)}\partial\beta^{(r)}}$ satisfies global Neyman orthogonality in a neighborhood of $\beta_0$, similar to derivations in Lemma \ref{lemma: ghat rate}, we have
\begin{align*}
\left\Vert\left(\frac{\partial\widehat{G}(\widebar{\beta},\widehat{\eta})}{\partial \beta^{(1)}},...,\frac{\partial\widehat{G}(\widebar{\beta},\widehat{\eta})}{\partial \beta^{(p)}}\right)-\left(\frac{\partial\widehat{G}(\widebar{\beta},\eta_{0,N})}{\partial \beta^{(1)}},...,\frac{\partial\widehat{G}(\widebar{\beta},\eta_{0,N})}{\partial \beta^{(p)}}\right)\right\Vert=o_p\left(\frac{1}{\sqrt{N}}\right).
\end{align*}
Since for any $\beta$ we have
\begin{align*}
    \Vert\widehat{\Omega}({\beta},\widehat{\eta})-\widebar{\Omega}(\beta,\eta_{0,N})\Vert=o_p(1),
\end{align*}
and the eigenvalues of $\widebar{\Omega}(\beta,\eta_{0,N})$ are bounded by $1/C$ and $C$ for any $\beta$, therefore the eigenvalues of $\widehat{\Omega}(\widebar{\beta},\widehat{\eta})$ are bounded by $1/2C$ and $2C$. Previously we have derived that $\sup_{\Vert\delta(\beta)\Vert\leq C}\Vert \widehat{g}(\beta,\widehat{\eta}) \Vert = O_p(\mu_N/\sqrt{N})$. Therefore,
\begin{align*}
    J_{11}\leq N\mu_N^{-2}o_p(1/\sqrt{N})O_p(\mu_N/\sqrt{N}) = o_p(1/\mu_N)=o_p(1).
\end{align*}
For $J_{12}$, we firstly notice that
\begin{align*}
    &\left\Vert NS_N^{-1}\left(\frac{\partial\widehat{G}(\widebar{\beta},\eta_{0,N})}{\partial \beta^{(1)}},...,\frac{\partial\widehat{G}(\widebar{\beta},\eta_{0,N})}{\partial \beta^{(p)}}\right)^T\right\Vert\\
    =&\left\Vert NS_N^{-1}\left(\frac{\partial\widehat{G}(\widebar{\beta},\eta_{0,N})}{\partial \beta^{(1)}},...,\frac{\partial\widehat{G}(\widebar{\beta},\eta_{0,N})}{\partial \beta^{(p)}}\right)^T-\left(\frac{\partial\widehat{G}({\beta}_0,\eta_{0,N})}{\partial \beta^{(1)}},...,\frac{\partial\widehat{G}({\beta}_0,\eta_{0,N})}{\partial \beta^{(p)}}\right)^T\right\Vert\\
    &+\left\Vert NS_N^{-1}\left(\frac{\partial\widehat{G}({\beta}_0,\eta_{0,N})}{\partial \beta^{(1)}},...,\frac{\partial\widehat{G}({\beta}_0,\eta_{0,N})}{\partial \beta^{(p)}}\right)^T\right\Vert\\
    &=o_p(\sqrt{N})+N\Vert S^{-1}_{N}\Vert O_p(\sqrt{m/N}).
\end{align*}
Therefore,
\begin{align*}
    J_{12}\leq (o_p(\sqrt{N})+N\Vert S^{-1}_{N}\Vert O_p(\sqrt{m/N}))o_p(1/\sqrt{m})O_p(\mu_N/\sqrt{N})\Vert S^{-1}_{N}\Vert=o_p(1).
\end{align*}

Similarly we can prove that $J_{13} = o_p(1)$. Therefore we have proved that
\begin{align*}
    NS_N^{-1}(J_1(\widebar{\beta},\widehat{\eta})-J_1(\widebar{\beta},{\eta}_{0,N}))S_{N}^{-1T} = o_p(1).
\end{align*}

For $J_2$ term, similarly we notice that
\begin{align*}
    J_2(\beta,\eta)=\widehat{G}(\beta,\eta)^T\widehat{\Omega}^{-1}(\beta,\eta)\widehat{G}(\beta,\eta)\widehat{\Omega}^{-1}(\beta,\eta)\widehat{g}(\beta,\eta).
\end{align*}

Therefore,

\begin{align*}
    &\Vert NS_N^{-1}(J_2(\widebar{\beta},\widehat{\eta})-J_2(\widebar{\beta},{\eta}_{0,N}))S_{N}^{-1T}\Vert \\
    \leq&\left\Vert NS_N^{-1}[\widehat{G}(\widebar{\beta},\widehat{\eta})-\widehat{G}(\widebar{\beta},{\eta}_{0,N})]\widehat{\Omega}^{-1}(\widebar{\beta},\widehat{\eta})\widehat{G}(\widebar{\beta},\widehat{\eta})\widehat{\Omega}^{-1}(\widebar{\beta},\widehat{\eta})\widehat{g}(\widebar{\beta},\widehat{\eta})S_N^{-1}\right\Vert\\
    +&\left\Vert NS_N^{-1}\widehat{G}(\widebar{\beta},{\eta}_{0,N})[\widehat{\Omega}(\widebar{\beta},\widehat{\eta})^{-1}-\widehat{\Omega}(\widebar{\beta},{\eta}_{0,N})^{-1}]\widehat{G}(\widebar{\beta},\widehat{\eta})\widehat{\Omega}^{-1}(\widebar{\beta},\widehat{\eta})\widehat{g}(\widebar{\beta},\widehat{\eta})S_N^{-1}\right\Vert\\
    +&\left\Vert NS_N^{-1}\widehat{G}(\widebar{\beta},{\eta}_{0,N})\widehat{\Omega}(\widebar{\beta},{\eta}_{0,N})^{-1}[\widehat{G}(\widebar{\beta},\widehat{\eta})-\widehat{G}(\widebar{\beta},{\eta}_{0,N})]\widehat{\Omega}^{-1}(\widebar{\beta},\widehat{\eta})\widehat{g}(\widebar{\beta},\widehat{\eta})S_N^{-1}\right\Vert\\
    +&\left\Vert NS_N^{-1}\widehat{G}(\widebar{\beta},{\eta}_{0,N})\widehat{\Omega}(\widebar{\beta},{\eta}_{0,N})^{-1}\widehat{G}(\widebar{\beta},{\eta}_{0,N})[\widehat{\Omega}^{-1}(\widebar{\beta},\widehat{\eta})-\widehat{\Omega}^{-1}(\widebar{\beta},{\eta}_{0,N})]\widehat{g}(\widebar{\beta},\widehat{\eta})S_N^{-1}\right\Vert\\
    +&\left\Vert NS_N^{-1}\widehat{G}(\widebar{\beta},{\eta}_{0,N})\widehat{\Omega}(\widebar{\beta},{\eta}_{0,N})^{-1}\widehat{G}(\widebar{\beta},{\eta}_{0,N})\widehat{\Omega}^{-1}(\widebar{\beta},{\eta}_{0,N})[\widehat{g}(\widebar{\beta},\widehat{\eta})-\widehat{g}(\widebar{\beta},{\eta}_{0,N})]S_N^{-1}\right\Vert\\
    :=&J_{21}+J_{22}+J_{23}+J_{24}+J_{25}.
\end{align*}

Similar to the previous arguments, we have
\begin{align*}
    &J_{21}\leq N\mu_N^{-2}o_P(1/\sqrt{N})O_P(\mu_N/\sqrt{N})O_P(\mu_N/\sqrt{N})=o_p(1)
\end{align*}
Now we consider the second term. Similar to previous arguments, we have
\begin{align*}
    &\Vert NS_N^{-1}\widehat{G}(\widebar{\beta},\eta_{0,N})\Vert =o_p(\sqrt{N})+N\Vert S^{-1}_{N}\Vert O_p(\sqrt{m/N}),\\
    &\Vert \widehat{\Omega}(\widebar{\beta},\widehat{\eta})^{-1}-\widehat{\Omega}(\widebar{\beta},{\eta}_{0,N})^{-1}\Vert 
    =\Vert \widehat{\Omega}(\widebar{\beta},\widehat{\eta})^{-1}(\widehat{\Omega}(\widebar{\beta},\widehat{\eta})-\widehat{\Omega}(\widebar{\beta},{\eta}_{0,N}))\widehat{\Omega}(\widebar{\beta},{\eta}_{0,N})^{-1}\Vert=o_p(1/\sqrt{m}).
\end{align*}

For $\Vert \widehat{G}(\widebar{\beta},\widehat{\eta})\Vert$, we have
\begin{align*}
    &\Vert \widehat{G}(\widebar{\beta},\widehat{\eta})\Vert\leq \Vert \widehat{G}(\widebar{\beta},\widehat{\eta})-\widehat{G}(\widebar{\beta},{\eta}_{0,N})\Vert+\Vert \widehat{G}(\widebar{\beta},{\eta}_{0,N})\Vert\\
    \leq &o_p(1/\sqrt{N})+\Vert S_N\Vert o_p(1/\sqrt{N})+O_p(\sqrt{m/N})=\Vert S_N\Vert o_p(1/\sqrt{N}).
\end{align*}
Therefore
\begin{align*}
    J_{22}\leq (o_p(\sqrt{N})+N\Vert S^{-1}_{N}\Vert O_p(\sqrt{m/N}))o_p(1/\sqrt{m})\Vert S_N\Vert o_p(1/\sqrt{N})O_p(\mu_N/\sqrt{N})\Vert S_N^{-1}\Vert=o_p(1).
\end{align*}
Similarly we can show that $J_{23}$, $J_{24}$, $J_{25}$ are $o_p(1)$. 

Using the same technique we can show that
\begin{align*}
     NS_N^{-1}(J_i(\widebar{\beta},\widehat{\eta})-J_i(\widebar{\beta},{\eta}_{0,N}))S_{N}^{-1T} = o_p(1),
\end{align*}
for $i = 3,4,5,6$. Therefore, we have
\begin{align*}
            NS_N^{-1}\Bigg( \frac{\partial^2 \widehat{Q}(\widebar{\beta},\widehat{\eta})}{\partial \beta \partial \beta^T} -\frac{\partial^2 \widehat{Q}(\widebar{\beta},{\eta}_{0,N})}{\partial \beta \partial \beta^T}\Bigg)S_{N}^{-1T} = o_p(1).
\end{align*}
        
\end{proof}
 \section{Proof for main Theorems}
\subsection{Proof for consistency}
\begin{proof}
    Our goal is to prove that $\delta(\widehat{\beta})\xrightarrow[]{p} 0$, where $\delta(\beta) = \frac{1}{\mu_N}S^T_N(\beta-\beta_0)$. By identification assumption,
    \begin{equation*}
        \Big\Vert \frac{S_N^T(\beta-\beta_0)}{\mu_N} \Big\Vert \leq C\sqrt{N}\frac{\Vert P_{0,N}[g(\beta,\eta_{0,N})]}{\mu_N}.
    \end{equation*}
    Therefore, it remains to prove that 
    \begin{equation*}
\mu_N^{-1}\sqrt{N}\Vert P_{0,N}[g(\widehat{\beta},\eta_{0,N})] \Vert = o_{p}(1).
    \end{equation*}
    Since $\widehat{\beta}$ is the minimizer of $\widehat{Q}(\beta,\widehat{\eta})$, we have
    \begin{equation*}
        \mu_N^{-2}N\widehat{Q}(\widehat{\beta},\widehat{\eta})\leq \mu_N^{-2}N\widehat{Q}(\beta_0,\widehat{\eta})
    \end{equation*}
    Later on, we will prove that
    \begin{equation}
        \sup_{\Vert\delta(\beta)\Vert\leq C}\mu_N^{-2}N \vert \widehat{Q}(\beta,\widehat{\eta})-Q(\beta,\eta_{0,N}) \vert = o_p(1). \label{eq: key uniform convergence}
    \end{equation}
    for any $\epsilon,\gamma$, let $\mathcal{A}_1=\{\Vert \delta(\widehat{\beta}) \Vert\leq C\}$, since $\delta(\widehat{\beta})=O_p(1)$, we can choose $C$ such that $P(\mathcal{A}_1)\geq 1-\epsilon$. Let $\mathcal{A}_2 = \{\sup_{\Vert \delta(\beta) \Vert\leq C}\mu_N^{-2}N\vert \widehat{Q}(\beta,\widehat{\eta})-Q(\beta,\eta_{0,N}) \vert\leq \gamma\}$, then by \ref{eq: key uniform convergence}, we have for $N$ large enough, $P(\mathcal{A}_2)>1-\epsilon$. Therefore, with probability bigger than $1-2\epsilon$,
    \begin{align*}
        \mu_N^{-2}N Q(\widehat{\beta},\eta_{0,N})\leq \mu_N^{-2}N \widehat{Q}(\widehat{\beta},\widehat{\eta}) +\gamma\leq \mu_N^{-2}N \widehat{Q}({\beta}_0,\widehat{\eta}) +\gamma\leq \mu_N^{-2}N{Q}({\beta}_0,{\eta}_0) +2\gamma,
    \end{align*}
    which implies $\mu_N^{-2}N[Q(\widehat{\beta},\eta_{0,N})-Q(\beta_0,\eta_{0,N})]=o_p(1)$ since $\epsilon$ and $\gamma$ are arbitrarily small. 

   Then we have
    \begin{align*}
        &\mu_N^{-2}N[Q(\widehat{\beta},\eta_{0,N})-Q(\beta_0,\eta_{0,N})]+m/\mu_N^2
        =\mu_N^{-2}NQ(\widehat{\beta},\eta_{0,N})\\
        =&\mu_N^{-2}N P_{0,N}g(\widehat{\beta},\eta_{0,N})P_{0,N}g^T(\widehat{\beta},\eta_{0,N})g(\widehat{\beta},\eta_{0,N})^{-1}P_{0,N}g(\widehat{\beta},\eta_{0,N})\\
        \geq & C\mu_N^{-2}N \Vert P_{0,N}g(\widehat{\beta},\eta_{0,N})\Vert^2,
    \end{align*}
    where the last inequality is due to Assumption \ref{Assumption: convergence of g-hat}. Therefore
    \begin{equation*}
        \mu_N^{-1}\sqrt{N}\Vert P_{0,N}[g(\widehat{\beta},\eta_{0,N})] \Vert = o_p(1),
    \end{equation*}
    and hence $\delta(\widehat{\beta})\xrightarrow[]{p}0$.

    To show \eqref{eq: key uniform convergence}, we make the following decomposition:
    \begin{align*}
     & \sup_{\Vert\delta(\beta)\Vert\leq C}\mu_N^{-1}N\vert \widehat{Q}(\beta,\widehat{\eta})-{Q}(\beta,\eta_{0,N}) \vert\\
    \leq & \underbrace{\sup_{\Vert\delta(\beta)\Vert\leq C}\mu_N^{-1}N \vert \widehat{Q}(\beta,\widehat{\eta})- \Tilde{Q}(\beta,\widehat{\eta})\vert}_{=:A_N} 
    + \underbrace{\sup_{\Vert\delta(\beta)\Vert\leq C}\mu_N^{-1}N \vert \Tilde{Q}(\beta,\widehat{\eta})- \Tilde{Q}(\beta,{\eta}_{0,N})\vert}_{=:B_N} 
    \\
    &+  \underbrace{\sup_{\Vert\delta(\beta)\Vert\leq C}\mu_N^{-1}N \vert \Tilde{Q}(\beta,{\eta}_{0,N})- {Q}(\beta,{\eta}_{0,N})\vert}_{=:C_N},
\end{align*}
where
\begin{align*}
    & Q(\beta,\eta) = \widebar{g}(\beta,\eta)^T\widebar{\Omega}(\beta,\eta)^{-1}\widebar{g}(\beta,\eta)/2+\frac{m}{2N},\\
    & \Tilde{Q}(\beta,\eta)  = \widehat{g}(\beta,\eta)^T\widebar{\Omega}(\beta,\eta_{0,N})^{-1}\widehat{g}(\beta,\eta)/2.
\end{align*}

Here we only discuss $A_N = o_p(1)$ and $B_N = o_p(1)$. Since $C_N$ does not involve any nuisance parameter, the proof for uniform convergence of $C_N$ is the same as \citet{newey2009generalized}.

For $A_N$ term, following the same derivation as in \citet{ye2024genius}, we have
\begin{align*}
    A_N  
    \leq &\sup_{\Vert\delta(\beta)\Vert\leq C} \left\vert \widehat{a}(\beta,\widehat{\eta})^T(\tilde{\Omega}(\beta,\widehat{\eta})-\widebar{\Omega}(\beta,\widehat{\eta}))\widehat{a}(\beta,\widehat{\eta}) \right\vert \\
    &+ \sup_{\Vert\delta(\beta)\Vert\leq C}\left\vert \widehat{a}(\beta,\widehat{\eta})^T(\widehat{\Omega}(\beta,\widehat{\eta})-\widebar{\Omega}(\beta,\widehat{\eta}))\widehat{\Omega}(\beta,\widehat{\eta})^{-1} (\widehat{\Omega}(\beta,\widehat{\eta})-\widebar{\Omega}(\beta,\widehat{\eta}))\widehat{a}(\beta,\widehat{\eta})  \right\vert\\
    \leq &\sup_{\Vert\delta(\beta)\Vert\leq C}\Vert \widehat{a}(\beta,\widehat{\eta})\Vert^2 \Big\{\sup_{\Vert\delta(\beta)\Vert\leq C}\Big\Vert \widehat{\Omega}(\beta,\widehat{\eta})- \Bar{\Omega}(\beta,\widehat{\eta})\Big\Vert+C\sup_{\Vert\delta(\beta)\Vert\leq C}\Big\Vert \widehat{\Omega}(\beta,\widehat{\eta})- \Bar{\Omega}(\beta,\widehat{\eta})\Big\Vert^2\Big\}
\end{align*}
where 
\begin{align*}
   \widehat{a}(\beta,\eta) = \mu_N^{-1}\sqrt{N}\widebar{\Omega}(\beta,\eta_{0,N})^{-1}\widehat{g}(\beta,{\eta}).
\end{align*}

Since
\begin{align*}
    &\sup_{\Vert\delta(\beta)\Vert\leq C}\Vert\widehat{a}(\beta,\widehat{\eta})\Vert^2
     \leq  \sup_{\Vert\delta(\beta)\Vert\leq C}C \mu_N^{-2}N \Vert \widehat{g}(\beta,\widehat{\eta})\Vert^2 = O_p(1),\\
     &\Vert \widehat{\Omega}(\beta_0,\widehat{\eta})-\overline{\Omega}(\beta_0,\eta_{0,N})\Vert = o_p(1),
\end{align*}
$A_N = o_p(1)$.

For $B_N$, following the same derivation of \citet{ye2024genius}, we have
\begin{align*}
    B_n &\lesssim \mu_N^{-2}N \sup_{\beta \in B} \Vert \widehat{g}(\beta,\widehat{\eta})-\widehat{g}(\beta,\eta_{0,N})\Vert \sup_{\Vert\delta(\beta)\Vert\leq C} (\Vert \widehat{g}(\beta,\widehat{\eta})\Vert+\Vert \widehat{g}(\beta,{\eta}_{0,N})\Vert)\\
    &=O_{p}\left(\frac{\sqrt{m}\delta_N}{\mu_N}\right) = o_p(1)
\end{align*}
by Lemma \ref{lemma: ghat rate}, Lemma \ref{lemma: g hat rate 3} and Lemma \ref{lemma: g rate 4}. 
\end{proof}

\subsection{Proof for asymptotic normality}

By the first order condition
\begin{align*}
    \frac{\partial \widehat{Q}(\beta,\widehat{\eta})}{\partial \beta}\Big|_{\beta = \widehat{\beta}} = 0
\end{align*}
and Taylor expansion around $\beta_0$, we have
\begin{align*}
    0 = NS_N^{-1}\frac{\partial \widehat{Q}(\beta,\widehat{\eta})}{\partial \beta}\Bigg|_{\beta = \widehat{\beta}} = NS_N^{-1}\frac{\partial \widehat{Q}(\beta,\widehat{\eta})}{\partial\beta}\Bigg|_{\beta = \beta_0} + NS_N^{-1}\frac{\partial \widehat{Q}(\beta,\widehat{\eta})}{\partial\beta \partial \beta^T}\Bigg|_{\beta = \Bar{\beta}}(S_N^{-1})^{T}[{S_N}(\widehat{\beta}-\beta_0)]
\end{align*}
Next, we aim to show 
\begin{align*}
    \left\Vert NS_N^{-1}\frac{\partial \widehat{Q}(\beta,\widehat{\eta})}{\partial\beta}\Bigg|_{\beta = \beta_0}-NS_N^{-1}\frac{\partial \widehat{Q}(\beta,{\eta}_{0,N})}{\partial\beta}\Bigg|_{\beta = \beta_0}\right\Vert=o_p(1),
\end{align*}
it suffices to show that 
\begin{align*}
    N/\mu_N\frac{\partial \tilde{Q}(\beta,\widehat{\eta})}{\partial\beta}\Bigg|_{\beta = \beta_0}=N/\mu_N\frac{\partial \widehat{Q}(\beta,{\eta}_{0,N})}{\partial\beta}\Bigg|_{\beta = \beta_0}+o_p(1)
\end{align*}
due to $\Vert S_N^{-1}\Vert\leq \mu_N^{-1}$ and Lemma \ref{Lemma: equivalence of FO derivative}.

We have
\begin{align*}
    &N\mu_N^{-1}\frac{\partial \widehat{Q}(\beta,\widehat{\eta})}{\partial\beta^{(k)}}\Bigg|_{\beta = \beta_0} = N\mu_N^{-1}\frac{\partial \tilde{Q}(\beta,\widehat{\eta})}{\partial\beta^{(k)}}\Bigg|_{\beta = \beta_0}+o_p(1)\\
    =&N\mu_N^{-1} \widebar{G}^{(k)T} \widebar{\Omega}\widehat{g}(\beta_0,\widehat{\eta})+\frac{1}{N\mu_N}\sum_{i,j = 1}^N \Tilde{U}_i^{(k)T}\widebar{\Omega}^{-1}(g(O_j;\beta_0,\widehat{\eta})-g(O_j;\beta_0,{\eta}_{0,N})) \\
    &+ \frac{1}{N\mu_N}\sum_{i,j=1}^N \Tilde{U}_i^{(k)T} \Bar{\Omega}^{-1}g(O_j;\beta_0,{\eta}_0)
    +o_p(1)
\end{align*}
where
\begin{align*}
    \Tilde{U}^{(k)}_i = G^{(k)}(\beta,\widehat{\eta})-\Bar{G}^{(k)}-\mathbb{E}[G^{(k)}_i(\beta_0,\eta_{0,N})g^T_i(\beta_0,\eta_{0,N})]\widebar{\Omega}^{-1}g(\beta_0,\widehat{\eta}).
\end{align*}
Notice that
\begin{align*}
    &\Vert N\mu_N^{-1} \widebar{G}^{(k)T} \widebar{\Omega}\hat{g}(\beta_0,\widehat{\eta})-N\mu_N^{-1} \widebar{G}^{(k)T} \widebar{\Omega}\hat{g}(\beta_0,{\eta}_{0,N})\Vert \\
    =& \Vert N\mu_N^{-1}\widebar{G}^{(k)T}\Omega^{-1}(\widehat{g}(\beta_0,\widehat{\eta})-\widehat{g}(\beta_0,{\eta}_{0,N}))\Vert = o_p(1)
\end{align*}

For the second term,
\begin{align*}
    &\frac{1}{N\mu_N}\sum_{i,j = 1}^N \Tilde{U}_i^{(k)T}\Bar{\Omega}^{-1}(g_j(\beta_0,\widehat{\eta})-g_j(\beta_0,{\eta}_{0,N})) \\
    \lesssim &  \frac{1}{N\mu_N}\sum_{i,j = 1}^N \Tilde{U}_i^{(k)T}(g_j(\beta_0,\widehat{\eta})-g_j(\beta_0,{\eta}_0)) \\
    \leq & \frac{1}{N\mu_N}\sum_{i=1}^N \Tilde{U}_i^{(k)T}\sum_{j=1}^N(g_j(\beta_0,\widehat{\eta})-g_j(\beta_0,{\eta}_{0,N}))\\
    \leq & \frac{1}{N\mu_N}\Bigg\Vert \sum_{i=1}^N \Tilde{U}_i^{(k)T}\Bigg\Vert 
 \Bigg\Vert \sum_{j=1}^N(g_j(\beta_0,\widehat{\eta})-g_j(\beta_0,{\eta}_{0,N}))\Bigg\Vert \\
 =& o_p(1)
\end{align*}
Since $\frac{1}{\sqrt{N}\mu_N}\Vert \Tilde{U}_i \Vert = O_p(1)$.

Lastly, 
\begin{align*}
    & \Bigg\Vert \frac{1}{N\mu_N}\sum_{i,j=1}^N \Tilde{U}^{(k)T}_i\Bar{\Omega}^{-1}g(O_j;\beta_0,{\eta}_{0,N})- \frac{1}{N\mu_N}\sum_{i,j=1}^N {U}_i^{(k)T} \Bar{\Omega}^{-1}g(O_j;\beta_0,{\eta}_{0,N})\Bigg\Vert \\
    \leq & \Bigg\Vert \frac{1}{N\mu_n}\sum_{i,j=1}^N \Big(\Tilde{U}^{(k)T}_i-{U}^{(k)T}_i\Big)\Bar{\Omega}^{-1}g(O_j;\beta_0,{\eta}_{0,N})\Bigg\Vert \\
    \lesssim & \Bigg\Vert \frac{1}{N\mu_N}\sum_{i=1}^N \Big(\Tilde{U}^{(k)T}_i-{U}^{(k)T}_i\Big)
    \sum_{j = 1}^N g(O_j;\beta_0,{\eta}_{0,N})\Bigg\Vert = o_p(1)
\end{align*}
Therefore, we have shown that
\begin{align*}
    NS_N^{-1}\frac{\partial \widehat{Q}(\beta,\widehat{\eta})}{\partial\beta}\Bigg|_{\beta = \beta_0}=NS_N^{-1}\frac{\partial \widehat{Q}(\beta,{\eta}_{0,N})}{\partial\beta}\Bigg|_{\beta = \beta_0}+o_p(1).
\end{align*}
Combing with 
\begin{equation*}
     NS_N^{-1}\Bigg( \frac{\partial^2 \widehat{Q}(\widebar{\beta},\widehat{\eta})}{\partial \beta \partial \beta^T}\Bigg)S_{N}^{-1T}=NS_N^{-1}\Bigg(\frac{\partial^2 \widehat{Q}(\widebar{\beta},{\eta}_{0,N})}{\partial \beta \partial \beta^T}\Bigg)S_{N}^{-1T}+ o_p(1)
\end{equation*}
and Theorem 3 of \citet{newey2009generalized}, the asymptotic normality result has been proved.

\subsection{Proof of Theorem \ref{theorem: variance estimator}}
\begin{proof}
The variance estimator can be written as
    \begin{align*}
 \frac{S_N^{-1}\widehat{V}S_N^{-1,T}}{N} = (NS_N^{-1}\widehat{H}S_N^{-1,T})^{-1}(NS_N^{-1}\widehat{D}^T\widehat{\Omega}\widehat{D}S_N^{-1,T})(NS_N^{-1}\widehat{H}S_N^{-1,T})^{-1}.
    \end{align*}

    We introduce some new notations to assist the proof:
    \begin{align*}
        \widehat{D}^{(j)}(\beta,\eta) &= \frac{1}{N}\sum_{i=1}^N\frac{\partial g_i(\beta,{\eta})}{\partial\beta_j}-\frac{1}{N}\sum_{i=1}^N\frac{\partial g_i(\beta,{\eta})}{\partial \beta_j}g_i(\beta,{\eta})^T\widehat{\Omega}(\beta,{\eta})^{-1}\widehat{g}(\beta,{\eta}),\\
        \widehat{D}(\beta,\eta) &= (\widehat{D}^{(1)},...,\widehat{D}^{(p)}), \\
        \Tilde{H} &= \frac{\partial^2\widehat{Q}(\widehat{\beta},{\eta}_{0,N})}{\partial\beta \partial \beta^T},\\
        \tilde{V} &=  \tilde{H}^{-1}\widehat{D}^T(\widehat{\beta},\eta_{0,N})\widehat{\Omega}^{-1}(\widehat{\beta},\eta_{0,N})\tilde{D}(\widehat{\beta},\eta_{0,N})\tilde{H}^{-1}.
    \end{align*}

We aim to show that
    \begin{align*}
\frac{S_N^{-1}\widehat{V}S_N^{-1}}{N} - \frac{S_N^{-1}\tilde{V}S_N^{-1}}{N}   = o_p(1).
    \end{align*}

    According to \citet{newey2009generalized},
    \begin{align*}
        & NS_N^{-1}\widehat{H}S_N^{-1} \xrightarrow[]{p} H,\\
        &NS_N^{-1,T}\tilde{D}^T\widehat{\Omega}(\widehat{\beta},\eta_{0,N})\tilde{D}S_N^{-1,T} \xrightarrow[]{p} HVH.
    \end{align*}
    Therefore, we only need to show
    \begin{enumerate}
        \item $(NS_N^{-1}\widehat{H}S_N^{-1,T})^{-1} - (NS_N^{-1}\tilde{H}S_N^{-1,T})^{-1} = o_p(1).$
        \item $NS_N^{-1}\widehat{D}^T(\widehat{\beta},\widehat{\eta})\widehat{\Omega}(\widehat{\beta},\widehat{\eta})\widehat{D}(\widehat{\beta},\widehat{\eta})S_N^{-1,T} -NS_N^{-1}\widehat{D}^T(\widehat{\beta},\eta_{0,N})\widehat{\Omega}(\widehat{\beta},\eta_{0,N})\widehat{D}(\widehat{\beta},\eta_{0,N})S_N^{-1,T} = o_p(1).$
    \end{enumerate}
I will list some key ingredients for showing those two claim and omit related details.
    For the first claim, by Lemma \ref{Lemma: convergence of second order derivative of Q}, we have
    \begin{align*}
        &(NS_N^{-1}\widehat{H}S_N^{-1,T})^{-1} - (NS_N^{-1}\tilde{H}S_N^{-1,T})^{-1}\\
        =& N^{-1}S_N^T(\widehat{H}^{-1}-\tilde{H}^{-1})S_N \\
        =& N^{-1}S_N^T\widehat{H}^{-1}(\tilde{H}-\widehat{H})\tilde{H}^{-1}S_N\\
        =&N^{-1}S_N^T\widehat{H}^{-1}S_N(NS_N^{-1}(\tilde{H}-\widehat{H})S_N^{-1,T})N^{-1}S_N^T\tilde{H}^{-1}S_N \xrightarrow[]{p} 0.
    \end{align*}

    For the second claim, we will show that
    \begin{align}
        &\sqrt{N}S_N^{-1}[\widehat{D}^T(\widehat{\beta},\widehat{\eta})-\widehat{D}^T(\widehat{\beta},{\eta}_{0,N})] = o_p(1), \label{eq: First D claim}\\
     &\sqrt{N}S_N^{-1}\widehat{D}^T(\widehat{\beta},{\eta}_{0,N}) = O_p(1). \label{eq: Second D claim}
    \end{align}
    Combing with the previous result $\Vert \widehat{\Omega}(\widehat{\beta},\widehat{\eta})-\widehat{\Omega}(\widehat{\beta},{\eta}_{0,N})\Vert = o_p(1/\sqrt{m})$, the second claim can be shown. Equation \eqref{eq: First D claim} follows from the following rate results we have already shown before:
    \begin{align*}
        &\left\Vert \frac{1}{N}\sum_{i=1}^N\frac{\partial g_i(\widehat{\beta},\widehat{\eta})}{\partial \beta_j} - \frac{1}{N}\sum_{i=1}^N\frac{\partial g_i(\widehat{\beta},{\eta}_{0,N})}{\partial \beta_j}\right\Vert  = o_p(1/\sqrt{m}),\\
        &\Vert \widehat{G}(\widehat{\beta},\widehat{\eta})-\widehat{G}(\widehat{\beta},{\eta}_{0,N})\Vert = o_p(1/\sqrt{N}), \\
        & \Vert \widehat{\Omega}(\widehat{\beta},\widehat{\eta})-\widehat{\Omega}(\widehat{\beta},{\eta}_{0,N})\Vert = o_p(1/\sqrt{m}), \\
        & \Vert \widehat{g}(\widehat{\beta},\widehat{\eta})-\widehat{g}(\widehat{\beta},{\eta}_{0,N})\Vert = o_p(1/\sqrt{N}),\\
        & \Vert \widehat{g}(\widehat{\beta},\eta_{0,N})\Vert = O_p(\mu_N/\sqrt{N}).
    \end{align*}
As for \eqref{eq: Second D claim}, it suffices to show that
\begin{align}
    &\Vert \sqrt{N}S_N^{-1}\widehat{D}^T(\widehat{\beta},{\eta}_{0,N})-\sqrt{N}S_N^{-1}\widehat{D}^T({\beta}_0,{\eta}_{0,N})\Vert = o_p(1) \label{eq: Third D claim},\\
    & \Vert \sqrt{N}S_N^{-1}\widehat{D}^T({\beta}_0,{\eta}_{0,N})\Vert = O_p(1) \label{eq: Fourth D claim}.
\end{align}

Equation \eqref{eq: Third D claim} is a direct consequence of Assumption \ref{Assumption: further restriction for moments.}. Equation \eqref{eq: Fourth D claim} is a direct consequence of $ \sqrt{m}/\mu_N  = O(1)$, $\Vert \widehat{G}(\beta_0,\eta_{0,N}) \Vert  = O_p(\sqrt{m}/\sqrt{N})$, $\Vert \widehat{g}(\beta_0,\eta_{0,N}) \Vert  = O_p(\sqrt{m}/\sqrt{N})$.

\end{proof}

\subsection{Proof of Theorem \ref{theorem: over-identification test}}
\begin{proof}
    Let $\overline{\beta}$ be a value lying on the line segment between $\beta_0$ and $\widehat{\beta}$. By mean-value theorem, 
    \begin{align*}
        &2N[\widehat{Q}(\beta_0,\widehat{\eta})-\widehat{Q}(\widehat{\beta},\widehat{\eta})]\\
        =&N(\widehat{\beta}-\beta_0)^T\left[\frac{\partial^2\widehat{Q}(\widebar{\beta},\widehat{\eta})}{\partial \beta \partial \beta^T}\right](\widehat{\beta}-\beta_0)\\
        =&(\widehat{\beta}-\beta_0)^TS_NNS_N^{-1}\left[\frac{\partial^2\widehat{Q}(\widebar{\beta},\widehat{\eta})}{\partial \beta \partial \beta^T}\right]S_N^{-1,T}S_N^T(\widehat{\beta}-\beta_0)\\
        =&(\widehat{\beta}-\beta_0)^TS_NNS_N^{-1}\left[\frac{\partial^2\widehat{Q}(\widebar{\beta},{\eta}_{0,N})}{\partial \beta \partial \beta^T}\right]S_N^{-1,T}S_N^T(\widehat{\beta}-\beta_0)+o_p(1),\\
        =&2N[\widehat{Q}(\beta_0,{\eta}_{0,N})-\widehat{Q}(\widehat{\beta},{\eta}_{0,N})]+o_p(1)
    \end{align*}
    where the third equality is due to Lemma \ref{Lemma: convergence of second order derivative of Q} and the fact that $S_N^T(\widehat{\beta}-\beta_0)=O_p(1)$. By the proof of Theorem 5 in \citet{newey2009generalized}, we have $2N[\widehat{Q}(\beta_0,{\eta}_{0,N})-\widehat{Q}(\widehat{\beta},{\eta}_{0,N})]=O_p(1)$. Therefore,
    \begin{align}
        \frac{2N[\widehat{Q}(\beta_0,\widehat{\eta})-\widehat{Q}(\widehat{\beta},\widehat{\eta})]}{\sqrt{m-p}}\xrightarrow[]{p}0.\label{eq: test, Qhat beta and Qhat beta_hat}
    \end{align}
Next, we will show that $N(\widehat{Q}(\beta_0,\widehat{\eta})-\widehat{Q}(\beta_0,{\eta}_{0,N}))=o_p(\sqrt{m})$.
\begin{align*}
     & \vert\widehat{Q}({\beta}_0,\widehat{\eta})-\widehat{Q}({\beta}_0,{\eta}_{0,N})\vert\\
    \leq & \vert (\widehat{g}(\beta_0,\widehat{\eta})-\widehat{g}(\beta_0,\eta_{0,N})) \widehat{\Omega}^{-1}(\beta_0,\widehat{\eta})\widehat{g}(\beta_0,\widehat{\eta})\vert\\
    &+\vert \widehat{g}(\beta_0,\eta_{0,N})\widehat{\Omega}(\beta_0,\widehat{\eta})^{-1}(\widehat{\Omega}(\beta_0,\widehat{\eta})-\widehat{\Omega}(\beta_0,{\eta}_{0,N}))\widehat{\Omega}^{-1}(\beta_0,{\eta}_{0,N})\widehat{g}(\beta_0,\widehat{\eta})\vert\\
    &+\vert \widehat{g}(\beta_0,\eta_{0,N}) \widehat{\Omega}^{-1}(\beta_0,\widehat{\eta})(\widehat{g}(\beta_0,\widehat{\eta})-\widehat{g}(\beta_0,{\eta}_{0,N}))\vert.
\end{align*}
Since
\begin{align*}
    &\Vert \widehat{g}(\beta_0,\widehat{\eta})-\widehat{g}(\beta_0,\eta_{0,N})\Vert = O_p\Big(\sqrt{\frac{m}{N}}\delta_N\Big),\delta_N=o(1/\sqrt{m})\\
    &\Vert \widehat{\Omega}(\beta_0,\widehat{\eta})-\widehat{\Omega}(\beta_0,{\eta}_{0,N})\Vert \leq \Vert \widehat{\Omega}(\beta_0,\widehat{\eta})-\widebar{\Omega}(\beta_0,{\eta}_{0,N})\Vert+\Vert\widebar{\Omega}(\beta_0,{\eta}_{0,N})-\widehat{\Omega}(\beta_0,{\eta}_{0,N})\Vert =o_p(1/\sqrt{m}),\\
    &\Vert \widehat{g}(\beta_0,\eta_{0,N})\Vert = O_p\Big(\sqrt{\frac{m}{N}}\Big),\Vert \widehat{g}(\beta_0,\widehat{\eta})\Vert = O_p\Big(\sqrt{\frac{m}{N}}\Big) \\
    & 1/(2C)\leq \lambda_{\min}(\widehat{\Omega}^{-1}(\beta_0,\widehat{\eta}))\leq \lambda_{\max}(\widehat{\Omega}^{-1}(\beta_0,\widehat{\eta}))\leq 2C,
\end{align*}
we have
\begin{align*}
    &N\vert (\widehat{g}(\beta_0,\widehat{\eta})-\widehat{g}(\beta_0,\eta_{0,N})) \widehat{\Omega}^{-1}(\beta_0,\widehat{\eta})\widehat{g}(\beta_0,\widehat{\eta})\vert=O_p\left({m}\delta_N\right)=o_p(\sqrt{m}),\\
    &N\vert \widehat{g}(\beta_0,\eta_{0,N})\widehat{\Omega}(\beta_0,\widehat{\eta})^{-1}(\widehat{\Omega}(\beta_0,\widehat{\eta})-\widehat{\Omega}(\beta_0,{\eta}_{0,N}))\widehat{\Omega}^{-1}(\beta_0,{\eta}_{0,N})\widehat{g}(\beta_0,\widehat{\eta})\vert=o(\sqrt{m})\\
    &N\vert \widehat{g}(\beta_0,\eta_{0,N}) \widehat{\Omega}^{-1}(\beta_0,\widehat{\eta})(\widehat{g}(\beta_0,\widehat{\eta})-\widehat{g}(\beta_0,{\eta}_{0,N}))\vert=o_p(\sqrt{m}).
\end{align*}
Therefore,
\begin{equation*}
    N(\widehat{Q}(\beta_0,\widehat{\eta})-\widehat{Q}(\beta_0,{\eta}_{0,N}))=o_p(\sqrt{m}).
\end{equation*}
Combing this result with \eqref{eq: test, Qhat beta and Qhat beta_hat}, we have
\begin{align*}
    \frac{2N[\widehat{Q}(\beta_0,{\eta}_{0,N})-\widehat{Q}(\widehat{\beta},\widehat{\eta})]}{\sqrt{m-p}}\xrightarrow[]{p}0.
\end{align*}
Hence,
\begin{align*}
    &\frac{2N\widehat{Q}(\widehat{\beta},\widehat{\eta})-(m-p)}{\sqrt{m-p}}\\
    =&\frac{2N\widehat{Q}({\beta}_0,{\eta}_{0,N})-(m-p)}{\sqrt{m-p}}+o_p(1)\\
    =&\sqrt{\frac{m}{m-p}}\frac{2N\widehat{Q}(\beta_0,\eta_{0,N})-m}{\sqrt{m}}+\frac{p}{\sqrt{m-p}}+o_p(1)\\
    \xrightarrow[]{d}&N(0,1)
\end{align*}
by proof of Theorem 5 in \citet{newey2009generalized}. By standard results for chi-squared distribution that $(\chi^2_{1-\alpha}(m)-m)/\sqrt{2m}$ converges to $1-\alpha$ quantile of standard normal as $m\rightarrow \infty$, we have
\begin{equation*}
    P(2N\widehat{Q}(\widehat{\beta},\widehat{\eta})\geq \chi^2_{1-\alpha}(m-p))=P\left(\frac{2N\widehat{Q}(\widehat{\beta},\widehat{\eta})-(m-p)}{\sqrt{2(m-p)}}\geq \frac{\chi^2_{1-\alpha}(m-p)-(m-p)}{\sqrt{2(m-p)}}\right)\rightarrow \alpha.
\end{equation*}
The Theorem has been proved.
\end{proof}

 \section{Linear moment condition case}
As an important example of separable moment, linear moment is very common in practice. That is, the estimating function have the following form:
\begin{align*}
    g(o;\beta,\eta) = G(o;\eta)^T\beta + g^{a}(o;\eta).
\end{align*}
The linear moment equation case could greatly simplify the problem. Assumption \ref{Assumption: separable score} can be simplified as follows for linear moment function:
\begin{assumption}
    \textbf{(Linear score regularity and nuisance parameters estimation).} 
    \begin{enumerate}[label=(\alph*)]
    \item Given a random subset $I$ of $[N]$ of size $N/L$, we have the nuisance parameter estimator $\widehat{\eta}$ (estimated using samples outside $I$) belongs to a realization set $\mathcal{T}_N \in T_N$ with probability $1-\Delta_N$, where $\mathcal{T}_N$ contains $\eta_{0,N}$ and satisfies the following conditions.
    \item $g(o;\beta,\eta)$ satisfies Neyman-orthogonality.
    \item The following conditions on statistical rates hold for any $j,k \in \{1,...,m\},l,r\in\{1,...,p\}$, define:
    \begin{align*}
        &r^{a,(j)}_N: = \sup_{\eta\in \mathcal{T}_N}\Big(\mathbb{E}_{P_{0,N}}[\vert g^{a,(j)}(O;\eta)-g^{a,(j)}(O;\eta_{0,N})\vert^2]\Big)^{1/2} \\
        & r^{(j,l)}_N:= \sup_{\beta \in \mathcal{B},\eta\in \mathcal{T}_N} \Big(\mathbb{E}_{P_{0,N}}\Bigg|G^{(j,l)}(\beta,\eta)-G^{(j,l)}(\beta,\eta_{0,N})\Bigg|^2\Big)^{1/2}  \\
        &\lambda^{(j)}_N:=\sup_{\beta\in \mathcal{B},t\in(0,1),\eta\in \mathcal{T}_N}\Bigg\vert \frac{\partial^2}{\partial t^2} \mathbb{E}_{P_{0,N}}g^{(j)}(O;\beta,\eta_{{0,N}}+t(\eta-\eta_{{0,N}}))\Bigg\vert\\
        & r^{a,(j),(k)}_N:= \sup_{\eta \in \mathcal{T}_N} \Big(\mathbb{E}_{P_{0,N}}\Bigg|g^{a,(j)}(\eta)g^{a,(k)}(\eta)-g^{a,(j)}(\eta_{P_{0,N}})g^{a,(k)}(\eta_{P_{0,N}})\Bigg|^2\Big)^{1/2}  \\
        & r^{(j,k,l,r)}_N:= \sup_{\beta \in \mathcal{B}} \Big(\mathbb{E}_{P_{0,N}}\Bigg|G^{(j,l)}(\eta)G^{(k,r)}(\eta)-G^{(j,l)}(\eta)G^{(k,r)}(\eta_{0,N})\Bigg|^2\Big)^{1/2} \\
        & r^{(j,k,r)}_N:= \sup_{\beta \in \mathcal{B}} \Bigg(\mathbb{E}_{P_{0,N}}\Bigg|G^{(j,l)}(\eta)g^{(k)}(\eta)-G^{(j,l)}(\eta)g^{(k)}(\eta_{0,N})\Bigg|^2\Bigg)^{1/2}
        \end{align*}
        Then the statistical rates satisfy
        \begin{align*}
        & a_N\leq  \delta_N, \lambda^{(j)}_N \leq N^{-1/2}\delta_N,\delta_N = o(1/\sqrt{m}),
    \end{align*}
    for $a_N \in\{ r^{a,(j)}_N,r^{(j,l))}_N,r^{a,(j),(k)}_N,r_N^{(j,k,l,r)},r_N^{(j,k,r)}\}$.
    \end{enumerate}\label{Assumption: linear moment function}
\end{assumption}

\section{Additive SMM example} 
We will verify the assumptions for ASMM example one by one. In this example, $g$ is linear moment condition. $G$ and $g^a$ are defined as
\begin{align*}
    &G(O;\eta) = -(Z-\eta_{Z}(X))(A-\eta_A(X)),\\
    &g^a(O;\eta) = (Z-\eta_{Z}(X))(Y-\eta_Y(X)).
\end{align*}

We first verify the regularity conditions hold for the ASMM example.

\begin{lemma}
    For all $\beta \in \mathcal{B}$, $1/C \leq \xi_{min}(\widebar{\Omega}(\beta,\eta_{0,N}))\leq \xi_{max}(\widebar{\Omega}(\beta,\eta_{0,N}))\leq C$.
\end{lemma}
\begin{proof}
    \begin{align*}
        &\widebar{\Omega}(\beta,\eta_{0})=\mathbb{E}[(Z-\eta_{Z,0}(X))(Z-\eta_{Z,0}(X))^T(Y-\eta_{Y,0}(X)-\beta A+ \beta\eta_{A,0}(X))^2]\\
        =&\mathbb{E}\left[(Z-\eta_{Z,0}(X))(Z-\eta_{Z,0}(X))^T\mathbb{E}[(Y-\eta_{Y,0}(X)-\beta A+ \beta\eta_{A,0}(X))^2|Z,X]\right]\\
        \lesssim & \mathbb{E}\left[(Z-\eta_{Z,0}(X))(Z-\eta_{Z,0}(X))^T\right]\leq CI_m
    \end{align*}
    Similarly,
    \begin{align*}
        &\widebar{\Omega}(\beta,\eta_{0,N})=\mathbb{E}[(Z-\eta_{Z,0}(X))(Z-\eta_{Z,0}(X))^T(Y-\eta_{Y,0}(X)-\beta A+ \beta\eta_{A,0}(X))^2]\\
        =&\mathbb{E}\left[(Z-\eta_{Z,0}(X))(Z-\eta_{Z,0}(X))^T\mathbb{E}[(Y-\eta_{Y,0}(X)-\beta A+ \beta\eta_{A,0}(X))^2|Z,X]\right]\\
        \gtrsim & \mathbb{E}\left[(Z-\eta_{Z,0}(X))(Z-\eta_{Z,0}(X))^T\right]\geq 1/CI_m.
    \end{align*}
    Therefore, for all $\beta \in \mathcal{B}$, $1/C \leq \xi_{min}(\widebar{\Omega}(\beta,\eta_{0,N}))\leq \xi_{max}(\widebar{\Omega}(\beta,\eta_{0,N}))\leq C$.
\end{proof}

\begin{lemma}
    (a) There is $C>0$ with $\vert \beta-\beta_0\vert\leq C\sqrt{N}\Vert \widebar{g}(\beta,\eta_{0,N}) \Vert/\mu_N$ for all $\beta \in \mathcal{B}$. (b) There is $C>0$ and $\widehat{M}=O_p(1)$ such that $\vert \beta-\beta_0\vert\leq C\sqrt{N}\Vert \widebar{g}(\beta,\eta_{0,N})\Vert/\mu_N+\widehat{M}$ for all $\beta\in \mathcal{B}$.
\end{lemma}

\begin{proof}
\noindent \textbf{Proof of (a):}
    \begin{align*}
        \widebar{g}(\beta,\eta_{0,N}) = \widebar{g}(\beta,\eta_{0,N})-\widebar{g}(\beta_0,\eta_{0,N}) = (\beta-\beta_0)\widebar{G}(\eta_{0,N}).
    \end{align*}
    Therefore,
    \begin{align*}
        \sqrt{N}\Vert \widebar{g}(\beta,\eta_{0,N})\Vert/\mu_N= \sqrt{N}\vert\beta-\beta_0\vert \sqrt{\widebar{G}^T(\eta_{0,N})\widebar{G}(\eta_{0,N})}/\mu_N \gtrsim \vert \beta-\beta_{0,N} \vert,
    \end{align*}
    where the second inequality is due to boundedness of eigenvalues of $\widebar{\Omega}(\beta,\eta_0)$ and weak asymptotics assumption.

    \noindent \textbf{Proof of (b):} 
    \begin{align*}
        &\sqrt{N}\widehat{g}(\beta,\eta_{0,N})/\mu_N\\
        =& \sqrt{N}\widehat{g}(\beta_0,\eta_{0,N})/\mu_N+\mu_N^{-1}\sqrt{N}(\beta-\beta_0)\frac{1}{N}\sum_{i=1}^N(G_i-\widebar{G})+\sqrt{N}(\beta-\beta_0)\widebar{G}/\mu_N.
    \end{align*}
    We have already shown that $\Vert\widehat{g}(\beta_0,\eta_{0,N})\Vert = O(\sqrt{m/N})$. Similarly we can drive the rate for $\Vert\frac{1}{N}\sum_{i=1}(G_i-\widebar{G})\Vert$, that is $\widehat{G}-\widebar{G}$:
    \begin{align*}
        &\mathbb{E}_{P_{0,N}}\Vert \widehat{G}-\widebar{G}\Vert^2=\mathbb{E}_{P_{0,N}}(\widehat{G}-\widebar{G})^T(\widehat{G}-\widebar{G})=\frac{1}{N}\mathbb{E}_{P_{0,N}}{G}_i^T{G}_i-\frac{1}{N}\widebar{G}^T\widebar{G}=\frac{1}{N}tr(\mathbb{E}_{P_{0,N}}{G}_i{G}^T_i)-\frac{1}{N}\widebar{G}^T\widebar{G}.
    \end{align*}
    From the weak asymptotics assumption, we know
    \begin{align*}
        \frac{1}{N}\widebar{G}^T\widebar{G} = O(\mu_N^2/N^2).
    \end{align*}
    From the assumption  $\mathbb{E}_{P_{0,N}}[A^2|Z,X]<C$
    \begin{align*}
        &\frac{1}{N}tr(\mathbb{E}_{P_{0,N}}{G}_i{G}^T_i)\\
        =&\frac{1}{N}tr(\mathbb{E}_{P_{0,N}}{G}_i{G}^T_i)=\frac{1}{N}tr\mathbb{E}_{P_{0,N}}\left[(Z-\mu_{Z,0,N}{X})(Z-\mu_{Z,0,N}(X))^T(A-\mu_{A,0,N}(X))^2\right]\\
        =&\frac{1}{N}tr\mathbb{E}_{P_{0,N}}\left[(Z-\mu_{Z,0,N}{(X)})(Z-\mu_{Z,0,N}(X))^T\mathbb{E}_{P_{0,N}}[(A-\mu_{A,0,N}(X))^2|Z,X]\right]\\
        \leq &\frac{1}{N}tr\mathbb{E}_{P_{0,N}}\left[(Z-\mu_{Z,0,N}{(X)})(Z-\mu_{Z,0,N}(X))^T\right]\lesssim m/N.
    \end{align*}
    To sum up, we have
    \begin{equation*}
        \left\Vert\frac{1}{N}\sum_{i=1}(G_i-\widebar{G})\right\Vert = O\left(\sqrt{\frac{m}{N}}\right).
    \end{equation*}
    Therefore, we have
    \begin{equation*}
        \widehat{M} := \mu_N^{-1}\sqrt{N}\sup_{\beta \in \mathcal{B}}\left\Vert \widehat{g}(\beta_0,\eta_{0,N})+(\beta-\beta_0)\frac{1}{N}\sum_{i=1}^N(G_i-\widebar{G}) \right\Vert = O_p(1),
    \end{equation*}
    where the last inequality is due to boundedness of $m^2/\mu_N$.

    Therefore,
    \begin{align*}
        \vert \beta-\beta_0\vert\leq \Vert \mu_N^{-1}\sqrt{N}(\beta-\beta_0)\widebar{G}\Vert\leq \mu_N^{-1}\sqrt{N}\Vert \widehat{g}(\beta,\eta_{0,N})\Vert+\widehat{M}.
    \end{align*}
\end{proof}

\begin{lemma}
    $\sup_{\beta\in\mathcal{B}}\mathbb{E}_{P_{0,N}}[(g(\beta,\eta_{0,N})^Tg(\beta,\eta_{0,N}))^2]/N\rightarrow 0$. \label{lemma: ASMM 3a}
\end{lemma}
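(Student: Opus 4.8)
The plan is to exploit the explicit product structure of the ASMM moment function together with the boundedness of $(X,Z)$ assumed in the regularity conditions for ASMM. Write $g(O;\beta,\eta_{0,N}) = R(\beta)\,(Z-\eta_{Z,0,N}(X))$ with the scalar $R(\beta) := Y-\eta_{Y,0,N}(X)-\beta A+\beta\eta_{A,0,N}(X)$, so that $g(\beta,\eta_{0,N})^T g(\beta,\eta_{0,N}) = R(\beta)^2\,\Vert Z-\eta_{Z,0,N}(X)\Vert^2$ and hence $\big(g^Tg\big)^2 = R(\beta)^4\,\Vert Z-\eta_{Z,0,N}(X)\Vert^4$.

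First I would bound the $Z$-factor deterministically. Since $Z$ is uniformly bounded and $\eta_{Z^{(k)},0,N}(X)=\mathbb{E}_{P_{0,N}}[Z^{(k)}\mid X]$ is bounded by the same constant (Jensen's inequality), each coordinate $|Z^{(k)}-\eta_{Z^{(k)},0,N}(X)|$ is bounded, so $\Vert Z-\eta_{Z,0,N}(X)\Vert^2 \le Cm$ and therefore $\Vert Z-\eta_{Z,0,N}(X)\Vert^4 \le C m^2$ almost surely. Pulling this constant out gives $\mathbb{E}_{P_{0,N}}[(g^Tg)^2] \le Cm^2\,\mathbb{E}_{P_{0,N}}[R(\beta)^4]$.

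Second I would bound $\mathbb{E}_{P_{0,N}}[R(\beta)^4]$ uniformly over $\beta\in\mathcal{B}$. By Jensen's inequality and the tower property, $\mathbb{E}_{P_{0,N}}[\eta_{Y,0,N}(X)^4]\le \mathbb{E}_{P_{0,N}}[\mathbb{E}_{P_{0,N}}[Y^4\mid X]]<C$ and likewise $\mathbb{E}_{P_{0,N}}[\eta_{A,0,N}(X)^4]<C$; compactness of $\mathcal{B}$ gives $|\beta|\le C_{\mathcal{B}}$. A $c_r$-inequality then yields $\mathbb{E}_{P_{0,N}}[R(\beta)^4]\lesssim \mathbb{E}_{P_{0,N}}[Y^4]+\mathbb{E}_{P_{0,N}}[\eta_{Y,0,N}(X)^4]+C_{\mathcal{B}}^4\big(\mathbb{E}_{P_{0,N}}[A^4]+\mathbb{E}_{P_{0,N}}[\eta_{A,0,N}(X)^4]\big)\le C'$ with $C'$ independent of $\beta$ and $N$. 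Combining the two steps, $\sup_{\beta\in\mathcal{B}}\mathbb{E}_{P_{0,N}}[(g^Tg)^2]\le C C' m^2$, so $\sup_{\beta\in\mathcal{B}}\mathbb{E}_{P_{0,N}}[(g^Tg)^2]/N \lesssim m^2/N\to 0$, since $m^3/N\to0$ forces $m^2/N\to0$.

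There is no genuine obstacle in this lemma; the only point requiring care is the scaling of the increment norm $\Vert Z-\eta_{Z,0,N}(X)\Vert$, which grows like $\sqrt{m}$, so that the final bound is of order $m^2/N$ rather than $1/N$. This is precisely why the dimension-growth condition $m^3/N\to 0$ (indeed $m^2/N\to 0$ would already suffice here) is invoked.
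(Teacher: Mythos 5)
Your proof is correct and follows essentially the same route as the paper: factor $(g^Tg)^2 = R(\beta)^4\Vert Z-\eta_{Z,0,N}(X)\Vert^4$, bound the $Z$-factor by $Cm^2$ via boundedness of $Z$, bound $\sup_{\beta}\mathbb{E}_{P_{0,N}}[R(\beta)^4]$ by a constant using the fourth-moment conditions on $Y,A$ and compactness of $\mathcal{B}$, and conclude from $m^2/N\to 0$ (implied by $m^3/N\to 0$). You merely make explicit a couple of steps (Jensen for the conditional means, the $c_r$-inequality) that the paper leaves implicit.
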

\begin{proof}
    \begin{align*}
        &\sup_{\beta\in\mathcal{B}}\mathbb{E}_{P_{0,N}}[(g(\beta,\eta_{0,N})^Tg(\beta,\eta_{0,N}))^2]/N\\
        =&\sup_{\beta\in\mathcal{B}}\mathbb{E}_{P_{0,N}}[((Z-\eta_{Z,0,N}(X))^T(Z-\eta_{Z,0,N}(X)))^2(Y-\eta_{Y,0,N}(X)-\beta A+ \beta\eta_{A,0,N}(X))^4]/N\\
        \leq & m^2/N\sup_{\beta\in\mathcal{B}}\mathbb{E}_{P_{0,N}}[(Y-\eta_{Y,0,N}(X)-\beta A+ \beta\eta_{A,0,N}(X))^4]\\
        \lesssim & m^2/N \rightarrow 0,
    \end{align*}
    where the first inequality is due to boundedness of $Z$ and $X$, the second inequality is due to boundedness of $\eta_{Y,0,N}(X)$, $\eta_{A,0,N}(X)$, compactness of $\mathcal{B}$ and moment contraints $\mathbb{E}_{P_{0,N}}Y^4<C$, $\mathbb{E}_{P_{0,N}}A^4<C$.
\end{proof}

\begin{lemma}
    $\vert a^T \{\widebar{\Omega}(\beta',\eta_{0,N})-\widebar{\Omega}(\beta,\eta_{{0,N}})\}b\vert \leq C\Vert a\Vert \Vert b \Vert \Vert \beta'-\beta\Vert$.
\end{lemma}
\begin{proof}
By matrix Cauchy-Schwarz inequality \citet{gautam1999matrixcauchy}, we have 
\begin{align*}
    \mathbb{E}_{P_{0,N}}[G_ig_i(\beta,\eta_0)^T]\Omega(\beta,\eta_{0,N})^{-1}\mathbb{E}_{P_{0,N}}[g_i(\beta,\eta_{0,N})G^T_i]\leq \mathbb{E}_{P_{0,N}}[G_iG_i^T].
\end{align*}
Therefore, $\Vert \mathbb{E}[G_ig_i(\beta,\eta_{0,N})^T]\Vert\leq C$. 

Direct calculation yields:
    \begin{align*}
        &\vert a^T \{\widebar{\Omega}(\beta',\eta_{0,N})-\widebar{\Omega}(\beta,\eta_{{0,N}})\}b\vert\\
        \leq & \vert (\beta'-\beta)^2a^T \mathbb{E}_{P_{0,N}}\{G_iG_i^T\}b\vert+\vert (\beta'-\beta)a^T \mathbb{E}_{P_{0,N}}\{G_ig_i^T(\beta,\eta_{0,N})\}b\vert+\vert (\beta'-\beta)a^T \mathbb{E}_{P_{0,N}}\{g_i(\beta,\eta_{0,N})G^T_i\}b\vert\\
        \leq & \vert\beta'-\beta\vert\Vert a\Vert \Vert b\Vert (\vert \beta'-\beta\vert \Vert \mathbb{E}_{P_{0,N}}[G_iG_i^T]\Vert + 2\Vert \mathbb{E}_{P_{0,N}}[G_ig_i(\beta,\eta_{0,N})^T]\Vert)\\
        \lesssim & \vert\beta'-\beta\vert\Vert a\Vert \Vert b\Vert.
    \end{align*}
\end{proof}
\begin{lemma}
    There is $C$ and $\widehat{M}=O_p(1)$ such that for all $\beta',\beta \in \mathcal{B}$.
    \begin{align*}
        &\sqrt{N}\Vert \widebar{g}(\beta',\eta_{0,N})-\widebar{g}(\beta,\eta_{0,N})\Vert/\mu_N \leq C\vert \beta'-\beta\vert,\\
        &\sqrt{N}\Vert \widehat{g}(\beta',\eta_{0,N})-\widehat{g}(\beta_0,\eta_{0,N})\Vert/\mu_N \leq \widehat{M}\vert \beta'-\beta\vert.
    \end{align*}
    \label{Lemma: ASMM 3e}
\end{lemma}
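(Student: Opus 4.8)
The plan is to exploit the linearity of the moment function in $\beta$. Recall that for the ASMM example $g(o;\beta,\eta_{0,N}) = g^{a}(o;\eta_{0,N}) + \beta\, G(o;\eta_{0,N})$ with $G(o;\eta_{0,N}) = -(Z-\eta_{Z,0,N}(X))(A-\eta_{A,0,N}(X))$, so that $g(o;\beta',\eta_{0,N})-g(o;\beta,\eta_{0,N}) = (\beta'-\beta)\,G(o;\eta_{0,N})$ pointwise, and hence $\widebar g(\beta',\eta_{0,N})-\widebar g(\beta,\eta_{0,N}) = (\beta'-\beta)\widebar G$ and $\widehat g(\beta',\eta_{0,N})-\widehat g(\beta,\eta_{0,N}) = (\beta'-\beta)\widehat G(\eta_{0,N})$, where $\widehat G(\eta_{0,N})=\frac1N\sum_i G_i$. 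Thus the two claimed inequalities reduce to controlling $\sqrt N\,\|\widebar G\|/\mu_N$ and $\sqrt N\,\|\widehat G(\eta_{0,N})\|/\mu_N$ respectively.

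For the first (population) inequality: by Assumption \ref{Assumption: weak moment condition}(b), $N\,\widebar G^{T}\widebar\Omega^{-1}\widebar G/\mu_N^2 \to H$ (in the scalar-$\beta$ case $S_N=\mu_N$), so $N\,\|\widebar G\|^2/\mu_N^2 \lesssim N\,\widebar G^{T}\widebar\Omega^{-1}\widebar G/\mu_N^2 = O(1)$ using the lower bound on the eigenvalues of $\widebar\Omega$ (Assumption \ref{Assumption: convergence of g-hat}(b) / the first ASMM lemma). Taking square roots gives $\sqrt N\,\|\widebar G\|/\mu_N \le C$, which is exactly what is needed.

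For the second (empirical) inequality: write $\widehat G(\eta_{0,N}) = \widebar G + (\widehat G(\eta_{0,N})-\widebar G)$, so $\sqrt N\,\|\widehat G(\eta_{0,N})\|/\mu_N \le \sqrt N\,\|\widebar G\|/\mu_N + \sqrt N\,\|\widehat G(\eta_{0,N})-\widebar G\|/\mu_N$. The first term is $O(1)$ by the previous paragraph. For the second term I reuse the bound already derived in the proof of the preceding ASMM lemma (``Proof of (b)''), namely $\mathbb E_{P_{0,N}}\|\widehat G(\eta_{0,N})-\widebar G\|^2 = \frac1N\operatorname{tr}(\mathbb E_{P_{0,N}} G_iG_i^{T}) - \frac1N\widebar G^{T}\widebar G \lesssim m/N$, using $\mathbb E_{P_{0,N}}[A^2\mid Z,X]<C$ and boundedness of $(Z,X)$. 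Hence $\|\widehat G(\eta_{0,N})-\widebar G\| = O_p(\sqrt{m/N})$, and $\sqrt N\,\|\widehat G(\eta_{0,N})-\widebar G\|/\mu_N = O_p(\sqrt m/\mu_N) = O_p(1)$ since $m/\mu_N^2 = O(1)$ by Assumption \ref{Assumption: weak moment condition}(a). Defining $\widehat M$ to be this $O_p(1)$ quantity (absorbing the constant $C$ from the first term), we get $\sqrt N\,\|\widehat g(\beta',\eta_{0,N})-\widehat g(\beta,\eta_{0,N})\|/\mu_N = |\beta'-\beta|\cdot\sqrt N\,\|\widehat G(\eta_{0,N})\|/\mu_N \le \widehat M\,|\beta'-\beta|$.

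There is no real obstacle here: the only mild point is to make sure the $O_p(1)$ random variable $\widehat M$ is the same across all $\beta,\beta'\in\mathcal B$, which is automatic because after factoring out $(\beta'-\beta)$ the remaining quantity $\sqrt N\,\|\widehat G(\eta_{0,N})\|/\mu_N$ does not depend on $\beta$ or $\beta'$ at all — this is precisely where linearity of the moment function in $\beta$ does all the work. I would therefore present the argument in the two displays above and cite Assumption \ref{Assumption: weak moment condition}, Assumption \ref{Assumption: convergence of g-hat}(b), and the moment/boundedness conditions of Assumption \ref{Assumption: Additive SMM}.
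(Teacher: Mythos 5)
Your proposal is correct and follows essentially the same route as the paper's proof: factor out $(\beta'-\beta)$ by linearity of the moment function in $\beta$, bound $\sqrt{N}\Vert\widebar{G}\Vert/\mu_N$ via Assumption \ref{Assumption: weak moment condition}(b) together with the eigenvalue bounds on $\widebar{\Omega}$, and take $\widehat{M}=\mu_N^{-1}\sqrt{N}\Vert\widehat{G}\Vert=O_p(1)$ using $\Vert\widehat{G}-\widebar{G}\Vert=O_p(\sqrt{m/N})$ and $m/\mu_N^2=O(1)$. The only difference is that you spell out the $O_p(1)$ justification for $\widehat{M}$ explicitly, which the paper leaves implicit from the preceding lemma.
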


\begin{proof}
    \begin{align*}
        \mu_N^{-1}\sqrt{N}\Vert \widebar{g}(\beta',\eta_{0,N})-\widebar{g}(\beta,\eta_{0,N})\Vert = \mu_N^{-1}\sqrt{N}\vert \beta'-\beta\vert\Vert \widebar{G}\Vert\lesssim \vert \beta'-\beta\vert.
    \end{align*}
    Similarly,
    \begin{align*}
        \mu_N^{-1}\sqrt{N}\Vert \widehat{g}(\beta',\eta_{0,N})-\widehat{g}(\beta,\eta_{0,N})\Vert = \mu_N^{-1}\sqrt{N}\vert \beta'-\beta\vert\Vert \widehat{G}\Vert\lesssim  \widehat{M}\vert\beta'-\beta\vert,
    \end{align*}
    where $\widehat{M}:=\mu_N^{-1}\sqrt{N}\Vert \widehat{G}\Vert=O_p(1)$.
    
\end{proof}

\begin{lemma}
\begin{equation*}
    (\mathbb{E}_{P_{0,N}}\Vert g_i\Vert^4+\mathbb{E}_{P_{0,N}}\Vert G_i\Vert^4)m/N\rightarrow 0.
\end{equation*}
\label{Lemma: ASMM 7b}
\end{lemma}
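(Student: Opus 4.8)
The plan is to reduce the whole statement to the elementary bound $\Vert Z_i - \eta_{Z,0,N}(X_i)\Vert \le C\sqrt{m}$ and then control the remaining scalar factors using the fourth-moment hypotheses in Assumption \ref{Assumption: Additive SMM}(a). First I would observe that, since $(X,Z)$ is uniformly bounded and each coordinate $\eta_{Z^{(j)},0,N}(X) = \mathbb{E}_{P_{0,N}}[Z^{(j)}\mid X]$ is a conditional expectation of a bounded random variable, the centered coordinates $Z_i^{(j)} - \eta_{Z^{(j)},0,N}(X_i)$ are bounded by a constant not depending on $j$ or $N$. Hence $\Vert Z_i - \eta_{Z,0,N}(X_i)\Vert^2 = \sum_{j=1}^m (Z_i^{(j)} - \eta_{Z^{(j)},0,N}(X_i))^2 \le C m$ and $\Vert Z_i - \eta_{Z,0,N}(X_i)\Vert^4 \le C m^2$ almost surely, with $C$ independent of $N$.

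Next I would substitute the explicit forms $g_i = (Z_i - \eta_{Z,0,N}(X_i))(Y_i - \eta_{Y,0,N}(X_i) - \beta_0 A_i + \beta_0 \eta_{A,0,N}(X_i))$ and $G_i = -(Z_i - \eta_{Z,0,N}(X_i))(A_i - \eta_{A,0,N}(X_i))$, and factor out the vector part to get $\Vert g_i\Vert^4 \le C m^2 |Y_i - \eta_{Y,0,N}(X_i) - \beta_0 A_i + \beta_0 \eta_{A,0,N}(X_i)|^4$ and $\Vert G_i\Vert^4 \le C m^2 |A_i - \eta_{A,0,N}(X_i)|^4$ almost surely. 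Taking expectations, splitting the fourth powers by the $c_r$-inequality, and using $|\beta_0|\le C$ (compactness of $\mathcal{B}$), the moment bounds $\mathbb{E}_{P_{0,N}}[Y^4\mid X]<C$ and $\mathbb{E}_{P_{0,N}}[A^4\mid X]<C$, together with Jensen's inequality to handle $\mathbb{E}_{P_{0,N}}[\eta_{Y,0,N}(X)^4] \le \mathbb{E}_{P_{0,N}}[\mathbb{E}_{P_{0,N}}[Y^4\mid X]] < C$ and likewise for $\eta_{A,0,N}$, I obtain $\mathbb{E}_{P_{0,N}}\Vert g_i\Vert^4 \le C m^2$ and $\mathbb{E}_{P_{0,N}}\Vert G_i\Vert^4 \le C m^2$, again with $C$ not depending on $N$.

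Finally, multiplying by $m/N$ gives $(\mathbb{E}_{P_{0,N}}\Vert g_i\Vert^4 + \mathbb{E}_{P_{0,N}}\Vert G_i\Vert^4)\, m/N \le C m^3/N \to 0$ by the assumption $m^3/N \to 0$ stated in Assumption \ref{Assumption: Additive SMM}(b). There is no substantive obstacle in this argument; the only step that requires a little care is the uniform-in-$(j,N)$ boundedness of the centered instrument coordinates, since this is precisely what converts the $\ell_2$ norm of an $m$-dimensional vector into the factor $\sqrt{m}$, and one must also check that every constant produced along the way is independent of $N$ so that the final $m^3/N$ rate is genuinely the only $N$-dependence.
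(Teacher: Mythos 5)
Your proposal is correct and follows essentially the same route as the paper: the paper proves this lemma by repeating the argument of its earlier moment-bound lemma for $\sup_{\beta}\mathbb{E}_{P_{0,N}}[(g^Tg)^2]/N$, i.e.\ using uniform boundedness of $(X,Z)$ to extract the factor $m^2$ from $\Vert Z-\eta_{Z,0,N}(X)\Vert^4$ and the conditional fourth-moment bounds on $Y$ and $A$ (plus compactness of $\mathcal{B}$) to control the scalar factor, and then invoking $m^3/N\to 0$. Your extra care about the constants being uniform in $j$ and $N$ is exactly the right point, and nothing further is needed.
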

\begin{proof}
    The proof is similar to Lemma \ref{lemma: ASMM 3a}. The only difference is that we use the assumption $m^3/N\rightarrow 0$.
\end{proof}

\begin{lemma}
    $g$ satisfies Global Neyman orthogonality.
\end{lemma}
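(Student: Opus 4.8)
The plan is to verify Definition~\ref{Definition: Neyman Orthogonality} directly: compute the first-order Gateaux derivative $D^{(1)}_{0,\beta,g}(\eta)$ and show it vanishes for every $\beta\in\mathcal{B}$, every $N$, and every direction $\eta=(\eta_Z,\eta_Y,\eta_A)\in\tilde{\mathcal{T}}_N$. Write $\eta_t=(1-t)\eta_{0,N}+t\eta$ and introduce the residuals $R_t = Y-\eta_{Y,t}(X)-\beta A+\beta\eta_{A,t}(X)$ (scalar) and $S_t = Z-\eta_{Z,t}(X)$ ($m$-vector), so that $g(O;\beta,\eta_t)=R_t\,S_t$. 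Since each component of $\eta_t$ is affine in $t$, the derivatives $\partial_t R_t = -(\eta_Y-\eta_{Y,0,N})(X)+\beta(\eta_A-\eta_{A,0,N})(X)=:\dot R(X)$ and $\partial_t S_t=-(\eta_Z-\eta_{Z,0,N})(X)=:\dot S(X)$ are constant in $t$ and, crucially, functions of $X$ alone.

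Under the integrability available here -- the nuisance directions in $\tilde{\mathcal{T}}_N$ have finite second moments and, by Assumption~\ref{Assumption: Additive SMM}, $(X,Z)$ are bounded while $Y,A$ have finite moments given $X$ -- the integrand $R_tS_t$ and its $t$-derivative admit an integrable envelope uniformly on $t\in[0,1]$, so $\partial_t$ and $\mathbb{E}_{P_{0,N}}$ may be exchanged. Hence
\begin{align*}
D^{(1)}_{0,\beta,g}(\eta)
=\mathbb{E}_{P_{0,N}}\big[\dot R(X)\,S_0\big]+\mathbb{E}_{P_{0,N}}\big[R_0\,\dot S(X)\big],
\end{align*}
with $R_0 = Y-\eta_{Y,0,N}(X)-\beta A+\beta\eta_{A,0,N}(X)$ and $S_0=Z-\eta_{Z,0,N}(X)$.

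Now I would finish by conditioning on $X$ in each term. For the first term, $\mathbb{E}_{P_{0,N}}[Z\mid X]=\eta_{Z,0,N}(X)$ gives $\mathbb{E}_{P_{0,N}}[S_0\mid X]=0$, so $\mathbb{E}_{P_{0,N}}[\dot R(X)\,S_0]=\mathbb{E}_{P_{0,N}}[\dot R(X)\,\mathbb{E}_{P_{0,N}}[S_0\mid X]]=0$. For the second term, $\mathbb{E}_{P_{0,N}}[Y\mid X]=\eta_{Y,0,N}(X)$ and $\mathbb{E}_{P_{0,N}}[A\mid X]=\eta_{A,0,N}(X)$ give $\mathbb{E}_{P_{0,N}}[R_0\mid X]=0$, so $\mathbb{E}_{P_{0,N}}[R_0\,\dot S(X)]=\mathbb{E}_{P_{0,N}}[\dot S(X)\,\mathbb{E}_{P_{0,N}}[R_0\mid X]]=0$. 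Therefore $D^{(1)}_{0,\beta,g}(\eta)=0$ for all $\beta$, which is exactly global Neyman orthogonality.

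There is no substantive obstacle: the only two points needing care are (i) justifying the interchange of $\partial_t$ and $\mathbb{E}_{P_{0,N}}$, handled by dominated convergence under the stated moment/boundedness conditions, and (ii) observing that both perturbation directions $\dot R,\dot S$ are $\sigma(X)$-measurable, which is what makes the iterated-expectation argument go through -- and this is precisely where the definitions of $\eta_{Y,0,N},\eta_{A,0,N},\eta_{Z,0,N}$ as the conditional means of $Y$, $A$, $Z$ given $X$ do all the work.
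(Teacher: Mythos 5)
Your proposal is correct and follows essentially the same route as the paper's proof: differentiate $t\mapsto\mathbb{E}_{P_{0,N}}[g(O;\beta,(1-t)\eta_{0,N}+t\eta)]$ at $t=0$, obtain the two terms $\mathbb{E}[\dot R(X)S_0]$ and $\mathbb{E}[R_0\dot S(X)]$, and kill each by conditioning on $X$ using that $\eta_{Y,0,N},\eta_{A,0,N},\eta_{Z,0,N}$ are the conditional means of $Y,A,Z$ given $X$. Your explicit dominated-convergence justification for exchanging $\partial_t$ and the expectation is a minor addition the paper leaves implicit, but the argument is the same.
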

\begin{proof}
   Let $\eta_1=(\eta_{Y,1},\eta_{A,1},\eta_{Z,1})$ be another nuisance parameter. For simplicity of notation, the subscript $N$ will be suppressed. Then 
   \begin{align*}
       &\mathbb{E}_{P_{0,N}}\Bigg[g(O;\beta,(1-t)\eta_{0,N}+t\eta_1)\Bigg]\\
       = & \mathbb{E}_{P_{0,N}}\Bigg[\Bigg(Z-[(1-t)\eta_{Z,0,N}+t\eta_{Z,1}]\Bigg)\Bigg(Y-[(1-t)\eta_{Y,0,N}+t\eta_{Y,1}]-\beta A+\beta[(1-t)\eta_{A,0,N}+t\eta_{A,1}]]\Bigg)\Bigg]
   \end{align*}

We can calculate the first order Gateaux derivative 
\begin{align*}
    &\frac{\partial}{\partial t}\mathbb{E}_{P_{0,N}}\Bigg[g(O;\beta,(1-t)\eta_{0,N}+t\eta_1)\Bigg]\\
    =&\mathbb{E}_{P_{0,N}}\Bigg[\Bigg(\eta_{Z,0,N}-\eta_{Z,1}\Bigg)\Bigg(Y-[(1-t)\eta_{Y,0,N}+t\eta_{Y,1}]-\beta A+\beta[(1-t)\eta_{A,0,N}+t\eta_{A,1}]]\Bigg)\Bigg]+
    \\&\mathbb{E}_{P_{0,N}}\Bigg[\Bigg(Z-[(1-t)\eta_{Z,0,N}+t\eta_{Z,1}]\Bigg)\Bigg(\eta_{Y,0,N}-\eta_{Y,1}-\beta(\eta_{A,0,N}-\eta_{A,1})\Bigg)\Bigg]
\end{align*}

Therefore,
\begin{align*}
    &\frac{\partial}{\partial t}\mathbb{E}_{P_{0,N}}\Bigg[g(O;\beta,(1-t)\eta_{0,N}+t\eta_1)\Bigg]\Bigg|_{t=0}\\
    =&\mathbb{E}_{P_{0,N}}\Bigg[\Bigg(\eta_{Z,0,N}-\eta_{Z,1}\Bigg)\Bigg(Y-\eta_{Y,0,N}-\beta(A-\eta_{A,0,N})\Bigg)\Bigg]+\\
    &\mathbb{E}_{P_{0,N}}\Bigg[\Bigg(Z-\eta_{Z,0,N}\Bigg)\Bigg(\eta_{Y,0,N}-\eta_{Y,1}-\beta(\eta_{A,0,N}-\eta_{A,1})\Bigg)\Bigg]\\
    =&0
\end{align*}
Therefore, $g$ satisfies Neyman-orthogonality.
\end{proof}

\begin{lemma}
    $g$ satisfies the rate conditions in Assumption \ref{Assumption: linear moment function}.
\end{lemma}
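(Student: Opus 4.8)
The plan is to verify part (c) of Assumption~\ref{Assumption: linear moment function} one rate quantity at a time, using only the product structure of the ASMM moment function together with the nuisance-rate bounds of Assumption~\ref{Assumption: Additive SMM}(d) and the moment conditions of Assumption~\ref{Assumption: Additive SMM}(a). Here $p=1$ for the specification $\gamma(a,z,x;\beta)=\beta a$, so $l,r\in\{1\}$, and
\[
G^{(j,1)}(O;\eta)=-(Z^{(j)}-\eta_{Z^{(j)}}(X))(A-\eta_A(X)),\qquad g^{a,(j)}(O;\eta)=(Z^{(j)}-\eta_{Z^{(j)}}(X))(Y-\eta_Y(X)).
\]
I take $\mathcal{T}_N$ to be the set of $\eta=(\eta_Y,\eta_A,\eta_Z)$ whose coordinates obey the $L^2(P_{0,N})$ and $L^\infty(P_{0,N})$ bounds and cross-product bounds of Assumption~\ref{Assumption: Additive SMM}(d), so that the suprema defining the rates are over such $\eta$.

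\textbf{First-order rates.} For $r^{a,(j)}_N$ I use the telescoping identity, valid for every $\eta\in\mathcal{T}_N$,
\[
g^{a,(j)}(O;\eta)-g^{a,(j)}(O;\eta_{0,N})=(Z^{(j)}-\eta_{Z^{(j)}}(X))(\eta_{Y,0,N}(X)-\eta_Y(X))+(\eta_{Z^{(j)},0,N}(X)-\eta_{Z^{(j)}}(X))(Y-\eta_{Y,0,N}(X)),
\]
which isolates a single nuisance difference in each summand. Since $Z$, $X$, $\eta_{Z^{(j)}}$ and $\eta_{Y,0,N}(X)-\eta_Y(X)$ are uniformly bounded, the first summand has $L^2(P_{0,N})$ norm $\lesssim\Vert\eta_Y-\eta_{Y,0,N}\Vert_{P_{0,N},2}$; for the second summand I condition on $X$ and use $\mathbb{E}_{P_{0,N}}[(Y-\eta_{Y,0,N}(X))^2\mid X]\le\mathbb{E}_{P_{0,N}}[Y^2\mid X]\le C^{1/2}$ to get $L^2$ norm $\lesssim\Vert\eta_{Z^{(j)}}-\eta_{Z^{(j)},0,N}\Vert_{P_{0,N},2}$. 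Hence $r^{a,(j)}_N\lesssim\delta_N$. The bound for $r^{(j,1)}_N$ is identical with $Y,\eta_Y$ replaced by $A,\eta_A$ and $\mathbb{E}_{P_{0,N}}[A^4\mid X]<C$ used in place of the moment bound on $Y$, giving $r^{(j,1)}_N\lesssim\delta_N$.

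\textbf{Second-order Gateaux rate.} Writing $\eta_t=(1-t)\eta_{0,N}+t\eta$ and $\Delta_\bullet=\bullet-\bullet_{0,N}$, the map $t\mapsto g^{(j)}(O;\beta,\eta_t)$ is a quadratic polynomial in $t$ whose leading coefficient is $(\Delta_Y(X)-\beta\Delta_A(X))\Delta_{Z^{(j)}}(X)$, so $\tfrac{\partial^2}{\partial t^2}\mathbb{E}_{P_{0,N}}[g^{(j)}(O;\beta,\eta_t)]=2\,\mathbb{E}_{P_{0,N}}[(\Delta_Y-\beta\Delta_A)\Delta_{Z^{(j)}}]$, independent of $t$. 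Since $\Delta_Y,\Delta_A,\Delta_{Z^{(j)}}$ are functions of $X$ alone, Cauchy--Schwarz together with the cross-product bounds $\Vert\Delta_Y\Vert_{P_{0,N},2}\Vert\Delta_{Z^{(j)}}\Vert_{P_{0,N},2}\le N^{-1/2}\delta_N$, $\Vert\Delta_A\Vert_{P_{0,N},2}\Vert\Delta_{Z^{(j)}}\Vert_{P_{0,N},2}\le N^{-1/2}\delta_N$ and $\sup_{\beta\in\mathcal{B}}|\beta|\le C$ yields $\lambda^{(j)}_N\lesssim N^{-1/2}\delta_N$.

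\textbf{Product rates.} For $r^{a,(j),(k)}_N$, $r^{(j,k,1,1)}_N$ and $r^{(j,k,1)}_N$ I apply the product telescoping $fh-f_0h_0=(f-f_0)h+f_0(h-h_0)$ with $f,h$ chosen among $g^{a,(\cdot)}$, $G^{(\cdot,1)}$ and (for $r^{(j,k,1)}_N$) $g^{(k)}$, and then substitute the first-order telescoping identity above into each factor. This reduces every resulting term to a product of (i) exactly one nuisance difference, (ii) several factors that are bounded in $L^\infty(P_{0,N})$ (drawn from $Z$, $X$, $\eta_Z$, $\Delta_Y$, $\Delta_A$ and, for $g^{(k)}$, $\beta$), and (iii) at most two factors among $Y-\eta_Y$, $Y-\eta_{Y,0,N}$, $A-\eta_A$, $A-\eta_{A,0,N}$. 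Conditioning on $X$ and invoking $\mathbb{E}_{P_{0,N}}[Y^4\mid X]<C$ and $\mathbb{E}_{P_{0,N}}[A^4\mid X]<C$ bounds the conditional moments of the type-(iii) factors, leaving an unconditional $L^2(P_{0,N})$ norm of a nuisance difference, which is $\le\delta_N$ (or $\le\delta_N/m$ when the difference is $\Delta_{Z^{(j)}}$). Summing the finitely many terms gives $r^{a,(j),(k)}_N, r^{(j,k,1,1)}_N, r^{(j,k,1)}_N\lesssim\delta_N$. Since $\delta_N=o(1/\sqrt m)$ by Assumption~\ref{Assumption: Additive SMM}(d), all requirements in Assumption~\ref{Assumption: linear moment function}(c) hold (after, if needed, rescaling $\delta_N$ by an absolute constant).

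The main obstacle is not depth but bookkeeping: one must arrange every $L^2$--$L^\infty$ split so that the single factor carrying a genuine rate is always the nuisance difference (bounded in $L^\infty$, or handled by first conditioning on $X$). A naive $L^4$--$L^4$ Cauchy--Schwarz would only yield $\delta_N^{1/2}$ from the $\eta_Y,\eta_A$ differences — via $\Vert\eta_Y-\eta_{Y,0,N}\Vert_{P_{0,N},4}^4\le\Vert\eta_Y-\eta_{Y,0,N}\Vert_\infty^2\Vert\eta_Y-\eta_{Y,0,N}\Vert_{P_{0,N},2}^2$ — which would be too slow; conditioning on $X$ first is what avoids this loss.
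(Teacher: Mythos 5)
Your proposal is correct and follows essentially the same route as the paper's own verification: telescope each difference so that exactly one factor is a nuisance error (bounded in $L^2$ by $\delta_N$ or $\delta_N/m$) while the remaining factors are controlled via the $L^\infty$ bounds in Assumption S4(d) and the conditional moment bounds $\mathbb{E}_{P_{0,N}}[Y^4\mid X],\,\mathbb{E}_{P_{0,N}}[A^4\mid X]<C$ after conditioning on $X$, compute the second-order Gateaux derivative explicitly and bound it by the cross-product rate conditions, and handle the product rates by a further telescoping. The paper writes out only $r^{(j,1)}_N$, $r^{a,(j)}_N$, $\lambda^{(j)}_N$ and $r^{a,(j),(k)}_N$ and invokes "similarly" for the rest, so your uniform sketch of the remaining product terms is at the same level of rigor and introduces no gap.
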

\begin{proof}
Now we verify the rate conditions. Define the realization set to be the set of all $\eta = (\eta_{Z},\eta_{A},\eta_{Y})$ (where $\eta_{Z} = (\eta_{Z^{(1)}},...,\eta_{Z^{(m)}})$) such that
    \begin{align*}
        &\Vert \eta_{Z^{(j)}}-\eta_{Z^{(j)},0,N} \Vert_{P_{0,N},\infty} \leq C, \quad j = 1,....,m \\
        & \Vert \eta_{Y}-\eta_{Y,0} \Vert_{P_{0,N},\infty} \leq C, \Vert \eta_{A}-\eta_{A,0,N} \Vert_{P_{0,N},\infty} \leq C\\
        & \max( \Vert \eta_{A}-\eta_{A,0,N} \Vert_{P_{0,N},2},\Vert \eta_{Y}-\eta_{Y,0,N} \Vert_{P_{0,N},2})\leq \delta_N\\
        & \Vert \eta_{Z^{(j)}}-\eta_{Z^{(j)},0,N} \Vert_{P_{0,N},2}\leq \delta_N/m\\
        &\Vert \eta_{Z^{(j)}}-\eta_{Z^{(j)},0,N} \Vert_{P_{0,N},2}\times (\Vert \eta_{A}-\eta_{A,0,N} \Vert_{P_{0,N},2}+\Vert \eta_{Y}-\eta_{Y,0,N} \Vert_{P_{0,N},2})\leq \delta_N N^{-1/2}
\end{align*}

For $r^{(j,1)}_N$, we have
\begin{align*}
    r^{(j,1)}_N&:=\sup_{\eta\in \mathcal{T}_N}\Big(\mathbb{E}_{P_{0,N}}\big[\vert G^{(j)}(O;\eta)-G^{(j)}(O;\eta_{0,N})\vert^2\big]\Big)^{1/2}\\
    &\leq\sup_{\eta\in \mathcal{T}_N} \Bigg\Vert (\eta_{Z^{(j)}}(X)-\eta_{Z^{(j)},0,N}(X))(A-\eta_{A,0,N}(X))\Bigg\Vert_{P_{0,N},2}+\sup_{\eta\in \mathcal{T}_N} \Bigg\Vert (Z-\eta_{Z^{(j)}})(\eta_{A}(X)-\eta_{A,0,N}(X))\Bigg\Vert_{P_{0,N},2}\\
    &\lesssim \sup_{\eta\in \mathcal{T}_N} \Bigg\Vert \eta_{Z^{(j)}}-\eta_{Z^{(j)},0,N}\Bigg\Vert_{P_{0,N},2}+\sup_{\eta\in \mathcal{T}_N}\Bigg\Vert \eta_{A}-\eta_{A,0,N}\Bigg\Vert_{P_{0,N},2}\\
    &\leq \delta_N.
\end{align*}
The first inequality is due to inequality, the second inequality is due to boundedness of $(Z,X)$ and $\mathbb{E}_{P_{0,N}}[(A-\eta_{A,0,N}(X))^2|Z,X]$.

For $r^{a,(j)}_N$, we have
\begin{align*}
    r'^{a,(j)}_N&:=\sup_{\eta\in \mathcal{T}_N}\Big(\mathbb{E}_{P}\big[\vert g^{a,(j)}(O;\eta)-g^{a,(j)}(O;\eta_{0,N})\vert^2\big]\Big)^{1/2}\\
    &\leq \sup_{\eta\in \mathcal{T}_N} \Bigg\Vert (\eta_{Z^{(j)}}-\eta_{Z^{(j)},0,N})(Y-\eta_{Y,0,N}(X))\Bigg\Vert_{P,2}+\sup_{\eta\in \mathcal{T}_N} \Bigg\Vert (Z-\eta_{Z^{(j)}})(\eta_{Y}-\eta_{Y,0,N})\Bigg\Vert_{P_{0,N},2}\\
    &\lesssim \sup_{\eta\in \mathcal{T}_N} \Bigg\Vert \eta_{Z^{(j)}}-\eta_{Z^{(j)},0,N}\Bigg\Vert_{P_{0,N},2}+\sup_{\eta\in \mathcal{T}_N}\Bigg\Vert \eta_{Y}-\eta_{Y,0,N}\Bigg\Vert_{P_{0,N},2}\\
    &\leq \delta_N,
\end{align*}
The first inequality is due to inequality, the second inequality is due to boundedness of $(Z,X)$ and $\mathbb{E}_{P_{0,N}}[(Y-\eta_{Y,0,N}(X))^2|Z,X]$.

The second-order Gateaux derivative is

\begin{align*}
    &\frac{\partial^2}{\partial t^2}\mathbb{E}_{P_{0,N}}\Bigg[g(O;\beta,(1-t)\eta_{0,N}+t\eta_1)\Bigg]\Bigg|_{t=\tilde{t}}\\
    =&2\mathbb{E}_{P_{0,N}}\Bigg[\Bigg(\eta_{Z,0,N}(X)-\eta_{Z,1}(X)\Bigg)\Bigg(\eta_{Y,0,N}(X)-\eta_{Y,1}(X)-\beta(\eta_{A,0,N}(X)-\eta_{A,1}(X))\Bigg)\Bigg]
\end{align*}

Therefore, by triangle and Cauchy–Schwarz inequality,
\begin{align*}
    &\mathbb{E}_{P_{0,N}}\Bigg|\frac{\partial^2}{\partial t^2}\mathbb{E}_{P_{0,N}}\Bigg[g(O;\beta,(1-t)\eta_{0,N}+t\eta_1)\Bigg]\Bigg|_{t=\tilde{t}}\Bigg|\\
    \lesssim & \Vert \eta_{Z,0,N}-\eta_{Z,1}\Vert_{P_{0,N},2}\Big(\Vert \eta_{Y,0,N}-\eta_{Y,1}\Vert_{P_{0,N},2}+\Vert \eta_{A,0,N}-\eta_{A,1}\Vert_{P_{0,N},2}\Big)\\
    \leq & \delta_N N^{-1/2}.
\end{align*}

For $r^{a,(j),(k)}$ term, we have
\begin{align*}
    r'^{(j,k)}_N:=& \sup_{\eta \in \mathcal{T}_N} \Bigg(\mathbb{E}_{P_{0,N}}\Bigg|g^{a,(j)}(\eta)g^{a,(k)}(\eta)-g^{a,(j)}(\eta_{0,N})g^{a,(k)}(\eta_{0,N})\Bigg|^2\Bigg)^{1/2}\\
    \leq &\sup_{\eta \in \mathcal{T}_N}\Bigg\Vert (g^{a,(j)}(\eta)-g^{a,(j)}(\eta_{0,N}))g^{a,(k)}(\eta) \Bigg\Vert_{P_{0,N},2}+\sup_{\eta \in \mathcal{T}_N}\Bigg\Vert (g^{a,(k)}(\eta)-g^{a,(k)}(\eta_{0,N}))g^{a,(j)}(\eta_{0,N}) \Bigg\Vert_{P_{0,N},2}\\
     \leq& \sup_{\eta \in \mathcal{T}_N}\Bigg\Vert (g^{a,(j)}(\eta)-g^{a,(j)}(\eta_{0,N}))(g^{a,(k)}(\eta)-g^{a,(k)}(\eta_{0,N}) \Bigg\Vert_{P_{0,N},2} \\
    &+\sup_{\eta \in \mathcal{T}_N}\Bigg\Vert (g^{a,(k)}(\eta)-g^{a,(k)}(\eta_{0,N}))g^{a,(j)}(\eta_{0,N}) \Bigg\Vert_{P_{0,N},2}+\sup_{\eta \in \mathcal{T}_N}\Bigg\Vert (g^{a,(j)}(\eta)-g^{a,(j)}(\eta_{0,N}))g^{a,(k)}(\eta_{0,N}) \Bigg\Vert_{P_{0,N},2}.
\end{align*}

Note that

\begin{align*}
&g^{a,(j)}(O;\eta)-g^{a,(j)}(O;\eta_{0,N})\\
=&(Z^{(j)}-\eta_{Z^{(j)}}(X))(\eta_{Y,0,N}(X)-\eta_{Y}(X))+(\eta_{Z^{(j)},0,N}(X)-\eta_{Z^{(j)}}(X))(Y-\eta_{Y,0,N}(X))
\end{align*}

For the first term,
\begin{align*}
    &\sup_{\eta \in \mathcal{T}_N}\Bigg\Vert (g^{a,(j)}(\eta)-g^{a,(j)}(\eta_{0,N}))(g^{a,(k)}(\eta)-g^{a,(k)}(\eta_{0,N}) \Bigg\Vert_{P_{0,N},2}\\
    \leq & \sup_{\eta \in \mathcal{T}_N}\Bigg\Vert (Z^{(j)}-\eta_{Z^{(j)}}(X))(Z^{(k)}-\eta_{Z^{(k)}}(X))(\eta_{Y,0,N}(X)-\eta_{Y}(X))^2\Bigg\Vert_{P_{0,N},2}\\
    &+\sup_{\eta \in \mathcal{T}_N}\Bigg\Vert (Z^{(j)}-\eta_{Z^{(j)}}(X))(Z^{(k)}-\eta_{Z^{(k)}}(X))(Y-\eta_{Y,0,N}(X))^2\Bigg\Vert_{P_{0,N},2}\\
    &+\sup_{\eta \in \mathcal{T}_N}\Bigg\Vert (Z^{(j)}-\eta_{Z^{(j)}}(X))(Z^{(k)}-\eta_{Z^{(k)}}(X))(Y-\eta_{Y,0,N}(X))(\eta_{Y,0,N}(X)-\eta_{Y}(X))\Bigg\Vert_{P_{0,N},2}\\
    \lesssim & \sup_{\eta \in \mathcal{T}_N}\Bigg\Vert \eta_{Y,0,N}-\eta_{Y}\Bigg\Vert_{P_{0,N},2} +\sup_{\eta \in \mathcal{T}_N}\Bigg\Vert \eta_{Z^{(j)},0,N}-\eta_{Z^{(j)}}\Bigg\Vert_{P_{0,N},2}\leq \delta_N,
\end{align*}
where for the last inequality, I used $\Vert \eta_{Y}-\eta_{Y,0,N} \Vert_\infty \leq C$, $\mathbb{E}_{P_{0,N}}[(Y-\eta_{Y,0,N})^2|X]\leq C$, $Z$ and $X$ are bounded.

For the second term,
\begin{align*}
    &\sup_{\eta \in \mathcal{T}_N}\Bigg\Vert (g^{a,(k)}(\eta)-g^{a,(k)}(\eta_{0,N}))g^{a,(j)}(\eta_{0,N}) \Bigg\Vert_{P_{0,N},2}\\
    \leq &\sup_{\eta \in \mathcal{T}_N}\Bigg\Vert (Z^{(k)}-\eta_{Z^{(k)}}(X))(Z^{(j)}-\eta_{Z^{(j)},0,N}(X))(\eta_{Y,0,N}(X)-\eta_{Y}(X))(Y-\eta_{Y,0,N}(X)) \Bigg\Vert_{P_{0,N},2}+\\
    &+\sup_{\eta \in \mathcal{T}_N}\Bigg\Vert (\eta_{Z^{(k)}}(X)-\eta_{Z^{(k)},0,N}(X))(Z^{(j)}-\eta_{Z^{(j)},0,N}(X))(Y-\eta_{Y,0,N}(X))^2 \Bigg\Vert_{P_{0,N},2}\\
    \leq & \delta_N
\end{align*}

Similarly we can verify the remaining conditions in Assumption \ref{Assumption: linear moment function}.
\end{proof}

\begin{lemma}
    Assumption \ref{Assumption: ASN matrix estimation} is satisfied.
\end{lemma}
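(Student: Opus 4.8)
The plan is to use the fact that, for the ASMM with $\gamma(a,z,x;\beta)=\beta a$, the moment function $g(o;\beta,\eta)=(Z-\eta_Z(X))\bigl(Y-\eta_Y(X)-\beta(A-\eta_A(X))\bigr)$ is linear in $\beta$ and separable: writing $G(o;\eta)=-(Z-\eta_Z(X))(A-\eta_A(X))$ and $g^a(o;\eta)=(Z-\eta_Z(X))(Y-\eta_Y(X))$, one has $g=\beta G+g^a$, $\partial g/\partial\beta=G$ independent of $\beta$, and $\partial^2 g/\partial\beta^2=0$. Hence $\Omega^{kl}(o;\beta,\eta)\equiv 0$ and both parts of Assumption~\ref{Assumption: ASN matrix estimation} are trivial for $A=\Omega^{kl}$; and $\Omega(\beta,\eta)=\beta^2 GG^{T}+\beta\bigl(G(g^a)^{T}+g^a G^{T}\bigr)+g^a(g^a)^{T}$, $\Omega^{k}(\beta,\eta)=\beta GG^{T}+g^a G^{T}$, $\Omega^{k,l}(\beta,\eta)=GG^{T}$ are polynomials in $\beta$ of degree at most two whose coefficient matrices are all of the form $\mathbb{E}_{P_{0,N}}\bigl[(Z-\eta_Z(X))(Z-\eta_Z(X))^{T}\rho(O;\eta)\bigr]$ for a scalar $\rho$ that is a product of at most two factors from $\{A-\eta_A(X),\,Y-\eta_Y(X)\}$. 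Since $\mathcal{B}'$ is compact, $\sup_{\beta\in\mathcal{B}'}$ of each object is at most a fixed constant times the spectral norms of these finitely many coefficient matrices, so it suffices to bound such norms (the affine specification $\gamma=a(\beta^{(1)}+x^{T}\beta^{(2)})$ is handled identically, $g$ still being linear in $\beta$).

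For part~(a) I would, for each coefficient matrix $M(\eta)=\mathbb{E}_{P_{0,N}}[(Z-\eta_Z(X))(Z-\eta_Z(X))^{T}\rho(\eta)]$, telescope $M(\eta)-M(\eta_{0,N})$ into a sum of terms each isolating a single nuisance error $\Delta_Z=\eta_{Z,0,N}(X)-\eta_Z(X)$, $\Delta_A=\eta_{A,0,N}(X)-\eta_A(X)$ or $\Delta_Y=\eta_{Y,0,N}(X)-\eta_Y(X)$, of the shapes $\mathbb{E}[\Delta_Z v^{T}s]$, $\mathbb{E}[v_0v_0^{T}\Delta_A s']$, $\mathbb{E}[v_0v_0^{T}\Delta_Y s']$, $\mathbb{E}[v_0v_0^{T}\Delta_A^2]$ and $\mathbb{E}[v_0v_0^{T}\Delta_Y^2]$, with $v_0=Z-\eta_{Z,0,N}(X)$. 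I would bound each spectral norm through $\|\mathbb{E}[uv^{T}s]\|=\sup_{\|a\|=\|b\|=1}\mathbb{E}[(a^{T}u)(v^{T}b)s]$ followed by Cauchy--Schwarz, and then invoke: Assumption~\ref{Assumption: Additive SMM}(c) to keep $X$-measurable quadratic forms $\mathbb{E}_{P_{0,N}}[(a^{T}v_0)^2\mid X]$ bounded; the moment bounds $\mathbb{E}_{P_{0,N}}[Y^4\mid X],\mathbb{E}_{P_{0,N}}[A^4\mid X],\mathbb{E}_{P_{0,N}}[A^2\mid Z,X]\le C$ together with boundedness of the true and estimated nuisances to control the residual factors after the appropriate conditioning; the ASMM identities $\mathbb{E}_{P_{0,N}}[A\mid Z,X]-\mathbb{E}_{P_{0,N}}[A\mid X]=h(Z,X)$ and $\mathbb{E}_{P_{0,N}}[Y\mid Z,X]-\mathbb{E}_{P_{0,N}}[Y\mid X]=\beta_0 h(Z,X)$ (consequences of the identification argument for the ASMM, with $h$ bounded by $\mathbb{E}_{P_{0,N}}[A^2\mid Z,X]\le C$) to collapse the residual in the $\Delta_A,\Delta_Y$ terms to its bounded conditional mean; and the rate bounds of Assumption~\ref{Assumption: Additive SMM}(d), in particular $\sum_{j=1}^m\|\widehat\eta_{Z^{(j)},l}-\eta_{Z^{(j)},0,N}\|_{P_{0,N},2}^2\le\delta_N^2/m$ (hence $\mathbb{E}_{P_{0,N}}[(a^{T}\Delta_Z)^2]\le\|\mathbb{E}_{P_{0,N}}[\Delta_Z\Delta_Z^{T}]\|\le\delta_N^2/m$), $\max(\|\widehat\eta_{A,l}-\eta_{A,0,N}\|_{P_{0,N},2},\|\widehat\eta_{Y,l}-\eta_{Y,0,N}\|_{P_{0,N},2})\le\delta_N$, and the product rates $\le N^{-1/2}\delta_N$. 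Tracking powers of $m$, the $\sqrt m$ one loses from a crude bound on a fourth moment of a $Z$-linear combination is exactly cancelled by the $1/\sqrt m$ in $\mathbb{E}[(a^{T}\Delta_Z)^2]$, so each term is $O(\delta_N)$ (some $O(\delta_N^2)$ or $O(N^{-1/2}\delta_N)$); summing over the finitely many terms gives $o_p(1/\sqrt m)$ because $\delta_N=o(1/\sqrt m)$.

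For part~(b), $\eta=\eta_{0,N}$ is fixed and no nuisance estimation enters; each $\widehat A(\beta,\eta_{0,N})-\widebar A(\beta,\eta_{0,N})$ is a polynomial in $\beta$ whose coefficient matrices are centred i.i.d.\ averages $\frac1N\sum_{i=1}^N v_{0,i}v_{0,i}^{T}\rho_i-\mathbb{E}_{P_{0,N}}[v_0v_0^{T}\rho]$. I would bound each by Tropp's inequality (Lemma~\ref{lemma: Tropp's inequality}): the matrix-variance parameter is controlled by combining the deterministic bound $\|v_0\|^2\lesssim m$ (boundedness of $(X,Z)$) with Assumption~\ref{Assumption: Additive SMM}(c) and the fourth-moment conditions of Assumption~\ref{Assumption: further restriction for moments.}(a), and the large-deviation parameter by the maximal-moment lemma of the Supplemental Material together with the all-moments envelope condition. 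This yields $\sup_{\beta\in\mathcal{B}'}\|\widehat A(\beta,\eta_{0,N})-\widebar A(\beta,\eta_{0,N})\|=O_p(\sqrt{m\log m/N})$ (the $\sup$ over $\beta$ being harmless by compactness and the polynomial-in-$\beta$ structure), which is $o_p(1/\sqrt m)$ since $m^3/N\to0$ forces $m^2\log m/N\to0$.

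The main obstacle will be the spectral-norm bookkeeping, chiefly in part~(a): the target $o(1/\sqrt m)$ is much sharper than the $O(1)$ that a naive estimate (a residual replaced by a constant, a $Z$-linear combination replaced by $\|Z-\eta_Z(X)\|\sim\sqrt m$) would give, so one must always pair a $\delta_N$-small nuisance factor against the conditionally covariance-bounded $Z$-outer product and push the unbounded $Y,A$ moments inside the appropriate conditional expectation, invoking the ASMM conditional-mean identities precisely for the terms in which a function-of-$X$ nuisance error multiplies a full $v_0v_0^{T}$ against a $Z$-dependent residual. A related, somewhat more routine, point in part~(b) is ensuring that the matrix-variance parameter grows only like $Nm$, for which the boundedness of $Z$ and Assumption~\ref{Assumption: Additive SMM}(c) do the essential work, before concluding via $m^3/N\to0$.
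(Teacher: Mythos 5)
Your proposal follows essentially the same route as the paper's proof: for part (a) you telescope the population difference into single-nuisance-error pieces and pair the $\delta_N/m$-rate errors for $\eta_{Z^{(j)}}$ (respectively the $X$-measurable $\Delta_A,\Delta_Y$ errors) against the conditionally bounded outer product $\mathbb{E}_{P_{0,N}}[(Z-\eta_{Z,0,N}(X))(Z-\eta_{Z,0,N}(X))^T\mid X]\preceq CI_m$ so that no net factor of $\sqrt m$ survives, and for part (b) you apply Tropp's inequality to the polynomial-in-$\beta$ coefficient matrices with variance parameter of order $m/N$ and the maximal-moment lemma, concluding via $m^3/N\to 0$ — exactly the paper's argument (which bounds the $Z$-part via the Frobenius norm entrywise rather than your direct spectral-norm supremum, an immaterial difference). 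Your appeal to the ASMM conditional-mean identities is superfluous — the paper only needs the assumed conditional moment bounds to absorb the unbounded $Y,A$ factors — but it does not harm the argument.
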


\begin{proof}
For the first part, we have
\footnotesize
\begin{align*}
    &\Bigg\Vert P_{0,N}[\Omega(\beta,\eta)-\Omega(\beta,\eta_{0,N})] \Bigg\Vert\\
    =&\Bigg\Vert \mathbb{E}_{P_{0,N}}(Z-\eta_Z(X))(Z-\eta_{Z}(X))^T(Y-\eta_Y(X)-\beta A+\beta \eta_A)^2-\\
    &\mathbb{E}_{P_{0,N}}(Z-\eta_{Z,0,N}(X))(Z-\eta_{Z,0,N}(X))^T(Y-\eta_{Y,0,N}(X)-\beta A+\beta \eta_{A,0,N})^2\Bigg\Vert \\
    \leq &\Bigg\Vert \mathbb{E}_{P_{0,N}}\Big[(Z-\eta_Z(X))(Z-\eta_{Z}(X))^T-(Z-\eta_{Z,0,N}(X))(Z-\eta_{Z,0,N}(X))^T(Y-\eta_Y(X)-\beta A+\beta \eta_A)^2\Big]\Bigg\Vert+\\
    &\Bigg\Vert \mathbb{E}_{P_{0,N}}[(Z-\eta_{Z,0,N}(X))(Z-\eta_{Z,0,N}(X))^T][(Y-\eta_Y(X)-\beta A+\beta \eta_A)^2-(Y-\eta_{Y,0,N}(X)-\beta A+\beta \eta_{A,0,N})^2]\Bigg\Vert
\end{align*}
\normalsize
Now we firstly consider the first term.
\scriptsize
\begin{align*}
   & \Bigg\Vert \mathbb{E}_{P_{0,N}}[(Z-\eta_Z(X))(Z-\eta_{Z}(X))^T-(Z-\eta_{Z,0,N}(X))(Z-\eta_{Z,0,N}(X))^T](Y-\eta_Y(X)-\beta A+\beta \eta_A)^2\Bigg\Vert\\
    \leq &\Bigg\Vert \mathbb{E}_{P_{0,N}}[(Z-\eta_Z(X))(Z-\eta_{Z}(X))^T-(Z-\eta_{Z,0,N}(X))(Z-\eta_{Z,0,N}(X))^T](Y-\eta_Y(X)-\beta A+\beta \eta_A)^2\Bigg\Vert_F\\
    =& \sqrt{\sum_{i,j=1}^m\Bigg(\mathbb{E}_{P_{0,N}}\Big[(Z^{(i)}-\eta_{Z^{(i)}}(X))(Z^{(j)}-\eta_{Z^{(j)}}(X))-(Z^{(i)}-\eta_{Z^{(i)},0,N}(X))(Z^{(j)}-\eta_{Z^{(j)},0,N}(X))(Y-\eta_Y(X)-\beta A+\beta \eta_A)^2\Big]\Bigg)^2}\\
    \leq &\sqrt{\sum_{i,j=1}^m\Bigg(\mathbb{E}_{P_{0,N}}\Big[\Big|(Z^{(i)}-\eta_{Z^{(i)}}(X))(Z^{(j)}-\eta_{Z^{(j)}}(X))-(Z^{(i)}-\eta_{Z^{(i)},0,N}(X))(Z^{(j)}-\eta_{Z^{(j)},0,N}(X))\Big|(Y-\eta_Y(X)-\beta A+\beta \eta_A)^2\Big]\Bigg)^2}
\end{align*}
\normalsize

Note that
\scriptsize
\begin{align*}
    &\mathbb{E}_{{P_{0,N}}}\Big[\Big|(Z^{(i)}-\eta_{Z^{(i)}}(X))(Z^{(j)}-\eta_{Z^{(j)}}(X))-(Z^{(i)}-\eta_{Z^{(i)},0,N}(X))(Z^{(j)}-\eta_{Z^{(j)},0,N}(X))\Big|(Y-\eta_Y(X)-\beta A+\beta \eta_A)^2\Big]\\
    \leq &\mathbb{E}_{P_{0,N}}\Big[\Big(\Big|(Z^{(i)}-\eta_{Z^{(i)}}(X))(\eta_{Z^{(j)}}(X)-\eta_{Z^{(j)},0,N}(X))\Big|+\Big|(Z^{(j)}-\eta_{Z^{(j)},0,N}(X))(\eta_{Z^{(i)}}(X)-\eta_{Z^{(i)},0,N}(X))\Big|\Big)(Y-\eta_Y(X)-\beta A+\beta \eta_A)^2\Big]\\
    \lesssim &\Vert \eta_{Z^{(j)}}-\eta_{Z^{(j)},0,N}\Vert_{P_{0,N},2} \Big[\sqrt{\mathbb{E}_{P_{0,N}}([Z^{(i)}-\eta_{Z^{(i)}})^2(Y-\eta_Y)^4]}\\
    &+\sqrt{\mathbb{E}_{P_{0,N}}[(Z^{(i)}-\eta_{Z^{(i)}})^2(A-\eta_A)^2(Y-\eta_Y)^2]}+\sqrt{\mathbb{E}_{P_{0,N}}[(Z^{(i)}-\eta_{Z^{(i)}})^2(A-\eta_A)^4]}\Big]\\
    &+\Vert \eta_{Z^{(i)}}-\eta_{Z^{(i)},0,N}\Vert_{P_{0,N},2} \Big[\sqrt{\mathbb{E}_{P_{0,N}}([Z^{(i)}-\eta_{Z^{(j)},0,N})^2(Y-\eta_Y)^4]}\\
    &+\sqrt{\mathbb{E}_{P_{0,N}}[(Z^{(j)}-\eta_{Z^{(j)}})^2(A-\eta_A)^2(Y-\eta_Y)^2]}+\sqrt{\mathbb{E}_{P_{0,N}}[(Z^{(j)}-\eta_{Z^{(j)}})^2(A-\eta_A)^4]}\Big]\\
    \lesssim & \frac{1}{{m}}\delta_N
\end{align*}\normalsize

We used uniformly boundedness of $(Z,X)$ in the last inequality, for example, to bound $\mathbb{E}_P([Z^{(i)}-\eta_{Z^{(i)}})^2(Y-\eta_Y)^4]$, we have
\begin{align*}
    & \mathbb{E}_{P_{0,N}}([Z^{(i)}-\eta_{Z^{(i)}})^2(Y-\eta_Y)^4]\\
    \lesssim & \mathbb{E}_{P_{0,N}}((Y-\eta_Y)^4]
    \leq  \mathbb{E}_{P_{0,N}}((Y-\eta_{Y,0,N}+\eta_{Y,0,N}-\eta_Y)^4]\\
    \lesssim & \mathbb{E}_{P_{0,N}}((Y-\eta_{Y,0,N})^4+(\eta_{Y,0,N}-\eta_Y)^4]\leq C.
\end{align*}
Similarly we can show that $\mathbb{E}_{P_{0,N}}[(Z^{(j)}-\eta_{Z^{(j)}})^2(A-\eta_A)^4]\leq C$ and $\mathbb{E}_{P_{0,N}}[(Z^{(i)}-\eta_{Z^{(i)}})^2(A-\eta_A)^2(Y-\eta_Y)^2]\leq C$.

Therefore
\begin{align*}
    \Bigg\Vert \mathbb{E}_{P_{0,N}}[(Z-\eta_Z(X))(Z-\eta_{Z}(X))^T-(Z-\eta_{Z,0,N}(X))(Z-\eta_{Z,0,N}(X))^T](Y-\eta_Y(X)-\beta A+\beta \eta_A)^2\Bigg\Vert\leq \delta_N
\end{align*}

For the second term,
\scriptsize
\begin{align*}
    &\Bigg\Vert \mathbb{E}_{P_{0,N}}[(Z-\eta_{Z,0,N}(X))(Z-\eta_{Z,0,N}(X))^T][(Y-\eta_Y(X)-\beta A+\beta \eta_A)^2-(Y-\eta_{Y,0,N}(X)-\beta A+\beta \eta_{A,0,N})^2]\Bigg\Vert\\
    \leq &\Bigg\Vert \mathbb{E}_{P_{0,N}}[(Z-\eta_{Z,0,N}(X))(Z-\eta_{Z,0,N}(X))^T][|2Y-\eta_Y(X)-\eta_{Y,0,N}(X)-2\beta A+\beta \eta_A+\beta\eta_{A,0,N}||\eta_{Y,0,N}-\eta_{Y}+\beta (\eta_A-\eta_{A,0,N})|]\Bigg\Vert\\
    \lesssim &\Bigg\Vert \mathbb{E}_{P_{0,N}}\Big[\mathbb{E}_{P_{0,N}}[(Z-\eta_{Z,0}(X))(Z-\eta_{Z,0,N}(X))^T|X]|\eta_{Y,0}-\eta_{Y}+\beta (\eta_A-\eta_{A,0,N})|\Big]\Bigg\Vert\\
    \lesssim & \mathbb{E}_{P_{0,N}} |\eta_{Y,0,N}-\eta_{Y}|+\mathbb{E}_{P_{0,N}} |\eta_{A,0,N}-\eta_{A}|\\
    \leq & \Vert \eta_{Y,0,N}-\eta_{Y}\Vert_{P_{0,N},2}+\Vert \eta_{A,0,N}-\eta_{A}\Vert_{P_{0,N},2}\leq \delta_N = o(1/\sqrt{m})
\end{align*}\normalsize

The last step is to show 

\begin{align*}
    \sup_{\beta \in B}\Big\Vert (\mathbb{P}_n-P_{0,N})M(\beta,\eta_0)\Big\Vert = o_p(1/\sqrt{m}).
\end{align*}
For $M=\Omega,\Omega^{k},\Omega^{kl},\Omega^{k,l}$. I will only show the case when $M = \Omega$. The proof for other cases are similar.
\begin{align*}
    &\Omega(\beta,\eta_0) \\
    = &(Z-\eta_{Z,0,N}(X))(Z-\eta_{Z,0,N}(X))^T(Y-\eta_{Y,0,N}-\beta A+\beta \eta_{A,0,N})^2\\
    = &(Z-\eta_{Z,0,N}(X))(Z-\eta_{Z,0,N}(X))^T[(A-\eta_{A,0,N})^2\beta^2+2\beta (A-\eta_{A,0,N})(Y-\eta_{Y,0,N})+(Y-\eta_{Y,0,N})^2].
\end{align*}
Therefore
\begin{align*}
    & \sup_{\beta \in B}\Big\Vert (\mathbb{P}_n-P_{0,N})\Omega(\beta,\eta_{0,N})\Big\Vert\\
\leq & \Big\Vert (\mathbb{P}_n-P_{0,N})(Z-\eta_{Z,0,N}(X))(Z-\eta_{Z,0,N}(X))^T(A-\eta_{A,0,N})^2\Big\Vert+\\
&\Big\Vert (\mathbb{P}_n-P_{0,N})(Z-\eta_{Z,0,N}(X))(Z-\eta_{Z,0,N}(X))^T(A-\eta_{A,0,N})(Y-\eta_{Y,0,N})\Big\Vert+\\
& \Big\Vert (\mathbb{P}_n-P_{0,N})(Z-\eta_{Z,0,N}(X))(Z-\eta_{Z,0,N}(X))^T(Y-\eta_{Y,0,N})^2\Big\Vert.
\end{align*}

I will show how to drive the rate for the first term, the second and third term can be handled similarly.

Let $S_i = \frac{1}{n}[(Z_i-\eta_{Z,0,N}(X_i))(Z_i-\eta_{Z,0,N}(X_i))^T(A_i-\eta_{A,0,N}(X_i))^2-E_{P_{0,N}}(Z_i-\eta_{Z,0,N}(X_i))(Z-\eta_{Z,0,N}(X_i))^T(A_i-\eta_{A,0,N}(X_i))^2]$.

Now we calculate the matrix variance parameter for $R=\sum_{i=1}^n S_i$:
\footnotesize
\begin{align*}
    &\nu(R)\\
    =&\Bigg\Vert \sum_{i=1}^N\mathbb{E}_{P_{0,N}}[S_iS^T_i]\Bigg\Vert\\
    =&\frac{1}{N}\Bigg\Vert \mathbb{E}_{P_{0,N}}\Bigg[(Z-\eta_{Z,0,N}(X))(Z-\eta_{Z,0,N}(X))^T(Z-\eta_{Z,0,N}(X))(Z-\eta_{Z,0,N}(X))^T(A-\eta_{A,0,N})^4\Bigg]-\\
    &\mathbb{E}_{P_{0,N}}\Bigg[(Z-\eta_{Z,0,N}(X))(Z-\eta_{Z,0,N}(X))^T(A-\eta_{A,0,N})^2\Bigg]\mathbb{E}_{P_{0,N}}\Bigg[(Z-\eta_{Z,0,N}(X))(Z-\eta_{Z,0,N}(X))^T(A-\eta_{A,0,N})^2\Bigg]\Bigg\Vert\\
    \leq & \frac{1}{N}\Bigg\Vert \mathbb{E}_{P_{0,N}}\Bigg[(Z-\eta_{Z,0,N}(X))(Z-\eta_{Z,0,N}(X))^T(Z-\eta_{Z,0,N}(X))(Z-\eta_{Z,0,N}(X))^T(A-\eta_{A,0,N})^4\Bigg]\Bigg\Vert\\
    &+\frac{1}{N}\Bigg\Vert\mathbb{E}_{P_{0,N}}\Bigg[(Z-\eta_{Z,0,N}(X))(Z-\eta_{Z,0,N}(X))^T(A-\eta_{A,0,N})^2\Bigg]\mathbb{E}_{P_{0,N}}\Bigg[(Z-\eta_{Z,0,N}(X))(Z-\eta_{Z,0,N}(X))^T(A-\eta_{A,0,N})^2\Bigg]\Bigg\Vert\\
    \leq & \frac{m}{N}\Bigg\Vert \mathbb{E}_{P_{0,N}}\Bigg[(Z-\eta_{Z,0,N}(X))(Z-\eta_{Z,0,N}(X))^T(A-\eta_{A,0,N})^4\Bigg]\Bigg\Vert+\\
    &\frac{1}{N}\Bigg\Vert\mathbb{E}_{P_{0,N}}\Bigg[(Z-\eta_{Z,0,N}(X))(Z-\eta_{Z,0,N}(X))^T(A-\eta_{A,0,N})^2\Bigg]\Bigg\Vert^2\\
    &\lesssim \frac{m}{N}
\end{align*}
\normalsize

Next we calculate the large deviation parameter.
\scriptsize
\begin{align*}
    &\Vert S_i \Vert^2=\Vert S_iS_i^T \Vert\\
    =& \frac{1}{N^2}\Bigg\Vert (Z_i-\eta_{Z,0,N}(X_i))(Z_i-\eta_{Z,0,N}(X_i))^T(Z-\eta_{Z,0,N}(X_i))(Z_i-\eta_{Z,0,N}(X_i))^T(A_i-\eta_{A,0,N}(X_i))^4\\
    &-\mathbb{E}_{P_{0,N}}[(Z_i-\eta_{Z,0,N}(X))(Z-\eta_{Z,0,N}(X))^T(A_i-\eta_{A,0,N}(X_i))^2](Z_i-\eta_{Z,0,N}(X_i))(Z_i-\eta_{Z,0,N}(X_i))^T(A_i-\eta_{A,0,N}(X_i))^2\\
    &-(Z_i-\eta_{Z,0,N}(X_i))(Z-\eta_{Z,0,N}(X_i))^T(A-\eta_{A,0,N}(X_i))^2\mathbb{E}_{P_{0,N}}[(Z_i-\eta_{Z,0,N}(X_i))(Z_i-\eta_{Z,0,N}(X_i))^T(A_i-\eta_{A,0,N}(X_i))^2]\\
    &+\mathbb{E}_{P_{0,N}}[(Z_i-\eta_{Z,0,N}(X))(Z-\eta_{Z,0,N}(X_i))^T(A_i-\eta_{A,0,N}(X_i))^2\mathbb{E}_{P_{0,N}}[(Z_i-\eta_{Z,0,N}(X_i))(Z_i-\eta_{Z,0,N}(X_i))^T(A_i-\eta_{A,0,N}(X_i))^2\Bigg\Vert \\
    \lesssim & \frac{m(A_i-\eta_{A,0,N}(X_i))^4}{N^2}\Bigg\Vert (Z_i-\eta_{Z,0,N}(X_i))(Z_i-\eta_{Z,0,N}(X_i))^T\Bigg\Vert+\frac{2(A_i-\eta_{A,0,N}(X_i))^2}{N^2}\Bigg\Vert (Z_i-\eta_{Z,0,N}(X_i))(Z_i-\eta_{Z,0,N}(X_i))^T\Bigg\Vert+\frac{1}{N^2}\\
    \lesssim & \frac{m^2}{N^2}[(A_i-\eta_{A,0,N}(X_i))^4+(A_i-\eta_{A,0,N}(X_i))^2]
\end{align*}
\normalsize
The large deviation parameter $L$ satisfies
\begin{align*}
    L^2=\mathbb{E}_{P_{0,N}}\max_i \Vert S_i\Vert\leq \frac{m^2}{N^2}\mathbb{E}_{P_{0,N}}\max_i[(A_i-\eta_{A,0,N}(X_i))^4+(A_i-\eta_{A,0,N}(X_i))^2]\lesssim \frac{m^2}{N^2}o(1) =o( \frac{m^2}{N^{2}}).
\end{align*}
The second last inequality is because we assumed $\mathbb{E}_{P_{0,N}}\vert A\vert^{4}<\infty$, therefore $\mathbb{E}_{P_{0,N}}\max_i|A|^4 =o(1)$ by Corollary 7.1 of \citet{zhang2021concentration}.

Therefore,
\begin{align*}
    \Vert R\Vert =O_p\left( \sqrt{\log(m)\frac{m}{N}}+\log(m)\sqrt{\frac{m^2}{N^{2}}}\right) = o_P\Big( \frac{1}{\sqrt{m}}\Big).
\end{align*}
\end{proof}

The proof for Theorem \ref{Theorem: ASMM} follows from the previous lemmas.
\begin{proof}[Proof of Theorem \ref{Theorem: ASMM}]
    So far we have verified all regularity condition except for Assumption \ref{Assumption: convergence of g-hat} (c) and Assumption \ref{Assumption: Further restrictions for uniform convergence}. 
    To verify Assumption \ref{Assumption: convergence of g-hat} (c), we have 
    \begin{align*}
        \sup_{\beta \in \mathcal{B}}\Vert \widehat{\Omega}(\beta,\eta_{0,N})-\widebar{\Omega}(\beta,\eta_{0,N})\Vert_{F}\leq \sup_{\beta \in \mathcal{B}}\sqrt{m}\Vert \widehat{\Omega}(\beta,\eta_{0,N})-\widebar{\Omega}(\beta,\eta_{0,N})\Vert=o_p(1).
    \end{align*}
    Similarly, since we have already had $\sup_{\beta\in \mathcal{B}}\Vert (\mathbb{P}_n-P_{0,N})A(\beta,\eta_0) \Vert = o_{P_{0,N}}(1/\sqrt{m})$, we have $\sup_{\beta\in \mathcal{B}}\Vert \widehat{A}(\beta,\eta_0)-\widebar{A}(\beta,\eta_0) \Vert = o_{P_{0,N}}(1)$ for $A = \Omega^{(k)}$ or $A = \Omega^{k,l}$. Therefore, Assumption \ref{Assumption: Further restrictions for uniform convergence} (a) has been verified. Next, we verify that $\vert a^T[\widebar{\Omega}^{(k)}(\tilde{\beta},\eta_0)-\widebar{\Omega}^{(k)}({\beta},\eta_0)]b\vert \leq C\Vert a\Vert \Vert b\Vert \Vert \tilde{\beta}-\beta\Vert$.
\begin{align*}
    &\vert a^T[\widebar{\Omega}^{(k)}(\tilde{\beta})-\widebar{\Omega}^{(k)}({\beta})]b\vert\\
    = & \vert a^T[\mathbb{E}_{P_{0,N}}(g_i(\tilde{\beta},\eta_{0,N})G_i(\eta_{0,N}))-\mathbb{E}_{P_{0,N}}(g_i({\beta},\eta_{0,N})G_i(\eta_{0,N}))]b\vert= \vert( \tilde{\beta}-\beta)a^T\mathbb{E}_{P_{0,N}}(G_iG_i^T)b\vert\\
    \lesssim & \vert\tilde{\beta}-\beta\vert\Vert a\Vert \Vert b\Vert.
\end{align*}
    Therefore, Assumption \ref{Assumption: Further restrictions for uniform convergence} is satisfied. According to Theorem \ref{Theorem: consistency and ASN}, we obtain the desired result. 
\end{proof}

\section{Multiplicative SMM example} \label{Supp: MSMM}

$g$ can be rewritten in the following way:
\begin{align*}
    g(O;\beta,\eta_N) =& (Z-\mu_{Z,N}(X))[I\{A=1\}Y-\eta_{Y,(1),N}(X)\eta_{A,(1),N}(X)]\exp(-\beta)\\
    &+(Z-\mu_{Z,N}(X))[I\{A=0\}Y-\eta_{Y,(0),N}(X)(1-\eta_{A,(1),N})(X)].
\end{align*}
Therefore, $g(O;\beta,\eta_N)$ is a separable moment function with $B=2$, $q=1$,
\begin{align*}
    g^{[1]}(o;\eta) &= (z-\eta_{Z,N}(x))[I\{a=1\}y-\eta_{Y,(1),N}(x)\eta_{A,(1),N}(x)],\\
    g^{[2]}(o;\eta) &= (z-\eta_{Z,N}(x))[I\{a=0\}y-\eta_{Y,(0),N}(x)(1-\eta_{A,(1),N})(x)],\\
    h^{[1]}(\beta)& = \exp(\beta),\\
    h^{[2]}(\beta) &= 1.
\end{align*}

Verifying conditions for MSMM is very similar to the ASMM case. Verifying assumption 2 and assumption 3 relies on the following lemma:
\begin{lemma}
    Suppose $\beta_1,\beta_2\in \mathcal{B}\subset \mathbb{R}$, where $\mathcal{B}$ is a compact set. Then there exists a constant $C_1$, $C_2$ such that 
    \begin{equation*}
        C_1\vert \beta_1-\beta_2\vert \leq \vert\exp(-\beta_1)-\exp(-\beta_2)\vert\leq C_2\vert \beta_1-\beta_2\vert.
    \end{equation*}
\end{lemma}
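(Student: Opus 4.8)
The plan is to reduce the claim to the mean value theorem applied to the smooth, strictly monotone function $t\mapsto e^{-t}$, exploiting compactness of $\mathcal{B}$ to obtain uniform two-sided control of its derivative. First I would observe that since $\mathcal{B}\subset\mathbb{R}$ is compact, it is bounded, so there exist finite numbers $a=\inf\mathcal{B}$ and $b=\sup\mathcal{B}$ with $\mathcal{B}\subset[a,b]$. If $\beta_1=\beta_2$ both sides of the asserted inequality are zero and there is nothing to prove, so assume $\beta_1\neq\beta_2$.

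Next I would apply the mean value theorem to $\phi(t)=e^{-t}$ on the interval with endpoints $\beta_1$ and $\beta_2$: there is a point $\xi$ strictly between $\beta_1$ and $\beta_2$ such that
\begin{equation*}
    \exp(-\beta_1)-\exp(-\beta_2) = -\exp(-\xi)\,(\beta_1-\beta_2),
\end{equation*}
hence $\vert\exp(-\beta_1)-\exp(-\beta_2)\vert = \exp(-\xi)\,\vert\beta_1-\beta_2\vert$. Since $\beta_1,\beta_2\in\mathcal{B}\subset[a,b]$ and $\xi$ lies between them, we have $\xi\in[a,b]$, and by monotonicity of the exponential, $\exp(-b)\le \exp(-\xi)\le \exp(-a)$. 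Setting $C_1=\exp(-b)>0$ and $C_2=\exp(-a)<\infty$ yields
\begin{equation*}
    C_1\vert\beta_1-\beta_2\vert \le \vert\exp(-\beta_1)-\exp(-\beta_2)\vert \le C_2\vert\beta_1-\beta_2\vert,
\end{equation*}
which is the claim; the constants depend only on $\mathcal{B}$ and not on the particular points chosen.

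This lemma is elementary, so I do not anticipate any real obstacle: the only thing to be slightly careful about is the degenerate case $\beta_1=\beta_2$ (handled separately) and the fact that one should bound $\exp(-\xi)$ using the enclosing interval $[\inf\mathcal{B},\sup\mathcal{B}]$ rather than $\mathcal{B}$ itself, since $\mathcal{B}$ need not be an interval. In the subsequent applications this lemma is then combined with the triangle inequality to transfer the separable-moment rate conditions on $g^{[1]},g^{[2]}$ and on $h^{[1]}(\beta)=\exp(\beta)$ (equivalently $\exp(-\beta)$ after the sign normalization) to the regularity conditions of Assumptions~2 and~3 for the MSMM moment function, exactly as in the ASMM verification.
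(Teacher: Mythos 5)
Your proof is correct and follows essentially the same route as the paper's: apply the mean value theorem to $t\mapsto e^{-t}$ and bound the derivative $\exp(-\xi)$ above and below using the endpoints of a compact interval containing $\mathcal{B}$, yielding $C_1=\exp(-\sup\mathcal{B})$ and $C_2=\exp(-\inf\mathcal{B})$. Your handling of the degenerate case $\beta_1=\beta_2$ and the remark about using the enclosing interval rather than $\mathcal{B}$ itself are minor refinements of the same argument.
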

\begin{proof}
 Without loss of generality, we assume $\beta_1\leq \beta_2$. Since $\beta_1,\beta_2$ belongs to some compact set, we know there exist $c_1,c_2$ such that $c_1\leq \beta_1\leq \beta_2\leq c_2$. By mean value theorem, we know there exists $\beta_3$ such that $\beta_2\in[\beta_1,\beta_2]$ and $\vert\exp(-\beta_1)-\exp(-\beta_2)\vert=\exp(-\beta_3)\vert \beta_1-\beta_2\vert$. Let $C_1=\exp(-c_2)$ and $C_2=\exp(-c_1)$, we then have $ C_1\vert \beta_1-\beta_2\vert \leq \vert\exp(-\beta_1)-\exp(-\beta_2)\vert\leq C_2\vert \beta_1-\beta_2\vert$.
\end{proof}

\begin{lemma}
    For all $\beta \in \mathcal{B}$, $1/C \leq \xi_{min}(\widebar{\Omega}(\beta,\eta_{0,N}))\leq \xi_{max}(\widebar{\Omega}(\beta,\eta_{0,N}))\leq C$.
    \label{Lemma: MSMM 3b}
\end{lemma}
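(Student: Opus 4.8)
The plan is to follow the proof of the analogous lemma for the ASMM example, adapting the algebra to the multiplicative moment function. Write $g(O;\beta,\eta_{0,N}) = R(\beta)\,(Z-\mathbb{E}_{P_{0,N}}[Z|X])$, where the scalar residual is $R(\beta):= Y\exp(-\beta A)-\sum_{a\in\{0,1\}}\mathbb{E}_{P_{0,N}}[Y|A=a,X]\exp(-\beta a)\,P_{0,N}(A=a|X)$. Since $(Z-\mathbb{E}_{P_{0,N}}[Z|X])(Z-\mathbb{E}_{P_{0,N}}[Z|X])^T$ depends only on $(Z,X)$, the tower property gives $\widebar{\Omega}(\beta,\eta_{0,N}) = \mathbb{E}_{P_{0,N}}\big[(Z-\mathbb{E}_{P_{0,N}}[Z|X])(Z-\mathbb{E}_{P_{0,N}}[Z|X])^T\, v(\beta,Z,X)\big]$ with $v(\beta,Z,X):=\mathbb{E}_{P_{0,N}}[R(\beta)^2|Z,X]$. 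It therefore suffices to sandwich $v(\beta,Z,X)$ between positive constants, uniformly over $\beta\in\mathcal{B}$ and almost surely, and then invoke Assumption~\ref{Assumption: MSMM}(c).

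For the upper bound, conditioning further on $A$, the map $Y\mapsto R(\beta)$ is affine, so $v(\beta,Z,X)=\mathbb{E}_{P_{0,N}}\big[\mathbb{E}_{P_{0,N}}[R(\beta)^2|A,Z,X]\,\big|\,Z,X\big]$, and by the exclusion restriction $\mathbb{E}_{P_{0,N}}[Y^k|A,Z,X]=\mathbb{E}_{P_{0,N}}[Y^k|A,X]$ for $k=1,2$. Since $A\in\{0,1\}$ and $\mathcal{B}\subset\mathbb{R}$ is compact, $\exp(-\beta A)$ and $\exp(-\beta a)$ are bounded uniformly over $\beta\in\mathcal{B}$; together with $P_{0,N}(A=a|X)\le1$ and the moment bound $\mathbb{E}_{P_{0,N}}[Y^4|A,X]<C$ in Assumption~\ref{Assumption: MSMM}(a) (hence $\mathbb{E}_{P_{0,N}}[Y^2|A,X]<C$, and $|\mathbb{E}_{P_{0,N}}[Y|A,X]|\le\sqrt C$ by Jensen), this yields $v(\beta,Z,X)\le C'$. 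For the lower bound, write $g(X,\beta):=\sum_a \mathbb{E}_{P_{0,N}}[Y|A=a,X]\exp(-\beta a)P_{0,N}(A=a|X)$, so that $R(\beta)=Y\exp(-\beta A)-g(X,\beta)$ with $g(X,\beta)$ a function of $(X,\beta)$ only; then $\mathbb{E}_{P_{0,N}}[R(\beta)^2|A,Z,X]=\mathrm{Var}(Y\exp(-\beta A)|A,Z,X)+\big(\exp(-\beta A)\mathbb{E}_{P_{0,N}}[Y|A,X]-g(X,\beta)\big)^2\ge \exp(-2\beta A)\,\mathrm{Var}(Y|A,X)$. Since $\exp(-2\beta A)$ has a positive minimum over the compact $\mathcal{B}$ and $\mathrm{Var}(Y|A,X)$ is bounded away from zero under the stated regularity conditions — exactly as in the ASMM case — taking $\mathbb{E}_{P_{0,N}}[\cdot|Z,X]$ gives $v(\beta,Z,X)\ge1/C''$ uniformly.

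Combining, from $1/C''\le v(\beta,Z,X)\le C'$ we get the matrix sandwich $\tfrac{1}{C''}\,\mathbb{E}_{P_{0,N}}\big[(Z-\mathbb{E}_{P_{0,N}}[Z|X])(Z-\mathbb{E}_{P_{0,N}}[Z|X])^T\big]\preceq\widebar{\Omega}(\beta,\eta_{0,N})\preceq C'\,\mathbb{E}_{P_{0,N}}\big[(Z-\mathbb{E}_{P_{0,N}}[Z|X])(Z-\mathbb{E}_{P_{0,N}}[Z|X])^T\big]$, and taking expectation over $X$ in Assumption~\ref{Assumption: MSMM}(c) yields $\tfrac1C I_m\preceq\mathbb{E}_{P_{0,N}}\big[(Z-\mathbb{E}_{P_{0,N}}[Z|X])(Z-\mathbb{E}_{P_{0,N}}[Z|X])^T\big]\preceq C I_m$, so the claim follows after renaming constants. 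The main obstacle is the eigenvalue lower bound, i.e.\ showing $v(\beta,Z,X)$ stays bounded away from zero uniformly in $\beta\in\mathcal{B}$; this is where the exclusion restriction and the positive lower bound on $\mathrm{Var}(Y|A,X)$ enter, and one must check the bound is uniform over the compact parameter space (which is immediate since $\exp(-2\beta A)$ is bounded below there). The upper bound and the conditioning identity for $\widebar{\Omega}$ are routine.
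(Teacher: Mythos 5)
Your overall route is the same as the paper's: factor $\widebar{\Omega}(\beta,\eta_{0,N})=\mathbb{E}_{P_{0,N}}\big[(Z-\mathbb{E}_{P_{0,N}}[Z|X])(Z-\mathbb{E}_{P_{0,N}}[Z|X])^T\,v(\beta,Z,X)\big]$ by the tower property, sandwich the scalar $v(\beta,Z,X)=\mathbb{E}_{P_{0,N}}[R(\beta)^2\mid Z,X]$ between positive constants uniformly over the compact $\mathcal{B}$, and conclude via Assumption S2(c). The paper does exactly this, except that it simply \emph{asserts} "with probability one, $1/C\le \mathbb{E}_{P_{0,N}}[(\text{residual})^2\mid Z,X]\le C$ for $\beta\in\mathcal{B}$" and does not attempt to derive it from more primitive conditions.

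The place where your write-up goes wrong is precisely in that derivation. You justify both bounds by claiming that the exclusion restriction gives $\mathbb{E}_{P_{0,N}}[Y^k\mid A,Z,X]=\mathbb{E}_{P_{0,N}}[Y^k\mid A,X]$ for $k=1,2$. That is false in this model: the exclusion restriction and latent ignorability constrain $Y(a)$ given $(X,U)$, but once you condition on the treatment $A$ (a collider), $Z$ and $U$ become dependent, so $\mathbb{E}_{P_{0,N}}[Y\mid A,Z,X]$ generally depends on $Z$; indeed this dependence is what makes $Z$ a useful instrument. Consequently your chain $\mathbb{E}_{P_{0,N}}[R(\beta)^2\mid A,Z,X]\ge \exp(-2\beta A)\,\mathrm{Var}(Y\mid A,X)$ does not follow (you would only get $\exp(-2\beta A)\,\mathrm{Var}(Y\mid A,Z,X)$), and your upper bound likewise needs control of $\mathbb{E}_{P_{0,N}}[Y^2\mid A,Z,X]$, which is not implied by the stated bound $\mathbb{E}_{P_{0,N}}[Y^4\mid A,X]<C$. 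Moreover, a lower bound on $\mathrm{Var}(Y\mid A,X)$ (or on $\mathrm{Var}(Y\mid A,Z,X)$) appears nowhere in Assumption S2, so "exactly as in the ASMM case" is not available either—the ASMM lemma has the same asserted, not derived, bounds. To fix this, either state the two-sided bound on $\mathbb{E}_{P_{0,N}}[R(\beta)^2\mid Z,X]$ as a regularity condition (which is what the paper effectively does), or impose conditional moment/variance bounds given $(A,Z,X)$ and rerun your argument with conditioning on $(A,Z,X)$ throughout; the remaining steps (the matrix sandwich and the use of Assumption S2(c), including taking the expectation over $X$) are correct.
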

\begin{proof}
    \begin{align*}
        &\widebar{\Omega}(\beta,\eta_{0,N})=\mathbb{E}_{P_{0,N}}[(Z-\eta_{Z,0,N}(X))(Z-\eta_{Z,0,N}(X))^T\\
        &((AY-\eta_{Y,(1),0,N}(X)\eta_{A,(1),0,N}(X))\exp(-\beta)+(1-A)Y-\eta_{Y,(0),0,N}(X)\eta_{A,(0),0,N}(X))^2]\\
        =&\mathbb{E}_{P_{0,N}}[(Z-\eta_{Z,0,N}(X))(Z-\eta_{Z,0,N}(X))^T\\
        &\mathbb{E}_{P_{0,N}}[((AY-\eta_{Y,(1),0,N}(X)\eta_{A,(1),0,N}(X))\exp(-\beta)+(1-A)Y-\eta_{Y,(0),0,N}(X)\eta_{A,(0),0,N}(X))^2|Z,X]]
    \end{align*}
    Since with probability one,
    \footnotesize
    \begin{align*}
        1/C\leq 
        \mathbb{E}_{P_{0,N}}[((AY-\eta_{Y,(1),0,N}(X)\eta_{A,(1),0,N}(X))\exp(-\beta)+(1-A)Y-\eta_{Y,(0),0,N}(X)\eta_{A,(0),0,N}(X))^2|Z,X]\\
        \leq C
    \end{align*}
    \normalsize
    for $\beta \in \mathcal{B}$. We have for all $\beta \in \mathcal{B}$, $1/C \leq \xi_{min}(\widebar{\Omega}(\beta,\eta_{0,N}))\leq \xi_{max}(\widebar{\Omega}(\beta,\eta_{0,N}))\leq C$.
\end{proof}

\begin{lemma}
    (a) There is $C>0$ with $\vert \beta-\beta_0\vert\leq C\sqrt{N}\Vert \widebar{g}(\beta,\eta_{0,N}) \Vert/\mu_N$ for all $\beta \in \mathcal{B}$. (b) There is $C>0$ and $\widehat{M}=O_p(1)$ such that $\vert \beta-\beta_0\vert\leq C\sqrt{N}\Vert \widebar{g}(\beta,\eta_{0,N})\Vert/\mu_N+\widehat{M}$ for all $\beta\in \mathcal{B}$.
\end{lemma}
\begin{proof}
\textbf{Proof for part (a):}
    \begin{align*}
        \widebar{G} &= \mathbb{E}_{P_{0,N}}\left[(Z-\mu_{Z,0,N}(X))(I\{A=1\}Y-\eta_{Y,(1),0,N}(X)\eta_{A,(1),0,N}(X))\exp(\beta_0)\right]\\
        &=\mathbb{E}_{P_{0,N}}[g^{[1]}(O;\eta_{0,N})]\exp(\beta_0).
    \end{align*}
    Combing with the fact that $\exp(\beta)$ is bounded and $\beta \in \mathcal{B}$, $1/C \leq \xi_{min}(\widebar{\Omega}(\beta,\eta_{0,N}))\leq \xi_{max}(\widebar{\Omega}(\beta,\eta_{0,N}))\leq C$ for all $\beta \in \mathcal{B}$, we have
    \begin{align*}
       c_1\mu_N^2/N \leq \widebar{g}^{[1]}(O;\eta_0)^T\widebar{g}^{[1]}(O;\eta_0)\leq c_2\mu_N^2/N
    \end{align*}
    for some constants $c_1$ and $c_2$.
    Since 
    \begin{align*}
        \widebar{g}(\beta,\eta_{0,N}) = \widebar{g}(\beta,\eta_{0,N})-\widebar{g}(\beta_0,\eta_{0,N})=g^{[1]}(\eta_{0,N})(\exp(\beta)-\exp(\beta_0)),
    \end{align*}
    similar to the ASMM case, we have
    \begin{align*}
        \sqrt{N}\Vert \widebar{g}(\beta,\eta_{0,N})\Vert/\mu_N = \sqrt{N}/\mu_N\vert\exp(\beta)-\exp(\beta_0)\vert\sqrt{\widebar{g}^T(\eta_{0,N})\widebar{g}(\eta_{0,N})} \gtrsim \vert \beta-\beta_0\vert.   \end{align*}

        \noindent \textbf{Proof for part (b).} The proof for part (b) is similar to that of ASMM, thus omitted.
\end{proof}

\begin{lemma}
    $g^{[1]}$ and $g^{[2]}$ satisfy Neyman orthogonality. Hence $g$ satisfies Neyman orthogonality.
\end{lemma}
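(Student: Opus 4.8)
The plan is to verify the first-order Gateaux derivative condition directly for each summand $g^{[1]}$ and $g^{[2]}$, exactly as was done for the ASMM moment function in the corresponding lemma of Section S4.1, and then invoke the permanence property (Lemma~\ref{Lemma: permanence of global neyman orthogonality}(b)) to conclude that $g(o;\beta,\eta) = g^{[1]}(o;\eta)h^{[1]}(\beta) + g^{[2]}(o;\eta)h^{[2]}(\beta)$ is Neyman orthogonal as well. Note that $g^{[1]}$ and $g^{[2]}$ do not depend on $\beta$, so the direction of the derivative is only over the nuisance component, and global orthogonality over all $\beta\in\mathcal{B}$ is immediate once we verify orthogonality at a single (arbitrary) $\beta$.

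First I would fix an alternative nuisance value $\eta_1 = (\eta_{Y,(0),1},\eta_{Y,(1),1},\eta_{A,(1),1},\eta_{Z,1})$ and form the path $\eta_t = (1-t)\eta_{0,N} + t\eta_1$. For $g^{[1]}$ I would write out $\mathbb{E}_{P_{0,N}}[g^{[1]}(O;\eta_t)] = \mathbb{E}_{P_{0,N}}\big[(Z-\eta_{Z,t}(X))(I\{A=1\}Y - \eta_{Y,(1),t}(X)\eta_{A,(1),t}(X))\big]$, differentiate in $t$, and evaluate at $t=0$. The derivative produces two groups of terms: one where the $Z$-factor is differentiated, giving $\mathbb{E}_{P_{0,N}}[(\eta_{Z,0,N}-\eta_{Z,1})(X)(I\{A=1\}Y - \eta_{Y,(1),0,N}(X)\eta_{A,(1),0,N}(X))]$; and one where the product $\eta_{Y,(1),t}\eta_{A,(1),t}$ is differentiated, giving $\mathbb{E}_{P_{0,N}}[(Z-\eta_{Z,0,N}(X))\cdot(\text{function of }X)]$. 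The first group vanishes because, conditioning on $X$, $\mathbb{E}_{P_{0,N}}[I\{A=1\}Y\mid X] = \eta_{Y,(1),0,N}(X)\eta_{A,(1),0,N}(X)$ (since $\eta_{Y,(1),0,N}(X) = \mathbb{E}_{P_{0,N}}[Y\mid A=1,X]$ and $\eta_{A,(1),0,N}(X) = P_{0,N}(A=1\mid X)$), and the $Z$-factor is then irrelevant. The second group vanishes because $\mathbb{E}_{P_{0,N}}[Z-\eta_{Z,0,N}(X)\mid X] = 0$ by definition of $\eta_{Z,0,N}$. An analogous computation handles $g^{[2]}$, using $\mathbb{E}_{P_{0,N}}[I\{A=0\}Y\mid X] = \eta_{Y,(0),0,N}(X)(1-\eta_{A,(1),0,N}(X))$.

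Finally, since $h^{[1]}(\beta) = \exp(-\beta)$ and $h^{[2]}(\beta) = 1$ are functions of $\beta$ only, Lemma~\ref{Lemma: permanence of global neyman orthogonality}(b) gives that the linear combination $g = g^{[1]}h^{[1]} + g^{[2]}h^{[2]}$ is Neyman orthogonal on all of $\mathcal{B}$, i.e.\ globally Neyman orthogonal. I do not anticipate a genuine obstacle here; the only mild care needed is justifying the interchange of $\partial/\partial t$ and $\mathbb{E}_{P_{0,N}}$, which follows from the boundedness assumptions on $(Z,X)$ and the fourth-moment conditions on $Y$ in Assumption~\ref{Assumption: MSMM}(a) (these provide an integrable envelope uniformly over $t\in[0,1]$ and over $\eta_1$ in the realization set), together with keeping track of the two conditional-expectation identities that make the surviving terms cancel.
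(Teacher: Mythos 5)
Your proposal is correct and follows essentially the same route as the paper: a direct computation of the first-order Gateaux derivative of $\mathbb{E}_{P_{0,N}}[g^{[b]}(O;(1-t)\eta_{0,N}+t\eta_1)]$ at $t=0$, with the two groups of terms killed by the identities $\mathbb{E}_{P_{0,N}}[I\{A=a\}Y\mid X]=\eta_{Y,(a),0,N}(X)P_{0,N}(A=a\mid X)$ and $\mathbb{E}_{P_{0,N}}[Z-\eta_{Z,0,N}(X)\mid X]=0$, followed by the linear-combination step (the paper does this implicitly, you cite Lemma~\ref{Lemma: permanence of global neyman orthogonality}(b) explicitly, which is fine). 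Your extra remark about the integrable envelope for interchanging $\partial/\partial t$ and $\mathbb{E}_{P_{0,N}}$ is a harmless refinement the paper leaves tacit.
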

\begin{proof}
    Let $\eta_1=(\eta_{Y,(0),1},\eta_{Y,(1),1},\eta_{A,(1),1},\eta_{Z,1})$.  Then for $g^{[1]}(\eta)$,
    \begin{align*}
        &\mathbb{E}_{P_{0,N}}\left[g^{[1]}(O;\beta,(1-t)\eta_{0,N}+t\eta_1)\right]\\
        =&\mathbb{E}_{P_{0,N}}\big[(Z-(1-t)\eta_{Z,0,N}(X)+t\eta_{Z,1}(X))\\
        &\times (I\{A=1\}Y-((1-t)\eta_{Y,(1),0,N}(X)+t\eta_{Y,(1),1}(X))((1-t)\eta_{A,(1),0,N}(X)+t\eta_{A,(1),1}(X)))\big].
    \end{align*}
    The first order Gatuex derivative is
    \begin{align*}
        &\frac{\partial }{\partial t}\mathbb{E}_{P_{0,N}}\left[g^{[1]}(O;(1-t)\eta_{0,N}+t\eta_1)\right]\\
        =&\mathbb{E}_{P_0}\Big[(\eta_{Z,0,N}(X)-\eta_{Z,1}(X))(I\{A=1\}Y-((1-t)\eta_{Y,(1),0,N}(X)+t\eta_{Y,(1),1}(X))\\
        &\times((1-t)\eta_{A,(1),0,N}(X)+t\eta_{A,(1),1}(X)))\Big]\\
        &+(Z-(1-t)\eta_{Z,0,N}(X)+t\eta_{Z,1}(X))(\eta_{Y,(1),0,N}(X)-\eta_{Y,(1),1}(X))((1-t)\eta_{A,(1),0,N}(X)+t\eta_{A,(1),1}(X))\\
        &+(Z-(1-t)\eta_{Z,0,N}(X)+t\mu_{Z,1}(X))(\eta_{A,(1),0,N}(X)-\eta_{A,(1),1}(X))((1-t)\eta_{Y,(1),0,N}(X)+t\eta_{Y,(1),1}(X)).
    \end{align*}
    Therefore,
    \begin{align*}
        &\frac{\partial }{\partial t}\mathbb{E}_{P_{0,N}}\left[g^{[1]}(O;\beta,(1-t)\eta_{0,N}+t\eta_1)\right]\Big\vert_{t=0}\\
        =&\mathbb{E}_{P_{0,N}}\left[(\mu_{Z,0}(X)-\mu_{Z,1}(X))(I\{A=1\}Y-\eta_{Y,(1),0,N}\eta_{A,(1),0,N})\right]\\
        &+\mathbb{E}_{P_{0,N}}\left[(Z-\mu_{Z,0,N}(X))(\eta_{Y,(1),0,N}(X)-\eta_{Y,(1),1}(X))\eta_{A,(1),0,N}(X)\right]\\
        &+\mathbb{E}_{P_{0,N}}\left[(Z-\mu_{Z,0,N}(X))(\eta_{A,(1),0,N}(X)-\eta_{A,(1),1}(X))\eta_{Y,(1),0,N}(X)\right]\\
        =&0.
    \end{align*}
    Therefore, $g^{[1]}(\eta)$ satisfies Neyman orthogonality. 

    We can show $g^{[2]}(\eta)$ is Neyman orthogonal in a similar way, and hence $g(\beta,\eta)$ is Neyman orthogonal.
\end{proof}

The next result is related to the statistical rate for the second order Gateux drivative of $g$.
\begin{lemma}
For $j=1,...,m$,
    \begin{align*}
        \lambda_N^{(j)}\leq N^{-1/2}\delta_N.
    \end{align*}
\end{lemma}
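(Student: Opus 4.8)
The plan is to compute the second-order Gateaux derivative of $\mathbb{E}_{P_{0,N}}[g^{(j)}(O;\beta,(1-t)\eta_{0,N}+t\eta_1)]$ explicitly, using the same bilinear-product structure exploited in the Neyman-orthogonality verification above, and then bound its absolute value by a product of two nuisance $L^2$-errors. Since $g$ for the MSMM example decomposes as $g(O;\beta,\eta)=g^{[1]}(O;\eta)\exp(\beta)+g^{[2]}(O;\eta)$ and $\exp(\beta)$ is bounded on the compact set $\mathcal{B}$, it suffices to control $\frac{\partial^2}{\partial t^2}\mathbb{E}_{P_{0,N}}[g^{[b],(j)}(O;(1-t)\eta_{0,N}+t\eta_1)]$ for $b=1,2$ uniformly over $t\in(0,1)$, $\eta_1\in\mathcal{T}_N$, and $j$.

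First I would write out the second derivative for $g^{[1]}$. Because $g^{[1],(j)}(O;\eta)=(Z^{(j)}-\eta_{Z^{(j)}}(X))\big(I\{A=1\}Y-\eta_{Y,(1)}(X)\eta_{A,(1)}(X)\big)$ is affine in $\eta_{Z^{(j)}}$ and bilinear in $(\eta_{Y,(1)},\eta_{A,(1)})$, the path $\eta_t=(1-t)\eta_{0,N}+t\eta_1$ makes the integrand a degree-2 polynomial in $t$. Differentiating twice in $t$ kills the linear and constant pieces and leaves only the cross terms, giving (up to signs)
\begin{align*}
\frac{\partial^2}{\partial t^2}\mathbb{E}_{P_{0,N}}[g^{[1],(j)}(O;\eta_t)]
= 2\,\mathbb{E}_{P_{0,N}}\Big[(\eta_{Z^{(j)},0,N}-\eta_{Z^{(j)},1})\big(\eta_{Y,(1),0,N}-\eta_{Y,(1),1}\big)\big(\eta_{A,(1),0,N}-\eta_{A,(1),1}\big)\cdot(\cdots)\Big]
+ (\text{two terms of the form } \eta_{Z}\text{-error}\times\eta_{Y}\text{-error}\times\text{bounded}),
\end{align*}
where the precise bookkeeping I would carry out carefully but which is routine. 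The analogous expansion holds for $g^{[2]}$ with $\eta_{Y,(0)}$ and $(1-\eta_{A,(1)})$ in place of $\eta_{Y,(1)}$ and $\eta_{A,(1)}$.

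Next, I would bound each resulting term by Cauchy--Schwarz and the moment/boundedness conditions of Assumption \ref{Assumption: MSMM}: the variables $(X,Z)$ are uniformly bounded, $\mathbb{E}_{P_{0,N}}[Y^4|A,X]<C$, and $\|\widehat{\eta}_{Z^{(j)},l}-\eta_{Z^{(j)},0,N}\|_{P_{0,N},\infty}$, $\|\widehat{\eta}_{Y,(a),l}-\eta_{Y,(a),0,N}\|_{P_{0,N},\infty}$, $\|\widehat{\eta}_{A,(1),l}-\eta_{A,(1),0,N}\|_{P_{0,N},\infty}$ are all $\le C$. Hence each triple (or pair) product of nuisance errors is dominated by a product of two $L^2$-errors times a bounded factor, e.g. $\|\eta_{Z^{(j)},0,N}-\eta_{Z^{(j)},1}\|_{P_{0,N},2}\big(\|\eta_{Y,(a),0,N}-\eta_{Y,(a),1}\|_{P_{0,N},2}+\|\eta_{A,(1),0,N}-\eta_{A,(1),1}\|_{P_{0,N},2}\big)$. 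By the cross-rate conditions in Assumption \ref{Assumption: MSMM}(d), each such product is $\le N^{-1/2}\delta_N$ (possibly after absorbing an $\exp(\beta)$ factor, which is bounded uniformly on $\mathcal{B}$, and noting $\delta_N/m\cdot\delta_N\le N^{-1/2}\delta_N$ where a $\eta_Z$-error in $L^2$ appears). Taking the supremum over $\beta\in\mathcal{B}$, $t\in(0,1)$, and $\eta_1\in\mathcal{T}_N$, and noting the number of summed terms is a fixed constant independent of $m$, yields $\lambda_N^{(j)}\le N^{-1/2}\delta_N$ for every $j$, as claimed.

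The main obstacle is the combinatorial bookkeeping: correctly identifying which second-order cross terms survive differentiation and matching each of them to one of the specific $L^2$-cross-rate bounds postulated in Assumption \ref{Assumption: MSMM}(d) — in particular making sure that the $\eta_Z$-error always pairs with a $\eta_Y$- or $\eta_A$-error (never with itself squared without the extra $1/m$ gain), so that every term genuinely falls under a hypothesized product bound rather than requiring a faster individual rate. Once the pairing is set up correctly, the estimates themselves are elementary applications of Cauchy--Schwarz and the uniform boundedness conditions.
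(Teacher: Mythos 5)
Your overall strategy is the same as the paper's: use the separable form $g^{(j)}(o;\beta,\eta)=g^{[1],(j)}(o;\eta)h^{[1]}(\beta)+g^{[2],(j)}(o;\eta)$ with $h^{[1]}$ bounded on the compact set $\mathcal{B}$, note that along $\eta_t=(1-t)\eta_{0,N}+t\eta_1$ the expectation is a polynomial in $t$, compute the second derivative explicitly, and bound the surviving cross terms by Cauchy--Schwarz together with the $L^\infty$ bounds and the product rate conditions.

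There is, however, a genuine gap, and it sits exactly at the step you identify as the crux. Your guiding principle that ``the $\eta_Z$-error always pairs with a $\eta_Y$- or $\eta_A$-error'' is false for the MSMM: because $g^{[1],(j)}(O;\eta)=(Z^{(j)}-\eta_{Z^{(j)}}(X))\bigl(I\{A=1\}Y-\eta_{Y,(1)}(X)\eta_{A,(1)}(X)\bigr)$ is bilinear in $(\eta_{Y,(1)},\eta_{A,(1)})$, the second derivative in $t$ contains, besides the two $\eta_Z$-cross terms you list, a term proportional to
\[
\mathbb{E}_{P_{0,N}}\Bigl[\bigl(Z^{(j)}-\eta_{Z^{(j)},t}(X)\bigr)\bigl(\eta_{Y,(1),0,N}(X)-\eta_{Y,(1),1}(X)\bigr)\bigl(\eta_{A,(1),0,N}(X)-\eta_{A,(1),1}(X)\bigr)\Bigr],
\]
where $\eta_{Z^{(j)},t}=(1-t)\eta_{Z^{(j)},0,N}+t\eta_{Z^{(j)},1}$. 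No $\eta_{Z^{(j)}}$-error appears here; the factor $Z^{(j)}-\eta_{Z^{(j)},t}(X)$ is merely bounded, not small, so the term must be controlled by $\Vert\eta_{Y,(1),0,N}-\eta_{Y,(1),1}\Vert_{P_{0,N},2}\,\Vert\eta_{A,(1),0,N}-\eta_{A,(1),1}\Vert_{P_{0,N},2}$ (an analogous term arises from $g^{[2],(j)}$). This is precisely the third term in the paper's proof, which bounds it by $N^{-1/2}\delta_N$ via a product condition on the $Y$- and $A$-nuisance errors. The cross-rate conditions you invoke involve only products with the $\eta_{Z^{(j)}}$-error, and the individual rates give only $\delta_N^2$ for this term, which is $\leq N^{-1/2}\delta_N$ only if $\delta_N\leq N^{-1/2}$ --- not implied by $\delta_N=o(1/\sqrt{m})$. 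So, as written, your argument leaves this term unbounded; you need to add (and invoke) the $Y\times A$ product requirement, as the paper does. A smaller point: your aside that $(\delta_N/m)\cdot\delta_N\leq N^{-1/2}\delta_N$ is equivalent to $\delta_N\leq mN^{-1/2}$, which is also not guaranteed under $m^3/N\to0$; it is unnecessary anyway, since the $\eta_Z$-cross products are assumed directly. Apart from these issues, your computation and the Cauchy--Schwarz/boundedness steps coincide with the paper's argument.
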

\begin{proof}
    We calculate the second-order Gateux derivative for $g^{[1],(j)}(\eta)$:
    \footnotesize
    \begin{align*}
        &\frac{\partial^2}{\partial t^2}\mathbb{E}_{P_{0,N}}[g^{[1],(j)}((1-t)\eta_0+t\eta_{1})]\\
        =&3\mathbb{E}_{P_{0,N}}\left[(\eta_{Z^{(j)},0,N}(X)-\eta_{Z^{(j)},1}(X))(\eta_{Y,(1),0,N}(X)-\eta_{Y,(1),1}(X))((1-t)\eta_{A,(1),0,N}+t\eta_{A,(1),1})\right]\\
        &+3\mathbb{E}_{P_{0,N}}\left[(\eta_{Z^{(j)},0,N}(X)-\eta_{Z^{(j)},1}(X))(\eta_{A,(1),0,N}(X)-\eta_{A,(1),1}(X))((1-t)\eta_{Y,(1),0,N}+t\eta_{Y,(1),1})\right]\\
        &+2\mathbb{E}_{P_{0,N}}\left[(Z^{(j)}-(1-t)\eta_{Z^{(j)},0,N}(X)-t\eta_{Z^{(j)},1}(X))(\eta_{A,(1),0,N}(X)-\eta_{A,(1),1}(X))(\eta_{Y,(1),1}(X)-\eta_{Y,(1),0,N}(X))\right].
    \end{align*}
    \normalsize
    Therefore,
    \begin{align*}
        &\sup_{t\in (0,1),\eta\in \mathcal{T}_N}\left\vert\frac{\partial^2}{\partial t^2}\mathbb{E}_{P_{0,N}}[g^{[1],(j)}((1-t)\eta_{0,N}+t\eta_1)]\right\vert\\
        \lesssim & \mathbb{E}_{P_{0,N}}\left[\vert(\eta_{Z^{(j)},0,N}(X)-\eta_{Z^{(j)},1}(X))(\eta_{Y,(1),0,N}(X)-\eta_{Y,(1),1}(X))\vert\right]\\
&+\mathbb{E}_{P_{0,N}}\left[\vert(\eta_{Z^{(j)},0,N}(X)-\eta_{Z^{(j)},1}(X))(\eta_{A,(1),0,N}(X)-\eta_{A,(1),1}(X))\vert\right]\\
        &+\mathbb{E}_{P_{0,N}}\left[\vert(\eta_{A,(1),0,N}(X)-\eta_{A,(1)}(X))(\eta_{Y,(1)}(X)-\eta_{Y,(1),0,N}(X))\vert\right]\\
        \leq & \Vert \eta_{Z^{(j)},0,N}-\eta_{Z^{(j)},1}\Vert_{P_{0,N},2}\Vert\eta_{Y,(1),0,N}-\eta_{Y,(1),1}\Vert_{P_{0,N},2}+\Vert \eta_{Z^{(j)},0,N}-\eta_{Z^{(j)},1}\Vert_{P_{0,N},2}\Vert\eta_{A,(1),0,N}-\eta_{A,(1),1}\Vert_{P_{0,N},2}\\
        &+\Vert \eta_{Y,(1),0,N}-\eta_{Y,(1)}\Vert_{P_{0,N},2}\Vert\eta_{A,(1),0,N}-\eta_{A,(1),1}\Vert_{P_{0,N},2}\\
        & \leq N^{-1/2}\delta_N.
    \end{align*}
    Similarly, we can prove that
    \begin{equation*}
        \sup_{t\in (0,1),\eta_1\in \mathcal{T}_N}\left\vert\frac{\partial^2}{\partial t^2}\mathbb{E}_{P_{0,N}}[g^{[2],(j)}((1-t)\eta_{0,N}+t\eta_1)]\right\vert\leq N^{-1/2}\delta_N.
    \end{equation*}
    Therefore
    \begin{align*}
        \lambda^{(j)}_N&:=\sup_{\beta\in B,t\in(0,1),\eta\in \mathcal{T}_N}\Bigg\vert \frac{\partial^2}{\partial t^2} \mathbb{E}_{P_{0,N}}g^{(j)}(O;\beta,\eta_{0,N}+t(\eta-\eta_{0,N}))\Bigg\vert\\
        &\leq \sup_{t\in (0,1),\eta\in \mathcal{T}_N}\left\vert\frac{\partial^2}{\partial t^2}\mathbb{E}[g^{[1],(j)}((1-t)\eta_{0,N}+t\eta)]\right\vert\sup_{\beta\in \mathcal{B}}\exp(\beta)+\sup_{t\in (0,1),\eta\in \mathcal{T}_N}\left\vert\frac{\partial^2}{\partial t^2}\mathbb{E}[g^{[2],(j)}((1-t)\eta_{0,N}+t\eta)]\right\vert\\
        &\leq N^{-1/2}\delta_N.
    \end{align*}
\end{proof}

    \begin{lemma}
$\sup_{\beta\in\mathcal{B}}\mathbb{E}_{P_{0,N}}[(g(\beta,\eta_{0,N})^Tg(\beta,\eta_{0,N}))^2]/N\rightarrow 0$. \label{lemma: MSMM 3a}
\end{lemma}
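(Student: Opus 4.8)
The plan is to mirror the argument used for the analogous ASMM statement (Lemma \ref{lemma: ASMM 3a}), exploiting the product structure of the MSMM moment function $g(o;\beta,\eta_{0,N}) = \big(Y\exp(-\beta A)-\sum_{a\in\{0,1\}}\eta_{Y,(a),0,N}(X)\exp(-\beta a)\eta_{A,(a),0,N}(X)\big)(Z-\eta_{Z,0,N}(X))$. First I would write $g(\beta,\eta_{0,N})^Tg(\beta,\eta_{0,N}) = \|Z-\eta_{Z,0,N}(X)\|^2\,\varepsilon_\beta^2$, where $\varepsilon_\beta := Y\exp(-\beta A)-\sum_{a}\eta_{Y,(a),0,N}(X)\exp(-\beta a)\eta_{A,(a),0,N}(X)$ is a scalar, so that $(g^Tg)^2 = \|Z-\eta_{Z,0,N}(X)\|^4\,\varepsilon_\beta^4$. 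Since $(X,Z)$ are uniformly bounded by Assumption \ref{Assumption: MSMM}(a) and $\eta_{Z,0,N}(X)=\mathbb{E}_{P_{0,N}}[Z|X]$ is therefore also uniformly bounded, each coordinate of $Z-\eta_{Z,0,N}(X)$ is bounded by a constant, hence $\|Z-\eta_{Z,0,N}(X)\|^4\le C m^2$ almost surely.

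Next I would show $\sup_{\beta\in\mathcal{B}}\mathbb{E}_{P_{0,N}}[\varepsilon_\beta^4]\le C$. Because $\mathcal{B}$ is compact, $\exp(-\beta A)$ and $\exp(-\beta a)$ are uniformly bounded over $\beta\in\mathcal{B}$ and $a\in\{0,1\}$; $\eta_{A,(a),0,N}(X)$ is a conditional probability and hence bounded by $1$; and $\eta_{Y,(a),0,N}(X)=\mathbb{E}_{P_{0,N}}[Y|A=a,X]$ satisfies $|\eta_{Y,(a),0,N}(X)|^4\le \mathbb{E}_{P_{0,N}}[Y^4|A=a,X]\le C$ by Jensen's inequality and Assumption \ref{Assumption: MSMM}(a). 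Therefore the subtracted term $\sum_a\eta_{Y,(a),0,N}(X)\exp(-\beta a)\eta_{A,(a),0,N}(X)$ is uniformly bounded, and by the elementary inequality $(u+v)^4\lesssim u^4+v^4$ we get $\varepsilon_\beta^4\lesssim Y^4\exp(-4\beta A)+C\lesssim Y^4+C$ uniformly in $\beta$. Taking expectations and again invoking $\mathbb{E}_{P_{0,N}}[Y^4|A,X]\le C$ (hence $\mathbb{E}_{P_{0,N}}[Y^4]\le C$ by iterated expectation) yields $\sup_{\beta\in\mathcal{B}}\mathbb{E}_{P_{0,N}}[\varepsilon_\beta^4]\le C$.

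Combining the two bounds gives $\sup_{\beta\in\mathcal{B}}\mathbb{E}_{P_{0,N}}[(g(\beta,\eta_{0,N})^Tg(\beta,\eta_{0,N}))^2]/N \lesssim m^2/N$, which tends to $0$ under Assumption \ref{Assumption: MSMM}(b) since $m^3/N\to0$ implies $m^2/N\to0$. I do not anticipate any real obstacle here; the only point requiring a little care is the uniformity over the compact parameter space $\mathcal{B}$, which is handled by bounding $\exp(-\beta A)$ and $\exp(-\beta a)$ uniformly, exactly as in the ASMM case but with the additional (harmless) presence of the exponential weights and the two-term sum over $a$.
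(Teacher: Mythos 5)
Your proposal is correct and follows essentially the same route as the paper: the paper's own proof of this lemma simply defers to the ASMM argument (bound $\|Z-\eta_{Z,0,N}(X)\|^4\le Cm^2$ by boundedness of $(X,Z)$, bound the fourth moment of the scalar residual uniformly over the compact $\mathcal{B}$ via the conditional moment assumption, and conclude $m^2/N\to0$ from $m^3/N\to0$), and your write-up supplies exactly the MSMM-specific details (boundedness of $\exp(-\beta A)$ for binary $A$, $\eta_{A,(a),0,N}\le1$, and Jensen for $\eta_{Y,(a),0,N}$) that the paper omits.
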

\begin{proof}
    The proof is similar to the proof of \ref{lemma: ASMM 3a}.
\end{proof}

    \begin{lemma}
$\vert a^T \{\widebar{\Omega}(\beta',\eta_{0,N})-\widebar{\Omega}(\beta,\eta_{{0,N}})\}b\vert \leq C\Vert a\Vert \Vert b \Vert \Vert \beta'-\beta\Vert$.
\label{Lemma: MSMM 3d}
\end{lemma}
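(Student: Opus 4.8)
The plan is to exploit the separable, low‑degree dependence of the moment function on $\beta$ through $h^{[1]}(\beta)=\exp(-\beta)$ (here $p=1$, so $\|\beta'-\beta\|=|\beta'-\beta|$). Writing $g_1=g^{[1]}(O;\eta_{0,N})$ and $g_2=g^{[2]}(O;\eta_{0,N})$ and using $h^{[2]}\equiv 1$, we have $g(O;\beta,\eta_{0,N})=g_1 h^{[1]}(\beta)+g_2$, hence
\begin{align*}
\widebar{\Omega}(\beta,\eta_{0,N})=h^{[1]}(\beta)^2 A_{11}+h^{[1]}(\beta)A_{12}+A_{22},
\end{align*}
where $A_{11}=\mathbb{E}_{P_{0,N}}[g_1 g_1^T]$, $A_{12}=\mathbb{E}_{P_{0,N}}[g_1 g_2^T+g_2 g_1^T]$, and $A_{22}=\mathbb{E}_{P_{0,N}}[g_2 g_2^T]$. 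Subtracting the same identity at $\beta'$ cancels $A_{22}$, giving
\begin{align*}
\widebar{\Omega}(\beta',\eta_{0,N})-\widebar{\Omega}(\beta,\eta_{0,N})=\bigl(h^{[1]}(\beta')^2-h^{[1]}(\beta)^2\bigr)A_{11}+\bigl(h^{[1]}(\beta')-h^{[1]}(\beta)\bigr)A_{12},
\end{align*}
so that $\vert a^T\{\widebar{\Omega}(\beta',\eta_{0,N})-\widebar{\Omega}(\beta,\eta_{0,N})\}b\vert\le\bigl(\vert h^{[1]}(\beta')^2-h^{[1]}(\beta)^2\vert\,\Vert A_{11}\Vert+\vert h^{[1]}(\beta')-h^{[1]}(\beta)\vert\,\Vert A_{12}\Vert\bigr)\Vert a\Vert\,\Vert b\Vert$. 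It then remains to control the three scalar factors.

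For the coefficients, both $h^{[1]}(\beta)=\exp(-\beta)$ and $h^{[1]}(\beta)^2=\exp(-2\beta)$ are $C^1$ on the compact set $\mathcal{B}$, hence Lipschitz there, so $\vert h^{[1]}(\beta')-h^{[1]}(\beta)\vert\le C\vert\beta'-\beta\vert$ and $\vert h^{[1]}(\beta')^2-h^{[1]}(\beta)^2\vert\le C\vert\beta'-\beta\vert$; this is the same elementary mean‑value estimate already used in the MSMM section. The only substantive step is the uniform operator‑norm bound $\Vert A_{11}\Vert,\Vert A_{12}\Vert\le C$, which is not immediate because $A_{11},A_{12}$ are $m\times m$ matrices with $m\to\infty$. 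I would obtain it by a Vandermonde reduction to Lemma \ref{Lemma: MSMM 3b}: choose three distinct points $\beta_1,\beta_2,\beta_3\in\mathcal{B}$; since $h^{[1]}$ is strictly monotone, the values $h^{[1]}(\beta_1),h^{[1]}(\beta_2),h^{[1]}(\beta_3)$ are distinct, so the $3\times3$ Vandermonde matrix with rows $(h^{[1]}(\beta_j)^2,h^{[1]}(\beta_j),1)$ is invertible. Solving the resulting linear system expresses each of $A_{11},A_{12},A_{22}$ as a fixed, dimension‑free linear combination of $\widebar{\Omega}(\beta_1,\eta_{0,N}),\widebar{\Omega}(\beta_2,\eta_{0,N}),\widebar{\Omega}(\beta_3,\eta_{0,N})$, each of which has spectral norm at most $C$ by Lemma \ref{Lemma: MSMM 3b}; hence $\Vert A_{11}\Vert,\Vert A_{12}\Vert\le C$. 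Combining with the Lipschitz bounds yields $\vert a^T\{\widebar{\Omega}(\beta',\eta_{0,N})-\widebar{\Omega}(\beta,\eta_{0,N})\}b\vert\le C\Vert a\Vert\,\Vert b\Vert\,\vert\beta'-\beta\vert$.

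An alternative, more hands‑on route bounds $\Vert A_{11}\Vert$ and $\Vert A_{12}\Vert$ directly from the rank‑one‑per‑observation structure $g_b=(Z-\eta_{Z,0,N}(X))s_b$ with $s_b$ a scalar, using Assumption \ref{Assumption: MSMM}(c) on the conditional covariance of $Z-\eta_{Z,0,N}(X)$ together with the moment bound $\mathbb{E}_{P_{0,N}}[Y^4\mid A,X]<C$ to control $\mathbb{E}_{P_{0,N}}[s_b^2\mid Z,X]$; however, the Vandermonde argument is cleaner and fully self‑contained given that Lemma \ref{Lemma: MSMM 3b} is already available. The main obstacle is precisely this uniform control of the growing‑dimensional matrices $A_{11},A_{12}$, and the linear‑algebra trick resolves it by transferring the bound from the already‑established eigenvalue bounds on $\widebar{\Omega}(\cdot,\eta_{0,N})$; everything else is routine.
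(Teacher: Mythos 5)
Your proof is correct, but the key step is handled differently from the paper. The paper decomposes the increment relative to $g(\beta,\eta_{0,N})$ itself, writing $g(\beta',\eta_{0,N})=g(\beta,\eta_{0,N})+g^{[1]}(\eta_{0,N})(\exp(\beta')-\exp(\beta))$, so the difference $\widebar{\Omega}(\beta',\eta_{0,N})-\widebar{\Omega}(\beta,\eta_{0,N})$ involves the matrices $\mathbb{E}_{P_{0,N}}[g^{[1]}g^{[1],T}]$ and $\mathbb{E}_{P_{0,N}}[g^{[1]}g(\beta,\eta_{0,N})^{T}]$, whose spectral norms the paper bounds via the matrix Cauchy--Schwarz inequality combined with the bound on $\mathbb{E}_{P_{0,N}}[G_iG_i^T]$ and the uniform eigenvalue bounds on $\widebar{\Omega}$; the same mean-value estimate for $\exp$ on the compact $\mathcal{B}$ then gives the Lipschitz factor. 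You instead expand $\widebar{\Omega}(\beta,\eta_{0,N})=h^2A_{11}+hA_{12}+A_{22}$ and bound $\Vert A_{11}\Vert,\Vert A_{12}\Vert$ by a Vandermonde interpolation at three fixed points of $\mathcal{B}$ (available since $\mathcal{B}$ has nonempty interior and $h^{[1]}$ is injective), expressing $A_{11},A_{12},A_{22}$ as dimension-free linear combinations of $\widebar{\Omega}(\beta_j,\eta_{0,N})$, $j=1,2,3$, and then invoking only the already-established uniform spectral bound of Lemma \ref{Lemma: MSMM 3b}. Your route is self-contained and sidesteps the paper's reliance on bounds for $G_i$ and the Cauchy--Schwarz step (the paper's claim $\Vert G_i(\beta,\eta_{0,N})\Vert\le C$ is in fact a bit loose since $G_i$ involves the unbounded $Y$, whereas your argument never needs it); the paper's route is more direct in that it generalizes immediately to moment functions that are affine in a single transformed parameter without requiring evaluation at auxiliary parameter values. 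Both yield the stated bound with a constant independent of $m$ and $N$.
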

\begin{proof}
By matrix Cauchy-Schwarz inequality \citet{gautam1999matrixcauchy}, we have 
\begin{align*}
    \mathbb{E}_{P_{0,N}}[G_i(\beta,\eta_{0,N})g_i(\beta,\eta_{0,N})^T]\overline{\Omega}(\beta,\eta_{0,N})^{-1}\mathbb{E}_{P_{0,N}}[g_i(\beta,\eta_{0,N})G(\beta,\eta_{0,N})^T_i]\leq \mathbb{E}_{P_{0,N}}[G_i(\beta,\eta_{0,N})G_i(\beta,\eta_{0,N})^T].
\end{align*}
Since $\Vert G_i(\beta,\eta_{0,N})\Vert\leq C$, we have $\Vert \mathbb{E}_{P_{0,N}}[G_i(\beta,\eta_{0,N})g_i(\beta,\eta_{0,N})^T]\Vert\leq C$ and $\Vert \mathbb{E}_{P_{0,N}}[g^{[1]}_i(\beta,\eta_{0,N})g_i(\beta,\eta_{0,N})^T]\Vert\leq C$. 

Since $g(\beta',\eta_{0,N}) = g(\beta,\eta_{0,N})+g^{[1]}(\eta_{0,N})(\exp(\beta')-\exp(\beta))$, we have
    \begin{align*}
        &\vert a^T \{\widebar{\Omega}(\beta',\eta_{0,N})-\widebar{\Omega}(\beta,\eta_{{0,N}})\}b\vert\\
        \leq & \vert (\exp(\beta')-\exp(\beta))^2a^T \mathbb{E}_{P_{0,N}}\{g^{[1]}_i(\eta_{0,N})g^{[1]}_i(\eta_{0,N})^T\}b\vert\\
        &+\vert (\exp(\beta')-\exp(\beta))a^T \mathbb{E}_{P_{0,N}}\{g^{[1]}_i(\eta_{0,N})g_i(\beta,\eta_0)^T\}b\vert\\
        &+\vert (\exp(\beta')-\exp(\beta))a^T \mathbb{E}_{P_{0,N}}\{g_i(\beta,\eta_0)g^{[1]}_i(\eta_{0,N})^T_i\}b\vert\\
        \lesssim & \vert\beta'-\beta\vert\Vert a\Vert \Vert b\Vert.
    \end{align*}
\end{proof}

\begin{lemma}
    There is $C$ and $\widehat{M}=O_p(1)$ such that for all $\beta',\beta \in \mathcal{B}$.
    \begin{align*}
        &\sqrt{N}\Vert \widebar{g}(\beta',\eta_{0,N})-\widebar{g}(\beta,\eta_{0,N})\Vert/\mu_N \leq C\vert \beta'-\beta\vert,\\
        &\sqrt{N}\Vert \widehat{g}(\beta',\eta_{0,N})-\widehat{g}(\beta_0,\eta_{0,N})\Vert/\mu_N \leq C\vert \beta'-\beta\vert.
    \end{align*}
\end{lemma}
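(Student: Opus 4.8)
The plan is to exploit the separable form of the MSMM moment function, which collapses this vector Lipschitz bound into a scalar one for the single $\beta$-dependent coordinate function. Write $g(O;\beta,\eta_{0,N}) = g^{[1]}(O;\eta_{0,N})h^{[1]}(\beta) + g^{[2]}(O;\eta_{0,N})h^{[2]}(\beta)$ with $h^{[1]}(\beta)=\exp(-\beta)$ and $h^{[2]}(\beta)\equiv 1$; since $h^{[2]}$ does not depend on $\beta$, we get $\widebar{g}(\beta',\eta_{0,N})-\widebar{g}(\beta,\eta_{0,N}) = \widebar{g}^{[1]}(\eta_{0,N})\bigl(h^{[1]}(\beta')-h^{[1]}(\beta)\bigr)$ and likewise $\widehat{g}(\beta',\eta_{0,N})-\widehat{g}(\beta,\eta_{0,N}) = \widehat{g}^{[1]}(\eta_{0,N})\bigl(h^{[1]}(\beta')-h^{[1]}(\beta)\bigr)$. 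So each claimed bound factors into (i) the scalar increment $|h^{[1]}(\beta')-h^{[1]}(\beta)|$ and (ii) the norm of $\widebar{g}^{[1]}(\eta_{0,N})$, respectively $\widehat{g}^{[1]}(\eta_{0,N})$, rescaled by $\sqrt{N}/\mu_N$. For (i) I would invoke the scalar Lipschitz estimate proved immediately above, $|\exp(-\beta')-\exp(-\beta)|\le C_2|\beta'-\beta|$ for all $\beta,\beta'$ in the compact set $\mathcal{B}$. For (ii) in the population case, the MSMM identification lemma (part (a)) already shows directly that $\|\widebar{g}^{[1]}(\eta_{0,N})\|^2$ is bounded above and below by constant multiples of $\mu_N^2/N$ — this is precisely the point where Assumption~\ref{Assumption: weak moment condition}(b), together with the eigenvalue bounds on $\widebar{\Omega}$ from Lemma~\ref{Lemma: MSMM 3b}, enters. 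Multiplying, $\sqrt{N}\,\|\widebar{g}(\beta',\eta_{0,N})-\widebar{g}(\beta,\eta_{0,N})\|/\mu_N \le (\sqrt{N}/\mu_N)\,C_2|\beta'-\beta|\cdot O(\mu_N/\sqrt{N}) = C|\beta'-\beta|$, which is the first inequality.

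For the empirical inequality I would write $\widehat{g}^{[1]}(\eta_{0,N}) = \widebar{g}^{[1]}(\eta_{0,N}) + \bigl(\widehat{g}^{[1]}(\eta_{0,N})-\widebar{g}^{[1]}(\eta_{0,N})\bigr)$ and control the fluctuation term by a second-moment computation, exactly in the style of the ASMM lemmas: $\mathbb{E}_{P_{0,N}}\|\widehat{g}^{[1]}(\eta_{0,N})-\widebar{g}^{[1]}(\eta_{0,N})\|^2 = \frac1N\,\mathrm{tr}\bigl(\mathrm{Var}_{P_{0,N}}(g^{[1]}(O;\eta_{0,N}))\bigr) \le \frac1N\,\mathrm{tr}\,\mathbb{E}_{P_{0,N}}[g^{[1]}(O;\eta_{0,N})g^{[1]}(O;\eta_{0,N})^{T}]$, and the last trace is $\lesssim m$ after conditioning on $(Z,X)$ and using boundedness of $(Z,X)$ together with the moment bound $\mathbb{E}_{P_{0,N}}[Y^4\mid A,X]<C$ from Assumption~\ref{Assumption: MSMM}(a). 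Hence $\|\widehat{g}^{[1]}(\eta_{0,N})-\widebar{g}^{[1]}(\eta_{0,N})\| = O_p(\sqrt{m/N})$, which is $O_p(\mu_N/\sqrt{N})$ because $m/\mu_N^2=O(1)$; combined with the population bound this gives $\|\widehat{g}^{[1]}(\eta_{0,N})\| = O_p(\mu_N/\sqrt{N})$. Setting $\widehat{M} := C_2(\sqrt{N}/\mu_N)\,\|\widehat{g}^{[1]}(\eta_{0,N})\| = O_p(1)$ then yields $\sqrt{N}\,\|\widehat{g}(\beta',\eta_{0,N})-\widehat{g}(\beta,\eta_{0,N})\|/\mu_N \le \widehat{M}|\beta'-\beta|$, which is the second inequality (with the $\beta_0$ in the displayed statement read as $\beta$, matching the analogous ASMM lemma and using $\widehat{M}$ rather than a deterministic constant, as declared in the hypothesis).

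The step I expect to require the most care is (ii): confirming $\|\widebar{g}^{[1]}(\eta_{0,N})\| \asymp \mu_N/\sqrt{N}$ and passing to its empirical analogue, since this is the only place the weak-identification rate $\mu_N$ genuinely couples with the moment structure; everything else is mechanical, modulo the bookkeeping that the empirical factor $\|\widehat{g}^{[1]}(\eta_{0,N})\|$ is random. Note that only the upper bounds on these norms are needed here, so the lower-bound halves of the identification estimates can be discarded.
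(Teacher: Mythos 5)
Your proof is correct and follows essentially the same route the paper intends (its own proof is omitted as ``similar to the ASMM case''): factor the increment through the separable structure, apply the scalar Lipschitz bound for the exponential on the compact set $\mathcal{B}$, and bound $\sqrt{N}\Vert\widebar{g}^{[1]}(\eta_{0,N})\Vert/\mu_N$ and its empirical counterpart (via the trace computation together with $m/\mu_N^2=O(1)$) to obtain the constant $C$ and the random $\widehat{M}=O_p(1)$. Your readings of the two slips in the displayed statement ($\beta_0$ in place of $\beta$, and $\widehat{M}$ rather than a deterministic $C$ in the second inequality) match the intended ASMM analogue, so no gap remains.
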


\begin{proof}
    the Proof is similar to that of Lemma \ref{Lemma: ASMM 3e}, thus omitted.
\end{proof}

\begin{lemma}
\begin{equation*}
    (\mathbb{E}_{P_{0,N}}\Vert g_i\Vert^4+\mathbb{E}_{P_{0,N}}\Vert G_i\Vert^4)m/N\rightarrow 0.
\end{equation*}
\end{lemma}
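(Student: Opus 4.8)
The plan is to mirror the argument used for the corresponding ASMM lemma (Lemma~\ref{Lemma: ASMM 7b}) and its proof via Lemma~\ref{lemma: MSMM 3a}, exploiting the separable form $g(O;\beta,\eta)=g^{[1]}(O;\eta)h^{[1]}(\beta)+g^{[2]}(O;\eta)h^{[2]}(\beta)$ together with the uniform boundedness of $(X,Z)$ and the conditional moment bound $\mathbb{E}_{P_{0,N}}[Y^4\mid A,X]<C$ from Assumption~\ref{Assumption: MSMM}(a). First I would record the explicit forms at the truth: since $h^{[2]}(\beta)$ is constant, $g_i=g^{[1]}(O_i;\eta_{0,N})h^{[1]}(\beta_0)+g^{[2]}(O_i;\eta_{0,N})h^{[2]}(\beta_0)$ and $G_i=g^{[1]}(O_i;\eta_{0,N})(h^{[1]})'(\beta_0)$. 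Because $h^{[1]},h^{[2]}$ are twice continuously differentiable on the compact set $\mathcal{B}$, the scalars $|h^{[1]}(\beta_0)|$, $|h^{[2]}(\beta_0)|$, $|(h^{[1]})'(\beta_0)|$ are all bounded by a constant, so $\|g_i\|^4+\|G_i\|^4\lesssim \|g^{[1]}(O_i;\eta_{0,N})\|^4+\|g^{[2]}(O_i;\eta_{0,N})\|^4$ almost surely.

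Next I would bound each coordinate. Writing $g^{[1],(j)}(O_i;\eta_{0,N})=(Z_i^{(j)}-\eta_{Z^{(j)},0,N}(X_i))\,[I\{A_i=1\}Y_i-\eta_{Y,(1),0,N}(X_i)\eta_{A,(1),0,N}(X_i)]$, I note that $|Z_i^{(j)}-\eta_{Z^{(j)},0,N}(X_i)|\le C$ (as $Z,X$ are bounded and $\eta_{Z^{(j)},0,N}$ is a conditional mean of a bounded variable), that $\eta_{A,(1),0,N}(X)=P_{0,N}(A=1\mid X)\in[0,1]$, and that $|\eta_{Y,(1),0,N}(X)|=|\mathbb{E}_{P_{0,N}}[Y\mid A=1,X]|\le (\mathbb{E}_{P_{0,N}}[Y^4\mid A=1,X])^{1/4}\le C$. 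Hence $|g^{[1],(j)}(O_i;\eta_{0,N})|\le C(1+|Y_i|)$, and summing the $m$ squared coordinates gives $\|g^{[1]}(O_i;\eta_{0,N})\|^4\le C m^2 (1+|Y_i|)^4$; the identical estimate holds for $g^{[2]}$. Taking expectations and using $\mathbb{E}_{P_{0,N}}[Y^4]=\mathbb{E}_{P_{0,N}}[\mathbb{E}_{P_{0,N}}[Y^4\mid A,X]]\le C$ yields $\mathbb{E}_{P_{0,N}}\|g_i\|^4+\mathbb{E}_{P_{0,N}}\|G_i\|^4\lesssim m^2$, so that $(\mathbb{E}_{P_{0,N}}\|g_i\|^4+\mathbb{E}_{P_{0,N}}\|G_i\|^4)\,m/N\lesssim m^3/N\to 0$ by Assumption~\ref{Assumption: MSMM}(b).

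There is no substantial obstacle here; the only point that requires mild care is that $Y$ is not bounded, so the $Y$-dependent factors must be controlled through the conditional fourth-moment assumption rather than by an almost-sure bound, while the nuisance functions (conditional expectations and probabilities) and the $\beta$-dependent scalars $h^{[b]}(\beta),(h^{[b]})'(\beta)$ are dominated uniformly on the compact parameter space. This is precisely the structure of the ASMM proof, with the hypothesis $m^3/N\to 0$ replacing the weaker $m^2/N\to 0$ that sufficed there.
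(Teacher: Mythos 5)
Your argument is correct and is essentially the paper's intended proof: the paper omits it by pointing to the ASMM analogue, whose method is exactly your coordinate-wise bounding via boundedness of $(Z,X)$ and the nuisance functions, the conditional fourth-moment condition on $Y$, compactness of $\mathcal{B}$ (so the factors $h^{[b]}(\beta_0)$, $(h^{[1]})'(\beta_0)$ are bounded), yielding an $m^2$ bound and then invoking $m^3/N\to 0$. No gaps.
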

\begin{proof}
    The Proof is similar to that of Lemma \ref{Lemma: ASMM 7b}, thus omitted.
\end{proof}

\begin{lemma}
\begin{align*}
      &\left\Vert \mathbb{E}_{P_{0,N}}[G_iG_i^T]\right\Vert\leq C, \left\Vert \mathbb{E}_{P_{0,N}}\left[\frac{\partial G_i(\beta_0,\eta_{0,N})}{\partial \beta}\frac{\partial G_i(\beta_0,\eta_{0,N})^T}{\partial \beta}\right]\right\Vert\leq C,\\
      &\frac{\sqrt{N}}{\mu_N} \left\Vert \mathbb{E}_{P_{0,N}}\left[\frac{\partial G_i(\beta_0)}{\partial\beta}\right]\right\Vert\leq C.
    \end{align*}
\end{lemma}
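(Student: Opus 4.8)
The plan is to verify the three bounds directly from the separable representation $g(O;\beta,\eta)=g^{[1]}(O;\eta)\,h^{[1]}(\beta)+g^{[2]}(O;\eta)$ established in Section~\ref{Supp: MSMM}, where $h^{[1]}(\beta)=\exp(-\beta)$ and $g^{[2]}(O;\eta)$ does not depend on $\beta$. Differentiating in $\beta$ gives $G(O;\beta,\eta)=g^{[1]}(O;\eta)(h^{[1]})'(\beta)$ and $\partial G(O;\beta,\eta)/\partial\beta=g^{[1]}(O;\eta)(h^{[1]})''(\beta)$. Since $(h^{[1]})'(\beta)=-\exp(-\beta)$ and $(h^{[1]})''(\beta)=\exp(-\beta)$, we have $\big((h^{[1]})'(\beta_0)\big)^2=\big((h^{[1]})''(\beta_0)\big)^2=\exp(-2\beta_0)$, so at $(\beta_0,\eta_{0,N})$,
$$G_iG_i^{T}=\exp(-2\beta_0)\,g^{[1]}_i(g^{[1]}_i)^{T}=\Big(\tfrac{\partial G_i}{\partial\beta}\Big)\Big(\tfrac{\partial G_i}{\partial\beta}\Big)^{T}.$$
Thus the first two displayed matrices are literally the same, and it suffices to bound $\big\Vert\mathbb{E}_{P_{0,N}}[g^{[1]}_i(g^{[1]}_i)^{T}]\big\Vert$ and absorb the factor $\exp(-2\beta_0)$, which is bounded because $\mathcal{B}$ is compact.

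To control $\big\Vert\mathbb{E}_{P_{0,N}}[g^{[1]}_i(g^{[1]}_i)^{T}]\big\Vert$, I would write $g^{[1]}_i=\widetilde{Z}_i\,\xi_i$ with $\widetilde{Z}_i=Z_i-\eta_{Z,0,N}(X_i)$ and the scalar $\xi_i=Y_iI\{A_i=1\}-\eta_{Y,(1),0,N}(X_i)\eta_{A,(1),0,N}(X_i)$, so that $g^{[1]}_i(g^{[1]}_i)^{T}=\widetilde{Z}_i\widetilde{Z}_i^{T}\,\xi_i^2$ is a positive semidefinite matrix times a nonnegative scalar. Conditioning on $(Z_i,X_i)$ and using the bound $\mathbb{E}_{P_{0,N}}[\xi_i^2\mid Z_i,X_i]\le C$ almost surely---this is exactly the conditional second-moment bound already invoked in the proof of Lemma~\ref{Lemma: MSMM 3b}, and it follows from Assumption~\ref{Assumption: MSMM}(a) together with the uniform boundedness of $(Z,X)$---we get, for every unit vector $v$,
$$v^{T}\mathbb{E}_{P_{0,N}}[g^{[1]}_i(g^{[1]}_i)^{T}]v=\mathbb{E}_{P_{0,N}}\big[(v^{T}\widetilde{Z}_i)^2\,\mathbb{E}_{P_{0,N}}[\xi_i^2\mid Z_i,X_i]\big]\le C\,v^{T}\mathbb{E}_{P_{0,N}}[\widetilde{Z}_i\widetilde{Z}_i^{T}]v.$$
Hence $\mathbb{E}_{P_{0,N}}[g^{[1]}_i(g^{[1]}_i)^{T}]\preceq C\,\mathbb{E}_{P_{0,N}}[\widetilde{Z}_i\widetilde{Z}_i^{T}]$, and since Assumption~\ref{Assumption: MSMM}(c) gives $\mathbb{E}_{P_{0,N}}[\widetilde{Z}_i\widetilde{Z}_i^{T}\mid X_i]\preceq C I_m$ and therefore $\mathbb{E}_{P_{0,N}}[\widetilde{Z}_i\widetilde{Z}_i^{T}]\preceq C I_m$, we conclude $\big\Vert\mathbb{E}_{P_{0,N}}[g^{[1]}_i(g^{[1]}_i)^{T}]\big\Vert\le C^2$. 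This proves the first two inequalities.

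For the third inequality, observe that $\mathbb{E}_{P_{0,N}}[\partial G_i(\beta_0,\eta_{0,N})/\partial\beta]=\exp(-\beta_0)\,\mathbb{E}_{P_{0,N}}[g^{[1]}_i]=-\widebar{G}$ (consistent with the formula for $\widebar{G}$ in Section~\ref{Supp: MSMM}), so the left-hand side equals $(\sqrt{N}/\mu_N)\Vert\widebar{G}\Vert$. By Lemma~\ref{Lemma: MSMM 3b} the smallest eigenvalue of $\widebar{\Omega}$ is at least $1/C$, whence $\widebar{G}^{T}\widebar{\Omega}^{-1}\widebar{G}\ge \Vert\widebar{G}\Vert^2/C$; combined with Assumption~\ref{Assumption: MSMM}(b), which makes $\mu_N^{-2}N\,\widebar{G}^{T}\widebar{\Omega}^{-1}\widebar{G}$ convergent and hence bounded by some $C'$, we obtain $(N/\mu_N^2)\Vert\widebar{G}\Vert^2\le C\,\mu_N^{-2}N\,\widebar{G}^{T}\widebar{\Omega}^{-1}\widebar{G}\le CC'$, i.e. $(\sqrt{N}/\mu_N)\Vert\widebar{G}\Vert\le\sqrt{CC'}$, as claimed.

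I do not expect a serious obstacle. The one point needing care is the positive-semidefinite reduction in the second step: one must keep the scalar $\xi_i^2$ and the matrix $\widetilde{Z}_i\widetilde{Z}_i^{T}$ together under the conditional expectation given $(Z_i,X_i)$ and invoke the conditional-moment control already used for Lemma~\ref{Lemma: MSMM 3b}, rather than attempting to bound $\mathbb{E}_{P_{0,N}}[\xi_i^2\mid X_i]$, which would be insufficient because $Z_i$ and $\xi_i$ are dependent given $X_i$. Everything else is bookkeeping: reading off the $\beta$-derivatives of the separable moment function and noticing that the second displayed matrix coincides with the first.
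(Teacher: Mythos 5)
Your proof is correct and follows essentially the same route as the paper: the first two bounds come from the same conditioning argument used for Lemma \ref{Lemma: MSMM 3b} (your observation that the exponential structure makes $G_iG_i^T$ and $\frac{\partial G_i}{\partial\beta}\frac{\partial G_i^T}{\partial\beta}$ coincide up to a bounded scalar simply makes the paper's ``similar to Lemma \ref{Lemma: MSMM 3b}'' explicit), and the third bound reduces to $(\sqrt{N}/\mu_N)\Vert\widebar{G}\Vert$ exactly as in the paper. Your use of Assumption \ref{Assumption: MSMM}(b) together with the eigenvalue bound on $\widebar{\Omega}$ is the right justification for that last step (the paper's citation of part (a) there appears to be a typo).
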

\begin{proof}
 The proof for the first and second inequality is similar to the proof of Lemma \ref{Lemma: MSMM 3b}. To show the third one, we just need to note that $\mathbb{E}_{P_{0,N}}\left[\frac{\partial G_i(\beta_0)}{\partial\beta}\right] = \overline{G}$, then the result follows by Assumption \ref{Assumption: MSMM} (a).
\end{proof}

\begin{lemma}
    If $\Bar{\beta}\xrightarrow[]{p} \beta_0$, then for $k=1,...,p$,
    \begin{align*}
       \left \Vert \sqrt{N}/\mu_N\left[\widehat{G}(\Bar{\beta},\eta_{0,N})-\widehat{G}({\beta}_0,\eta_{0,N})\right]\right\Vert = o_{p}(1),\Bigg\Vert \sqrt{N}/\mu_N\Bigg[\frac{\partial \widehat{G}(\Bar{\beta},\eta_{0,N})}{\partial \beta^{(k)}}-\frac{\partial \widehat{G}({\beta},\eta_{0,N})}{\partial \beta^{(k)}}\Bigg]\Bigg\Vert= o_{p}(1).
    \end{align*}
\end{lemma}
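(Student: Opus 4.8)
The plan is to exploit the separable form of the MSMM moment function. Writing $g(o;\beta,\eta)=g^{[1]}(o;\eta)\,h^{[1]}(\beta)+g^{[2]}(o;\eta)$ with the smooth scalar factor $h^{[1]}(\beta)=\exp(-\beta)$ (recall that here $\beta$ is one-dimensional, so $p=1$ and the index $k$ takes only the value $1$), differentiation in $\beta$ gives $G(o;\beta,\eta)=g^{[1]}(o;\eta)\,(h^{[1]})'(\beta)$ and $\partial G(o;\beta,\eta)/\partial\beta=g^{[1]}(o;\eta)\,(h^{[1]})''(\beta)$. The key structural observation is that the $\beta$-dependence sits entirely in the smooth scalar multipliers $(h^{[1]})'$ and $(h^{[1]})''$, while the random, high-dimensional factor $g^{[1]}(o;\eta_{0,N})$ does not depend on $\beta$. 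Consequently, with $\widehat{g}^{[1]}(\eta_{0,N})=N^{-1}\sum_{i=1}^N g^{[1]}(O_i;\eta_{0,N})$,
\[
\widehat{G}(\bar\beta,\eta_{0,N})-\widehat{G}(\beta_0,\eta_{0,N})=\bigl((h^{[1]})'(\bar\beta)-(h^{[1]})'(\beta_0)\bigr)\,\widehat{g}^{[1]}(\eta_{0,N}),
\]
and the analogous identity with $(h^{[1]})''$ in place of $(h^{[1]})'$ holds for $\partial\widehat{G}/\partial\beta$.

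Next I would bound $\Vert\widehat{g}^{[1]}(\eta_{0,N})\Vert$ exactly as in the proof of Lemma~\ref{lemma: g hat rate 2}: $\mathbb{E}_{P_{0,N}}\Vert\widehat{g}^{[1]}(\eta_{0,N})\Vert^2=N^{-1}\operatorname{tr}\mathbb{E}_{P_{0,N}}\bigl[g^{[1]}(O;\eta_{0,N})g^{[1]}(O;\eta_{0,N})^{T}\bigr]$. Since $g^{[1]}(o;\eta_{0,N})=(z-\eta_{Z,0,N}(x))\bigl(I\{a=1\}y-\eta_{Y,(1),0,N}(x)\eta_{A,(1),0,N}(x)\bigr)$, boundedness of $(Z,X)$ together with $\mathbb{E}_{P_{0,N}}[Y^4\mid A,X]<C$ (Assumption~\ref{Assumption: MSMM}(a)) makes the conditional second moment of the scalar bracket given $(Z,X)$ bounded, so the trace is $\lesssim m$; Markov's inequality then yields $\Vert\widehat{g}^{[1]}(\eta_{0,N})\Vert=O_p(\sqrt{m/N})$. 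Because $\mathcal{B}$ is compact, $(h^{[1]})'$ and $(h^{[1]})''$ are Lipschitz on $\mathcal{B}$, so $\bar\beta\xrightarrow{p}\beta_0$ forces $|(h^{[1]})'(\bar\beta)-(h^{[1]})'(\beta_0)|=o_p(1)$, and likewise for $(h^{[1]})''$.

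Finally I would assemble the bound:
\[
\frac{\sqrt{N}}{\mu_N}\bigl\Vert\widehat{G}(\bar\beta,\eta_{0,N})-\widehat{G}(\beta_0,\eta_{0,N})\bigr\Vert=\frac{\sqrt{N}}{\mu_N}\,\bigl|(h^{[1]})'(\bar\beta)-(h^{[1]})'(\beta_0)\bigr|\cdot O_p\!\bigl(\sqrt{m/N}\bigr)=\frac{\sqrt{m}}{\mu_N}\,o_p(1)\,O_p(1),
\]
and since $m/\mu_N^2$ is bounded (Assumption~\ref{Assumption: MSMM}(b)) we have $\sqrt{m}/\mu_N=O(1)$, so the whole expression is $o_p(1)$; the second-derivative claim follows verbatim from $\partial\widehat{G}(\beta,\eta_{0,N})/\partial\beta=(h^{[1]})''(\beta)\,\widehat{g}^{[1]}(\eta_{0,N})$. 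I do not anticipate a genuine obstacle here—the one mild point is verifying $\operatorname{tr}\mathbb{E}_{P_{0,N}}[g^{[1]}g^{[1],T}]\lesssim m$ (rather than growing faster in $m$), which is immediate from the bound on $Z-\eta_{Z,0,N}$ and the fourth-moment control on $Y$.
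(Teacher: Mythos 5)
Your overall route is the same as the paper's: exploit separability so that $\widehat{G}(\bar\beta,\eta_{0,N})-\widehat{G}(\beta_0,\eta_{0,N})$ factors into the deterministic scalar increment of $(h^{[1]})'$ times the $\beta$-free random vector $\widehat g^{[1]}(\eta_{0,N})$, and let compactness of $\mathcal{B}$ plus $\bar\beta\xrightarrow{p}\beta_0$ kill the scalar factor. The genuine gap is the claimed rate $\Vert\widehat g^{[1]}(\eta_{0,N})\Vert=O_p(\sqrt{m/N})$. The identity $\mathbb{E}_{P_{0,N}}\Vert\widehat g^{[1]}\Vert^2=N^{-1}\operatorname{tr}\mathbb{E}_{P_{0,N}}[g^{[1]}g^{[1],T}]$ that you borrow from the bound on $\Vert\widehat g(\beta_0,\eta_{0,N})\Vert$ is only valid for a mean-zero summand; $\widehat g(\beta_0,\eta_{0,N})$ is centered by the moment condition, but $g^{[1]}(O;\eta_{0,N})$ alone is not. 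Indeed $\mathbb{E}_{P_{0,N}}[g^{[1]}(O;\eta_{0,N})]$ equals $\widebar{G}$ up to the nonzero scalar $(h^{[1]})'(\beta_0)$, and under Assumption 1(b) $\Vert\widebar{G}\Vert\asymp\mu_N/\sqrt{N}$ — this is exactly the weak-but-nonzero identification signal. The correct expansion is $\mathbb{E}_{P_{0,N}}\Vert\widehat g^{[1]}\Vert^2=N^{-1}\mathbb{E}_{P_{0,N}}\Vert g^{[1]}\Vert^2+\tfrac{N-1}{N}\Vert\mathbb{E}_{P_{0,N}}g^{[1]}\Vert^2=O(m/N)+O(\mu_N^2/N)$, so the true order is $O_p(\mu_N/\sqrt{N})$, which can be strictly larger than $\sqrt{m/N}$ (take $m$ fixed and $\mu_N=N^{1/4}$), and under strong identification $\Vert\widehat g^{[1]}\Vert$ does not vanish at all.

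The lemma itself is unharmed, because the corrected rate still closes the argument: $\tfrac{\sqrt{N}}{\mu_N}\,\vert(h^{[1]})'(\bar\beta)-(h^{[1]})'(\beta_0)\vert\,\Vert\widehat g^{[1]}\Vert=o_p(1)\cdot O_p(1)=o_p(1)$, and likewise for the $(h^{[1]})''$ term. This is precisely how the paper proceeds: it bounds $\Vert\widehat{G}(\beta_0,\eta_{0,N})\Vert\le\Vert\widebar{G}\Vert+\Vert\widehat{G}(\beta_0,\eta_{0,N})-\widebar{G}\Vert=O(\mu_N/\sqrt{N})+O_p(\sqrt{m/N})=O_p(\mu_N/\sqrt{N})$, using $\sqrt{m}/\mu_N=O(1)$. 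So to repair your proof, either split $\widehat g^{[1]}$ into its mean (order $\mu_N/\sqrt{N}$ by weak identification) and its centered part (to which your trace computation applies and gives $O_p(\sqrt{m/N})$), or simply quote the $O_p(\mu_N/\sqrt{N})$ bound; the final $o_p(1)$ conclusions for both displays then follow exactly as you wrote them.
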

\begin{proof}
     \begin{align*}
        &\Vert \sqrt{N}[\widehat{G}(\Bar{\beta},\eta_0)-\widehat{G}({\beta}_0,\eta_0)]/\mu_N \Vert\\
        =&\frac{\sqrt{N}}{\mu_N}\vert\exp(\widebar{\beta})-\exp(\beta_0)\vert\sqrt{\widehat{g}^{[1],T}(\eta_{0,N})\widehat{g}^{[1],T}(\eta_0)}\\
        =&\frac{\sqrt{N}}{\mu_N}\vert\exp(\widebar{\beta})-\exp(\beta_0)\vert\exp(-\beta_0)\sqrt{\widehat{G}^{T}(\eta_{0,N})\widehat{G}^{T}(\eta_0)}.
    \end{align*}
    Similar to the ASMM case, we can prove that
    \begin{align*}
        \Vert \widehat{G}^{T}(\eta_{0,N})\widehat{G}^{T}(\eta_{0,N})\Vert = O_P\left(\frac{\mu_N}{\sqrt{N}}\right).
    \end{align*}
    Since $\vert\exp(\widebar{\beta})-\exp(\beta_0)\vert=o_p(1)$, we have
    \begin{equation*}
        \Vert \sqrt{N}[\widehat{G}(\Bar{\beta},\eta_{0,N})-\widehat{G}({\beta}_0,\eta_{0,N})]/\mu_N \Vert = o_{p}(1).
    \end{equation*}
    The proof for
    \begin{align*}
        \Bigg\Vert \sqrt{N}\Bigg[\frac{\partial \widehat{G}(\Bar{\beta},\eta_{0,N})}{\partial \beta^{(k)}}-\frac{\partial \widehat{G}({\beta}_{0,N},\eta_{0,N})}{\partial \beta^{(k)}}\Bigg] /\mu_N\Bigg\Vert= o_{p}(1)
    \end{align*}
    is similar, thus omitted.
\end{proof}

The next lemma says the  convergence rate conditions stated in Assumption \ref{Assumption: separable score} hold.
\begin{lemma}
    Assumption \ref{Assumption: separable score} holds for the MSMM example.
\end{lemma}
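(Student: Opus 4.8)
The plan is to verify the two parts of Assumption~\ref{Assumption: separable score} in turn, recycling the computations already carried out in this subsection, so that the only genuinely new work is bounding the statistical rates $r^{(b_1),(j,q)}_N$ and $r^{(b_1,b_2),(j,k,l,r)}_N$. For part~(a): the global Neyman orthogonality of $g^{[1]}$ and $g^{[2]}$ is exactly the content of the lemma above establishing Neyman orthogonality of $g^{[1]}$ and $g^{[2]}$, and the continuity in $t$ of $D^{(2)}_{\beta,t,g^{[b]}}(\eta)$ follows from the explicit formula for that derivative computed in the preceding $\lambda_N^{(j)}$ lemma, which is a fixed polynomial in $t$ with $P_{0,N}$-integrable coefficients (by boundedness of $Z,X,A$ and $\mathbb{E}_{P_{0,N}}[Y^4\mid A,X]<C$). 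The bound $\lambda_N^{(j)}\le N^{-1/2}\delta_N$ demanded by part~(b) has also already been proved in that lemma.

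It then remains to control, uniformly over $\eta\in\mathcal{T}_N$, the rates in part~(b); since here $q=1$ these are $r^{(b_1),(j)}_N$ and $r^{(b_1,b_2),(j,k)}_N$. For $r^{(b_1),(j)}_N$ I would expand $g^{[b_1],(j)}(O;\eta)-g^{[b_1],(j)}(O;\eta_{0,N})$, add and subtract so that only one of $\eta_{Z^{(j)}},\eta_{Y,(a)},\eta_{A,(1)}$ is perturbed at a time, and apply the triangle inequality. Each summand is a product of uniformly bounded factors (coordinates of $Z-\eta_Z(X)$, the indicators $I\{A=a\}$; the $L^\infty$ constraints defining $\mathcal{T}_N$ keep the other nuisances within a constant of their truths) times either $Y$ or $\eta_{Y,(a)}$ (which have uniformly bounded fourth $P_{0,N}$-moments) times one of the nuisance errors. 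Cauchy--Schwarz then bounds the $L^2(P_{0,N})$-norm of each summand by the $L^2$-norm of the corresponding error, which is $\le \delta_N/m$ or $\le\delta_N$ by Assumption~\ref{Assumption: MSMM}(d); hence $r^{(b_1),(j)}_N\lesssim\delta_N$.

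For $r^{(b_1,b_2),(j,k)}_N$ I would use the product identity
\[
g^{[b_1],(j)}(\eta)g^{[b_2],(k)}(\eta)-g^{[b_1],(j)}(\eta_{0,N})g^{[b_2],(k)}(\eta_{0,N})=\bigl(g^{[b_1],(j)}(\eta)-g^{[b_1],(j)}(\eta_{0,N})\bigr)g^{[b_2],(k)}(\eta)+g^{[b_1],(j)}(\eta_{0,N})\bigl(g^{[b_2],(k)}(\eta)-g^{[b_2],(k)}(\eta_{0,N})\bigr),
\]
and bound the $L^2(P_{0,N})$-norm of each term by Cauchy--Schwarz: the difference factor is handled as in the previous paragraph (factoring the $\eta_Z$-error out in $L^2$ against a fourth-moment-bounded remainder, and the $\eta_Y,\eta_{A,(1)}$ errors in $L^2$ against bounded remainders), while the remaining factor $g^{[b_2],(k)}(\eta)$ or $g^{[b_1],(j)}(\eta_{0,N})$ has a uniformly bounded fourth moment over $\mathcal{T}_N$ by the boundedness of $(Z,X,A)$, the conditional moment bound on $Y$, and the $L^\infty$ proximity of $\eta_{Y,(a)},\eta_{A,(1)}$ to their truths. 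This gives $r^{(b_1,b_2),(j,k)}_N\lesssim\delta_N$. Collecting everything, $a_N\le C\delta_N$ for all the rates in part~(b), and since Assumption~\ref{Assumption: MSMM}(d) provides $\delta_N=o(1/\sqrt m)$ (relabel $\delta_N$ by the constant $C$ if needed), Assumption~\ref{Assumption: separable score} is verified.

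The main obstacle is purely bookkeeping: making sure that when an $\eta_{Z^{(j)}}$-error is pulled out of a product its complement never carries a hidden factor of $m$, and that the fourth-moment control on $Y$ (conditional on $(A,X)$) suffices to absorb the squaring inside the $L^2(P_{0,N})$-norms. Because $Z^{(j)}$ is a single bounded coordinate and the $g^{[b]}$'s do not depend on $\beta$, every product that appears is integrable with a constant independent of $j,k$, so no blow-up occurs and the calculation mirrors the ASMM verification, with $\exp(\pm\beta)$ playing no role.
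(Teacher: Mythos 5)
Your proposal is correct and follows essentially the same route as the paper: Neyman orthogonality and the bound $\lambda_N^{(j)}\le N^{-1/2}\delta_N$ are inherited from the preceding lemmas, and the remaining rates $r^{(b_1),(j)}_N$ and $r^{(b_1,b_2),(j,k)}_N$ (with $q=1$ here) are bounded by perturbing one nuisance at a time, using the triangle inequality, Cauchy--Schwarz, boundedness of $(Z,X,A)$ and the conditional moment bound on $Y$, together with the $L^2$/$L^\infty$ rate conditions of Assumption S5(d). The paper carries this out only for the representative case $r^{(1),(1,1)}_N$ and declares the rest "similar and omitted," so your more systematic treatment of the product terms is consistent with, and slightly more explicit than, the published argument.
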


\begin{proof}
    We have shown Neyman othogonality condition and $\lambda_N\leq N^{-1/2}\delta_N$. It remains to show that $z_N\leq \delta_N$ for $a_N = r^{(b_1),(j,q)}_N, r^{(b_1,b_2),(j,k,l,r)}_N$. We will show that $r^{(1),(1,1)}_N\leq \delta_N$ as example. Other proofs are similar and omitted.

    We pick $\eta_{1}$ from the realization set $\mathcal{T}_N$. For $r^{(1),(1,1)}_N$, we have
    \begin{align*}
        &r^{(1),(1,1)}\\
        =&\Vert (Z^{(1)}-\eta_{Z^{(1)},1})(AY-\eta_{Y,(1),1}(X)\eta_{A,(1),1}(X))-\\
        &(Z^{(1)}-\eta_{Z^{(1)},0,N})(AY-\eta_{Y,(1),0,N}(X)\eta_{A,(1),0,N}(X))\Vert_{P_{0,N},2}\\
        \leq &\Vert AY(\eta_{Z^{(1)},0,N}-\eta_{Z^{(1)},1})\Vert_{P_{0,N},2}+\Vert Z^{(1)}(\eta_{Y,(1),1}(X)\eta_{A,(1),1}(X))-\eta_{Y,(1),0,N}(X)\eta_{A,(1),0,N}(X)))\Vert_{P_{0,N},2}\\
        &+\Vert \eta_{Z^{(1)},1}(X)\eta_{Y,(1),1}(X)\eta_{A,(1),1}(X)-\eta_{Z^{(1)},0,N}(X)\eta_{Y,(1),0,N}(X)\eta_{A,(1),0,N}(X)))\Vert_{P_{0,N},2}
    \end{align*}

    For the first term, we have
    \begin{align*}
        &\Vert AY(\eta_{Z^{(1)},0,N}-\eta_{Z^{(1)},1})\Vert_{P_{0,N},2}\\
        \leq &\sqrt{\mathbb{E}_{P_{0,N}}(\eta_{Z^{1},0,N}(X)-\eta_{Z^{1},1}(X))^2\mathbb{E}_{P_{0,N}}[Y^2|A,X]\mathbb{E}_{P_{0,N}}[A^2|X]}\leq   \Vert\eta_{Z^{1},0,N}-\eta_{Z^{1},1}\Vert_{P_{0,N},2}.
    \end{align*}

    For the second term, we have 
    \begin{align*}
        &\Vert Z^{(1)}(\eta_{Y,(1),1}(X)\eta_{A,(1),1}(X))-\eta_{Y,(1),0,N}(X)\eta_{A,(1),0,N}(X)))\Vert_{P_{0,N},2}\\
        \leq &\Vert (\eta_{Y,(1),1}(X)\eta_{A,(1),1}(X))-\eta_{Y,(1),0,N}(X)\eta_{A,(1),0,N}(X)))\Vert_{P_{0,N},2} \\
        \leq & \Vert (\eta_{Y,(1),1}(X)-\eta_{Y,(1),0,N}(X))\eta_{A,(1),1}(X)\Vert_{P_{0,N},2}+\Vert (\eta_{A,(1),1}(X)-\eta_{A,(1),0,N}(X))\eta_{Y,(1),0,N}(X)\Vert_{P_{0,N},2}\\
        \lesssim &\delta_N
    \end{align*}

    Similarly we can show that 
    \begin{align*}
        \Vert \eta_{Z^{(1)},1}(X)\eta_{Y,(1),1}(X)\eta_{A,(1),1}(X)-\eta_{Z^{(1)},0,N}(X)\eta_{Y,(1),0,N}(X)\eta_{A,(1),0,N}(X)))\Vert_{P_{0,N},2}\leq C\delta_N.
    \end{align*} Therefore, $r^{(1),(1,1)}\leq C\delta_N$.
\end{proof}

\begin{lemma}
    Assumption \ref{Assumption: Further restrictions for uniform convergence} holds for the MSMM example.
\end{lemma}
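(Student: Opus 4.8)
The plan is to verify each component of Assumption~\ref{Assumption: Further restrictions for uniform convergence} for the MSMM moment function $g(o;\beta,\eta)$, exploiting its separable structure $g = g^{[1]}(\eta)h^{[1]}(\beta) + g^{[2]}(\eta)h^{[2]}(\beta)$ with $h^{[1]}(\beta) = \exp(-\beta)$, $h^{[2]}(\beta) = 1$, and $G(o;\eta) = -\exp(-\beta)g^{[1]}(o;\eta)$ (up to sign conventions already fixed in the MSMM subsection). Because $G$, $\partial G/\partial\beta^{(k)}$, and the second derivatives are all scalar multiples of $g^{[1]}(o;\eta)$ by bounded functions of $\beta$ (namely $\pm\exp(-\beta)$, which are bounded above and below on the compact set $\mathcal{B}$), every object $\Omega^{k}$, $\Omega^{kl}$, $\Omega^{k,l}$ is a product of two coordinates of $g(\beta,\eta_{0,N})$ or $g^{[1]}(\eta_{0,N})$ times a bounded function of $\beta$. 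This reduces both parts of the assumption to statements about $m\times m$ matrices whose entries are products of two such coordinates.

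First I would verify part (a): $\sup_{\beta\in\mathcal{B}'}\Vert\widehat{A}(\beta,\eta_{0,N})-\widebar{A}(\beta,\eta_{0,N})\Vert\xrightarrow{p}0$ for $A\in\{\Omega^k,\Omega^{kl},\Omega^{k,l}\}$. I would write each such matrix as $\pm\exp(-c\beta)$ times $g^{[1]}(o;\eta_{0,N})g^{[1]}(o;\eta_{0,N})^T$ (or $g^{[1]}g^{T}$), pull the scalar out of the supremum using compactness of $\mathcal{B}$, and then apply Tropp's matrix concentration inequality (Lemma~\ref{lemma: Tropp's inequality}) exactly as in the last lemma of the ASMM section, computing the matrix variance parameter $\nu(S)\lesssim m/N$ and the large-deviation parameter $L^2 = o(m^2/N^2)$ using $\mathbb{E}_{P_{0,N}}[Y^4|A,X]<C$ and boundedness of $(X,Z)$ from Assumption~\ref{Assumption: MSMM}(a), together with Lemma~S (Corollary 7.1 of \citet{zhang2021concentration}) to handle $\mathbb{E}_{P_{0,N}}\max_i|Y_i|^4 = o(1)$. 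This yields $\Vert\widehat{A}(\beta,\eta_{0,N})-\widebar{A}(\beta,\eta_{0,N})\Vert = O_p(\sqrt{m\log m/N}+\log m\cdot m/N) = o_p(1)$, which suffices since $m^3/N\to 0$.

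Next I would verify part (b), the Lipschitz-type bound $|a'[A(\tilde\beta,\eta_{0,N})-A(\beta,\eta_{0,N})]b|\le C\Vert a\Vert\Vert b\Vert\Vert\tilde\beta-\beta\Vert$. Using the decomposition $g(\tilde\beta,\eta_{0,N}) = g(\beta,\eta_{0,N}) + g^{[1]}(\eta_{0,N})(\exp(-\tilde\beta)-\exp(-\beta))$ and the elementary bound $C_1|\tilde\beta-\beta|\le|\exp(-\tilde\beta)-\exp(-\beta)|\le C_2|\tilde\beta-\beta|$ from the first lemma of the MSMM section, I would expand the difference of matrices into a sum of terms each containing a factor $(\exp(-\tilde\beta)-\exp(-\beta))$, a bounded function of $\beta,\tilde\beta$, and an expectation of the form $\mathbb{E}_{P_{0,N}}[g^{[1]}_i g_i^T]$ or $\mathbb{E}_{P_{0,N}}[g^{[1]}_i(g^{[1]}_i)^T]$, whose spectral norms are bounded by a constant by the matrix Cauchy--Schwarz argument already used in Lemma~\ref{Lemma: MSMM 3d} (relying on $\Vert G_i(\beta,\eta_{0,N})\Vert\le C$ and boundedness of the eigenvalues of $\widebar\Omega$). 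Collecting these gives the claimed bound. The main obstacle is bookkeeping rather than conceptual: one must carefully enumerate the cross-terms produced when differencing products of three or four coordinate functions in $\Omega^{kl}$ and $\Omega^{k,l}$ and confirm that, after factoring out $\exp(-\cdot)$ differences, each residual expectation is $O(1)$ in spectral norm uniformly over $\beta,\tilde\beta\in\mathcal{B}'$; the uniform boundedness of $(X,Z)$ and the fourth-moment condition on $Y$ in Assumption~\ref{Assumption: MSMM}(a) are exactly what make this routine. The result then follows.

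\begin{proof}
We verify the two parts of Assumption~\ref{Assumption: Further restrictions for uniform convergence} for the MSMM moment function. Recall that $g(o;\beta,\eta) = g^{[1]}(o;\eta)\exp(-\beta) + g^{[2]}(o;\eta)$, that $G(o;\beta,\eta) = -g^{[1]}(o;\eta)\exp(-\beta)$, and that $\partial^k G/\partial(\beta^{(1)})^k = (-1)^{k+1}g^{[1]}(o;\eta)\exp(-\beta)$ for all $k\ge 0$; in particular each of $\Omega^{k}(o;\beta,\eta_{0,N})$, $\Omega^{kl}(o;\beta,\eta_{0,N})$, $\Omega^{k,l}(o;\beta,\eta_{0,N})$ equals a product of two coordinate functions drawn from $\{g(o;\beta,\eta_{0,N}),g^{[1]}(o;\eta_{0,N})\}$ multiplied by a function of $\beta$ of the form $\pm\exp(-c\beta)$ with $c\in\{0,1,2\}$, which is bounded above and below on the compact set $\mathcal{B}'$.

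\emph{Part (a).} Fix $A\in\{\Omega^k,\Omega^{kl},\Omega^{k,l}\}$. By the preceding observation and compactness of $\mathcal{B}'$,
\begin{align*}
\sup_{\beta\in\mathcal{B}'}\Vert\widehat{A}(\beta,\eta_{0,N})-\widebar{A}(\beta,\eta_{0,N})\Vert \lesssim \max_{c\in\{0,1,2\}}\Big\Vert (\mathbb{P}_n-P_{0,N})\big[v(O;\eta_{0,N})v(O;\eta_{0,N})^T\big]\Big\Vert,
\end{align*}
where $v(O;\eta_{0,N})$ ranges over $g(O;\beta,\eta_{0,N})$ (for fixed values of $\beta$ entering only through bounded scalars, which we absorb) and $g^{[1]}(O;\eta_{0,N})$. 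Applying Lemma~\ref{lemma: Tropp's inequality} to $S = \sum_{i=1}^N X_i$ with $X_i = N^{-1}\big(v(O_i;\eta_{0,N})v(O_i;\eta_{0,N})^T - P_{0,N}[vv^T]\big)$: by Assumption~\ref{Assumption: MSMM}(a) the variables $(X,Z)$ are bounded and $\mathbb{E}_{P_{0,N}}[Y^4|A,X]<C$, so $\mathbb{E}_{P_{0,N}}\Vert X_iX_i^T\Vert \lesssim m\,\mathbb{E}_{P_{0,N}}[\Vert v(O_i;\eta_{0,N})\Vert^2 v(O_i;\eta_{0,N})v(O_i;\eta_{0,N})^T]/N^2$ has spectral norm $O(m^2/N^2)\cdot\mathbb{E}_{P_{0,N}}[Y^4|A,X]$ pointwise, giving the matrix variance parameter $\nu(S)\lesssim m/N$, and the large-deviation parameter
\begin{align*}
L^2 = \mathbb{E}_{P_{0,N}}\max_{i}\Vert X_i\Vert^2 \lesssim \frac{m^2}{N^2}\,\mathbb{E}_{P_{0,N}}\max_i\big(1+Y_i^4\big) = o\Big(\frac{m^2}{N^2}\Big),
\end{align*}
where the last equality uses $\mathbb{E}_{P_{0,N}}|Y|^4<\infty$ and Corollary~7.1 of \citet{zhang2021concentration}. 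Hence $\Vert S\Vert = O_p\big(\sqrt{m\log m/N} + (m/N)\log m\big)$, and since $m^3/N\to 0$ this is $o_p(1)$. Therefore $\sup_{\beta\in\mathcal{B}'}\Vert\widehat{A}(\beta,\eta_{0,N})-\widebar{A}(\beta,\eta_{0,N})\Vert\xrightarrow{p}0$.

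\emph{Part (b).} Fix $A\in\{\Omega^k,\Omega^{kl},\Omega^{k,l}\}$. Using $g(\tilde\beta,\eta_{0,N}) = g(\beta,\eta_{0,N}) + g^{[1]}(\eta_{0,N})\big(\exp(-\tilde\beta)-\exp(-\beta)\big)$ and the corresponding identities for the derivatives of $g$, the difference $A(\tilde\beta,\eta_{0,N}) - A(\beta,\eta_{0,N})$ is a finite sum of terms each of the form $\big(\exp(-c_1\tilde\beta)-\exp(-c_1\beta)\big)\,b(\beta,\tilde\beta)\,\mathbb{E}_{P_{0,N}}[u_1(O;\eta_{0,N})u_2(O;\eta_{0,N})^T]$ with $b$ bounded on $\mathcal{B}'\times\mathcal{B}'$ and $u_1,u_2\in\{g(\beta,\eta_{0,N}),g^{[1]}(\eta_{0,N})\}$. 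By the mean-value estimate $|\exp(-c_1\tilde\beta)-\exp(-c_1\beta)|\le C|\tilde\beta-\beta|$ on $\mathcal{B}'$, and by the matrix Cauchy--Schwarz bound used in Lemma~\ref{Lemma: MSMM 3d}, which gives $\Vert\mathbb{E}_{P_{0,N}}[g_i(\beta,\eta_{0,N})g_i(\beta,\eta_{0,N})^T]\Vert\le C$, $\Vert\mathbb{E}_{P_{0,N}}[g^{[1]}_i(\eta_{0,N})g_i(\beta,\eta_{0,N})^T]\Vert\le C$, and $\Vert\mathbb{E}_{P_{0,N}}[g^{[1]}_i(\eta_{0,N})g^{[1]}_i(\eta_{0,N})^T]\Vert\le C$ (using $\Vert G_i(\beta,\eta_{0,N})\Vert\le C$ and the bounded eigenvalues of $\widebar\Omega$ from Lemma~\ref{Lemma: MSMM 3b}), we obtain for all $a,b\in\mathbb{R}^m$,
\begin{align*}
\big|a'\big[A(\tilde\beta,\eta_{0,N}) - A(\beta,\eta_{0,N})\big]b\big| \le C\,\Vert a\Vert\,\Vert b\Vert\,\Vert\tilde\beta-\beta\Vert.
\end{align*}
This establishes both parts of Assumption~\ref{Assumption: Further restrictions for uniform convergence} for the MSMM example.
\end{proof}
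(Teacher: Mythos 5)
Your proposal is correct and follows essentially the same route as the paper, which disposes of this lemma by pointing back to the eigenvalue/Lipschitz arguments of the MSMM section (your part (b), via the decomposition $g(\tilde\beta,\eta_{0,N})=g(\beta,\eta_{0,N})+g^{[1]}(\eta_{0,N})(\exp(-\tilde\beta)-\exp(-\beta))$ and matrix Cauchy--Schwarz) and to the ASMM-style matrix-concentration verification via Lemma \ref{lemma: Tropp's inequality} (your part (a)). The only cosmetic caveat is that your large-deviation step $\mathbb{E}\max_i(1+Y_i^4)=o(1)$ should read $o(N)$ when invoking Corollary 7.1 of \citet{zhang2021concentration}, but this mirrors the paper's own ASMM computation and the resulting bound is still $o_p(1)$ under $m^3/N\rightarrow 0$, so the conclusion stands.
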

\begin{proof}
    The proofs are similar to the proofs of Lemma \ref{Lemma: MSMM 3b} and \ref{Lemma: MSMM 3d}.
\end{proof}

\begin{lemma}
    Assumption \ref{Assumption: ASN matrix estimation} holds for MSMM example.
\end{lemma}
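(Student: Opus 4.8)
The plan is to reuse, almost verbatim, the two-part strategy already carried out for the ASMM example, exploiting the separable structure $g(o;\beta,\eta)=g^{[1]}(o;\eta)h^{[1]}(\beta)+g^{[2]}(o;\eta)h^{[2]}(\beta)$ with $g^{[1]},g^{[2]}$ independent of $\beta$ and $h^{[1]},h^{[2]}$ (and their derivatives in $\beta$, up to second order) uniformly bounded on the compact neighborhood $\mathcal{B}'$. Since $p=1$ here, $\Omega^{k}=g\,\partial_\beta g^{T}$, $\Omega^{kl}=g\,\partial^2_\beta g^{T}$, $\Omega^{k,l}=\partial_\beta g\,\partial_\beta g^{T}$, and $\partial_\beta g$, $\partial^2_\beta g$ are again linear combinations of $g^{[1]},g^{[2]}$ with bounded $\beta$-coefficients. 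Hence \emph{each} of $\Omega,\Omega^{k},\Omega^{kl},\Omega^{k,l}$ equals, at every $\beta\in\mathcal{B}'$, a finite linear combination — with coefficients uniformly bounded over $\mathcal{B}'$ — of the four fixed ($\beta$-free) matrix functions $\Pi^{[b_1,b_2]}(o;\eta):=g^{[b_1]}(o;\eta)g^{[b_2]}(o;\eta)^{T}$, $b_1,b_2\in\{1,2\}$. So the first step is this reduction: it suffices to prove (a) and (b) with $A$ replaced by each $\Pi^{[b_1,b_2]}$, and for these pieces the $\sup_{\beta}$ is vacuous.

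For part (a), I would write, using the triangle inequality and $\Vert\cdot\Vert\le\Vert\cdot\Vert_F$, $\Vert P_{0,N}[\Pi^{[b_1,b_2]}(\eta)-\Pi^{[b_1,b_2]}(\eta_{0,N})]\Vert$ bounded by $\Vert P_{0,N}[(g^{[b_1]}(\eta)-g^{[b_1]}(\eta_{0,N}))g^{[b_2]}(\eta)^{T}]\Vert+\Vert P_{0,N}[g^{[b_1]}(\eta_{0,N})(g^{[b_2]}(\eta)-g^{[b_2]}(\eta_{0,N}))^{T}]\Vert$. Each factor has the form $g^{[b]}(o;\eta)=(Z-\eta_Z(X))\,s^{[b]}(o;\eta)$ with scalar $s^{[1]}=I\{A=1\}Y-\eta_{Y,(1)}\eta_{A,(1)}$, $s^{[2]}=I\{A=0\}Y-\eta_{Y,(0)}(1-\eta_{A,(1)})$, so I would split $g^{[b]}(\eta)-g^{[b]}(\eta_{0,N})=(Z-\eta_Z(X))(s^{[b]}(\eta)-s^{[b]}(\eta_{0,N}))+(\eta_{Z,0,N}(X)-\eta_Z(X))s^{[b]}(\eta_{0,N})$. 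The first piece is a bounded $Z$-residual vector times a scalar whose $L^2$-norm is $\lesssim\delta_N$ — this uses the $L^\infty$ bounds on the nuisances, $\mathbb{E}[Y^4|A,X]<C$, and the product-rate conditions of Assumption \ref{Assumption: MSMM}(d), with the product $\eta_{Y,(a)}\eta_{A,(1)}$ split linearly into already-controlled differences; conditioning on $X$ and using $\mathbb{E}[(Z-\eta_{Z,0,N})(Z-\eta_{Z,0,N})^{T}|X]\preceq CI_m$ (plus the matrix Cauchy–Schwarz/PSD lemmas used for ASMM) gives a spectral contribution $\lesssim\delta_N=o(1/\sqrt m)$. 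The second piece has $(j,k)$-entry carrying the factor $\eta_{Z^{(j)},0,N}-\eta_{Z^{(j)}}$, of $L^2$-size $\le\delta_N/m$, so the Frobenius norm of the $m\times m$ matrix is $\lesssim\sqrt{m^2(\delta_N/m)^2}=\delta_N=o(1/\sqrt m)$. Summing finitely many pieces and multiplying by the bounded $\beta$-coefficients yields (a).

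For part (b) I would show $\Vert(\mathbb{P}_n-P_{0,N})\Pi^{[b_1,b_2]}(\eta_{0,N})\Vert=o_p(1/\sqrt m)$ by Tropp's inequality (Lemma \ref{lemma: Tropp's inequality}) applied to $X_i=\tfrac1N(\Pi^{[b_1,b_2]}_i-P_{0,N}\Pi^{[b_1,b_2]})$ and $S=\sum_i X_i$. For the variance parameter, $\mathbb{E}[X_iX_i^{T}]\preceq\tfrac1{N^2}\mathbb{E}[\Vert g^{[b_2]}_i\Vert^2 g^{[b_1]}_i(g^{[b_1]}_i)^{T}]$; boundedness of $Z,X$ gives $\Vert g^{[b_2]}_i\Vert^2\le Cm\,(s^{[b_2]}_i)^2$, and $\mathbb{E}[(s^{[b_2]}_i)^2(Z-\eta_{Z,0,N})(Z-\eta_{Z,0,N})^{T}\mid X]\preceq CI_m$ by $\mathbb{E}[Y^4|A,X]<C$, so $\nu(S)\lesssim m/N$. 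For the large-deviation parameter, $\Vert X_i\Vert\lesssim\tfrac{m}{N}|s^{[b_1]}_i|\,|s^{[b_2]}_i|$, hence $L^2=\mathbb{E}\max_i\Vert X_i\Vert^2\lesssim\tfrac{m^2}{N^2}\mathbb{E}\max_i(1+|Y_i|^4)=o(m^2/N)$ by $\mathbb{E}|Y|^4<\infty$ and Corollary 7.1 of \citet{zhang2021concentration}. With dimensional constant $C(m,m)\lesssim\log m$, Tropp gives $\mathbb{E}\Vert S\Vert\lesssim\sqrt{\log m\cdot m/N}+\log m\cdot L=o(1/\sqrt m)$ since $m^3/N\to0$. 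Combining with part (a) and the reduction of the first step completes the proof. The main obstacle I anticipate is this matrix-concentration step: squeezing an $o_p(1/\sqrt m)$ bound on the $m\times m$ empirical fluctuations out of only fourth-moment control of $Y$, which forces careful use of the maximal inequality and careful tracking of how the $\sqrt m$ factors coming from the dimension of $Z$ interact with the growth budget $m^3/N\to0$; by contrast the bias term is a routine, if tedious, replay of the ASMM computation, the only genuinely new feature being the product form $\eta_{Y,(a)}\eta_{A,(1)}$ inside $s^{[b]}$.
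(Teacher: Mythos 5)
Your proposal is correct and follows essentially the same route as the paper, whose proof of this lemma is simply deferred to the ASMM case: you reduce $\Omega,\Omega^{k},\Omega^{kl},\Omega^{k,l}$ via the separable structure to the $\beta$-free blocks $g^{[b_1]}(g^{[b_2]})^{T}$, bound the population bias exactly as in the ASMM verification (entrywise $\delta_N/m$ rates for the $Z$-nuisances, $L^2$ rates for the scalar nuisances, conditional covariance bound $\preceq CI_m$), and control the empirical fluctuation with Tropp's inequality plus the maximal inequality, which is precisely the paper's ASMM argument transplanted. Your treatment of the large-deviation parameter ($L^2=\mathbb{E}\max_i\Vert X_i\Vert^2=o(m^2/N)$) is in fact the honest version of the paper's own computation, and it inherits the same implicit absorption of the $\log m$ factor under $m^3/N\to 0$ that the paper's ASMM proof also relies on.
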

\begin{proof}
    The proof is similar to the proof of the ASMM case, thus omitted.
\end{proof}

The proof of Theorem \ref{Theorem: MSMM} is a direct result of Lemmas in Supplemental material \ref{Supp: MSMM} and Theorem \ref{Theorem: consistency and ASN}.

\section{Proximal causal inference example}
\subsection{Proof for the orthogonality property of the proposed moment condition}
\begin{proof}
Note that
\begin{align*}
    &\mathbb{E}_{P_{0,N}}[Y-\beta_{a,0} A-\beta_{w,0}W|A,Z,U,X]\\
    =&\mathbb{E}_{P_{0,N}}[Y(0)-\beta_{w,0}W|A,Z,U,X]\\
    =&\mathbb{E}_{P_{0,N}}[Y(0)-\beta_{w,0}W|U,X]\\
    =&\mathbb{E}_{P_{0,N}}[Y(0)-\beta_{w,0}W|X]
\end{align*}
On the other hand,
\begin{align*}
    &\mathbb{E}_{P_{0,N}}[Y-\beta_{a,0} A-\beta_{w,0}W|X]\\
    =&\mathbb{E}_{P_{0,N}}[\mathbb{E}_{P_{0,N}}[Y-\beta_{a,0} A-\beta_{w,0}W|A,Z,U,X]|X]\\
    =&\mathbb{E}_{P_{0,N}}[Y(0)-\beta_{w,0}W|X]
\end{align*}
Therefore we have
\begin{align*}
    \mathbb{E}_{P_{0,N}}[Y-\beta A-\beta_wW|A,Z,U,X] = \mathbb{E}_{P_{0,N}}[Y-\beta A-\beta_wW|X].
\end{align*}
Hence, for any function $h$ of $(A,Z,U)$, we have
\begin{equation*}
    \mathbb{E}_{P_{0,N}}h(A,Z,U)(Y-\beta_{a,0} A-\beta_{w,0}W-\mathbb{E}_{P_{0,N}}[Y-\beta_{a,0} A-\beta_{w,0}W|X]) = 0,
\end{equation*}
specifically, 
\begin{align*}
    &\mathbb{E}_{P_{0,N}}(Z-E_{P_{0,N}}[Z|X])(Y-\beta_{a,0} A-\beta_{w,0}W-\mathbb{E}_{P_{0,N}}[Y-\beta_{a,0} A-\beta_{w,0}W|X]) = 0 \\
    &\mathbb{E}_{P_{0,N}}(A-E_{P_{0,N}}[A|X])(Y-\beta_{a,0} A-\beta_{w,0}W-\mathbb{E}_{P_{0,N}}[Y-\beta_{a,0} A-\beta_{w,0}W|X]) = 0.
\end{align*}
\end{proof}

\begin{lemma}
    The moment function
    \begin{align}
g(O;\beta,\eta)=\begin{pmatrix}
  A-\eta_A(X) \\
  Z-\eta_Z(X)
\end{pmatrix}(Y-\beta_{a} A-\beta_{w}W-\eta_{Y}(X)+\beta_{a}\eta_{A}(X)+\beta_{w}\eta_{W}(X)), \label{Equation: Moment function for PSMM}
\end{align}
     satisfies global Neyman orthogonality. 
\end{lemma}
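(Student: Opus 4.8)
The plan is to verify the defining condition of global Neyman orthogonality directly: for every $\beta=(\beta_a,\beta_w)\in\mathcal{B}$, every $N$, and every candidate nuisance $\eta_1=(\eta_{Y,1},\eta_{W,1},\eta_{A,1},\eta_{Z,1})\in\tilde{\mathcal{T}}_N$, I must show that the first-order Gateaux derivative $D^{(1)}_{0,\beta,g}(\eta_1)=\frac{\partial}{\partial t}\mathbb{E}_{P_{0,N}}[g(O;\beta,(1-t)\eta_{0,N}+t\eta_1)]\big|_{t=0}$ vanishes. First I would write out the perturbed moment function explicitly, substituting $\eta_{A,t}=(1-t)\eta_{A,0,N}+t\eta_{A,1}$ and similarly for $\eta_W,\eta_Y,\eta_Z$, so that $g(O;\beta,\eta_t)$ becomes a product of the ``instrument-residual'' factor $((A-\eta_{A,t}(X)),(Z-\eta_{Z,t}(X)))^T$ and the ``outcome-residual'' factor $R_t:=Y-\beta_aA-\beta_wW-\eta_{Y,t}(X)+\beta_a\eta_{A,t}(X)+\beta_w\eta_{W,t}(X)$.

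Next I would differentiate in $t$ using the product rule. The derivative splits into two groups of terms: (i) terms where the derivative hits the instrument-residual factor, producing $((\eta_{A,0,N}-\eta_{A,1})(X),(\eta_{Z,0,N}-\eta_{Z,1})(X))^T R_0$ with $R_0$ evaluated at the truth; and (ii) terms where the derivative hits $R_t$, producing the original instrument residual $((A-\eta_{A,0,N}(X)),(Z-\eta_{Z,0,N}(X)))^T$ times $\frac{\partial}{\partial t}R_t|_{t=0}=-(\eta_{Y,0,N}-\eta_{Y,1})(X)+\beta_a(\eta_{A,0,N}-\eta_{A,1})(X)+\beta_w(\eta_{W,0,N}-\eta_{W,1})(X)$. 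For group (i), I would take the conditional expectation given $X$ inside: since $\eta_{A,0,N}(X)=\mathbb{E}_{P_{0,N}}[A|X]$ and $\eta_{Z,0,N}(X)=\mathbb{E}_{P_{0,N}}[Z|X]$, the factor $(\eta_{A,0,N}-\eta_{A,1})(X),(\eta_{Z,0,N}-\eta_{Z,1})(X))^T$ is $X$-measurable, so it suffices that $\mathbb{E}_{P_{0,N}}[R_0\mid X]=0$; this is exactly the identifying moment equation established above, namely $\mathbb{E}_{P_{0,N}}[Y-\beta_{a,0}A-\beta_{w,0}W\mid X]=\mathbb{E}_{P_{0,N}}[Y(0)-\beta_{w,0}W\mid X]$ combined with the fact that at the true $\beta_0$ the conditional mean of the outcome residual is constant in everything — but here I must be careful: group (i) must vanish for \emph{all} $\beta$, not only $\beta_0$. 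The resolution is that $\mathbb{E}_{P_{0,N}}[R_0\mid X]$ equals $\mathbb{E}_{P_{0,N}}[Y-\eta_{Y,0,N}(X)\mid X]-(\beta_a-0)(A\text{-term})\cdots$; since $\eta_{Y,0,N}(X)=\mathbb{E}_{P_{0,N}}[Y\mid X]$, $\eta_{A,0,N}(X)=\mathbb{E}_{P_{0,N}}[A\mid X]$, $\eta_{W,0,N}(X)=\mathbb{E}_{P_{0,N}}[W\mid X]$, each bracket $\mathbb{E}_{P_{0,N}}[Y-\eta_{Y,0,N}(X)\mid X]$, $\mathbb{E}_{P_{0,N}}[A-\eta_{A,0,N}(X)\mid X]$, $\mathbb{E}_{P_{0,N}}[W-\eta_{W,0,N}(X)\mid X]$ is identically zero, so $\mathbb{E}_{P_{0,N}}[R_0\mid X]=0$ for every $\beta$. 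Hence group (i) integrates to zero.

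For group (ii), the perturbation factor $-(\eta_{Y,0,N}-\eta_{Y,1})(X)+\beta_a(\eta_{A,0,N}-\eta_{A,1})(X)+\beta_w(\eta_{W,0,N}-\eta_{W,1})(X)$ is $X$-measurable, so I would condition on $X$ and use $\mathbb{E}_{P_{0,N}}[A-\eta_{A,0,N}(X)\mid X]=0$ and $\mathbb{E}_{P_{0,N}}[Z-\eta_{Z,0,N}(X)\mid X]=0$ to kill the instrument-residual factor in expectation; thus group (ii) also integrates to zero. Combining, $D^{(1)}_{0,\beta,g}(\eta_1)=0$ for all $\beta\in\mathcal{B}$, which is precisely global Neyman orthogonality. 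I expect the only delicate point to be the bookkeeping in group (i): one must observe that the relevant vanishing conditional expectations hold for \emph{arbitrary} $\beta$ because they decompose into the three centering identities for $Y$, $A$, and $W$ separately, rather than relying on the causal structure at $\beta_0$ — the causal assumptions (Assumption~\ref{Assumption: proxy_identification}) are what make $\mathbb{E}_{P_{0,N}}[g(O;\beta_0,\eta_{0,N})]=0$, but orthogonality of the \emph{derivative} needs only these unconditional-mean centerings, which is a routine verification. Alternatively, one could invoke the permanence/construction result (Proposition from \citet{Chernozhukov2022locallyrobust} quoted above) by exhibiting $g$ as $\tilde g+\phi$ for the initial moment $\tilde g(O;\beta,\eta_Z)=(Z-\eta_Z(X))^T(\cdots)$ plus its influence-function correction, but the direct differentiation is shorter here.
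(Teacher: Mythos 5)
Your proposal is correct and follows essentially the same route as the paper's proof: differentiate the perturbed moment in $t$, split via the product rule into the term where the derivative hits the $(A,Z)$-residual factor and the term where it hits the outcome-residual factor, then condition on $X$ and use the centering identities $\mathbb{E}_{P_{0,N}}[Y-\eta_{Y,0,N}(X)\mid X]=\mathbb{E}_{P_{0,N}}[A-\eta_{A,0,N}(X)\mid X]=\mathbb{E}_{P_{0,N}}[W-\eta_{W,0,N}(X)\mid X]=\mathbb{E}_{P_{0,N}}[Z-\eta_{Z,0,N}(X)\mid X]=0$, valid for every $\beta$, which is exactly how the paper kills both groups (handling the $A$-row and $Z$-rows identically). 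Your remark that global orthogonality rests only on these centerings rather than on the causal structure at $\beta_0$ is a correct and useful clarification of what the paper leaves implicit.
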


\begin{proof}
    Let 
    \begin{equation*}
        \tilde{g}(O;\beta,\eta) = (Z-\eta_Z)(Y-\eta_Y-\beta_A(A-\eta_A)-\beta_W(W-\eta_W)).
    \end{equation*}
    The first-order Gateux derivative of $\tilde{g}$ with direction $\eta_{1}$ is
    \begin{align*}
        &\frac{\partial}{\partial t}\mathbb{E}_{P_{0,N}}\left[\tilde{g}(O;\beta,
        (1-t)\eta_{0,N}+t\eta)\right]\\
        =&\mathbb{E}_{P_{0,N}}\Big[(\eta_{Z,0,N}(X)-\eta_{Z,1}(X))\\
        &\times [Y-(1-t)\eta_{Y,0,N}(X)-t\eta_{Y,1}(X)\\
        &-\beta_A(A-(1-t)\eta_{A,0,N}(X)-t\eta_{A,1}(X))-\beta_W(W-(1-t)\eta_{W,0,N}(X)-t\eta_{W,1}(X))]\\
        &+(Z-(1-t)\eta_{Z,0,N}(X)-t\eta_{Z,1}(X))\\
        &\times(\eta_{Y,0,N}(X)-\eta_{Y,1}(X)-\beta_A(\eta_{A,0,N}(X)-\eta_{A,1}(X))-\beta_{W,1}(\eta_{W,0,N}(X)-\eta_{W,1}(X)))\Big].
    \end{align*}
    Therefore,
    \begin{align*}
        \frac{\partial}{\partial t}\mathbb{E}_{P_{0,N}}\left[\tilde{g}(O;\beta,
        (1-t)\eta_{0,N}+t\eta_1)\right]\Big\vert_{t=0} = 0.
    \end{align*}
    The result still holds if we replace $Z$ in $\tilde{g}$ with $A$. Therefore,
    \begin{align*}
        \frac{\partial}{\partial t}\mathbb{E}_{P_{0,N}}\left[{g}(O;\beta,
        (1-t)\eta_{0,N}+t\eta_1)\right]\Big\vert_{t=0} = 0.
    \end{align*}
\end{proof}

\begin{lemma}
    Under Assumption \ref{Assumption: PSMM}, we have for $j = 1,...,m+1$
    \begin{equation*}
        \lambda_N^{(j)}\leq N^{-1/2}\delta_N.
    \end{equation*}
\end{lemma}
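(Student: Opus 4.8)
The plan is to exploit the fact that, for the PSMM moment function, each coordinate $g^{(j)}(O;\beta,\eta)$ is \emph{bilinear} in the nuisance component: it is a product of a term affine in $(\eta_A,\eta_Z)$ and a term affine in $(\eta_Y,\eta_A,\eta_W)$. Fix $j\in\{1,\dots,m+1\}$, $\beta=(\beta_a,\beta_w)\in\mathcal{B}$, $\eta\in\mathcal{T}_N$, and set $\eta_t=(1-t)\eta_{0,N}+t\eta$ and $\Delta_f:=\eta_f-\eta_{f,0,N}$ for each component $f$. Substituting $\eta_t$ makes the first factor of $g^{(j)}$ equal to $\bigl(A-\eta_{A,0,N}(X)\bigr)-t\Delta_A(X)$ or $\bigl(Z^{(k)}-\eta_{Z^{(k)},0,N}(X)\bigr)-t\Delta_{Z^{(k)}}(X)$ (according to whether $j$ indexes the $A$ coordinate or the $Z^{(k)}$ coordinate), and makes the second factor equal to its value at $\eta_{0,N}$ plus $t\bigl(-\Delta_Y(X)+\beta_a\Delta_A(X)+\beta_w\Delta_W(X)\bigr)$. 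Hence $g^{(j)}(O;\beta,\eta_t)$, and therefore $t\mapsto\mathbb{E}_{P_{0,N}}[g^{(j)}(O;\beta,\eta_t)]$, is a quadratic polynomial in $t$; its second derivative is constant in $t$ and equals twice the product of the two $t$-linear coefficients, e.g.\ for a $Z$ coordinate
\[
\frac{\partial^2}{\partial t^2}\mathbb{E}_{P_{0,N}}\bigl[g^{(j)}(O;\beta,\eta_t)\bigr]=2\,\mathbb{E}_{P_{0,N}}\Bigl[\Delta_{Z^{(k)}}(X)\bigl(\Delta_Y(X)-\beta_a\Delta_A(X)-\beta_w\Delta_W(X)\bigr)\Bigr],
\]
with the obvious modification (replace $\Delta_{Z^{(k)}}$ by $\Delta_A$) when $j=1$.

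From here I would bound the displayed expectation by the triangle inequality and Cauchy--Schwarz, using compactness of $\mathcal{B}$ to absorb $|\beta_a|$ and $|\beta_w|$ into a constant:
\[
\Bigl|\tfrac{\partial^2}{\partial t^2}\mathbb{E}_{P_{0,N}}[g^{(j)}(O;\beta,\eta_t)]\Bigr|\ \lesssim\ \Vert\Delta_{Z^{(k)}}\Vert_{P_{0,N},2}\bigl(\Vert\Delta_Y\Vert_{P_{0,N},2}+\Vert\Delta_A\Vert_{P_{0,N},2}+\Vert\Delta_W\Vert_{P_{0,N},2}\bigr),
\]
and likewise with $\Delta_{Z^{(k)}}$ replaced by $\Delta_A$ for $j=1$. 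The right-hand side is free of $t$ and depends on $\beta$ only through a bounded constant, so passing to the supremum over $t\in(0,1)$ and $\beta\in\mathcal{B}$ in the definition of $\lambda_N^{(j)}$ costs nothing. As in the verification for the ASMM example, I would take the realization set $\mathcal{T}_N$ to be exactly the set of $\eta$ obeying the bounds listed in Assumption \ref{Assumption: PSMM}(d); this set contains $\eta_{0,N}$ (where the derivative vanishes) and contains $\widehat\eta_l$ with probability at least $1-\Delta_N$. For every $\eta\in\mathcal{T}_N$ the two cross-product bounds $\Vert\Delta_{Z^{(k)}}\Vert_{P_{0,N},2}(\Vert\Delta_Y\Vert_{P_{0,N},2}+\Vert\Delta_A\Vert_{P_{0,N},2}+\Vert\Delta_W\Vert_{P_{0,N},2})\le N^{-1/2}\delta_N$ and $\Vert\Delta_A\Vert_{P_{0,N},2}(\Vert\Delta_Y\Vert_{P_{0,N},2}+\Vert\Delta_A\Vert_{P_{0,N},2}+\Vert\Delta_W\Vert_{P_{0,N},2})\le N^{-1/2}\delta_N$ hold, which is precisely what the previous two displays require; hence $\lambda_N^{(j)}\lesssim N^{-1/2}\delta_N$ for all $j=1,\dots,m+1$.

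The computation is routine bilinear bookkeeping (integrability of the relevant products being supplied by the boundedness and moment conditions in Assumption \ref{Assumption: PSMM}(a)); the only point that needs attention is the $A$--coordinate ($j=1$), where both factors of $g^{(1)}$ involve $A$, so the $t^2$-coefficient contains the squared term $\Delta_A(X)^2$ and one needs the product $\Vert\Delta_A\Vert_{P_{0,N},2}^2$ — which is exactly why Assumption \ref{Assumption: PSMM}(d) carries $\Vert\widehat\eta_{A,l}-\eta_{A,0,N}\Vert_{P_{0,N},2}$ inside the parenthesis of the second cross-product condition. No substantive obstacle arises; the lemma then follows by collecting these bounds.
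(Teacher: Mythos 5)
Your proposal is correct and follows essentially the same route as the paper: compute the second Gateaux derivative exactly (it is constant in $t$ because each coordinate of $g$ is a product of two terms affine in $\eta$), bound it via the triangle and Cauchy--Schwarz inequalities with compactness of $\mathcal{B}$ absorbing $|\beta_a|,|\beta_w|$, and invoke the cross-product rate conditions in Assumption \ref{Assumption: PSMM}(d). The only difference is cosmetic: you spell out the $j=1$ ($A$-coordinate) case, including the $\Vert\Delta_A\Vert_{P_{0,N},2}^2$ term, which the paper dismisses as ``similar, thus omitted.''
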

\begin{proof}
    For $j\geq 2$, we have
    \begin{align*}
        &\frac{\partial^2}{\partial t^2}\mathbb{E}_{P_{0,N}}\left[g(O;\beta,(1-t)\eta_{0,N}+t\eta)\right]\\
        =& 2\mathbb{E}_{P_{0,N}}\left[(\eta_{Z^{(j)},0,N}(X)-\eta_{Z^{(j)}}(X))(\eta_{Y,0,N}(X)-\eta_Y(X)-\beta_A(\eta_{A,0,N}-\eta_A)-\beta_W(\eta_{W,0,N}-\eta_W))\right]
    \end{align*}
    Therefore,
    \begin{align*}
\lambda_N^{(j)}\leq & \sup_{\eta\in \mathcal{T}_N,\beta \in \mathcal{B}}\mathbb{E}_{P_{0,N}}\vert (\eta_{Z^{(j)},0,N}(X)-\eta_{Z^{(j)}}(X))(\eta_{Y,0,N}(X)-\eta_Y(X)) \vert \\
&+\sup_{\beta\in\mathcal{B}}\vert \beta_A\vert\sup_{\eta\in \mathcal{T}_N,\beta \in \mathcal{B}}\mathbb{E}_{P_{0,N}}\vert (\eta_{Z^{(j)},0,N}(X)-\eta_{Z^{(j)}}(X))(\eta_{A,0,N}(X)-\eta_A(X)) \vert\\
&+\sup_{\beta\in\mathcal{B}}\vert \beta_W\vert\sup_{\eta\in \mathcal{T}_N,\beta \in \mathcal{B}}\mathbb{E}_{P_{0,N}}\vert (\eta_{Z^{(j)},0,N}(X)-\eta_{Z^{(j)},0,N}(X))(\eta_{W,0,N}(X)-\eta_W(X)) \vert\\
\leq & \sup_{\eta\in \mathcal{T}_N} \Vert \eta_{Z^{(i)},0,N}-\eta_{Z^{(i)}}\Vert_{P_{0,N},2}(\Vert \eta_{Y,0,N}-\eta_{Y}\Vert_{P_{0,N},2}+\Vert \eta_{A,0,N}-\eta_{A}\Vert_{P_{0,N},2}+\Vert \eta_{W,0,N}-\eta_{W}\Vert_{P_{0,N},2})\\
\leq & N^{-1/2}\delta_N.
    \end{align*}
    The case when $j=1$ is similar, thus omitted. 
\end{proof}

Next, we will verify the regularity conditions for the proximal causal inference example. This example is similar to the ASMM example because the moment condition of both examples are linear in $\beta$. We will verify Assumption 2,3. Other assumptions can be verified similarly.

\begin{lemma}
    For all $\beta \in \mathcal{B}$, $1/C \leq \xi_{min}(\widebar{\Omega}(\beta,\eta_{0,N}))\leq \xi_{max}(\widebar{\Omega}(\beta,\eta_{0,N}))\leq C$.
\end{lemma}
\begin{proof}
    \begin{align*}
        &\widebar{\Omega}(\beta,\eta_{0,N})=\mathbb{E}_{P_{0,N}}[(\tilde{Z}-\eta_{\tilde{Z},0,N}(X))(\tilde{Z}-\eta_{\tilde{Z},0,N}(X))^T\\
        &\times(Y-\eta_{Y,0,N}(X)-\beta_a A+ \beta_a\eta_{A,0,N}(X)-\beta_wW+\eta_{W,0,N}(X))^2]\\
        =&\mathbb{E}_{P_{0,N}}\Big[(\tilde{Z}-\eta_{\tilde{Z},0,N}(X))(\tilde{Z}-\eta_{\tilde{Z},0,N}(X))^T\\
        &\times \mathbb{E}_{P_{0,N}}[(Y-\eta_{Y,0,N}(X)-\beta_a A+ \beta_a\eta_{A,0,N}-\beta_w +\beta_w\eta_{W,0,N}(X))^2|Z,X,A]\Big]
    \end{align*}
    Since $\mathbb{E}_{P_{0,N}}[(Y-\eta_{Y,0,N}(X)-\beta_a A+ \beta_a\eta_{A,0,N}-\beta_w +\beta_w\eta_{W,0,N}(X))^2|Z,X,A]$ is bounded by $1/C$ and $C$ and $\mathbb{E}_{P_{0,N}}[(\tilde{Z}-\eta_{\tilde{Z},0,N}(X))(\tilde{Z}-\eta_{\tilde{Z},0,N}(X))^T]$ is bounded by $CI_{m+1}$ and $1/CI_{m+1}$.
    
\end{proof}

\begin{lemma}
    (a) There is a constant $C>0$ with $\Vert \delta(\beta) \Vert \leq C\sqrt{N} \Vert \widebar{g}(\beta,\eta_{0,N}) \Vert/\mu_N$ for all $\beta \in B$. (b) there is $C>0$ and $\widehat{M} = O_p(1)$ such that $\Vert \delta(\beta) \Vert \leq C\sqrt{N}\Vert \widehat{g}(\beta,\eta_{0,N})\Vert/\mu_N +\widehat{M}$ for all $\beta \in B$, where $\widehat{M}$ is $O_p(1)$.
\end{lemma}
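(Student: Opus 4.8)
The plan is to exploit that the PSMM moment function is affine in $\beta=(\beta_a,\beta_w)$. Writing $\tilde Z=(A,Z^T)^T$ and $\eta_{\tilde Z}=(\eta_A,\eta_Z^T)^T$, one has $g(O;\beta,\eta)=(\tilde Z-\eta_{\tilde Z}(X))\big(Y-\eta_Y(X)-\beta_a(A-\eta_A(X))-\beta_w(W-\eta_W(X))\big)$, so $g(O;\beta,\eta)=g(O;\beta_0,\eta)+G(O;\eta)(\beta-\beta_0)$ with $G(O;\eta)=-(\tilde Z-\eta_{\tilde Z}(X))\big((A-\eta_A(X)),(W-\eta_W(X))\big)$ independent of $\beta$. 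Since $\widebar g(\beta_0,\eta_{0,N})=0$, this yields the two identities $\widebar g(\beta,\eta_{0,N})=\widebar G(\beta-\beta_0)$ and $\widehat g(\beta,\eta_{0,N})=\widehat g(\beta_0,\eta_{0,N})+\widehat G(\eta_{0,N})(\beta-\beta_0)$, where $\widehat G(\eta_{0,N})=N^{-1}\sum_{i=1}^N G(O_i;\eta_{0,N})$. Everything then reduces to controlling $\|\widebar G(\beta-\beta_0)\|$ together with two empirical averages.

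For part (a), the first step is to sandwich the quadratic form $(\beta-\beta_0)^T\widebar G^T\widebar\Omega^{-1}\widebar G(\beta-\beta_0)$. Writing $\widebar G^T\widebar\Omega^{-1}\widebar G=S_N M_N S_N^T$ with $M_N=S_N^{-1}\widebar G^T\widebar\Omega^{-1}\widebar G(S_N^{-1})^T$, Assumption \ref{Assumption: weak moment condition} (available through Assumption \ref{Assumption: PSMM}(b)) gives $NM_N\to H$ with $H$ nonsingular positive semidefinite, hence positive definite, so for $N$ large $NM_N\succeq \tfrac12\lambda_{\min}(H)\,I_2$; since $S_N^T(\beta-\beta_0)=\mu_N\delta(\beta)$ this gives $(\beta-\beta_0)^T\widebar G^T\widebar\Omega^{-1}\widebar G(\beta-\beta_0)\ge \tfrac{\lambda_{\min}(H)}{2}\tfrac{\mu_N^2}{N}\|\delta(\beta)\|^2$. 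On the other side, the eigenvalue bounds on $\widebar\Omega(\beta_0,\eta_{0,N})$ established in the preceding lemma give $(\beta-\beta_0)^T\widebar G^T\widebar\Omega^{-1}\widebar G(\beta-\beta_0)=v^T\widebar\Omega^{-1}v\le C\|v\|^2$ for $v=\widebar G(\beta-\beta_0)=\widebar g(\beta,\eta_{0,N})$. Chaining the two bounds yields $\|\delta(\beta)\|\le C'\sqrt N\,\|\widebar g(\beta,\eta_{0,N})\|/\mu_N$, uniformly in $\beta\in\mathcal B$.

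For part (b), I would rearrange the empirical affine identity into $\widebar G(\beta-\beta_0)=\widehat g(\beta,\eta_{0,N})-\widehat g(\beta_0,\eta_{0,N})-(\widehat G(\eta_{0,N})-\widebar G)(\beta-\beta_0)$ and feed this into the bound $\|\delta(\beta)\|\lesssim \tfrac{\sqrt N}{\mu_N}\|\widebar G(\beta-\beta_0)\|$ from part (a), so that by the triangle inequality and compactness of $\mathcal B$,
\[
\|\delta(\beta)\|\le C\tfrac{\sqrt N}{\mu_N}\|\widehat g(\beta,\eta_{0,N})\|+C\tfrac{\sqrt N}{\mu_N}\Big(\|\widehat g(\beta_0,\eta_{0,N})\|+\|\widehat G(\eta_{0,N})-\widebar G\|\,\sup_{\beta'\in\mathcal B}\|\beta'-\beta_0\|\Big).
\]
It then remains to show the bracketed term, once multiplied by $\sqrt N/\mu_N$, is $O_p(1)$. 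By Lemma \ref{lemma: g hat rate 2}, $\|\widehat g(\beta_0,\eta_{0,N})\|=O_p(\sqrt{m/N})$; and because $G(O;\eta_{0,N})$ involves only the uniformly bounded variables $(A,W,Z)$ (Assumption \ref{Assumption: PSMM}(a)), a second-moment computation gives $\mathbb E_{P_{0,N}}\|\widehat G(\eta_{0,N})-\widebar G\|^2\le \mathbb E_{P_{0,N}}\|\widehat G(\eta_{0,N})-\widebar G\|_F^2\le N^{-1}\mathbb E_{P_{0,N}}\|G(O;\eta_{0,N})\|_F^2\lesssim m/N$, hence $\|\widehat G(\eta_{0,N})-\widebar G\|=O_p(\sqrt{m/N})$ by Markov's inequality. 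Therefore $\widehat M:=C\tfrac{\sqrt N}{\mu_N}\big(\|\widehat g(\beta_0,\eta_{0,N})\|+\|\widehat G(\eta_{0,N})-\widebar G\|\,\sup_{\beta'\in\mathcal B}\|\beta'-\beta_0\|\big)=O_p(\sqrt m/\mu_N)=O_p(1)$, the last step using $m/\mu_N^2=O(1)$ from Assumption \ref{Assumption: weak moment condition}. The parts that need care rather than a headline difficulty are the uniformity of the constant in part (a), which is only guaranteed once $N$ passes the threshold where $NM_N$ is within $\tfrac12\lambda_{\min}(H)$ of $H$ (the usual ``for $N$ large'' convention), and the fact that all of part (b) works only because $\sqrt m/\mu_N=O(1)$, so the $(m+1)$-dimensionality of $\tilde Z$ must be tracked exactly when bounding $\|\widehat G(\eta_{0,N})-\widebar G\|$ and must not leak an extra factor of $m$.
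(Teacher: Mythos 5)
Your proposal is correct and follows essentially the same route as the paper's own proof: both exploit the affine-in-$\beta$ structure to write $\widebar g(\beta,\eta_{0,N})=\widebar G(\beta-\beta_0)$ and $\widehat g(\beta,\eta_{0,N})=\widehat g(\beta_0,\eta_{0,N})+(\widehat G-\widebar G)(\beta-\beta_0)+\widebar G(\beta-\beta_0)$, prove (a) by combining the nonsingular limit $NS_N^{-1}\widebar G^T\widebar\Omega^{-1}\widebar G(S_N^{-1})^T\to H$ with the uniform eigenvalue bounds on $\widebar\Omega$, and prove (b) by absorbing the remaining terms into $\widehat M$ using $\|\widehat g(\beta_0,\eta_{0,N})\|=O_p(\sqrt{m/N})$, the $O_p(\sqrt{m/N})$ bound on $\widehat G-\widebar G$, compactness of $\mathcal B$, and $\sqrt m/\mu_N=O(1)$. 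The only cosmetic difference is that you bound $\|\widehat G-\widebar G\|$ in spectral norm and multiply by $\sup_{\beta\in\mathcal B}\|\beta-\beta_0\|$, whereas the paper computes $\mathbb{E}\|(\widehat G-\widebar G)(\beta-\beta_0)\|^2$ directly, which is an equivalent calculation.
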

\begin{proof}
    For (a), we have
    \begin{align*}
        \widebar{g}(\beta,\eta_{0,N}) = \widebar{g}(\beta,\eta_{0,N})-\widebar{g}(\beta_0,\eta_{0,N}) = \overline{G}(\beta-\beta_0).
    \end{align*}
    Here $\overline{G}$ is a $(m+1)\times 2$ matrix.
    Therefore,
    \begin{align*}
        &\sqrt{N}\Vert \widebar{g}(\beta,\eta_{0,N})\Vert/\mu_N\\
        =& \sqrt{N}\sqrt{(\beta-\beta_0)^TG^T(\eta_{0,N})G(\eta_{0,N})(\beta-\beta_0)}/\mu_N\\
        =&\sqrt{\delta(\beta)^TNS_N^{-1}\overline{G}^T\overline{G}S_N^{-1,T}\delta(\beta)}\\
        \gtrsim& \sqrt{\delta(\beta)^TNS_N^{-1}\overline{G}^T\overline{\Omega}^{-1}\overline{G}S_N^{-1,T}\delta(\beta)}\gtrsim \Vert \delta(\beta)\Vert.
    \end{align*}

    To prove part (b), we first have the following expansion:
    \begin{align*}
        &\sqrt{N}\widehat{g}(\beta,\eta_{0,N})/\mu_N\\
        =& \sqrt{N}\widehat{g}(\beta_0,\eta_{0,N})/\mu_N+\mu_N^{-1}\sqrt{N}\frac{1}{N}\sum_{i=1}^N(G_i-\widebar{G})(\beta-\beta_0)+\sqrt{N}\widebar{G}(\beta-\beta_0)/\mu_N\\
        =&\sqrt{N}\widehat{g}(\beta_0,\eta_{0,N})/\mu_N+\mu_N^{-1}\sqrt{N}\frac{1}{N}\sum_{i=1}^N(G_i-\widebar{G})(\beta-\beta_0)+\sqrt{N}\widebar{G}S_N^{T,-1}\delta(\beta).
    \end{align*}
    Therefore,
    \begin{align*}
        &\Vert \delta(\beta)\Vert\\
        \lesssim & \Vert \sqrt{N}\widebar{G}S_N^{T,-1}\delta(\beta)\Vert\\
        \leq & \sqrt{N}\Vert \widehat{g}(\beta,\eta_{0,N})\Vert/\mu_N +\underbrace{\left\Vert \sqrt{N}\widehat{g}(\beta_0,\eta_{0,N})/\mu_N+\mu_N^{-1}\sqrt{N}\frac{1}{N}\sum_{i=1}^N(G_i-\widebar{G})(\beta-\beta_0)\right\Vert}_{\widehat{M}}.
    \end{align*}
    Since we have already shown that $\Vert\widehat{g}(\beta_0,\eta_0)\Vert = O_p(\sqrt{m/N})$, therefore $\sqrt{N}\Vert \widehat{g}(\beta,\eta_{0,N})\Vert/\mu_N = O_p(\sqrt{m}/\mu_N) = O(1)$. It remains to show that
    \begin{equation*}
        \sup_{\beta \in \mathcal{B}}\left\Vert\mu_N^{-1}\sqrt{N}\frac{1}{N}\sum_{i=1}^N(G_i-\widebar{G})(\beta-\beta_0)\right\Vert = O_p(1).
    \end{equation*}
Since 
    \begin{align*}
       &\mathbb{E}_{P_{0,N}}\Vert(\widehat{G}-\overline{G})(\beta-\beta_0)\Vert^2 \\
       =&\frac{1}{N}\text{tr}\mathbb{E}_{P_{0,N}}G_i(\beta-\beta_0)(\beta-\beta_0)^TG^T_i-\frac{1}{N}(\beta-\beta_0)\overline{G}^T\overline{G}(\beta-\beta_0).
    \end{align*}
    Similar to the ASMM example, we have 
    \begin{align*}
        &\frac{1}{N}\text{tr}\mathbb{E}_{P_{0,N}}G_i(\beta-\beta_0)(\beta-\beta_0)^TG^T_i = O(
        m/N)\\
        &\frac{1}{N}(\beta-\beta_0)\overline{G}^T\overline{G}(\beta-\beta_0) = O(\mu_N^2/N^2).
    \end{align*}
    Therefore, $\widehat{M} = O_p(1)$.
\end{proof}

\begin{lemma} $\sup_{\beta\in\mathcal{B}}\mathbb{E}_{P_{0,N}}[(g(\beta,\eta_{0,N})^Tg(\beta,\eta_{0,N}))^2]/N\rightarrow 0$. \label{lemma: PSMM 3a}
\end{lemma}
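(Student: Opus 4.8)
The plan is to factor the integrand explicitly and bound each piece using the boundedness of the proxies together with the fourth-moment conditions in Assumption \ref{Assumption: PSMM}(a). Write $\tilde Z=(A,Z^T)^T$, $\eta_{\tilde Z,0,N}(X)=(\eta_{A,0,N}(X),\eta_{Z,0,N}(X)^T)^T$, and
\[
  R(O;\beta)=Y-\beta_a A-\beta_w W-\eta_{Y,0,N}(X)+\beta_a\eta_{A,0,N}(X)+\beta_w\eta_{W,0,N}(X),
\]
so that $g(O;\beta,\eta_{0,N})^T g(O;\beta,\eta_{0,N})=\Vert\tilde Z-\eta_{\tilde Z,0,N}(X)\Vert^2\,R(O;\beta)^2$ and hence $\big(g^T g\big)^2=\Vert\tilde Z-\eta_{\tilde Z,0,N}(X)\Vert^4\,R(O;\beta)^4$.

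First I would bound the first factor. Since $A$ and each $Z^{(j)}$ are uniformly bounded by Assumption \ref{Assumption: PSMM}(a), the conditional means $\eta_{A,0,N}(X)=\mathbb{E}_{P_{0,N}}[A|X]$ and $\eta_{Z^{(j)},0,N}(X)=\mathbb{E}_{P_{0,N}}[Z^{(j)}|X]$ are bounded by the same constants almost surely; therefore each coordinate of $\tilde Z-\eta_{\tilde Z,0,N}(X)$ is bounded, which gives $\Vert\tilde Z-\eta_{\tilde Z,0,N}(X)\Vert^2\le C(m+1)$ and $\Vert\tilde Z-\eta_{\tilde Z,0,N}(X)\Vert^4\lesssim m^2$ almost surely, with constants independent of $N$.

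Next I would show $\sup_{\beta\in\mathcal{B}}\mathbb{E}_{P_{0,N}}[R(O;\beta)^4]\le C$ uniformly in $N$. Compactness of $\mathcal{B}$ bounds $\beta_a$ and $\beta_w$; Assumption \ref{Assumption: PSMM}(a) bounds $A$, $W$, and hence $\eta_{A,0,N}(X)$, $\eta_{W,0,N}(X)$; and by conditional Jensen together with $\mathbb{E}_{P_{0,N}}[Y^4|A,X,Z]<C$ we obtain $\mathbb{E}_{P_{0,N}}[Y^4]<C$ and $\mathbb{E}_{P_{0,N}}[\eta_{Y,0,N}(X)^4]=\mathbb{E}_{P_{0,N}}[(\mathbb{E}_{P_{0,N}}[Y|X])^4]\le\mathbb{E}_{P_{0,N}}[Y^4]<C$. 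Expanding $R^4$ and applying the $c_r$-inequality then yields a bound that depends on neither $\beta$ nor $N$. Combining the two estimates, $\sup_{\beta\in\mathcal{B}}\mathbb{E}_{P_{0,N}}[(g^T g)^2]/N\lesssim m^2/N\to 0$, since $m^3/N\to 0$ in Assumption \ref{Assumption: PSMM}(b) forces $m^2/N\to 0$.

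The argument is essentially bookkeeping; the only points that require any care are keeping the moment bounds uniform in $N$ (supplied by the uniform constants in Assumption \ref{Assumption: PSMM}) and tracking the factor $m$ produced by the $(m+1)$-dimensional proxy vector, which is why $m^3/N\to 0$ (hence $m^2/N\to 0$) is exactly the rate condition invoked. The mildly nontrivial step is the uniform-in-$\beta$ fourth-moment bound on the residual $R$.
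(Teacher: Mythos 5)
Your proof is correct and is essentially the argument the paper intends: the paper omits the PSMM proof and defers to the ASMM case (Lemma on the corresponding ASMM moment bound), which uses exactly your factorization of $g^Tg$ into the squared norm of the centered proxy vector (bounded by a constant times $m$) times the squared residual, followed by a uniform-in-$\beta$ fourth-moment bound and the rate $m^2/N\to 0$ implied by $m^3/N\to 0$. No gaps.
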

\begin{proof}
    Omitted.
\end{proof}

\begin{lemma}
    $\vert a^T \{\widebar{\Omega}(\beta',\eta_{0,N})-\widebar{\Omega}(\beta,\eta_{{0,N}})\}b\vert \leq C\Vert a\Vert \Vert b \Vert \Vert \beta'-\beta\Vert$.
\end{lemma}
\begin{proof}
\begin{align*}
    g_i(\beta,\eta_{0,N}) = G_i\beta+(\tilde{Z}_i-\eta_{\tilde{Z},0,N}(X_i))(Y_i-\eta_{Y,0,N}(X_i))
\end{align*}
Therefore,
\begin{align*}
&g_i(\beta,\eta_{0,N})g_i(\beta,\eta_{0,N})^T\\
    =&G_i\beta\beta^T G_i^T+2(\tilde{Z}_i-\eta_{\tilde{Z},0,N}(X_i))(Y_i-\eta_{Y,0,N}(X_i))\beta^TG_i^T\\
    &+(\tilde{Z}_i-\eta_{\tilde{Z},0,N}(X_i))(\tilde{Z}_i-\eta_{\tilde{Z},0,N}(X_i))^T(Y_i-\eta_{Y,0,N}(X_i))^2
\end{align*}
We start from bounding $\vert a^T \mathbb{E}_{P_{0,N}}[G_i^T\beta'\beta'^TG_i^T]b^T -  a^T \mathbb{E}_{P_{0,N}}[G_i^T\beta\beta^TG_i^T]b^T \vert$:
\scriptsize
\begin{align*}
    &\vert a^T \mathbb{E}_{P_{0,N}}[G_i^T\beta'\beta'^TG_i^T]b^T -  a^T \mathbb{E}_{P_{0,N}}[G_i^T\beta\beta^TG_i^T]b^T \vert\\
    =&\left\vert a^T \mathbb{E}_{P_{0,N}}\left[(\tilde{Z}-\eta_{\tilde{Z},0,N})\begin{pmatrix}
        (A-\eta_{A,0,N}(X))\\
        (W-\eta_{W,0,N}(X))
    \end{pmatrix}^T\beta'\beta'^T\begin{pmatrix}
        (A-\eta_{A,0,N}(X))\\
        (W-\eta_{W,0,N}(X))
    \end{pmatrix}(\tilde{Z}-\eta_{\tilde{Z},0,N})^T\right]b^T \right.\\
    - &\left.a^T \mathbb{E}_{P_{0,N}}\left[(\tilde{Z}-\eta_{\tilde{Z},0,N})\begin{pmatrix}
        (A-\eta_{A,0,N}(X))\\
        (W-\eta_{W,0,N}(X))
    \end{pmatrix}^T\beta\beta^T\begin{pmatrix}
        (A-\eta_{A,0,N}(X))\\
        (W-\eta_{W,0,N}(X))
    \end{pmatrix}(\tilde{Z}-\eta_{\tilde{Z},0,N})^T\right]b^T \right\vert\\
    =&\left\vert a^T \mathbb{E}_{P_{0,N}}\left[(\tilde{Z}-\eta_{\tilde{Z},0,N})\mathbb{E}_{P_{0,N}}\left[\begin{pmatrix}
        (A-\eta_{A,0,N}(X))\\
        (W-\eta_{W,0,N}(X))
    \end{pmatrix}^T\beta'\beta'^T\begin{pmatrix}
        (A-\eta_{A,0,N}(X))\\
        (W-\eta_{W,0,N}(X))
    \end{pmatrix}\Bigg|A,Z,X\right](\tilde{Z}-\eta_{\tilde{Z},0,N})^T\right]b^T \right.\\
    - &\left.a^T \mathbb{E}_{P_{0,N}}\left[(\tilde{Z}-\eta_{\tilde{Z},0,N}(X))\mathbb{E}_{P_{0,N}}\left[\begin{pmatrix}
        (A-\eta_{A,0,N}(X))\\
        (W-\eta_{W,0,N}(X))
    \end{pmatrix}^T\beta\beta^T\begin{pmatrix}
        (A-\eta_{A,0,N}(X))\\
        (W-\eta_{W,0,N}(X))
    \end{pmatrix}\Bigg|A,Z,X\right](\tilde{Z}-\eta_{\tilde{Z},0,N}(X))^T\right]b^T \right\vert\\
    \lesssim &\mathbb{E}_{P_{0,N}}[\vert a^T(\tilde{Z}-\eta_{\tilde{Z}0,N}(X)) \vert\vert b^T(\tilde{Z}-\eta_{\tilde{Z}0,N}(X)) \vert] \Vert \beta'-\beta\Vert\\
    \leq & \Vert a\Vert \Vert b \Vert \Vert \beta'-\beta\Vert
\end{align*}
\normalsize
where the second last inequality is due to boundedness of $\eta_{A,0,N}$, $\eta_{W,0,N}$ $\mathbb{E}_{P_{0,N}}[Y^4|A,X,Z]$ and $\mathbb{E}_{P_{0,N}}[W^4|A,X,Z]$. We can use the similar way to bound
\footnotesize
\begin{align*}
    \vert a^T \mathbb{E}_{P_{0,N}}[(\tilde{Z}_i-\eta_{\tilde{Z},0,N}(X_i))(Y_i-\eta_{Y,0,N}(X_i))\beta'^TG_i^T]b^T -  a^T \mathbb{E}_{P_{0,N}}[(\tilde{Z}_i-\eta_{\tilde{Z},0,N}(X_i))(Y_i-\eta_{Y,0,N}(X_i))\beta^TG_i^T]b^T \vert.
\end{align*}
\normalsize
The details are omitted.
\end{proof}

\begin{lemma}
    There is a constant $C$ and $\widehat{M}=O_p(1)$ such that for all $\beta',\beta \in \mathcal{B}$.
    \begin{align*}
        &\sqrt{N}\Vert \widebar{g}(\beta',\eta_{0,N})-\widebar{g}(\beta,\eta_{0,N})\Vert/\mu_N \leq C\vert \delta(\beta')-\delta(\beta)\vert,\\
        &\sqrt{N}\Vert \widehat{g}(\beta',\eta_{0,N})-\widehat{g}(\beta_0,\eta_{0,N})\Vert/\mu_N \leq \widehat{M}\vert \delta(\beta')-\delta(\beta)\vert.
    \end{align*}
    \label{Lemma: PSMM 3e}
\end{lemma}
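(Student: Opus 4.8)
The plan is to exploit the fact that the PSMM moment function is linear in $\beta$. Writing $g(O;\beta,\eta_{0,N}) = G(O;\eta_{0,N})\beta + g^{a}(O;\eta_{0,N})$, where $G(O;\eta_{0,N})$ is the $(m+1)\times 2$ matrix appearing in the identification computation and $g^{a}$ does not depend on $\beta$, we get $\widebar{g}(\beta',\eta_{0,N})-\widebar{g}(\beta,\eta_{0,N}) = \widebar{G}(\beta'-\beta)$ and $\widehat{g}(\beta',\eta_{0,N})-\widehat{g}(\beta,\eta_{0,N}) = \widehat{G}(\beta'-\beta)$, with $\widebar{G}=\mathbb{E}_{P_{0,N}}[G(O;\eta_{0,N})]$ and $\widehat{G}=\frac1N\sum_{i}G(O_i;\eta_{0,N})$. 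Since $\delta(\beta)=S_N^{T}(\beta-\beta_0)/\mu_N$, we also have $\beta'-\beta = \mu_N (S_N^{T})^{-1}\big(\delta(\beta')-\delta(\beta)\big)$, so both displays reduce to controlling the spectral norms of $\sqrt{N}\,\widebar{G}(S_N^{T})^{-1}$ and of $\sqrt{N}\,\widehat{G}(S_N^{T})^{-1}$.

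For the population inequality I would bound $\|\sqrt{N}\,\widebar{G}(S_N^{T})^{-1}\|^{2} = \|N S_N^{-1}\widebar{G}^{T}\widebar{G}(S_N^{-1})^{T}\|$. By Assumption \ref{Assumption: PSMM} (equivalently Assumption \ref{Assumption: convergence of g-hat}(b)) the eigenvalues of $\widebar{\Omega}$ are bounded below by $1/C$, hence $\widebar{G}^{T}\widebar{G} \preceq C\,\widebar{G}^{T}\widebar{\Omega}^{-1}\widebar{G}$, and therefore $N S_N^{-1}\widebar{G}^{T}\widebar{G}(S_N^{-1})^{T} \preceq C\, N S_N^{-1}\widebar{G}^{T}\widebar{\Omega}^{-1}\widebar{G}(S_N^{-1})^{T}\to C H$ by Assumption \ref{Assumption: weak moment condition}(b). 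Thus $\sqrt{N}\,\widebar{G}(S_N^{T})^{-1}$ has spectral norm bounded by a constant $C$, which yields $\sqrt{N}\,\|\widebar{g}(\beta',\eta_{0,N})-\widebar{g}(\beta,\eta_{0,N})\|/\mu_N \le C\,\|\delta(\beta')-\delta(\beta)\|$.

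For the empirical inequality I would decompose $\sqrt{N}\,\widehat{G}(S_N^{T})^{-1} = \sqrt{N}\,(\widehat{G}-\widebar{G})(S_N^{T})^{-1} + \sqrt{N}\,\widebar{G}(S_N^{T})^{-1}$; the second term is $O(1)$ by the previous step. For the first term, $\|(S_N^{T})^{-1}\|=\|S_N^{-1}\| \le C/\mu_N$, since $\tilde S_N^{-1}$ is bounded (smallest eigenvalue of $\tilde S_N\tilde S_N^{T}$ bounded away from zero) and $\mu_N=\min_j \mu_{jN}$, so it suffices to show $\sqrt{N}\,\|\widehat{G}-\widebar{G}\|/\mu_N = O_p(1)$. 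As in the ASMM computation, $\mathbb{E}_{P_{0,N}}\|\widehat{G}-\widebar{G}\|_F^2 = \frac1N\big(\mathbb{E}_{P_{0,N}}\|G(O;\eta_{0,N})\|_F^2 - \|\widebar{G}\|_F^2\big) \le \frac1N\mathbb{E}_{P_{0,N}}\|G(O;\eta_{0,N})\|_F^2 \lesssim m/N$, where the last bound uses the uniform boundedness of $(A,W,Z)$ and the fourth-moment conditions on $Y$ and $W$ in Assumption \ref{Assumption: PSMM}(a), so that each of the $O(m)$ entries of $G(O;\eta_{0,N})$ has uniformly bounded second moment. Hence $\|\widehat{G}-\widebar{G}\| \le \|\widehat{G}-\widebar{G}\|_F = O_p(\sqrt{m/N})$, and since $m/\mu_N^2=O(1)$ by Assumption \ref{Assumption: weak moment condition}(a), $\sqrt{N}\,\|\widehat{G}-\widebar{G}\|/\mu_N = O_p(\sqrt{m}/\mu_N) = O_p(1)$. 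Setting $\widehat{M} := \sqrt{N}\,\|\widehat{G}(S_N^{T})^{-1}\| = O_p(1)$ then gives $\sqrt{N}\,\|\widehat{g}(\beta',\eta_{0,N})-\widehat{g}(\beta,\eta_{0,N})\|/\mu_N \le \widehat{M}\,\|\delta(\beta')-\delta(\beta)\|$.

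The argument is essentially routine; the only point that needs a little care is the bookkeeping in moving from the scalar-valued $G$ of the ASMM case to the matrix-valued $G$ here, namely verifying that the $O_p(\sqrt{m/N})$ rate for $\|\widehat{G}-\widebar{G}\|$ still follows from an entrywise (Frobenius-norm) second-moment bound. There is no genuine obstacle, since the same linear-moment manipulations used throughout the ASMM section apply verbatim after replacing $\beta'-\beta$ by $\mu_N(S_N^{T})^{-1}(\delta(\beta')-\delta(\beta))$.
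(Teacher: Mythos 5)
Your proof is correct and follows essentially the same route as the paper: exploit linearity of the PSMM moment in $\beta$, rewrite $\beta'-\beta$ through $\delta(\beta')-\delta(\beta)$, bound $\sqrt{N}\,\widebar{G}(S_N^T)^{-1}$ via Assumption 1(b) together with the eigenvalue bounds on $\widebar{\Omega}$, and handle the empirical version by the trace computation giving $\Vert\widehat{G}-\widebar{G}\Vert=O_p(\sqrt{m/N})$ plus $m/\mu_N^2=O(1)$. One small correction: the inequality $\widebar{G}^T\widebar{G}\preceq C\,\widebar{G}^T\widebar{\Omega}^{-1}\widebar{G}$ follows from the \emph{upper} bound $\xi_{\max}(\widebar{\Omega})\leq C$ (so that $\widebar{\Omega}^{-1}\succeq C^{-1}I$), not from the lower eigenvalue bound you cite; since both bounds are verified for the PSMM example, nothing breaks.
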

\begin{proof}
    \begin{align*}
 &\mu_N^{-1}\sqrt{N}\Vert \widebar{g}(\beta',\eta_{0,N})-\widebar{g}(\beta,\eta_{0,N})\Vert = \mu_N^{-1}\sqrt{N}\Vert \overline{G}(\beta'-\beta)\Vert\\
  = &\mu_N^{-1}\sqrt{N}\sqrt{(\beta'-\beta)^TS_NS_N^{-1}\overline{G}^T\overline{G}S_N^{-1,T}S^T_N(\beta'-\beta)}
 \lesssim \Vert \delta(\beta')-\delta(\beta)\Vert.
    \end{align*}
    Similarly
    \begin{align*}
\mu_N^{-1}\sqrt{N}\Vert \widebar{g}(\beta',\eta_{0,N})-\widebar{g}(\beta,\eta_{0,N})\Vert = \mu_N^{-1}\sqrt{N}\Vert \widehat{G}(\eta_{0,N})(\beta'-\beta)\Vert\leq \sqrt{N}\Vert \widehat{G}(\eta_{0,N})\Vert\Vert S_N^{-1}\Vert \Vert \delta(\beta')-\delta(\beta)\Vert,
    \end{align*}
    where $\widehat{M} = \sqrt{N}\Vert \widehat{G}(\eta_{0,N})\Vert\Vert S_N^{-1}\Vert = O_p(1)$.
\end{proof}

 \section{Additional simulation details}
\subsection{Sampling distributions of the CUE, GMM and 2SLS in the simulation settings}


Figure \ref{fig:ASMM and MSMM} and Figure \ref{fig:PSMM} show the sampling distributions of estimates for GMM and CUE across 1000 simulations.

\begin{figure}
    \centering
\includegraphics[width=1\linewidth]{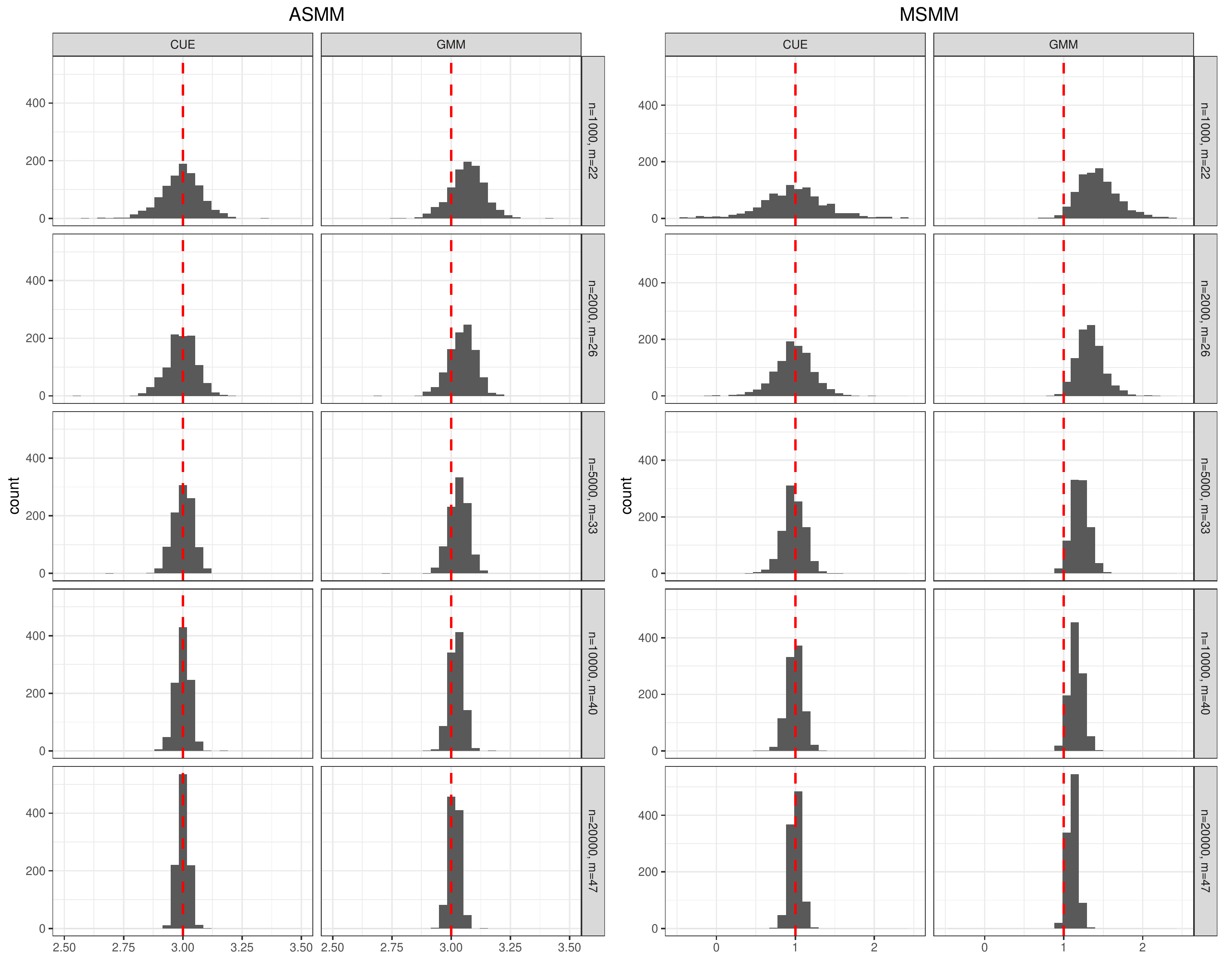}
    \caption{Sampling distribution of the CUE and GMM estimator under ASMM and MSMM settings. The dashed red lines represents the ground truth in each setting.}
    \label{fig:ASMM and MSMM}
\end{figure}

\begin{figure}
    \centering
    \includegraphics[width=1\linewidth]{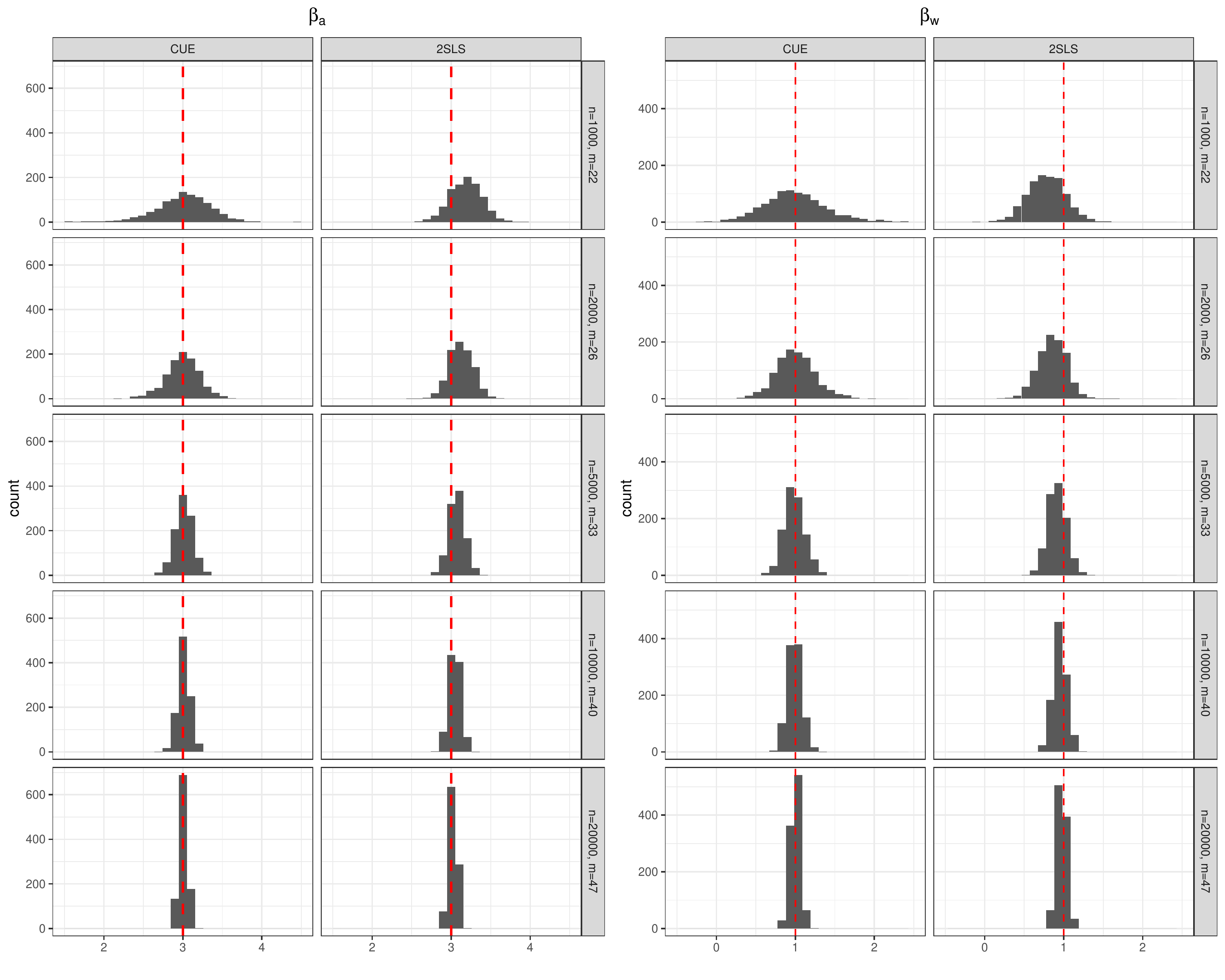}
    \caption{Sampling distribution of the CUE and 2SLS estimator under proximal causal inference settings. The dashed red lines represents the ground truth in each setting.}
    \label{fig:PSMM}
\end{figure}

\subsection{More details of Figure \ref{fig: comparison of the estimators}}

For the simulation presented in Figure \ref{fig: comparison of the estimators}, we set the sample size to $N=2000$. The data are generated following the same distribution described in Section 5.1 for the ASMM example, except that we modify the model for $A^*$ as follows:
\begin{align*}
    A^*=[\beta+3(1-\beta)]\sum_{j=1}^m N^{-1/2-j/(3m)} Z_j+X_1+\text{sin}(X_2)+\text{expit}(X_3)+U+N(0,1).
\end{align*}
Here, $\beta = 1$ corresponds to the weak identification case and $\beta = 0$ corresponds to the strong identification case. For the results using unorthogonal moment conditions, we set $\widehat{\eta}_{A,l} = \widehat{\eta}_{Y,l} = 0$.

\end{document}